\newtheorem{thm}{Theorem}[section]
\newtheorem{lem}[thm]{Lemma}
\newtheorem{cor}[thm]{Corollary}
\newtheorem{rem}[thm]{Remark}
\newtheorem{prp}[thm]{Proposition}
\newtheorem{ex}[thm]{Example}
\newcommand{\inte}{{\mathrm{int}}\,}
\newcommand{\conv}{{\mathrm{conv}}\,}
\newcommand{\lin}{{\mathrm{lin}}\,}
\newcommand{\ee}{\varepsilon}
\newcommand{\R}{\mathbb{R}}
\newcommand{\N}{\mathbb{N}}
\newcommand{\K}{\mathcal{K}}
\newcommand{\di}{\diamondsuit}
\begin{document}
\hfill\today
\bigskip

\title{OPERATIONS BETWEEN SETS IN GEOMETRY}
\author[Richard J. Gardner, Daniel Hug, and Wolfgang Weil]
{Richard J. Gardner, Daniel Hug, and Wolfgang Weil}
\address{Department of Mathematics, Western Washington University,
Bellingham, WA 98225-9063,USA} \email{Richard.Gardner@wwu.edu}
\address{Department of Mathematics, Karlsruhe Institute of Technology,
D-76128 Karlsruhe, Germany}
\email{daniel.hug@kit.edu}
\address{Department of Mathematics, Karlsruhe Institute of Technology,
D-76128 Karlsruhe, Germany} \email{wolfgang.weil@kit.edu}
\thanks{First author supported in
part by U.S.~National Science Foundation Grant DMS-1103612 and by the Alexander von Humboldt Foundation}
\subjclass[2010]{Primary: 52A20, 52A30; secondary: 39B22, 52A39, 52A41} \keywords{compact convex set, star set, Brunn-Minkowski theory, Minkowski addition, radial addition, $L_p$ addition, $M$-addition, projection, symmetrization, central symmetral, difference body, associativity equation, polynomial volume} \maketitle
\begin{abstract}
An investigation is launched into the fundamental characteristics of operations on and between sets, with a focus on compact convex sets and star sets (compact sets star-shaped with respect to the origin) in $n$-dimensional Euclidean space $\R^n$.  It is proved that if $n\ge 2$, with three trivial exceptions, an operation between origin-symmetric compact convex sets is continuous in the Hausdorff metric, $GL(n)$ covariant, and associative if and only if it is $L_p$ addition for some $1\le p\le\infty$.  It is also demonstrated that
if $n\ge 2$, an operation $*$ between compact convex sets is continuous in the Hausdorff metric, $GL(n)$ covariant, and has the identity property (i.e., $K*\{o\}=K=\{o\}*K$ for all compact convex sets $K$, where $o$ denotes the origin) if and only if it is Minkowski addition.  Some analogous results for operations between star sets are obtained.  Various characterizations are given of operations that are projection covariant, meaning that the operation can take place before or after projection onto subspaces, with the same effect.

Several other new lines of investigation are followed.  A relatively little-known but seminal operation called $M$-addition is generalized and systematically studied for the first time.  Geometric-analytic formulas that characterize continuous and $GL(n)$-covariant operations between compact convex sets in terms of $M$-addition are established.  It is shown that if $n\ge 2$, an $o$-symmetrization of compact convex sets (i.e., a map from the compact convex sets to the origin-symmetric compact convex sets) is continuous in the Hausdorff metric, $GL(n)$ covariant, and translation invariant if and only if it is of the form $\lambda DK$ for some $\lambda\ge 0$, where $DK=K+(-K)$ is the difference body of $K$.  The term ``polynomial volume" is introduced for the property of operations $*$ between compact convex or star sets that the volume of $rK*sL$, $r,s\ge 0$, is a polynomial in the variables $r$ and $s$. It is proved that if $n\ge 2$, with three trivial exceptions, an operation between origin-symmetric compact convex sets is continuous in the Hausdorff metric, $GL(n)$ covariant, associative, and has polynomial volume if and only if it is Minkowski addition.
\end{abstract}

\section{Introduction}
One of the most prevalent operations in mathematics is vector addition.  As an operation between sets $K$ and $L$, defined by
\begin{equation}\label{introMink1}
K+L=\{x+y: x\in K, y\in L\},
\end{equation}
it is usually called Minkowski addition and continues to find wide utility in science. For example, \cite{BDD} provides references to its application in computer-aided design and manufacturing, computer
animation and morphing, morphological image analysis, robot motion planning, and solid modeling. In geometry, when the sets $K$ and $L$ are usually compact convex sets in $\R^n$, the operation is a cornerstone of the Brunn-Minkowski theory, a profound and powerful apparatus developed by Minkowski, Blaschke, Aleksandrov, Fenchel, and others. Indeed, the whole theory can be said to arise from combining two concepts: volume and Minkowski addition.  This stems from a fundamental observation of Minkowski, who showed that if $K_1,\dots,K_m$ are compact convex sets in $\R^n$, and $t_1,\dots,t_m\ge 0$, the volume ${{\mathcal{H}}^n}(t_1K_1+\cdots +t_mK_m)$ is a homogeneous polynomial of degree $n$ in the variables $t_1,\dots,t_m$.  The coefficients in this polynomial are called mixed volumes. When $m=n$, $K_1=\cdots=K_i=K$, and $K_{i+1}=\cdots=K_n=B^n$, the unit ball in $\R^n$, then, up to constant factors, the mixed volumes turn out to be averages of volumes of orthogonal projections of $K$ onto subspaces, and include the volume, surface area, and mean width of $K$, as special cases.  In this way, Minkowski created a unified framework for solving problems of a metrical character.  The influence of the Brunn-Minkowski theory is felt not only in geometry, but in many other areas both in and outside mathematics.   To hint at these, we recall that zonoids (limits in the Hausdorff metric of finite Minkowski sums of line segments) alone have found application in stochastic geometry \cite[Section~4.6]{SchW}, random determinants \cite{Vit91}, Hilbert's fourth problem \cite{Ale88}, data analysis and mathematical economics \cite{Mos02}, and mathematical finance \cite{MolS}.  The classic treatise of Schneider \cite{Sch93} provides a detailed survey of the Brunn-Minkowski theory and a host of references.

There are several (though surprisingly few) other ways of combining sets that have found application in geometry and beyond. However, only the imagination limits the number of different operations that might be considered.  In this paper we initiate an investigation motivated, at the first level, by the simple yet fundamental question:  What is so special about the known operations, Minkowski addition in particular?

Before outlining our results, we briefly survey some other useful operations. Two of these underpin far-reaching extensions to the classical Brunn-Minkowski theory.  The first, $L_p$ addition, $1<p\le \infty$, introduced by Firey \cite{Fir61}, \cite{Fir62} and denoted by $+_p$, is defined by
\begin{equation}\label{introMink2}
h_{K+_pL}(x)^p=h_K(x)^p+h_L(x)^p,
\end{equation}
for all $x\in\R^n$ and compact convex sets $K$ and $L$ containing the origin, where the functions are the support functions of the sets involved.  (When $p=\infty$, (\ref{introMink2}) is interpreted as $h_{K+_{\infty}L}(x)=\max\{h_K(x),h_L(x)\}$, as is customary. It is possible to extend $L_p$ addition to arbitrary compact convex sets; see Section~\ref{subLp}.) Note that when $p=1$, (\ref{introMink2}) is equivalent to (\ref{introMink1}) for compact convex $K$ and $L$, so we regard Minkowski addition as the case $p=1$ of $L_p$ addition.  The rich theory that emerges is called the $L_p$-Brunn-Minkowski theory.  The second, radial addition, denoted by $\widetilde{+}$, is just Minkowski addition restricted to lines through the origin $o$: One defines $x\widetilde{+}y=x+y$ if $x$, $y$, and $o$ are collinear, $x\widetilde{+}y=o$, otherwise, and
$$K\widetilde{+}L=\{x\widetilde{+}y: x\in K, y\in L\},$$
for star sets (compact sets star-shaped at the origin) $K$ and $L$ in $\R^n$.  When combined with volume, radial addition gives rise to another substantial appendage to the classical theory, called the dual Brunn-Minkowski theory.  Indeed, in founding this theory, Lutwak \cite{L4} proved that if $K_1,\dots,K_m$ are star sets in $\R^n$, and $t_1,\dots,t_m\ge 0$, the volume ${{\mathcal{H}}^n}(t_1K_1\widetilde{+}\cdots\widetilde{+}t_mK_m)$ is a homogeneous polynomial of degree $n$ in the variables $t_1,\dots,t_m$, a perfect analog of Minkowski's theorem.  He called the coefficients of this polynomial dual mixed volumes and showed that up to constant factors, they include averages of volumes of sections of a star set by subspaces.

The significance of these two extensions of the classical Brunn-Minkowski theory cannot be overstated.   The $L_p$-Brunn-Minkowski theory has allowed many of the already potent sharp affine isoperimetric inequalities of the classical theory, as well as related analytic inequalities, to be strengthened; provided tools for attacks on major unsolved problems such as the slicing problem of Bourgain; and consolidated connections between convex geometry and information theory.  See, for example, \cite{DP}, \cite{HS1}, \cite{HS2}, \cite{LYZ1}, \cite{LYZ2}, \cite{LYZ3}, \cite{LYZ4}, \cite{LYZ5}, and \cite{LYZ6}.  The dual Brunn-Minkowski theory can count among its successes the solution of the Busemann-Petty problem in \cite{G3}, \cite{GKS}, \cite{L5}, and \cite{Z1}. It also has connections and applications to integral geometry, Minkowski geometry, the local theory of Banach spaces, and stereology; see \cite{Gar06} and the references given there.

As well as $L_p$ and radial addition, there are a few other operations familiar to many geometers:  $p$th radial addition $\widetilde{+}_p$, $-\infty\le p\neq 0\le \infty$ (which is, roughly, to radial addition $\widetilde{+}$ as $+_p$ is to Minkowski addition $+$), polar $L_p$ addition, and Blaschke addition.  These are all defined and discussed in Section~\ref{examples}.

Faced with our motivating question above---what is so special about these operations?---it is reasonable to compile a list of properties that they may enjoy.  Algebraic properties such as commutativity and associativity immediately come to mind.  Several properties can be considered that express good behavior under natural geometrical operations, such as continuity in an appropriate metric or covariance with respect to nonsingular linear transformations.  These and other properties that feature in our study are defined formally in Section~\ref{properties}.  They include two worthy of special mention: the identity property, meaning that addition of the single point at the origin leaves a set unchanged, and projection covariance, which states that the operation can take place before or after projection onto subspaces, with the same effect.

Projection covariance plays a star role in this paper.  Note that it is completely natural, being automatically satisfied when the operation takes place between compact convex sets and is both continuous with respect to the Hausdorff metric and $GL(n)$ covariant (see Lemma~\ref{CGimpliesP}).  (Here, $GL(n)$ covariant means that the operation can take place before or after the sets concerned undergo the same transformation in $GL(n)$.)  Generally, the Brunn-Minkowski theory caters readily for projections.  Similarly, intersections with subspaces are handled efficiently by the dual Brunn-Minkowski theory, and for this reason we also consider the corresponding property, section covariance. Both projections and sections are of course of prime importance in geometry, but besides, as Ball states in his MathSciNet review of the book \cite{Ko}: ``A variety of problems from several areas of mathematics, including probability theory, harmonic analysis, the geometry of numbers, and linear programming, can be couched as questions about the volumes of sections or projections of convex bodies."

At this point we can state two of our main results.

\smallskip

{\bf Theorem A}. {\em If $n\ge 2$, with three trivial exceptions, an operation between $o$-symmetric compact convex sets is continuous in the Hausdorff metric, $GL(n)$ covariant, and associative if and only if it is $L_p$ addition for some $1\le p\le\infty$.}  (See Theorem~\ref{thm1}; here, $o$-symmetric means symmetric with respect to the origin.)

\smallskip

{\bf Theorem B}. {\em If $n\ge 2$, an operation between compact convex sets is continuous in the Hausdorff metric, $GL(n)$ covariant, and has the identity property if and only if it is Minkowski addition.} (See Corollary~\ref{cor277}.)

\smallskip

Note that in both cases only one extra and quite weak property beyond continuity and $GL(n)$ covariance is required for these strong classification theorems.  Moreover, none of the properties can be omitted, as we show with various examples.

In both of these results, it suffices to assume projection covariance instead of continuity and $GL(n)$ covariance. In fact, it is a consequence of our work that an operation between compact convex sets is projection covariant if and only if it is continuous and $GL(n)$ covariant; see Corollaries~\ref{contaff} and~\ref{contaff7}.  It is remarkable that this single property should have such dramatic consequences.  Section covariance, for example, which at first sight may seem to be of approximately the same strength, is decidedly weaker.  Thus we prove:

\smallskip

{\bf Theorem C}. {\em If $n\ge 2$, with three trivial exceptions, an operation between $o$-symmetric star sets is section covariant, continuous in the radial metric, rotation covariant, homogeneous of degree 1, and associative if and only if it is $p$th radial addition for some $-\infty\le p\neq 0\le \infty$.}  (See Theorem~\ref{thm1a}.)

\smallskip

The meaning of the term radial metric can be found, along with other basic definitions and notation, in Section~\ref{prelim}. Though the list of assumptions in Theorem~C is longer than in Theorem~A, we again show by examples that none can be omitted. The analogy between Theorems A and C is yet another instance of a still mysterious, imperfect duality between projections onto subspaces and sections by subspaces, discussed at length in \cite{Gar06} and \cite{Ko}, for example.

Notice that a consequence of Theorems A and C is that operations with these properties must be commutative, even though it is associativity that is assumed. This extraordinary effect of associativity under certain circumstances actually has a long history, going back at least to Abel's pioneering work on the so-called associativity equation. A full discussion would take us too far afield; we refer the reader to the books \cite{Acz66} and \cite{Acz89} on functional equations and \cite{AFS06} and \cite{KMP} on triangular norms and copulas.  It is also a well-known phenomenon in semiring theory.  In fact, associativity is brought to bear in Theorems A and C via a result of Pearson \cite{Pea66} (see Proposition~\ref{prp2}).  Pearson's work is more suited to our purpose than the closely related and earlier articles of Acz\'{e}l \cite{Acz55} and Bohnenblust \cite{Boh40}.  In Section~\ref{background} we provide a minor service by clarifying the relationship between the three results.

Our study has more to offer than merely determining minimal lists of properties that characterize known operations. En route to Theorem A we establish a complete geometric-analytic characterization of all continuous and $GL(n)$-covariant operations (equivalently, of all projection covariant operations) between $o$-symmetric compact convex sets in $\R^n$ for $n\ge 2$, by proving in Theorem~\ref{thm12} that such operations are precisely those corresponding to $M$-addition for some compact convex set $M$ that is 1-unconditional (symmetric with respect to the coordinate axes) in $\R^2$.  This means that for all $o$-symmetric compact convex sets $K$ and $L$ in $\R^n$, the operation is defined by
\begin{equation}\label{intro1}
K\oplus_M L= \{ a x + b y : x\in K, y\in L, (a,b )\in M\}.
\end{equation}
Surprisingly, this very natural generalization of Minkowski and $L_p$ addition (which correspond here to taking $M=[-1,1]^2$ and $M$ equal to the unit ball in $l^2_{p'}$, $1/p+1/p'=1$, respectively) appears to have been introduced only quite recently, by Protasov \cite{Pro97}, inspired by work on the joint spectral radius in the theory of normed algebras.

We actually discovered Protasov's work after finding what turns out to be an equivalent version of Theorem~\ref{thm12}:  Projection covariant (equivalently, continuous and $GL(n)$ covariant) operations $*$ between $o$-symmetric compact convex sets are precisely those given by the formula
\begin{equation}\label{intro2}
h_{K*L}(x)=h_{M}\left(h_K(x),h_L(x)\right),
\end{equation}
for all $x\in \R^n$ and some $1$-unconditional compact convex set $M$ in $\R^2$.  The obviously fundamental character of $M$-addition and the equivalence of (\ref{intro1}) and (\ref{intro2}) in the given context prompted us to simultaneously generalize $M$-addition (so that the sets $K$, $L$, and $M$ are arbitrary sets in the appropriate Euclidean spaces) and initiate a thorough investigation into its properties.  In fact, we further extend the scope by considering the analogous $m$-ary operation between $m$ sets in $\R^n$, which we call $M$-combination (the natural extension of (\ref{intro1}), where $M$ is a subset of $\R^m$).  Our results are set out in Section~\ref{Maddition}.

In the transition from $o$-symmetric to general sets, we begin by observing that if an operation between arbitrary compact convex sets satisfies the hypotheses of Theorem A when restricted to the $o$-symmetric sets, then this restriction must be $L_p$ addition for some $1\le p\le\infty$ (see Corollary~\ref{corrlem01arb}).  However, Theorem A itself does not hold for operations between arbitrary compact convex sets and in this regard, the following example is instructive.  Define
\begin{equation}\label{introex}
K*L=(1/2)DK+(1/2)DL,
\end{equation}
for all compact convex sets $K$ and $L$ in $\R^n$,
where $DK$ is the difference body $K+(-K)$ of $K$.  When $K$ and $L$ are $o$-symmetric, they coincide with their reflections $-K$ and $-L$ in the origin, so $*$ is the same as Minkowski addition, but for general $K$ and $L$, it is not equal to $L_p$ addition for any $p$. The formula (\ref{introex}) leads to another new investigation, in Section~\ref{Symmetrization}, on $o$-symmetrizations, i.e., maps from the compact convex sets (or star sets) to the $o$-symmetric compact convex sets (or $o$-symmetric star sets, respectively).  We obtain the following characterization of the difference body operator.

\smallskip

{\bf Theorem D}. {\em If $n\ge 2$, an $o$-symmetrization of compact convex sets is continuous in the Hausdorff metric, $GL(n)$ covariant, and translation invariant if and only if it is of the form $\lambda DK$ for some $\lambda\ge 0$.}  (See Corollary~\ref{corthm2s}.)

\smallskip

Once again, none of the assumptions can be omitted and the version stated in Corollary~\ref{corthm2s} is only apparently more general, since an $o$-symmetrization of compact convex sets is projection covariant if and only if it is both continuous and $GL(n)$ covariant, by Lemma~\ref{diCGimpliesP} and Corollary~\ref{contaffs}.

In the process of proving Theorem~B, we show in Theorem~\ref{thm275} that projection covariant (equivalently, continuous and $GL(n)$ covariant) operations $*$ between arbitrary compact convex sets are precisely those given by the formula
\begin{equation}\label{intro27}
h_{K*L}(x)=h_{M}\left(h_K(-x),h_K(x),h_L(-x),h_L(x)\right),
\end{equation}
for all $x\in \R^n$ and some closed convex set $M$ in $\R^4$. (More work remains to be done to understand which such sets $M$ give rise via (\ref{intro27}) to valid operations, but we show that this is true if $M$ is any compact convex subset of $[0,\infty)^4$, in which case $K*L$ is just the $M$-combination of $K$, $-K$, $L$, and $-L$.)  A similar result for section covariant operations $*$ between arbitrary star sets is given in Theorem~\ref{lem01788}; however, in this case the natural analog of Theorem~B fails to hold.

In Section~\ref{Minkowski}, we set off in yet another new direction by focusing on operations between $o$-symmetric compact sets having polynomial volume.  This means that for all $r,s\ge 0$,
\begin{equation}\label{intropol}
{{\mathcal{H}}^n}(rK*sL)=\sum_{i,j=0}^{m(K,L)} a_{ij}(K,L)r^{i}s^j,
\end{equation}
for some real coefficients $a_{ij}(K,L)$, some $m(K,L)\in\N\cup\{0\}$, and all $o$-symmetric compact convex or star sets $K$ and $L$ in $\R^n$.
It was mentioned above that both Minkowski and radial addition have this property, the coefficients being mixed or dual mixed volumes, respectively, but note that here the polynomial need not be homogeneous and its degree may depend on $K$ and $L$.  In Theorem~\ref{thm5} we solve a problem known to experts by showing that $+_p$ does not have polynomial volume for $p>1$.  (This effectively implies that the full set of ``$L_p$-mixed volumes" is not available unless $p=1$.)  As a result, we obtain the following characterization of Minkowski addition as an operation between $o$-symmetric sets.

\smallskip

{\bf Theorem~E}. {\em If $n\ge 2$, with three trivial exceptions, an operation between $o$-symmetric compact convex sets is projection covariant, associative, and has polynomial volume if and only if it is Minkowski addition.}  (See Corollary~\ref{cor6}.)

\smallskip

Further examples show that none of the assumptions can be removed. An analog for operations between star sets is stated in Corollary~\ref{cor8}, but, interestingly, here $p$th radial addition is allowed, provided that $p\in \N$ and $p$ divides the dimension $n$.

This work raises many natural questions and invites extensions in several different directions, so we regard it as the first stage in an extensive program. Rather than elaborate on this in detail here, we mention only that some of the results are applied in \cite{GHW} to further the new Orlicz-Brunn-Minkowski theory (see \cite{LYZ7}, \cite{LYZ8}).

The paper is organized as follows.  After Section~\ref{prelim} giving definitions and notation, Section~\ref{background} mainly concerns functions satisfying the associativity equation and different types of homogeneity.  The properties of operations and $o$-symmetrizations with which we work are listed and defined in Section~\ref{properties}, and some basic lemmas relating them are proved. In Section~\ref{examples}, the various examples of useful operations and $o$-symmetrizations are defined and their properties are discussed, including some new observations.  Our generalization and extension of $M$-addition is presented in Section~\ref{Maddition} and several fundamental results are established that shed light on its behavior.  Section~\ref{results} focuses on operations between $o$-symmetric compact convex or star sets and Section~\ref{Symmetrization} sets out the results on $o$-symmetrizations.  The symmetry restriction is discarded in Section~\ref{arbresults}, where we deal with operations between arbitrary compact convex or star sets.  In the final Section~\ref{Minkowski}, we state our results on the polynomial volume property.

The first author acknowledges discussions with Mathieu Meyer concerning Theorem~\ref{thm5}.

\section{Definitions and preliminaries}\label{prelim}

As usual, $S^{n-1}$ denotes the unit sphere and $o$ the origin in Euclidean
$n$-space $\R^n$.  The unit ball in $\R^n$ will be denoted by $B^n$. The
standard orthonormal basis for $\R^n$ will be $\{e_1,\dots,e_n\}$.  Otherwise, we usually denote the coordinates of $x\in \R^n$ by $x_1,\dots,x_n$.  We write
$[x,y]$ for the line segment with endpoints $x$ and $y$. If $x\in \R^n\setminus\{o\}$, then $x^{\perp}$ is the $(n-1)$-dimensional subspace orthogonal to $x$ and $l_x$ is the line through $o$ containing $x$.  (Throughout the paper, the term {\em subspace} means a linear subspace.)

If $X$ is a set,  we denote by $\partial X$, $\inte X$, $\lin X$, $\conv X$, and $\dim X$ the {\it boundary}, {\it interior}, {\it linear hull}, {\it convex hull}, and {\it dimension} (that is, the dimension of the affine hull) of $X$, respectively.  If $S$ is a subspace of $\R^n$, then $X|S$ is the (orthogonal) projection of $X$ onto $S$ and $x|S$ is the projection of a vector $x\in\R^n$ onto $S$.

If $t\in\R$, then $tX=\{tx:x\in X\}$. When $t>0$, $tX$ is called a {\em dilatate} of $X$.  The set $-X=(-1)X$ is the {\em reflection} of $X$ in the origin.

A {\it body} is a compact set equal to the closure of its interior.

We write ${\mathcal{H}}^k$ for $k$-dimensional Hausdorff measure in $\R^n$,
where $k\in\{1,\dots, n\}$. The notation $dz$ will always
mean $d{\mathcal{H}}^k(z)$ for the appropriate $k=1,\dots, n$.

We follow Schneider \cite{Sch93} by writing $\kappa_n$ for the volume ${{\mathcal{H}}^n}(B^n)$ of
the unit ball in $\R^n$, so that $\kappa_n=\pi^{n/2}/\,\Gamma(1+n/2)$.

The Grassmannian of $k$-dimensional subspaces in $\R^n$ is denoted by ${\mathcal{G}}(n,k)$.

A set is {\it $o$-symmetric} if it is centrally symmetric, with center at the
origin.  We shall call a set in $\R^n$ {\em $1$-unconditional} if it is symmetric with respect to each coordinate hyperplane; this is traditional in convex geometry for compact convex sets. If $X$ is a set in $\R^n$, we denote by
\begin{equation}\label{1uncond}
\widehat{X}=\{(\alpha_1x_1,\alpha_2x_2,\dots,\alpha_n x_n): (x_1,x_2,\dots,x_n)\in X, |\alpha_i|\le 1, i=1,\dots,n\}
\end{equation}
its {\em 1-unconditional hull}.  Geometrically, this is the union of all $o$-symmetric coordinate boxes that have at least one vertex in $X$.

Let ${\mathcal K}^n$ be the class of nonempty compact convex
subsets of  $\R^n$, let ${\mathcal K}^n_s$ denote the class of $o$-symmetric members of ${\mathcal K}^n$, let ${\mathcal{K}}_o^n$ be the class of members of ${\mathcal K}^n$ containing the origin, and let ${\mathcal{K}}_{oo}^n$ be those sets in ${\mathcal K}^n$ containing the origin in their interiors. A set $K\in {\mathcal K}^n$ is called a {\em
convex body} if its interior is nonempty.

If $K$ is a nonempty closed (not necessarily bounded) convex set, then
\begin{equation}\label{suppf}
h_K(x)=\sup\{x\cdot y: y\in K\},
\end{equation}
for $x\in\R^n$, is its {\it support function}. A nonempty closed convex set is uniquely determined by its support function.   Support functions are {\em homogeneous of degree 1}, that is,
$$h_K(rx)=rh_K(x),$$
for all $x\in \R^n$ and $r\ge 0$, and are therefore often regarded as functions on $S^{n-1}$.  They are also {\em subadditive}, i.e.,
$$h_K(x+y)\le h_K(x)+h_K(y),$$
for all $x,y\in \R^n$.  Any real-valued function on $\R^n$ that is {\em sublinear}, that is, both homogeneous of degree 1 and subadditive, is the support function of a unique compact convex set.
The {\em Hausdorff distance} $\delta(K,L)$ between sets $K,L\in {\mathcal K}^n$
can be conveniently defined by
\begin{equation}\label{HD}
\delta(K,L)=\|h_K-h_L\|_{\infty},
\end{equation}
where $\|\cdot\|_{\infty}$ denotes the $L_{\infty}$ norm on $S^{n-1}$.  (For compact convex sets, this is equivalent to the alternative definition
$$\delta(K,L)=\max\{\max_{x\in K}d(x,L),\max_{x\in L}d(x,K)\}$$
that applies to arbitrary compact sets, where $d(x,E)$ denotes the distance
from the point $x$ to the set $E$.)  Proofs of these facts can be found in \cite{Sch93}.  Gruber's book \cite{Gru07} is also a good general reference for convex sets.

Let $K$ be a nonempty, closed (not necessarily bounded) convex set. If $S$ is a subspace of $\R^n$, then it is easy to show that
\begin{equation}\label{hproj}
h_{K|S}(x)=h_K(x|S),
\end{equation}
for each $x\in \R^n$.  The formula (see \cite[(0.27), p.~18]{Gar06})
\begin{equation}\label{suppchange}
h_{\phi K}(x)=h_K(\phi^t x),
\end{equation}
for $x\in \R^n$, gives the change in a support function under a transformation $\phi\in GL(n)$, where $\phi^t$ denotes the linear transformation whose standard matrix is the transpose of that of $\phi$.  (Equation (\ref{suppchange}) is proved in \cite[p.~18]{Gar06} for compact sets, but the proof is the same if $K$ is unbounded.)

The {\em polar set} of an arbitrary set $K$ in $\R^n$ is
$$
K^{\circ}=\{x\in \R^n:\,x\cdot y\le 1 {\mathrm{~for~all~}} y\in K\}.$$
See, for example, \cite[p.~99]{Web}.

Recall that $l_x$ is the line through the origin containing $x\in\R^n\setminus\{o\}$. A set $L$ in $\R^n$ is {\it star-shaped at $o$} if $L\cap l_u$ is either empty
or a (possibly degenerate) closed line segment for each $u\in
S^{n-1}$. If $L$ is star-shaped at $o$, we define its {\it radial
function} $\rho_L$ for $x\in \R^n\setminus\{o\}$ by
$$\rho_{L}(x)=\left\{\begin{array}{ll} \max\{c:cx\in L\}, &
\mbox{if $L\cap l_x\neq\emptyset$,}\\ 0, & \mbox{otherwise.}
\end{array}\right.
$$ This definition is a slight modification of \cite[(0.28)]{Gar06};
as defined here, the domain of $\rho_L$ is always $\R^n\setminus\{o\}$.  Radial functions are {\em homogeneous of degree $-$1}, that is,
$$\rho_L(rx)=r^{-1}\rho_L(x),$$
for all $x\in \R^n\setminus\{o\}$ and $r>0$, and are therefore often regarded as functions on $S^{n-1}$.

In this paper, a {\it star set} in $\R^n$ is a compact set that is star-shaped at $o$ and contains $o$.  (Other definitions have been used; see, for example
\cite[Section~0.7]{Gar06} and \cite{GV}.)  We denote the class of star sets in
$\R^n$ by ${\mathcal S}^n$ and the subclass of such sets that are $o$-symmetric by ${\mathcal S}^n_{s}$. Note that each of these two classes is closed under finite unions, countable intersections, and intersections with subspaces.
The {\em radial metric} $\widetilde{\delta}$ defines the distance between star sets $K,L\in {\mathcal S}^n$ by
$$\widetilde{\delta}(K,L)=\|\rho_K-\rho_L\|_{\infty}=\sup_{u\in S^{n-1}}|\rho_K(u)-\rho_L(u)|.$$
Observe that this differs considerably from the Hausdorff metric; for example, the radial distance between any two different $o$-symmetric line segments containing the origin and of length two is one.

Let $\mathcal{C}$ be a class of sets in $\R^n$ and let $\mathcal{C}_s$ denote the subclass of $o$-symmetric members of $\mathcal{C}$. We call a map $\di:{\mathcal{C}}\rightarrow{\mathcal{C}}_s$ an {\em $o$-symmetrization} on $\mathcal{C}$, and for $K\in \mathcal{C}$, we call $\di K$ an {\em $o$-symmetral}.

\section{Some background results}\label{background}

The following result is due to Bohnenblust \cite{Boh40}.

\begin{prp}\label{prp1}
Let $f:[0,\infty)^2\to\R$  satisfy the following conditions:

{\em (i)} $f(rs,rt)=rf(s,t)$ for $r,s,t\ge 0$;

{\em (ii)} $f(s,t)\le f(s',t')$ for $0\le s\le s'$ and $0\le t\le t'$;

{\em (iii)} $f(s,t)=f(t,s)$;

{\em (iv)} $f(0,1)=1$;

{\em (v)} $f(s,f(t,u))=f(f(s,t),u)$ for $s,t,u\ge 0$.

Then there exists $p$, $0<p\le \infty$, such that
\begin{equation}\label{2}
f(s,t)=(s^p+t^p)^{1/p},
\end{equation}
where, in the case $p=\infty$, we mean $f(s,t)=\max\{s,t\}$.
\end{prp}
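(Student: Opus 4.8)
The plan is to use homogeneity~(i) to reduce $f$ to a one-variable function, to exploit the associativity equation~(v) together with commutativity~(iii) to build a multiplicative arithmetic of iterated ``sums,'' and finally to fill in all real arguments by a monotonicity squeeze. First I would set $c=f(1,1)$; monotonicity~(ii) with (iv) gives $c=f(1,1)\ge f(1,0)=1$. If $c=1$, I claim $f$ is the maximum: for $0\le t\le 1$ monotonicity traps $f(1,t)$ between $f(1,0)=1$ and $f(1,1)=1$, so $f(1,t)=1$, while for $t\ge 1$ homogeneity and commutativity give $f(1,t)=tf(1,1/t)=t$. Hence $f(1,t)=\max\{1,t\}$ and, by homogeneity, $f(s,t)=\max\{s,t\}$, which is the case $p=\infty$.

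Assume now $c>1$. By (iii) and (v) the value $a_n:=f(1,1,\dots,1)$ of $f$ applied to $n$ ones is independent of the order and bracketing (the general associativity theorem). Writing $P_m(x)=f(x,\dots,x)$ with $m$ arguments, a short induction using binary homogeneity~(i) yields $P_m(x)=x\,a_m$. Grouping $mn$ ones into $m$ blocks of $n$ and collapsing each block to $a_n$ then gives $a_{mn}=P_m(a_n)=a_m a_n$, so $n\mapsto a_n$ is multiplicative; it is also nondecreasing, since $a_{n+1}=f(a_n,1)\ge f(a_n,0)=a_n$. A nondecreasing multiplicative function on $\N$ with $a_1=1$ must be a power $a_n=n^{\alpha}$ with $\alpha=\log_2 c>0$ (compare $a_n^k$ with the two consecutive powers of $2$ straddling $n^k$ and let $k\to\infty$). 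Set $p=1/\alpha\in(0,\infty)$, so that $a_n=n^{1/p}$.

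The key identity is $f(a_n,a_m)=a_{n+m}$, again by general associativity; that is, $f(n^{1/p},m^{1/p})=(n+m)^{1/p}$. Rescaling by homogeneity with $\lambda=m^{-1/p}$ gives $f((n/m)^{1/p},1)=(n/m+1)^{1/p}$, so the nondecreasing function $g(s):=f(s,1)$ satisfies $g(s)=(1+s^p)^{1/p}$ on the dense set $\{q^{1/p}:q\in\Q,\ q>0\}$. For arbitrary $s_0\ge 0$ I would squeeze: choosing dense arguments $s_k^-\uparrow s_0$ and $s_k^+\downarrow s_0$, monotonicity gives $g(s_k^-)\le g(s_0)\le g(s_k^+)$, and both bounds converge to $(1+s_0^p)^{1/p}$; hence $g(s)=(1+s^p)^{1/p}$ for all $s\ge 0$. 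Homogeneity and commutativity then deliver $f(s,t)=t\,g(s/t)=(s^p+t^p)^{1/p}$ for $t>0$, while the boundary values $f(s,0)=s$ and $f(0,0)=0$ follow from (i) and (iv).

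I expect the main obstacle to be the bookkeeping behind the multiplicative law $a_{mn}=a_m a_n$: it rests on the general, order- and bracket-independent form of associativity derived from (iii) and (v), combined with the factoring of scalars out of $P_m$ via homogeneity. A secondary point requiring care is that no continuity of $f$ is assumed, so the passage from the countable dense set to the full function must go entirely through the monotonicity squeeze rather than through any limiting argument on $f$ itself.
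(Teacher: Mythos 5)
Your proof is correct, but it follows a genuinely different route from the paper. The paper does not prove Proposition~\ref{prp1} directly: it states it as Bohnenblust's theorem and then, in the discussion following Proposition~\ref{prp2}, indicates how it follows from Pearson's work. That derivation first shows $f>0$ on $(0,\infty)^2$ from (i) and (iv), then establishes continuity of the restriction to $(0,\infty)^2$ via the sandwich $f(s,t)\le(1+\varepsilon/w)f(s_0,t_0)$ obtained from (i) and (ii), and finally invokes Pearson's classification \cite[Theorem~1]{Pea66}, using $f(s,0)=s$, $f(0,t)=t$ and monotonicity to eliminate every listed form except (\ref{2}). You instead give a self-contained elementary argument: general associativity and commutativity make the $n$-fold value $a_n$ well defined, homogeneity gives $P_m(x)=xa_m$ and hence the multiplicative law $a_{mn}=a_ma_n$, monotonicity forces $a_n=n^{1/p}$, and the identity $f(a_n,a_m)=a_{n+m}$ determines $g(s)=f(s,1)$ on a dense set, after which the monotone squeeze finishes without ever proving continuity of $f$. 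This is essentially Bohnenblust's original strategy; it buys independence from Pearson's (continuity-based) theorem at the cost of the combinatorial bookkeeping you identify, whereas the paper's route is shorter given Pearson as a black box and serves the paper's stated purpose of clarifying the logical relations among the Bohnenblust, Acz\'{e}l and Pearson results. The one point you should make explicit is that $f\ge 0$: condition (i) gives $f(0,0)=0$ and then (ii) gives $f(s,t)\ge 0$, and without this the iterated application of $f$ to its own outputs, on which the entire construction of $a_n$ and $P_m$ rests, would not be defined.
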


The equation in (v) is called the {\em associativity equation} and has generated a large literature; see, for example, \cite{Acz66}, \cite{Acz89}, and \cite{AFS06}.

In \cite[Theorem~4]{Fle07}, Fleming states: {\em The conclusion of Bohnenblust's theorem remains true even with condition {\em (iii)} of the hypotheses removed}.  He means to say also that (iv) should be replaced  by $f(1,0)=f(0,1)=1$ (or else the function $f(s,t)=t$ for all $s,t\ge 0$ would be a counterexample).
Fleming ascribes this result to B.~Randrianantoanina in a personal communication.  See also \cite[Theorem~9.5.3]{FleJ08}.

For $s,t\ge 0$, let

\begin{eqnarray*}
f_1(s,t)&=&\log\left(e^s+e^t-1\right);\\
f_2(s,t)&=& \begin{cases}
\min\{s,t\},& {\text{ if $s>0$ and $t>0$,}}\\
\max\{s,t\},& {\text{ if $s=0$ or $t=0$}};
\end{cases}\\
f_3(s,t)&=&t;\\
f_4(s,t)&=&\min\{s,t\};\\
f_5(s,t)&=&s+t+\sqrt{st}.\\
\end{eqnarray*}

Then one can check that for $i=1,\dots,5$, the function $f_i(s,t)$ satisfies all but the $i$th of the five hypotheses of Proposition~\ref{prp1}.

A related result is due to Acz\'{e}l \cite[Theorem~2]{Acz55}.  He shows that if $f$ is continuous and satisfies only the hypotheses (i), (ii) (but with {\em strict} inequalities), and (v) in Bohnenblust's theorem, then $f$ is given by (\ref{2}).  Incidentally, according to Acz\'{e}l, (ii) with strict inequalities is equivalent to the cancellation law ($f(s,t)=f(s,u)\Rightarrow t=u$ and $f(s,t)=f(u,t)\Rightarrow s=u$).

The following result, stronger than Acz\'{e}l's, was proved by Pearson \cite[Theorem~2]{Pea66} in a paper on topological semigroups.

\begin{prp}\label{prp2}
Let $f:[0,\infty)^2\rightarrow [0,\infty)$ be a continuous function satisfying conditions {\em (i)} and {\em (v)} of Proposition~\ref{prp1}.  Then either $f(s,t)=0$, or $f(s,t)=s$, or $f(s,t)=t$, or there exists $p$, $0<p\le \infty$, such that $f$ is given by (\ref{2}), or
$$
f(s,t)=\begin{cases}
(s^p+t^p)^{1/p},& {\text{ if $s>0$ and $t>0$,}}\\
0,& {\text{ if $s=0$ or $t=0$}},
\end{cases}
$$
for some $p<0$, or $f(s,t)=\min\{s,t\}$ (the case $p=-\infty$).
\end{prp}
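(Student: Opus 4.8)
The plan is to classify $f$ according to its two boundary values $a=f(1,0)$ and $b=f(0,1)$, and then to analyze the interior $(0,\infty)^2$ in each case, reducing the genuinely two-dimensional work to the one-variable results already at hand, namely Proposition~\ref{prp1} and its Fleming--Randrianantoanina strengthening in which hypothesis (iii) is dropped and (iv) is replaced by $f(1,0)=f(0,1)=1$. First I would record the boundary constraints. Homogeneity (i) with $r=0$ gives $f(0,0)=0$. Feeding zeros into the associativity equation (v) yields $f(f(1,0),0)=f(1,f(0,0))=f(1,0)$; since $f(f(1,0),0)=f(1,0)\,f(1,0)$ by (i), we get $a^2=a$, and symmetrically $b^2=b$, so $a,b\in\{0,1\}$. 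This leaves four cases, which already sort the candidate solutions: the identically-zero, negative-$p$, and $\min$ solutions have $(a,b)=(0,0)$; the solution $f=s$ has $(a,b)=(1,0)$; $f=t$ has $(0,1)$; and the solutions $(s^p+t^p)^{1/p}$ with $0<p\le\infty$ have $(a,b)=(1,1)$.

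The two ``projection'' cases are immediate. If $(a,b)=(1,0)$, then $f(s,0)=s$ and $f(0,t)=0$, so associativity (the instance $f(f(s,0),t)=f(s,f(0,t))$) gives
\[
f(s,t)=f(f(s,0),t)=f(s,f(0,t))=f(s,0)=s,
\]
and symmetrically $(a,b)=(0,1)$ forces $f(s,t)=t$.

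The heart of the matter is the case $(a,b)=(1,1)$, where $f(s,0)=s$ and $f(0,t)=t$, so that $0$ is a two-sided identity and $\big([0,\infty),f\big)$ is a continuous, homogeneous, associative monoid. Here I would aim to invoke the Fleming--Randrianantoanina form of Proposition~\ref{prp1}, which needs precisely the monotonicity hypothesis (ii) in addition to (i), (v), and the normalization $f(1,0)=f(0,1)=1$ that we now possess. The task therefore reduces to deriving monotonicity---and, en route, strict positivity of $f$ on $(0,\infty)^2$---from continuity, homogeneity, associativity, and the identity property. I expect this to be the \textbf{main obstacle}: continuity is exactly what rules out the discontinuous would-be solution $f_2$ (which satisfies all hypotheses but (ii)), and extracting monotonicity amounts to pinning down the order structure of a one-dimensional continuous associative operation. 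The natural route is to show the operation is \emph{inflationary}, $f(s,t)\ge\max\{s,t\}$, and cancellative away from its idempotent part, using that both the level sets and the positivity set are cones (by homogeneity) whose geometry is tightly constrained by (v); this is precisely where Pearson's topological-semigroup machinery (the Mostert--Shields structure theory of continuous semigroups on an interval) does its work. Once monotonicity is in hand, Proposition~\ref{prp1} delivers $f(s,t)=(s^p+t^p)^{1/p}$ for some $0<p\le\infty$.

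Finally, the case $(a,b)=(0,0)$ I would reduce to the case just settled by conjugating with the reciprocal map. First one establishes a dichotomy (via homogeneity, continuity, and associativity): either $f\equiv 0$, one of the listed solutions, or else $f>0$ throughout $(0,\infty)^2$; the mechanism is that the zero set is a closed cone containing the boundary, and a zero in the interior propagates through (v) and $f(0,\cdot)=0$, contradicting positivity of the diagonal $f(s,s)=f(1,1)\,s$. In the positive case, set $\tilde f(s,t)=1/f(1/s,1/t)$ on $(0,\infty)^2$. Conjugation by a bijection preserves associativity, and a short computation shows $\tilde f$ is again continuous and homogeneous of degree $1$; one then checks that $\tilde f$ extends continuously to the boundary as a type-$(1,1)$ operation, so the previous case gives $\tilde f(s,t)=(s^r+t^r)^{1/r}$ for some $0<r\le\infty$. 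Inverting yields $f(s,t)=(s^p+t^p)^{1/p}$ on $(0,\infty)^2$ with $p=-r<0$ (the value $r=\infty$ producing $f=\min\{s,t\}$), together with $f\equiv 0$ on the boundary, completing the classification. The secondary technical point to verify carefully here is the continuous boundary extension of $\tilde f$: it requires the limit $\lim_{\varepsilon\to 0^+}f(\varepsilon,1)/\varepsilon$ to exist and equal $1$, which is exactly where the hypothesized continuity of $f$ up to the boundary of $[0,\infty)^2$ must be exploited.
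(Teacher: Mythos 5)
The paper does not prove this proposition at all: it is quoted verbatim from Pearson \cite[Theorem~2]{Pea66}, and the only related argument in the text runs in the \emph{opposite} direction (the paragraph after Proposition~\ref{quasilem} deduces Bohnenblust's theorem from Pearson's Theorem~1 by restricting to $(0,\infty)^2$ and checking continuity there). So your attempt cannot be measured against an in-paper proof; it has to stand on its own. Your skeleton is sound as far as it goes: $f(0,0)=0$, the idempotency computation $a^2=a$, $b^2=b$ forcing $f(1,0),f(0,1)\in\{0,1\}$, and the complete disposal of the cases $(1,0)$ and $(0,1)$ via the associativity instance $f(f(s,0),t)=f(s,f(0,t))$ are all correct and genuinely elementary.

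The gap is that the two remaining cases, which contain essentially all of the content of the proposition, are not proved but only described. In the $(1,1)$ case you say the task is to derive monotonicity and positivity on $(0,\infty)^2$ from (i), (v), continuity, and the two-sided identity, and then you attribute that derivation to ``Pearson's topological-semigroup machinery (the Mostert--Shields structure theory).'' That is not a proof step; it is a citation of the theorem you are trying to prove. Neither the inflationary inequality $f(s,t)\ge\max\{s,t\}$ nor interior positivity is immediate: if $f(s_0,t_0)=0$ with $s_0,t_0>0$, associativity only yields relations such as $f(f(s,s_0),t_0)=s$ and $f(s_0,f(t_0,u))=u$, which constrain $f$ heavily but do not visibly produce a contradiction without the structure theory. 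Likewise, in the $(0,0)$ case your dichotomy argument assumes $f(1,1)>0$ when $f\not\equiv 0$, which is itself one of the facts to be established, and the continuous boundary extension of $\tilde f(s,t)=1/f(1/s,1/t)$ requires $\lim_{\varepsilon\to 0^+}f(\varepsilon,1)/\varepsilon=1$, a statement about the \emph{rate} of vanishing that mere continuity of $f$ at $(0,1)$ does not give; you flag it but do not prove it, and if $\tilde f$ fails to extend you are thrown back on classifying continuous homogeneous associative operations on the open quadrant, i.e.\ on Pearson's Theorem~1 again. In short: the reduction of Theorem~2 to boundary bookkeeping plus an interior classification is correctly organized, but the interior classification --- the actual theorem --- is left to the reference, exactly as the paper leaves it.
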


The functions $f_1(s,t)$ and $f_5(s,t)$ above are continuous and show that both conditions (i) and (v) are necessary in the previous proposition. Note that Proposition~\ref{prp2} also implies that any function $f:[0,\infty)^2\rightarrow[0,\infty)$ that satisfies conditions (i) and (v), but not (ii), of Proposition~\ref{prp1} cannot be continuous. Indeed, if such a function were continuous, it would have to be one of the possibilities given by Proposition~\ref{prp2}, but each of these satisfies (ii).

Proposition~\ref{prp2} implies Acz\'{e}l's result mentioned above, since if $f:[0,\infty)^2\to \R$ satisfies (i) and (ii) of Proposition~\ref{prp1}, then (i) implies that $f(0,0)=f(0\cdot 1,0\cdot 1)=0\cdot f(1,1)=0$ and from (ii) it follows that $f$ is nonnegative.  Therefore Proposition~\ref{prp2} applies, but the only strictly increasing function provided by Proposition~\ref{prp2} is the one given in \eqref{2}.

Let $f:[0,\infty)^2\to \R$ satisfy (i), (ii), (iv) (in the symmetric form $f(1,0)=f(0,1)=1$), and (v) in Proposition~\ref{prp1}.
If $s,t>0$, then by (i) and (iv), $f(s,t)=tf(s/t,1)\ge tf(0,1)=t>0$. Hence the restriction $f:(0,\infty)^2\to(0,\infty)$ is well defined and satisfies (i), (ii), and (v). We claim that this restriction is also continuous.  To see this, let $s_0,t_0>0$, define $w=\min\{s_0,t_0\}$, and choose
$\varepsilon\in[0,w)$. Then, for
$s\in  (s_0-\varepsilon,s_0+\varepsilon)$ and $t\in
(t_0-\varepsilon,t_0+\varepsilon)$, we use first (ii) and then (i) to obtain
$$
f(s,t)\le f(s_0+\varepsilon,t_0+\varepsilon)\le
f\left(s_0(1+\varepsilon/w),t_0(1+\varepsilon/w)\right)
=(1+\varepsilon/w)f(s_0,t_0).
$$
Similarly,
$f(s,t)\ge (1-\varepsilon/w)f(s_0,t_0)$ and hence
$$
|f(s,t)-f(s_0,t_0)|\le \frac{\varepsilon}{w} f(s_0,t_0).
$$
Therefore $f$ is continuous at $(s_0,t_0)$. Another result of Pearson \cite[Theorem~1]{Pea66} then applies and shows that the
restriction of $f$ to $(0,\infty)^2$ must be of one of five types of functions given there. The condition $f(0,1) = f(1,0) = 1$ and (i) imply that  $f(s,0) = s$ and $f(0,t) = t$ for all $s,t\ge 0$. This and (ii) can be used to rule out all the functions provided by \cite[Theorem~1]{Pea66} except those given by \eqref{2}. This shows that Bohnenblust's theorem is a consequence of \cite[Theorem~1]{Pea66} and also confirms Fleming's statement mentioned above.

The following proposition sheds light on the relation between various types of homogeneity.  We omit the proof, which is an easy adaptation of the argument in \cite[p.~345]{Acz89}.

\begin{prp}\label{quasilem}
Let $f:[0,\infty)^2\to[0,\infty)$ be a continuous function satisfying
$$
f(r s,r t)=g(r)f(s,t),
$$
for all $r,s,t\ge 0$ and some function $g:[0,\infty)\to[0,\infty)$. Then $f\equiv 0$ or $g(r)=r^c$ for some $c\in\R$ and all $r\ge 0$.  If, in addition, $f(0,t)=t$ for all $t> 0$ (or $f(s,0)=s$ for all $s> 0$), then $g(r)=r$ for all $r\ge 0$.
\end{prp}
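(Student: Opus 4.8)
The plan is to reduce the identity $f(rs,rt)=g(r)f(s,t)$ to the multiplicative Cauchy functional equation for $g$ and then invoke the classical description of its continuous solutions. First I would dispose of the trivial case: if $f\equiv 0$ the first alternative holds and there is nothing to prove, so I may assume there is a point $(s_0,t_0)\in[0,\infty)^2$ with $f(s_0,t_0)\neq 0$. Applying the hypothesis twice — scaling by $r_2$ and then by $r_1$, and comparing with a single scaling by $r_1r_2$ — gives
$$g(r_1)g(r_2)f(s_0,t_0)=f(r_1r_2s_0,r_1r_2t_0)=g(r_1r_2)f(s_0,t_0),$$
and dividing by $f(s_0,t_0)\neq 0$ yields the multiplicativity relation $g(r_1r_2)=g(r_1)g(r_2)$ for all $r_1,r_2\ge 0$. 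The same computation with $r=1$ shows $g(1)=1$, and, crucially, the identity $g(r)=f(rs_0,rt_0)/f(s_0,t_0)$ exhibits $g$ as the quotient of the continuous function $r\mapsto f(rs_0,rt_0)$ by a nonzero constant, so $g$ is continuous on all of $[0,\infty)$, including at the boundary point $r=0$.

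Next I would record the sign behaviour. Multiplicativity forces $g\ge 0$; and if $g$ vanished at some $r_0>0$ then $g(r)=g(r_0)g(r/r_0)=0$ for every $r>0$, contradicting $g(1)=1$, so $g>0$ on $(0,\infty)$. This permits the substitution $h(u)=\log g(e^u)$, which defines a continuous function $h\colon\R\to\R$ that is additive (its additivity is precisely the multiplicativity of $g$ transported through $\exp$). Since continuous solutions of Cauchy's additive equation are linear, $h(u)=cu$ for some $c\in\R$, whence $g(r)=r^c$ for all $r>0$. The value at $r=0$ is then pinned down by continuity via $g(0)=\lim_{r\to 0^+}r^c$, which agrees with $0^c$ in the relevant cases (and finiteness of $g(0)$ in fact forces $c\ge 0$), establishing the first assertion.

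For the final statement, suppose in addition that $f(0,t)=t$ for all $t>0$; the case $f(s,0)=s$ is entirely symmetric. Then $f\not\equiv 0$, so the first part already gives $g(r)=r^c$, but $c$ can be identified directly: putting $s=0$ in the hypothesis gives $f(0,rt)=g(r)f(0,t)$, and using $f(0,\cdot)=\mathrm{id}$ on $(0,\infty)$ for $r,t>0$ yields $rt=g(r)\,t$, so $g(r)=r$ for all $r>0$. Continuity at $0$ then extends this to $g(0)=0$, and hence $g(r)=r$ for all $r\ge 0$, as claimed.

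The only genuine subtlety — the main obstacle, such as it is in an admittedly elementary argument — lies in the continuity bookkeeping: one must first secure the continuity of $g$ (through its ratio representation at a non-vanishing point of $f$) before the continuous Cauchy solutions may be invoked, and one must treat the boundary value $r=0$ as a separate matter, since the logarithmic substitution sees only $(0,\infty)$. Beyond this, the entire proof is a routine reduction to the multiplicative Cauchy equation.
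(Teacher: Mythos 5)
Your proof is correct, and it follows essentially the route the paper has in mind: the paper omits the argument entirely, citing the analogous derivation in Acz\'{e}l's book, which is precisely your reduction of $g$ to the multiplicative Cauchy equation via the ratio $g(r)=f(rs_0,rt_0)/f(s_0,t_0)$ at a non-vanishing point of $f$. Your care with the continuity of $g$ and the boundary value at $r=0$ (including the observation that finiteness of $g(0)$ forces $c\ge 0$) supplies exactly the details the paper leaves to the reader.
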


The inequality
\begin{equation}\label{mul}
\varphi^{-1}\left(\varphi(s_1+s_2) + \varphi(t_1+t_2)\right)\le
\varphi^{-1}\left(\varphi(s_1) + \varphi(t_1)\right)+\varphi^{-1}\left(\varphi(s_2) + \varphi(t_2)\right),
\end{equation}
where $s_1,s_2,t_1,t_2\ge 0$ and $\varphi: [0,\infty)\rightarrow [0,
\infty)$ is continuous and strictly increasing with $\varphi(0)=0$, is known as {\em Mulholland's inequality}.  It was first studied by Mulholland \cite{Mul50} and represents a generalization of Minkowski's inequality to functions other than $\varphi(s)=s^p$, $p\ge 1$.  Mulholland proved that (\ref{mul}) holds if $\varphi(s)=s\exp(\psi(\log s))$ for some continuous, increasing, convex function $\psi$ on $\R$. He gave as particular examples satisfying this condition the functions $\varphi(s)=\sinh s$ and $\varphi(s)=s^{1+a}\exp(bs^c)$ for $a,b,c\ge 0$.

\section{Properties of binary operations and $o$-symmetrizations}\label{properties}

For certain classes $\mathcal{C}$, $\mathcal{D}$ of sets in $\R^n$ with $\mathcal{C}\subset \mathcal{D}$, we seek natural properties to impose on an arbitrary binary operation $*:{\mathcal{C}}^2\rightarrow {\mathcal{D}}$ that force the operation to coincide with a known one.  The investigation is restricted to the cases $\mathcal{D}\subset {\mathcal{K}}^n$ and $\mathcal{D}\subset {\mathcal{S}}^n$.  In the following list, it is assumed that $\mathcal{C}$ is an appropriate class for the property under consideration.  The properties are supposed to hold for all appropriate $K, L, M, N, K_m, L_m\in{\mathcal{C}}$ and for all $r,s\ge 0$.  Properties~10--12 do not play a major role in our investigation, but are nonetheless familiar properties that could be considered in characterizing known operations.  Moreover, since the best-known operations all satisfy these three properties, they act as extra motivation for Property~(13), which we shall see in Section~\ref{Minkowski} can distinguish Minkowski addition from $L_p$ addition for $p>1$.

\medskip

1. (Commutativity) $K*L=L*K$.

2. (Associativity) $K*(L*M)=(K*L)*M$.

3. (Homogeneity of degree $k$) $(rK)*(rL)=r^k(K*L)$.

4. (Distributivity) $(rK)*(sK)=(r+s)K$.

5.  (Identity) $K*\{o\}=K=\{o\}*K$.

6. (Continuity) $K_m\rightarrow M, L_m\rightarrow N\Rightarrow K_m*L_m\rightarrow M*N$ as $m\rightarrow\infty$.

7. ($GL(n)$ covariance) $\phi(K*L)=\phi K*\phi L$ for all $\phi\in GL(n)$.

8. (Projection covariance) $(K*L)|S=(K|S)*(L|S)$ for all $S\in {\mathcal{G}}(n,k)$, $1\le k\le n-1$.

9. (Section covariance) $(K*L)\cap S=(K\cap S)*(L\cap S)$ for all $S\in {\mathcal{G}}(n,k)$, $1\le k\le n-1$.

10. (Monotonicity) $K\subset M, L\subset N \Rightarrow K*L\subset M*N$.

11. (Cancellation) $K*M=L*M\Rightarrow K=L$ and $M*K=M*L \Rightarrow K=L$.

12. (Valuation)  $K\cup L, K\cap L\in {\mathcal{C}}\Rightarrow (K\cup L)*(K\cap L)=K*L$.

13.  (Polynomial volume) ${{\mathcal{H}}^n}(rK*sL)=\sum_{i,j=0}^{m(K,L)} a_{ij}(K,L)r^{i}s^j$, for some real coefficients $a_{ij}(K,L)$ and $m(K,L)\in\N\cup\{0\}$.

\medskip

Of course, continuity (Property~6) is with respect to some suitable metric. Throughout the paper, we shall use the Hausdorff metric when ${\mathcal{D}}\subset{\mathcal{K}}^n$ and otherwise, if  ${\mathcal{D}}\subset{\mathcal{S}}^n$, the radial metric.

In the definitions of projection and section covariance, the stated property is to hold for all $1\le k\le n-1$.  However, the proofs of our results never require $k>4$, and often $k=1$ or $k=1,2$ suffices.

Note that these properties are not independent.  For example, Property~4 implies Property~5, and the following lemma implies that for compact convex sets,  Property 8 follows from Properties~6 and~7.

\begin{lem}\label{CGimpliesP}
Let ${\mathcal{C}}\subset{\mathcal{K}}^n$ be closed under the action of $GL(n)$ and the taking of Hausdorff limits. If $*:{\mathcal{C}}^2\rightarrow {\mathcal{K}}^n$ is continuous and $GL(n)$ covariant, then it is also projection covariant.
\end{lem}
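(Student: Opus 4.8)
The plan is to realize orthogonal projection onto a subspace as a Hausdorff limit of maps in $GL(n)$, so that projection covariance emerges as a limiting case of $GL(n)$ covariance combined with continuity. Fix $S\in\mathcal{G}(n,k)$ with $1\le k\le n-1$, and for $\lambda>0$ define $\phi_\lambda\in GL(n)$ by $\phi_\lambda x=x|S+\lambda\,(x|S^{\perp})$; that is, $\phi_\lambda$ fixes $S$ pointwise and scales the orthogonal complement $S^{\perp}$ by $\lambda$. This map is self-adjoint and invertible for every $\lambda>0$, and as $\lambda\to 0^+$ it ``flattens'' any set onto $S$.

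The first step is to verify that $\phi_\lambda M\to M|S$ in the Hausdorff metric as $\lambda\to 0^+$, for every $M\in\mathcal{K}^n$. Since $\phi_\lambda$ is self-adjoint, $\phi_\lambda^t=\phi_\lambda$, so \eqref{suppchange} gives $h_{\phi_\lambda M}(x)=h_M(\phi_\lambda x)=h_M(x|S+\lambda\,(x|S^{\perp}))$ for all $x\in\R^n$. As $\lambda\to 0^+$ we have $\phi_\lambda x\to x|S$ uniformly for $x\in S^{n-1}$, and $h_M$ is (Lipschitz) continuous, so $h_{\phi_\lambda M}(x)\to h_M(x|S)=h_{M|S}(x)$ uniformly on $S^{n-1}$, where the last equality is \eqref{hproj}. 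By the definition \eqref{HD} of the Hausdorff metric, this says exactly that $\phi_\lambda M\to M|S$.

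Now apply this twice. By $GL(n)$ covariance, $\phi_\lambda(K*L)=(\phi_\lambda K)*(\phi_\lambda L)$ for all $\lambda>0$. Letting $\lambda\to 0^+$, the first step applied to $M=K*L\in\mathcal{K}^n$ shows the left-hand side converges to $(K*L)|S$. For the right-hand side, the first step applied to $K$ and $L$ gives $\phi_\lambda K\to K|S$ and $\phi_\lambda L\to L|S$; moreover $\phi_\lambda K,\phi_\lambda L\in\mathcal{C}$ because $\mathcal{C}$ is closed under the action of $GL(n)$, and the limits $K|S,L|S$ lie in $\mathcal{C}$ because $\mathcal{C}$ is closed under Hausdorff limits, so $*$ is defined on all of these. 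Continuity of $*$ then yields $(\phi_\lambda K)*(\phi_\lambda L)\to (K|S)*(L|S)$, and equating the two limits gives $(K*L)|S=(K|S)*(L|S)$, which is projection covariance. The only points needing care are the two closure hypotheses on $\mathcal{C}$, used precisely to guarantee that $K|S$ and $L|S$ lie in the domain of $*$, and the uniform convergence in the first step; beyond correctly identifying projection as the $\lambda\to 0^+$ limit of the $\phi_\lambda$, I expect no serious obstacle.
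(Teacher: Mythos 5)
Your proposal is correct and follows essentially the same route as the paper: the paper likewise approximates the orthogonal projection onto $S$ by transformations in $GL(n)$ (it takes an arbitrary sequence $\phi_m\to\phi$ where you exhibit the explicit family $\phi_\lambda$), verifies $\phi_m K\to K|S$ via the support-function identity (\ref{suppchange}) together with (\ref{hproj}) and uniform convergence on $S^{n-1}$, and then concludes by combining $GL(n)$ covariance with continuity of $*$. Your explicit attention to why the closure hypotheses on $\mathcal{C}$ are needed is a point the paper leaves implicit, but it changes nothing of substance.
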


\begin{proof}
Let $S$ be a proper subspace of $\R^n$ and let $\phi$ denote projection onto $S$. Let $(\phi_m)$, $m\in \N$, be a sequence of transformations in $GL(n)$ that converges (in the sense of convergence of $n\times n$ matrices) to $\phi$.  If $K\in {\mathcal{K}}^n$, we claim that $\phi_m K\rightarrow \phi K$ as $m\rightarrow\infty$, in the Hausdorff metric.  To see this, let $u\in S^{n-1}$.  Then, using (\ref{suppchange}) and the continuity of support functions, we have
\begin{eqnarray*}
\lim_{m\rightarrow\infty}|h_{\phi_m K}(u)-h_{\phi K}(u)|&=&
\lim_{m\rightarrow\infty}|h_{K}(\phi_m^t u)-h_{K}(\phi^t u)|=\left|h_{K}(\lim_{m\rightarrow\infty}\phi_m^t u)-h_{K}(\phi^t u)\right|\\
&=& |h_{K}(\phi^t u)-h_{K}(\phi^t u)|=0.
\end{eqnarray*}
The convergence is uniform in $u\in S^{n-1}$ by \cite[Theorem~1.8.12]{Sch93}, so
$$\lim_{m\rightarrow\infty}\|h_{\phi_m K}-h_{\phi K}\|_{\infty}=0,$$
which in view of (\ref{HD}) proves the claim.

For $K, L\in {\mathcal{C}}$, we now have
$$\phi(K*L)=\lim_{m\rightarrow\infty}\phi_m(K*L)
=\lim_{m\rightarrow\infty}(\phi_m K*\phi_m L)
=(\lim_{m\rightarrow\infty} \phi_m K)*(\lim_{m\rightarrow\infty}\phi_m L)
=\phi K*\phi L,$$
as required.
\end{proof}

We shall also consider other properties of operations that are easily stated in words, for example, rotation covariance.  Some of the above properties can be stated in different versions; for example, Property~6 is continuity in both variables separately, and one can impose instead continuity in either variable or joint continuity. Properties~5 and 11 can be stated as one-sided versions.

Various modifications of the above properties can be considered.  For example, we may impose:

3$'$. (Quasi-homogeneity) $(rK)*(rL)=g(r)(K*L)$ for some continuous function $g:[0,\infty]\rightarrow [0,\infty]$.

The following lemma relates quasi-homogeneity with homogeneity of degree 1.

\begin{lem}\label{lemthmcor9}
Suppose that $\mathcal{C}\subset \mathcal{D}$ are classes of sets in $\R^n$ such that $rB^n\in {\mathcal{C}}$ for all $r\ge 0$ and $*:{\mathcal{C}}^2\rightarrow {\mathcal{D}}$ is a quasi-homogeneous operation that satisfies either $K*\{o\}=K$ or $\{o\}*K=K$, for all $K\in {\mathcal{C}}$. Then $*$ is homogeneous of degree 1.
\end{lem}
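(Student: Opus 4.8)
The plan is to reduce the statement to showing that the continuous factor $g$ appearing in the quasi-homogeneity relation must be the identity, i.e.\ $g(r)=r$ for all $r\ge 0$; once this is known, quasi-homogeneity $(rK)*(rL)=g(r)(K*L)$ becomes exactly homogeneity of degree~$1$. The strategy is to test the relation on a single well-chosen pair of inputs, namely the centered balls $rB^n$ together with the one-point set $\{o\}$, and to exploit the one-sided identity to pin $g$ down. Since $rB^n\in\mathcal{C}$ for every $r\ge 0$ by hypothesis, taking $r=1$ and $r=0$ shows in particular that $B^n\in\mathcal{C}$ and $\{o\}=0\cdot B^n\in\mathcal{C}$, so these are admissible arguments of $*$.

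Assume first that $K*\{o\}=K$ for all $K\in\mathcal{C}$. I would apply quasi-homogeneity with $K=B^n$ and $L=\{o\}$. Since $r\{o\}=\{o\}$ and, by the identity hypothesis, $B^n*\{o\}=B^n$, this gives
\[
(rB^n)*\{o\}=g(r)\,(B^n*\{o\})=g(r)\,B^n
\]
for every $r\ge 0$. On the other hand, applying the identity hypothesis directly to the set $rB^n\in\mathcal{C}$ yields $(rB^n)*\{o\}=rB^n$. Comparing the two expressions produces the equality of dilatates $rB^n=g(r)\,B^n$ for all $r\ge 0$.

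The key step is to read $g$ off from this equality. Passing to support functions and using $h_{B^n}\equiv 1$ on $S^{n-1}$, the left-hand side has support function identically equal to $r$ and the right-hand side identically equal to $g(r)$; since a compact convex set is determined by its support function, $g(r)=r$ for every $r\ge 0$. Hence $(rK)*(rL)=r(K*L)$ for all $K,L\in\mathcal{C}$ and $r\ge 0$, which is homogeneity of degree~$1$. The remaining case $\{o\}*K=K$ is entirely symmetric: one applies quasi-homogeneity with $K=\{o\}$ and $L=B^n$, uses $\{o\}*B^n=B^n$ and $\{o\}*(rB^n)=rB^n$, and arrives at the same conclusion $rB^n=g(r)B^n$.

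I do not expect a substantive obstacle. The argument is a direct substitution, and in fact the continuity of $g$ is never used: the identity property determines the value $g(r)$ individually for each fixed $r$. The only point deserving a word of care is the passage from $rB^n=g(r)B^n$ to $g(r)=r$, where one relies on $B^n$ being a fixed nondegenerate set so that its dilatation factor is uniquely determined (equivalently, comparing support functions, or radial functions, gives the value of $g(r)$ uniformly for all $r\ge 0$, including $r=0$).
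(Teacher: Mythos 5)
Your proof is correct and follows essentially the same route as the paper's: both test quasi-homogeneity on the pair $(rB^n,\{o\})$, use $r\{o\}=\{o\}$ and the one-sided identity to obtain $rB^n=g(r)B^n$, and conclude $g(r)=r$. The only difference is cosmetic — you spell out the admissibility of $\{o\}=0\cdot B^n$ and the step from $rB^n=g(r)B^n$ to $g(r)=r$, which the paper leaves implicit.
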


\begin{proof}
Suppose that $*$ is quasi-homogeneous, for some continuous function $g:[0,\infty]\rightarrow [0,\infty]$.  Suppose that $K*\{o\}=K$ for all $K\in {\mathcal{C}}$ (the proof for the case when $\{o\}*K=K$ is similar).  Then for $r\ge 0$, we have
$$rB^n=(rB^n)\ast \{o\}=(rB^n)\ast \{ro\}=g(r)(B^n\ast \{o\})=g(r)B^n.$$
Thus $g(r)=r$ for $r\ge 0$ and so $*$ is homogeneous of degree 1.
\end{proof}

Another natural modification is:

9$'$. (Affine section covariance) $(K*L)\cap S=(K\cap S)*(L\cap S)$ for all $S\in {\mathcal{A}}(n,k)$, $1\le k\le n-1$, where ${\mathcal{A}}(n,k)$ denotes the set of $k$-dimensional planes in $\R^n$.

However, we shall not find use for Property~9$'$ in this paper.

Analogous properties will be considered of $o$-symmetrizations $\di:{\mathcal{C}}\rightarrow{\mathcal{C}}_s$, for example:

1. (Homogeneity of degree $k$) $\di(rK)=r^k\di K$.

2. (Identity) $\di K=K$ if $K\in {\mathcal{C}}_s$.

3. (Continuity) $K_m\rightarrow K\Rightarrow \di K_m\rightarrow \di K$ as $m\rightarrow\infty$.

4. ($GL(n)$ covariance) $\phi(\di K)=\di(\phi K)$ for all $\phi\in GL(n)$.

5. (Projection covariance) $(\di K)|S=\di(K|S)$ for all $S\in {\mathcal{G}}(n,k)$, $1\le k\le n-1$.

6. (Section covariance) $(\di K)\cap S=\di(K\cap S)$ for all $S\in {\mathcal{G}}(n,k)$, $1\le k\le n-1$.

7. (Monotonicity) $K\subset L \Rightarrow \di K\subset \di L$.

We shall not find use for Property (7) in this paper.

The pertinent remarks above concerning the list of properties of binary operations apply also to these properties of $o$-symmetrizations.  In particular, the following lemma holds.

\begin{lem}\label{diCGimpliesP}
If $\di:{\mathcal{K}}^n\rightarrow {\mathcal{K}}_s^n$ is continuous and $GL(n)$ covariant, then it is also projection covariant.
\end{lem}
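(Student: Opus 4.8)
The plan is to mirror the proof of Lemma~\ref{CGimpliesP}, exploiting the fact that orthogonal projection onto a subspace can be approximated, in the Hausdorff metric, by the action of a sequence of transformations in $GL(n)$. The single-set setting here is in fact simpler than the binary one, since only one application of the continuity and $GL(n)$ covariance of $\di$ is needed.

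First I would fix a proper subspace $S$ of $\R^n$ and let $\phi$ denote orthogonal projection onto $S$, so that $\phi K=K|S$ for each $K\in\K^n$. Next I would choose a sequence $(\phi_m)$, $m\in\N$, in $GL(n)$ converging to $\phi$ in the sense of convergence of $n\times n$ matrices. The central geometric input is the claim that $\phi_m K\to\phi K$ in the Hausdorff metric for every $K\in\K^n$. This is precisely the convergence established within the proof of Lemma~\ref{CGimpliesP}, using the transformation formula (\ref{suppchange}) for support functions, the continuity of support functions, and the uniform convergence on $S^{n-1}$ guaranteed by \cite[Theorem~1.8.12]{Sch93} together with the identification (\ref{HD}) of the Hausdorff distance. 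I would invoke this claim directly rather than reprove it.

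With the claim in hand, the conclusion follows by a short chain of limits. For any $K\in\K^n$, the $GL(n)$ covariance of $\di$ gives $\di(\phi_m K)=\phi_m(\di K)$ for each $m$. Letting $m\to\infty$, the left-hand side converges to $\di(\phi K)$ by the continuity of $\di$ (since $\phi_m K\to\phi K$), while the right-hand side converges to $\phi(\di K)$ by applying the same approximation claim to the fixed compact convex set $\di K\in\K^n_s$. Hence $\di(\phi K)=\phi(\di K)$, that is, $\di(K|S)=(\di K)|S$, which is exactly projection covariance for $\di$.

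Since the heavy lifting---the Hausdorff approximation of a projection by elements of $GL(n)$---is already carried out in Lemma~\ref{CGimpliesP}, I do not expect a genuine obstacle here. The only point requiring care is that the two one-sided limits are justified by \emph{different} properties: the convergence $\di(\phi_m K)\to\di(\phi K)$ rests on the continuity of the symmetrization $\di$, whereas $\phi_m(\di K)\to\phi(\di K)$ is the purely linear-algebraic approximation claim applied to the already $o$-symmetric set $\di K$, and has nothing to do with $\di$ itself.
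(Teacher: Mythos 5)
Your argument is correct and is essentially identical to the paper's proof: both approximate the projection $\phi$ by a sequence $(\phi_m)$ in $GL(n)$, reuse the Hausdorff convergence $\phi_m K\to\phi K$ established in the proof of Lemma~\ref{CGimpliesP}, and pass to the limit in $\di(\phi_m K)=\phi_m(\di K)$ using continuity of $\di$ on one side and the same approximation claim applied to $\di K$ on the other. No further comment is needed.
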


\begin{proof}
Let $S$ be a proper subspace of $\R^n$ and let $\phi$ denote projection onto $S$. Let $(\phi_m)$, $m\in \N$, be a sequence of transformations in $GL(n)$ that converges (in the sense of convergence of $n\times n$ matrices) to $\phi$.  If $K\in {\mathcal{K}}^n$, then as in the proof of Lemma~\ref{CGimpliesP}, we have $\phi_m K\rightarrow \phi K$ as $m\rightarrow\infty$, in the Hausdorff metric.  Therefore
$$\phi(\di K)=\lim_{m\rightarrow\infty}\phi_m(\di K)=\lim_{m\rightarrow\infty}\di(\phi_m K)=\di\left(\lim_{m\rightarrow\infty}(\phi_m K)\right)=\di(\phi K).$$
\end{proof}

\section{Examples of binary operations and $o$-symmetrizations}\label{examples}

The properties of known additions in this section are those listed in Section~\ref{properties} for operations $*:{\mathcal{C}}^2\rightarrow {\mathcal{D}}$, where $\mathcal{C}$, $\mathcal{D}$ are classes of sets in $\R^n$ with $\mathcal{C}\subset \mathcal{D}$.  It will always be assumed that $\mathcal{D}\subset {\mathcal K}^n$ or $\mathcal{D}\subset {\mathcal S}^n$, as appropriate, and that $\mathcal{C}$ is an appropriate subclass of $\mathcal{D}$.

\subsection{Minkowski addition}

The vector or Minkowski sum of sets $X$ and $Y$ in $\R^n$ is defined by
$$
X+Y=\{x+y: x\in X, y\in Y\}.
$$
When $K,L\in {\mathcal K}^n$, $K+L$ can be equivalently defined as the compact convex set such that
\begin{equation}\label{supph}
h_{K+L}(u)=h_K(u)+h_L(u),
\end{equation}
for all $u\in S^{n-1}$.  Minkowski addition satisfies all the 13 properties listed in Section~\ref{properties} with $\mathcal{C}={\mathcal K}^n$, except Property~(9), section covariance. (Here, and throughout this section, the homogeneity Property~(3) is with $k=1$.) Some of these are a direct consequence of (\ref{supph}) and the properties of the support function. For Properties~(12) and (13), see \cite[Lemma~3.1.1]{Sch93} and \cite[Theorem~5.1.6]{Sch93}, respectively.

\subsection{$L_p$ addition}\label{subLp}

Let $1< p\le \infty$. Firey \cite{Fir61}, \cite{Fir62} introduced the notion of what is now called the {\em $L_p$ sum} of $K,L\in {\mathcal K}^n_o$.  (The operation has also been called Firey addition, as in \cite[Section~24.6]{BZ}.)  This is the compact convex set $K+_pL$ defined by
$$
h_{K+_pL}(u)^p=h_K(u)^p+h_L(u)^p,
$$
for $u\in S^{n-1}$ and $p<\infty$, and by
$$
h_{K+_{\infty}L}(u)=\max\{h_K(u),h_L(u)\},
$$
for all $u\in S^{n-1}$. Note that $K+_{\infty}L=\conv(K\cup L)$.  The operation of $L_p$ addition satisfies the properties listed in Section~\ref{properties} with $\mathcal{C}={\mathcal K}^n_o$, except Property~(4), distributivity, and Property~(9), section covariance; Property (13) is discussed in Section~\ref{Minkowski}.

Lutwak, Yang, and Zhang \cite{LYZ} extended the previous definition for $1<p<\infty$, as follows.  Let $K$ and $L$ be arbitrary subsets of $\R^n$ and define
\begin{equation}\label{LYZLp}
K+_pL=\left\{(1-t)^{1/p'}x+t^{1/p'}y: x\in K, y\in L, 0\le t\le 1\right\},
\end{equation}
where $p'$ is the H\"{o}lder conjugate of $p$, i.e., $1/p+1/p'=1$.  In \cite{LYZ} it is shown that when $K,L\in {\mathcal K}^n_o$, this definition agrees with the previous one.

Equation (\ref{LYZLp}) makes sense for arbitrary $K, L\in {\mathcal K}^n$. However, the right-hand side of (\ref{LYZLp}) is not in general convex.  To see this, take $K=\{x\}$ and $L=\{y\}$, where $x$ and $y$ do not lie on the same line through the origin.  Then $K+_pL$ is a nonlinear curve that approaches $[x,x+y]\cup [y,x+y]$ as $p\rightarrow 1$ and $[x,y]$ as $p\rightarrow\infty$.  An important exception is given in the following theorem.

\begin{thm}\label{thmLpS}
For each $K\in {\mathcal K}^n$, the set $K+_p(-K)$ defined by (\ref{LYZLp}) with $L=-K$ is convex and hence belongs to ${\mathcal K}^n$.
\end{thm}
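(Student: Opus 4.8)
The plan is to verify convexity directly from the definition (\ref{LYZLp}). Throughout, write $q=p'\in(1,\infty)$, so that
$$K+_p(-K)=\left\{(1-t)^{1/q}x-t^{1/q}y: x,y\in K,\ 0\le t\le 1\right\}.$$
Given two points $z_0=(1-t_0)^{1/q}x_0-t_0^{1/q}y_0$ and $z_1=(1-t_1)^{1/q}x_1-t_1^{1/q}y_1$ of this set and $\lambda\in[0,1]$, I would first regroup the convex combination $w=(1-\lambda)z_0+\lambda z_1$ by collecting the $x$-terms and the $y$-terms separately. Setting $A=(1-\lambda)(1-t_0)^{1/q}+\lambda(1-t_1)^{1/q}$ and $B=(1-\lambda)t_0^{1/q}+\lambda t_1^{1/q}$, the $x$-terms form $A\bar x$ and the $y$-terms form $B\bar y$ for suitable $\bar x,\bar y\in K$ (convex combinations of $x_0,x_1$ and of $y_0,y_1$, which lie in $K$ by convexity; one takes any point of $K$ if the relevant coefficient sum vanishes), so that $w=A\bar x-B\bar y$. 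The crucial observation in this reduction is that $(A,B)=(1-\lambda)u+\lambda v$, where $u=((1-t_0)^{1/q},t_0^{1/q})$ and $v=((1-t_1)^{1/q},t_1^{1/q})$ are unit vectors in $\ell^q_2$; by Minkowski's inequality, $\|(A,B)\|_q\le 1$, i.e.\ $A^q+B^q\le 1$.

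This reduces the theorem to the following claim: if $A,B\ge 0$ satisfy $A^q+B^q\le 1$ and $\bar x,\bar y\in K$, then $A\bar x-B\bar y\in K+_p(-K)$. To prove it, note first that $|A-B|\le\max\{A,B\}\le(A^q+B^q)^{1/q}\le 1$, so by the intermediate value theorem applied to the continuous function $t\mapsto(1-t)^{1/q}-t^{1/q}$ on $[0,1]$ (which runs from $1$ to $-1$), there is a $t\in[0,1]$ with $(1-t)^{1/q}-t^{1/q}=A-B$. Write $c=(1-t)^{1/q}$ and $d=t^{1/q}$, so $c-d=A-B$ and $c^q+d^q=1$. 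The idea is then to choose $x$ and $y$ on the segment $[\bar x,\bar y]\subset K$: put $x=\alpha\bar x+(1-\alpha)\bar y$ and $y=\beta\bar x+(1-\beta)\bar y$ with $\alpha,\beta\in[0,1]$ to be determined, and observe that
$$(1-t)^{1/q}x-t^{1/q}y=(c\alpha-d\beta)\bar x+\bigl(c(1-\alpha)-d(1-\beta)\bigr)\bar y.$$
Matching coefficients requires $c\alpha-d\beta=A$, and the companion equation $c(1-\alpha)-d(1-\beta)=-B$ then holds automatically because $c-d=A-B$.

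It therefore remains only to solve $c\alpha-d\beta=A$ with $\alpha,\beta\in[0,1]$. Since $c\alpha-d\beta$ ranges over $[-d,c]$ as $(\alpha,\beta)$ ranges over $[0,1]^2$, and $A\ge 0\ge -d$, this is solvable precisely when $A\le c$. I expect this last inequality to be the heart of the argument, but it follows from the constraint $A^q+B^q\le 1$: if one had $A>c$, then from $c-d=A-B$ one would get $B=A-c+d>d$, whence $A^q+B^q>c^q+d^q=1$ (using $A>c\ge 0$ and $B>d\ge 0$ with $q>0$), contradicting $A^q+B^q\le 1$. Hence $A\le c$, the required $\alpha,\beta$ exist, and the resulting $x,y\in K$ together with $t\in[0,1]$ exhibit $w$ as an element of $K+_p(-K)$. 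This completes the proof, the only genuine subtlety being how to absorb the slack in the inequality $A^q+B^q\le 1$, which is handled by allowing $x$ and $y$ to slide along the segment joining $\bar x$ and $\bar y$ rather than taking $x=\bar x$ and $y=\bar y$.
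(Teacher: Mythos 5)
Your proof is correct, but it takes a genuinely different route from the paper's. The paper does not verify convexity directly: it first proves the set identity $K+_p(-K)=K^{\dagger}+_p(-K^{\dagger})$, where $K^{\dagger}=\conv\{K,o\}$, and then invokes the external result \cite[Lemma~1.1]{LYZ} that $L+_pL'\in{\mathcal K}^n_o$ whenever $L,L'\in{\mathcal K}^n_o$. Its only real work is the reduction step, carried out by the ansatz $y^{(1)}=(1-\theta)x^{(1)}+\theta x^{(2)}$, $y^{(2)}=x^{(2)}$, which leads to a small system solved by monotonicity of $s\mapsto(1-s)^{1/p'}-s^{1/p'}$. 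You instead prove convexity from scratch: you group a convex combination of two generic points into $A\bar x-B\bar y$, observe via Minkowski's inequality in $\ell^{p'}_2$ that $A^{p'}+B^{p'}\le 1$, and then absorb the slack by sliding both $x$ and $y$ along the segment $[\bar x,\bar y]$, with the intermediate value theorem supplying the parameter $t$ and the inequality $A\le(1-t)^{1/p'}$ guaranteeing solvability. What your approach buys is self-containment — in effect you reprove the relevant case of the LYZ lemma rather than citing it — and the clean isolation of the one genuinely nontrivial claim ($A^{p'}+B^{p'}\le 1$ implies $A\bar x-B\bar y$ lies in the set), which is arguably more transparent than the paper's two-stage reduction. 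What the paper's approach buys is brevity, by outsourcing the convexity of $L+_p(-L)$ for $o\in L$ to a known result; its slack-absorption trick (sliding $y^{(1)}$ toward $x^{(2)}$) is a close cousin of yours, but applied only to the reduction identity rather than to convexity itself. All the steps in your argument check out, including the edge cases where $A$ or $B$ vanishes and the strict-monotonicity argument ruling out $A>(1-t)^{1/p'}$.
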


\begin{proof}
Let $K\in {\mathcal K}^n$ and let $K^{\dagger}=\conv\{K,o\}$.  We claim that $K+_p(-K)=K^{\dagger}+_p (-K^{\dagger})$.  Once this is proved, the result follows immediately from \cite[Lemma~1.1]{LYZ}, which states that $K+_p L\in {\mathcal K}^n_o$ whenever $K,L\in {\mathcal K}^n_o$.

To prove the claim, it suffices to show that if $x^{(1)},x^{(2)}\in K$ and
$0\le \alpha, \beta, t\le 1$, then there are $y^{(1)},y^{(2)}\in K$ and
$0\le s\le 1$ such that
$$(1-t)^{1/p'}\alpha x^{(1)}-t^{1/p'}\beta
x^{(2)}=(1-s)^{1/p'}y^{(1)}-s^{1/p'}y^{(2)}.$$
Indeed, the inclusion $K^{\dagger}+_p (-K^{\dagger})\subset K+_p(-K)$
then follows and the reverse inclusion is obvious.  We shall seek a solution to the previous equation with $y^{(1)}=(1-\theta)x^{(1)}+\theta x^{(2)}$ and $y^{(2)}=x^{(2)}$, where $0\le \theta\le 1$.  Substituting, we see that it suffices to solve the equations
\begin{equation}\label{first}
(1-s)^{1/p'}(1-\theta)=(1-t)^{1/p'}\alpha
\end{equation}
and
\begin{equation}\label{second}
(1-s)^{1/p'}\theta-s^{1/p'}=-t^{1/p'}\beta
\end{equation}
for $s$ and $\theta$.  Adding (\ref{first}) and (\ref{second}), we obtain
\begin{equation}\label{ff}
f(s)=(1-s)^{1/p'}-s^{1/p'}=(1-t)^{1/p'}\alpha-t^{1/p'}\beta=\gamma,
\end{equation}
say, where $-1\le \gamma\le 1$.  Since the function $f(s)$ is strictly decreasing for $0\le s\le 1$ with $f(0)=1$ and $f(1)=-1$, (\ref{ff}) has a solution for $s$.  Now (\ref{first}) and (\ref{second}) give
$$\theta=\frac{(1-s)^{1/p'}-(1-t)^{1/p'}\alpha}
{(1-s)^{1/p'}}=\frac{s^{1/p'}-t^{1/p'}\beta}{(1-s)^{1/p'}}.$$
In view of the previous equation, it is enough to show that either of the two numerators are nonnegative.  Suppose, on the contrary, that $t\beta^{p'}>s$ and $(1-t)\alpha^{p'}>1-s$.  These two inequalities imply that $t\beta^{p'}>1-(1-t)\alpha^{p'}$ and hence $t(\beta^{p'}-\alpha^{p'})>1-\alpha^{p'}$.  Clearly $\beta\le \alpha$ is not possible, but if $\beta>\alpha$, then
$$t>\frac{1-\alpha^{p'}}{\beta^{p'}-\alpha^{p'}}\ge 1,$$
a contradiction.
\end{proof}

Noting that $h_{K^{\dagger}}(u)=\max\{h_K(u),0\}$ for $u\in S^{n-1}$, the previous result suggests a reasonable definition of the $L_p$ sum of $K,L\in {\mathcal K}^n$ for $1\le p\le\infty$, namely via the equation
\begin{equation}\label{extendedLp}
h_{K+_pL}(u)^p=\max\{h_K(u),0\}^p+\max\{h_L(u),0\}^p,
\end{equation}
for $u\in S^{n-1}$.  Since $K=K^{\dagger}$ when $K\in {\mathcal K}^n_o$, this definition extends the original one.  It can be checked that the extended operation retains all the properties listed above for the original $L_p$ addition, except Property~(5), the identity property, which holds if and only if $o\in K$. We shall return to this extension of $L_p$ addition in Example~\ref{LPex}.

\subsection{$M$-addition}\label{Msub}

Let $M$ be an arbitrary subset of $\R^2$ and define the {\em $M$-sum} $K\oplus_M L$ of arbitrary sets $K$ and $L$ in $\R^n$ by
\begin{equation}\label{Mdef}
K\oplus_M L= \{ a x + b y : x\in K, y\in L, (a,b )\in M\} .
\end{equation}

It appears that $M$-addition was first introduced, for centrally symmetric compact convex sets $K$ and $L$ and a $1$-unconditional convex body $M$ in $\R^2$, by Protasov \cite{Pro97}, motivated by work on the joint spectral radius in the theory of normed algebras.

Note that if $M=\{(1,1)\}$, then $\oplus_M$ is ordinary vector or Minkowski addition, and if
\begin{equation}\label{Mmp}
M=\left\{(a,b)\in [0,1]^2:a^{p'}+b^{p'}=1\right\}=\left\{\left((1-t)^{1/p'},t^{1/p'}\right): 0\le t\le 1\right\},
\end{equation}
where $p>1$ and $1/p+1/p'=1$, then $\oplus_M$ is $L_p$ addition as defined in \cite{LYZ}.
The limiting case $p=1$, $p'=\infty$ gives $M=[e_1,e_1+e_2]\cup [e_2,e_1+e_2]$, and the case $p=\infty$, $p'=1$ corresponds to $M=[e_1,e_2]$ and
$$K\oplus_M L=\{(1-t)x+ty:x\in K,y\in L, 0\le t\le 1\}=\conv(K\cup L).$$
For a choice of $M$ leading to a different extension of $L_p$ addition, see Example~\ref{LPex}.

If $M$ is a compact set in $\R^2$, it follows from the definition (\ref{Mdef}) that $\oplus_M$ maps $\left({\mathcal{C}}^n\right)^2$ to ${\mathcal{C}}^n$, where ${\mathcal{C}}^n$ denotes the class of compact sets in $\R^n$.   It is easy to see that in this case $\oplus_M$ is monotonic, continuous in the Hausdorff metric, and $GL(n)$ covariant (and hence projection covariant, by Lemma~\ref{CGimpliesP}, and homogeneous of degree 1). Protasov \cite{Pro97} proved that if $M$ is a compact convex subset in $\R^2$ that is $1$-unconditional, then $\oplus_M:\left({\mathcal{K}}^n_s\right)^2\rightarrow {\mathcal{K}}^n_s$.  (This proof is omitted in the English translation.)  This and other results on $M$-addition can be found in Section~\ref{Maddition}.

\subsection{Radial and $p$th radial addition}\label{rpadd}

The {\em radial sum} $K\widetilde{+}L$ of $K, L\in {\mathcal S}^n$ can be defined either by
$$
K\widetilde{+}L=\{x\widetilde{+}y: x\in K, y\in L\},$$
where
$$
x\widetilde{+}y=\left\{\begin{array}{ll} x+y &
\mbox{if $x$, $y$, and $o$ are collinear,}\\
o & \mbox{otherwise,}
\end{array}\right.
$$
or by
$$\rho_{K\widetilde{+}L}=\rho_K+\rho_L.
$$
Radial addition satisfies all the 13 properties listed in Section~\ref{properties} with $\mathcal{C}={\mathcal S}^n$, except Property~(8), projection covariance. Many of these are a direct consequence of the previous equation and the properties of the radial function; for example, Property~(7) follows from \cite[(0.34), p.~20]{Gar06}.

More generally, for any $p>0$, we can define the {\em $p$th radial sum} $K\widetilde{+}_pL$ of $K, L\in {\mathcal S}^n$ by
\begin{equation}\label{radialp}
\rho_{K\widetilde{+}_pL}(x)^p=\rho_K(x)^p+\rho_L(x)^p,
\end{equation}
for $x\in \R^n\setminus\{o\}$. If $p<0$, we define $\rho_{K\widetilde{+}_pL}(x)$ as in (\ref{radialp}) when $\rho_K(x),\rho_L(x)>0$, and by $\rho_{K\widetilde{+}_pL}(x)=0$ otherwise.  Of course we can extend these definitions in a consistent fashion by setting $K\widetilde{+}_{-\infty}L=K\cap L$ and $K\widetilde{+}_{\infty}L=K\cup L$.
The operation of $p$th radial addition satisfies Properties (1)--(3), (6), (7), (9), (10), and (12) of Section~\ref{properties} with $\mathcal{C}={\mathcal S}^n$, and if $p>0$, Properties~(5) and (11) also hold.  Property (13) is discussed in Section~\ref{Minkowski}.

Hints as to the origins of these radial additions can be found in \cite[Note~6.1]{Gar06}.

\subsection{Polar $L_p$ addition}\label{polarlpadd}

In \cite{Fir61}, Firey defined the $p$-sum of $K,L\in {\mathcal K}^n_{oo}$ for $-\infty\le p\le -1$ to be $\left(K^o+_{-p} L^o\right)^o$. We shall call this operation from $({\mathcal K}^n_{oo})^2$ to ${\mathcal K}^n_{oo}$ {\em polar $L_p$ addition}, although in view of the relation $h_{K^o}=1/\rho_K$ (see \cite[(0.36), p.~20]{Gar06}), the polar $L_p$ sum of $K$ and $L$ is just $K\widetilde{+}_p L$.   (In \cite[Section~24.6]{BZ}, it is called Firey addition and the special case when $p=-1$ is sometimes called inverse addition, as in \cite{Roc70}.)  Since ${\mathcal K}^n_{oo}\subset {\mathcal S}^n$, polar $L_p$ addition satisfies Properties~(1)--(3), (7), and (9)--(12) of Section~\ref{properties} with $\mathcal{C}={\mathcal K}^n_{oo}$.  It also satisfies Property~(6), continuity, with respect to the Hausdorff metric, but this is lost when lower-dimensional sets are involved.

\begin{thm}\label{thmF}
If $n\ge 2$ and $-\infty\le p\le -1$, polar $L_p$ addition cannot be extended to a continuous operation $*:\left({\mathcal{K}}^n_s\right)^2\rightarrow {\mathcal{K}}^n$.
\end{thm}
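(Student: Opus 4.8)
The plan is to rule out any continuous extension by exhibiting, at a single degenerate pair of sets, two sequences of bodies converging to the same limits but whose polar $L_p$ sums converge to different sets. Since polar $L_p$ addition coincides with the $p$th radial sum $\widetilde{+}_p$, any continuous operation $*:(\mathcal{K}^n_s)^2\to\mathcal{K}^n$ extending it must satisfy $K_m*L_m=K_m\widetilde{+}_pL_m$ whenever $K_m,L_m$ are $o$-symmetric members of $\mathcal{K}^n_{oo}$. Thus if two such sequences tend in the Hausdorff metric to the same pair in $\mathcal{K}^n_s$ while their sums tend to different limits, no single value of $*$ at that pair is compatible with continuity.

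For the target pair I would take the degenerate segment $K=[-e_1,e_1]$ together with the fixed body $L=B^n$ (so $\rho_L\equiv 1$), and approximate $K$ in two ways by $o$-symmetric bodies in $\mathcal{K}^n_{oo}$: thin boxes $K_m^{(1)}=[-1,1]\times[-1/m,1/m]^{n-1}$, and thin tilted parallelogram-prisms such as $K_m^{(2)}=\conv\{\pm(e_1+\tfrac1m e_2),\ \pm\tfrac1m e_2\}\times[-\tfrac1m,\tfrac1m]^{n-2}$. Both converge to $[-e_1,e_1]$. The crucial point is that the radial function in the \emph{fixed} direction $e_1$ behaves differently for the two families: a direct computation gives $\rho_{K_m^{(1)}}(e_1)=1$, while $\rho_{K_m^{(2)}}(e_1)=\tfrac12$, because the ray through $e_1$ leaves the tilted parallelogram through the interior of an edge rather than at a vertex.

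Next I would evaluate the sums along the $e_1$-axis. Since $\rho_{K_m^{(i)}}(e_1)$ and $\rho_L(e_1)$ are positive there, the defining equation (\ref{radialp}) applies (and $\min$ when $p=-\infty$), giving $\rho_{K_m^{(i)}\widetilde{+}_pL}(e_1)=(\rho_{K_m^{(i)}}(e_1)^p+1)^{1/p}$ for finite $p\le -1$. This tends to $c_1=2^{1/p}$ for the boxes and to $c_2=(2^{-p}+1)^{1/p}$ for the parallelograms, while for $p=-\infty$ the limits are $c_1=1$ and $c_2=\tfrac12$; in every case $c_1\ne c_2$, since the $p$th powers are $2$ versus $2^{-p}+1\ge 3$. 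Because $\widetilde{+}_p$ with $p\le -1$ satisfies $\rho_{K\widetilde{+}_pL}\le\rho_K$, each sum lies in $K_m^{(i)}$, so it is an $o$-symmetric convex subset of a set converging to $[-e_1,e_1]$, with $e_1$-extent exactly $\rho_{K_m^{(i)}\widetilde{+}_pL}(e_1)$. Hence the two sequences of sums converge, at least along subsequences, to the distinct segments $[-c_1e_1,c_1e_1]$ and $[-c_2e_1,c_2e_1]$, so no continuous extension can be defined at $([-e_1,e_1],B^n)$. The same pair of sequences handles all $-\infty\le p\le -1$ and all $n\ge 2$ at once.

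The main obstacle, and the conceptual heart of the argument, is the (initially counterintuitive) fact that the radial function in a fixed direction is \emph{not} continuous under Hausdorff convergence to a lower-dimensional set: a thin convex body can converge to the segment $[-e_1,e_1]$ while its $e_1$-ray is truncated well before the endpoint. Producing the tilted parallelogram that realizes a prescribed truncated value $\rho(e_1)<1$ while still converging to $[-e_1,e_1]$ is where the real work lies; once that is in hand, evaluating the two additions on the axis and comparing the endpoint extents is routine.
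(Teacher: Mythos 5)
There is a genuine gap at the final step, where you identify the Hausdorff limits of the two sequences of sums. Write $S_m^{(i)}=K_m^{(i)}\widetilde{+}_pB^n$. You correctly observe that any Hausdorff limit of $S_m^{(i)}$ is an $o$-symmetric subsegment $[-ce_1,ce_1]$ of $[-e_1,e_1]$, but the value of $c$ is $\lim_m h_{S_m^{(i)}}(e_1)=\lim_m\sup_u\rho_{S_m^{(i)}}(u)\,(u\cdot e_1)$, \emph{not} $\lim_m\rho_{S_m^{(i)}}(e_1)$. For the tilted prisms this makes all the difference: take $u_m$ to be the unit vector in the direction of the vertex $e_1+\tfrac1m e_2$. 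Then $\rho_{K_m^{(2)}}(u_m)=(1+1/m^2)^{1/2}\to 1$, so
$$\rho_{S_m^{(2)}}(u_m)=\left(\rho_{K_m^{(2)}}(u_m)^p+1\right)^{1/p}\to 2^{1/p},$$
and the boundary point $\rho_{S_m^{(2)}}(u_m)u_m\in S_m^{(2)}$ converges to $2^{1/p}e_1$. Hence every Hausdorff limit of $S_m^{(2)}$ contains $2^{1/p}e_1$; combined with the upper bound $h_{S_m^{(i)}}(e_1)\le 2^{1/p}$ (which follows from $\rho_{K_m^{(i)}}(u)\le (u\cdot e_1)^{-1}h_{K_m^{(i)}}(e_1)=(u\cdot e_1)^{-1}$ and $p<0$), both sequences of sums converge to the \emph{same} segment $[-2^{1/p}e_1,2^{1/p}e_1]$, and no contradiction with continuity is obtained. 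The same collapse occurs for $p=-\infty$, where $S_m^{(i)}=K_m^{(i)}\cap B^n\to[-e_1,e_1]$ for both $i$. In short, the very phenomenon you call the conceptual heart of the argument --- discontinuity of the radial function in a fixed direction under Hausdorff convergence to a lower-dimensional set --- applies equally to the sums $S_m^{(2)}$ and erases the distinction you are trying to create: the Hausdorff limit ``fills in'' the truncated ray through $e_1$ using nearby directions.

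The underlying reason the construction cannot be repaired with $L=B^n$ is that $B^n$ is full-dimensional and rotation invariant, so the polar sum with $B^n$ is too stable near a degenerate $K$. The paper instead compares two approximations of the \emph{pair} $([-e_1,e_1],[-e_1,e_1])$: aligned thin cylinders $K_m*K_m\to 2^{1/p}[-e_1,e_1]$, versus $K_m*\phi_\alpha K_m$ with a small rotation $\phi_\alpha$, for which the polar bodies are double cones whose convex hull blows up to $\R^n$, forcing $K_m*\phi_\alpha K_m\to\{o\}$; letting $\alpha\to 0$ then contradicts continuity. Some mechanism of this kind --- where degeneracy of \emph{both} summands makes the polar bodies unbounded in transverse directions --- seems essential, and is missing from your argument.
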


\begin{proof}
Let $n\ge 2$ and $-\infty\le p\le -1$ and suppose, to the contrary, that $*:\left({\mathcal{K}}^n_s\right)^2\rightarrow {\mathcal{K}}^n$ is such a continuous extension. Let $K\neq \{o\}\subset e_n^{\perp}$ be an $o$-symmetric compact convex set, and let $\{K_m\}$ be a sequence of $o$-symmetric convex bodies such that $K_m\rightarrow K$ as $m\rightarrow\infty$.  For each $m$, we have
$$\left(K_m^o+_{-p} K_m^o\right)^o=\left(2^{-1/p}K_m^o\right)^o=2^{1/p}K_m,$$
so
\begin{equation}\label{KK}
K*K=\lim_{m\rightarrow\infty}2^{1/p}K_m=2^{1/p}K.
\end{equation}
(This holds true also for the limiting value $p=-\infty$.)
Let $\phi_{\alpha}$ denote a counterclockwise rotation by angle $\alpha$ in the $\{x_1,x_n\}$-plane and let $D$ denote the $o$-symmetric $(n-1)$-dimensional unit ball in $e_1^{\perp}$.  Let $0<\alpha<\pi/2$, let $K=[-e_1,e_1]$, and let $K_m=K+(1/m)D$ and $L_{m,\alpha}=\phi_{\alpha}K_m$, for $m\in \N$, so that $K_m$ and $L_{m,\alpha}$ are spherical cylinders such that $K_m\rightarrow K$ and $L_m\rightarrow \phi_{\alpha}K$ as $m\rightarrow\infty$. Then $K_m^o=\conv\{\pm e_1,mD\}$ and $L_{m,\alpha}^o=\phi_{\alpha} K_m^o$ are double cones such that $K_m^o+_{\infty}L_{m,\alpha}^o=\conv(K_m^o\cup L_{m,\alpha}^o)\rightarrow \R^n$ as $m\rightarrow\infty$ for any fixed $\alpha$.  Hence $(K_m^o+_{\infty}L_{m,\alpha}^o)^o\rightarrow \{o\}$ as $m\rightarrow\infty$.  Since polar $L_p$ sums decrease as $p$ increases, it follows that $(K_m^o+_{-p}L_{m,\alpha}^o)^o\rightarrow \{o\}$ as $m\rightarrow\infty$ for any fixed $\alpha$ and all $-\infty\le p\le -1$. By the continuity assumption, $K*\phi_{\alpha}K=\{o\}$.  Now letting $\alpha\rightarrow 0$, we obtain $K*K=\{o\}$, contradicting (\ref{KK}).
\end{proof}

\subsection{Blaschke addition}\label{blaadd}

Another important binary operation in convex geometry is {\em Blaschke addition} $\sharp$, defined for convex bodies $K$ and $L$ in $\R^n$ by letting $K\,\sharp\,L$ be the unique convex body with centroid at the origin such that
$$S(K\,\sharp \,L,\cdot)=S(K,\cdot)+S(L,\cdot),$$
where $S(K,\cdot)$ denotes the surface area measure of $K$.  See \cite[p.~130]{Gar06} or \cite[p.~394]{Sch93}.  As an operation between convex bodies, Blaschke addition satisfies Properties~(1)--(3), (7), and (12) of Section~\ref{properties}.  The $GL(n)$ covariance is not quite obvious, but a proof is given in \cite{GPS}.  When $n=2$, Blaschke addition is the same as Minkowski addition, up to translation, so it can be extended to a continuous operation between $o$-symmetric compact convex sets in $\R^2$ in this case.  For $n\ge 3$, such an extension does not exist, as we now prove.

\begin{thm}\label{thmB}
If $n\ge 3$, Blaschke addition cannot be extended to a continuous operation $*:\left({\mathcal{K}}^n_s\right)^2\rightarrow {\mathcal{K}}^n$.
\end{thm}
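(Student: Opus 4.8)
The plan is to follow the strategy of the proof of Theorem~\ref{thmF}: first pin down the value that continuity forces $*$ to take on the diagonal for lower-dimensional sets, and then exhibit a rotated family of approximations whose Blaschke sums degenerate to $\{o\}$, producing a contradiction as the rotation angle tends to $0$. Throughout I assume, to the contrary, that a continuous extension $*:(\mathcal{K}^n_s)^2\to\mathcal{K}^n$ exists, so that $*$ agrees with $\sharp$ on $o$-symmetric convex bodies.

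First I would record the value of $*$ on the diagonal. Let $K\in\mathcal{K}^n_s$ be lower-dimensional and choose $o$-symmetric convex bodies $K_m\to K$. Since $S(K_m\,\sharp\,K_m,\cdot)=2S(K_m,\cdot)$ and surface area measures are homogeneous of degree $n-1$, so that $S(2^{1/(n-1)}K_m,\cdot)=2S(K_m,\cdot)$, the uniqueness in Minkowski's existence theorem together with the fact that both bodies are centered gives $K_m\,\sharp\,K_m=2^{1/(n-1)}K_m$. Continuity of $*$ then yields
\begin{equation*}
K*K=\lim_{m\to\infty}2^{1/(n-1)}K_m=2^{1/(n-1)}K.
\end{equation*}

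Next I would set up the degeneration. Take $K=[-e_1,e_1]$, let $D$ be the $(n-1)$-dimensional unit ball in $e_1^\perp$, put $K_m=K+(1/m)D$, let $\phi_\alpha$ be rotation by $\alpha\in(0,\pi/2)$ in the $\{x_1,x_n\}$-plane, and set $L_{m,\alpha}=\phi_\alpha K_m$, so $K_m\to K$ and $L_{m,\alpha}\to\phi_\alpha K$. A direct computation of the surface area measure of the spherical cylinder $K_m$ shows that, with $r=1/m$, its lateral part has mass of order $r^{n-2}$ and concentrates (after division by $r^{n-2}$) on the great subsphere $e_1^\perp\cap S^{n-1}$, while the two caps contribute only mass of order $r^{n-1}$. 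Hence $r^{-(n-2)}S(K_m\,\sharp\,L_{m,\alpha},\cdot)$ converges weakly to a multiple of $\mu_\alpha$, the sum of the surface measures on the two distinct great subspheres $e_1^\perp\cap S^{n-1}$ and $(\phi_\alpha e_1)^\perp\cap S^{n-1}$. The crucial point is that, because $\alpha>0$ and $n\ge 3$, the even measure $\mu_\alpha$ has centroid $o$ and is not concentrated on any single great subsphere, so by Minkowski's theorem it is the surface area measure of a unique centered convex body $Q_\alpha$. Factoring out the vanishing scale and invoking the continuity of the solution of the Minkowski problem (see \cite{Sch93}), I get $K_m\,\sharp\,L_{m,\alpha}=c_m\,P_{m,\alpha}$ with $c_m=O\bigl(r^{(n-2)/(n-1)}\bigr)\to 0$ and $P_{m,\alpha}\to Q_\alpha$, whence $K_m\,\sharp\,L_{m,\alpha}\to\{o\}$. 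Continuity of $*$ then gives $K*\phi_\alpha K=\{o\}$ for every $\alpha\in(0,\pi/2)$. Letting $\alpha\to 0^+$ and using continuity once more, $\phi_\alpha K\to K$ forces $K*K=\{o\}$, contradicting $K*K=2^{1/(n-1)}K\neq\{o\}$.

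The main obstacle is this second step, namely showing rigorously that the Blaschke sums collapse to $\{o\}$. A small total surface area does not by itself force a body to be small---thin needles have small surface area but large diameter---so the argument must use the shape of the limiting measure, not merely its mass. The decisive feature is the dichotomy between $\alpha>0$ and $\alpha=0$: for $\alpha>0$ the rescaled limiting measure lives on two distinct great subspheres and is therefore non-degenerate, which is exactly what allows the continuity of the Minkowski map to be applied after the vanishing scale is divided out; for the diagonal ($\alpha=0$) the rescaled measure collapses onto the single great subsphere $e_1^\perp\cap S^{n-1}$, Minkowski's non-degeneracy hypothesis fails, and the body instead elongates into the needle $2^{1/(n-1)}K$ rather than shrinking. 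This contrast is precisely what produces the contradiction, and it is also where the hypothesis $n\ge 3$ enters, since for $n=2$ the great subspheres degenerate to point pairs and, as already noted, Blaschke addition coincides with Minkowski addition up to translation.
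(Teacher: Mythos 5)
Your argument is correct, but it follows a genuinely different route from the paper's. You transplant the scheme of the proof of Theorem~\ref{thmF}: first pin down the diagonal value $K*K=2^{1/(n-1)}K$ (via $S(K_m\,\sharp\,K_m,\cdot)=2S(K_m,\cdot)=S(2^{1/(n-1)}K_m,\cdot)$ and the uniqueness part of Minkowski's theorem), then make $K_m\,\sharp\,\phi_\alpha K_m$ degenerate to $\{o\}$ along thin cylinders for each fixed $\alpha>0$, and finally let $\alpha\to 0^+$. The paper instead works with two families of coordinate boxes, $K_a=[0,a]^{n-1}\times[-1/2,1/2]$ and $L_a=[-1/2,1/2]^{n-1}\times[0,a^{n-2}]$, whose surface area measures are finite sums of point masses; $K_a\,\sharp\,L_a$ is then an explicit box converging to the cube $P\subset e_n^{\perp}$, while for a generic rotation $\phi$ fixing the $x_n$-axis the sum $\phi K_a\,\sharp\,L_a$ is identified exactly (via Minkowski's existence theorem for a discrete measure together with an explicit diagonal scaling $\psi_a$) and converges to a cylinder cross-section $Q\neq P$; since $\phi K_a\to[-e_n/2,e_n/2]$ as well, continuity would force $[-e_n/2,e_n/2]*P$ to equal both $P$ and $Q$. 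The trade-off is this: the paper's computation is entirely explicit and needs only the existence and uniqueness statements of Minkowski's theorem for finitely supported measures, whereas your step from ``$S(P_{m,\alpha},\cdot)\to\mu_\alpha$ weakly'' to ``$P_{m,\alpha}\to Q_\alpha$'' genuinely requires the continuity of the solution of the Minkowski problem for centered bodies (weak convergence of surface area measures to a limit measure with centroid $o$ that is not concentrated on any great subsphere implies Hausdorff convergence of the centered bodies). That tool is standard and derivable from the material in \cite{Sch93}, but you should cite the precise statement rather than the book at large, since it is the entire load-bearing step of your second paragraph. What your route buys is a transparent mechanism --- the collapse occurs exactly because the rescaled limit measure spreads over two distinct great subspheres while the unrescaled total mass vanishes, and the dichotomy at $\alpha=0$ is precisely where the hypothesis $n\ge 3$ enters --- together with a proof formally parallel to that of Theorem~\ref{thmF}.
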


\begin{proof}
For $a>0$, let $K_a=[0,a]^{n-1}\times [-1/2,1/2]$ and $L_a=[-1/2,1/2]^{n-1}\times [0,a^{n-2}]$. Then $S(K_a,\cdot)$ consists of point masses of $a^{n-2}$ at $\pm e_1,\dots,\pm e_{n-1}$ and $a^{n-1}$ at $\pm e_n$, while $S(L_a,\cdot)$ consists of point masses of $a^{n-2}$ at $\pm e_1,\dots,\pm e_{n-1}$ and $1$ at $\pm e_n$.  Therefore $S(K_a\,\sharp\, L_a,\cdot)$ consists of point masses of $2a^{n-2}$ at $\pm e_1,\dots,\pm e_{n-1}$ and $1+a^{n-1}$ at $\pm e_n$.  It follows easily that $K_a\,\sharp\, L_a$ is the coordinate box
$$\left[-\frac{(1+a^{n-1})^{\frac{1}{n-1}}}{2},\frac{(1+a^{n-1})^
{\frac{1}{n-1}}}{2}\right]^{n-1}\times \left[-\frac{a^{n-2}}{(1+a^{n-1})^{\frac{n-2}{n-1}}},\frac{a^{n-2}}{(1+a^{n-1})^
{\frac{n-2}{n-1}}}\right].$$
As $a \to 0$, $K_a\rightarrow [-e_n/2,e_n/2]$, $L_a\rightarrow P$, and $K_a\,\sharp\, L_a\rightarrow P$, where $P$ is the $o$-symmetric $(n-1)$-dimensional coordinate cube $P$ of side length 1 contained in $e_n^{\perp}$.  Let $\phi$ be any rotation that leaves the $x_n$-axis fixed and satisfies $\phi e_i\neq e_j$ for all $i=1,\dots,n-1$ and $j=1,\dots,n$.  Then $\phi K_a\rightarrow [-e_n/2,e_n/2]$ as $a\to 0$.  Also, $S(\phi K_a,\cdot)$ consists of point masses of $a^{n-2}$ at $\pm \phi e_1,\dots,\pm \phi e_{n-1}$ and $a^{n-1}$ at $\pm e_n$, so $S(\phi K_a\,\sharp\, L_a,\cdot)$ consists of point masses of $a^{n-2}$ at $\pm e_1\dots,\pm e_{n-1},\pm \phi e_1,\dots,\pm \phi e_{n-1}$ and $1+a^{n-1}$ at $\pm e_n$.

Minkowski's existence theorem (see, for example, \cite[Theorem~A.3.2]{Gar06}) guarantees the existence of a convex polytope $J$ with centroid at the origin such that
$$
S(J,\cdot)=\sum\{\delta_x: x=\pm e_1\dots,\pm e_n,\pm \phi e_1,\dots,\pm \phi e_{n-1}\},
$$
where $\delta_x$ denotes a point mass of $1$ at $x$.  Then $J$ is an $o$-symmetric cylinder with the $x_n$-axis as its axis, and the formula for $S(J,\cdot)$ shows that the cross-section $Q=J\cap e_n^{\perp}$ has volume $1$ and $4(n-1)$ facets whose volumes are equal. Let $\psi_a$ be the linear transformation of $\R^n$ defined by
$$
\psi_a(y+s e_n)=(1+a^{n-1})^{\frac{1}{n-1}}y+
a^{n-2}(1+a^{n-1})^{-\frac{n-2}{n-1}}s e_n,
$$
for $y\in e_n^{\perp}$ and $s\in\R$.
Then $\psi_aJ$ is also an $o$-symmetric cylinder with the $x_n$-axis as its axis and cross-section $Q_a=\psi_a(J)\cap e_n^{\perp}$ of volume $1+a^{n-1}$.
Moreover, each facet of $\psi_aJ$ parallel to $e_n$ is the product of an $(n-2)$-dimensional face parallel to $e_n^{\perp}$ and an edge parallel to $e_n$. Its volume is therefore the volume of the corresponding parallel facet of $J$, which is $1$, times
$$\left(\left((1+a^{n-1})^{\frac{1}{n-1}}\right)^{n-2}\right)
\left(a^{n-2}(1+a^{n-1})^{-\frac{n-2}{n-1}}\right)=a^{n-2}.$$
Since their volumes and outer unit normals are therefore the same, $\psi_a J=\phi K_a\sharp L_a$, by the uniqueness part of Minkowski's existence theorem. Thus the formula for $\psi_a$ shows that $Q_a\to Q$ and $\phi K_a\sharp L_a\to Q$ as $a\to 0$.

Now the theorem is proved, because if $\sharp$ had a continuous extension $*$ defined on $({\mathcal{K}}_s^n)^2$, we would have $[-e_n/2,e_n/2]*P=P$, $[-e_n/2,e_n/2]*P=Q$, and $P\neq Q$.
\end{proof}

\subsection{$o$-symmetrizations}\label{osymms}

Examples of $o$-symmetrizations are the {\em central symmetral} (see \cite[p.~106]{Gar06}),
$$\Delta K=\frac{1}{2}K+\frac{1}{2}(-K),$$
for $K\in {\mathcal{K}}^n$, the {\em $p$th central symmetral},
$$\Delta_p K=\left(\frac{1}{2}K\right)+_p\left(\frac{1}{2}(-K)\right)
=\frac{1}{2}\left(K+_p(-K)\right),$$
for $K\in {\mathcal{K}}^n$ and $p\ge 1$, the {\em chordal symmetral} (see \cite[p.~196]{Gar06}),
$$\widetilde{\Delta}K=\frac{1}{2}K\widetilde{+}\frac{1}{2}(-K),$$
for $K\in {\mathcal{S}}^n$, and the {\em $p$th chordal symmetral},
$$\widetilde{\Delta}_pK=\left(\frac{1}{2}K\right)\widetilde{+}_p\left(\frac{1}{2}(-K)\right)
=\frac{1}{2}\left(K\widetilde{+}_p(-K)\right),$$
for $K\in {\mathcal{S}}^n$ and $p\neq 0$. For the latter, see \cite[p.~234]{Gar06}, where different notation is used.  Note that the fact that $\Delta_p K\in {\mathcal{K}}^n_s$ for $K\in {\mathcal{K}}^n$ follows from Theorem~\ref{thmLpS} if $L_p$ addition is defined as in \cite{LYZ}, or, equivalently, from the definition given by (\ref{extendedLp}) (see also Example~\ref{LPex}).

The $o$-symmetrizations $\Delta$ and $\Delta_p$ satisfy the $o$-symmetrization properties listed in Section~\ref{properties}, except Property~(6), section covariance, and $\widetilde{\Delta}$ and $\widetilde{\Delta}_p$ satisfy Properties (1), (4), (6), and (7), and also Property~(2) if $p>0$.  Here Property~(1) is with $k=1$.

There are many other important $o$-symmetrizations in convex geometry, for example, the projection body, intersection body, and centroid body operators, usually denoted by $\Pi K$, $IK$, and $\Gamma K$, respectively.  See, for example, \cite{Gar06}. Note that Steiner symmetrization is not an $o$-symmetrization.

\section{$M$-addition and $M$-combination}\label{Maddition}

We begin by generalizing the definition of $M$-addition in (\ref{Mdef}). Let $M$ be an arbitrary subset of $\R^m$ and define the {\em $M$-combination} $\oplus_M (K_1,K_2,\dots, K_m)$ of arbitrary sets $K_1,K_2,\dots,K_m$ in $\R^n$ by
\begin{equation}\label{Mndef}
\oplus_M (K_1,K_2,\dots, K_m)= \left\{\sum_{i=1}^ma_ix^{(i)}: x^{(i)}\in K_i, (a_1,a_2,\dots,a_m)\in M\right\}.
\end{equation}
For $m=2$ and $n\ge 2$, this becomes $M$-addition as in (\ref{Mdef}).

An equivalent definition is
$$
\oplus_M (K_1,K_2,\dots, K_m)= \cup\left\{a_1K_1+a_2K_2+\cdots +a_mK_m : (a_1,a_2,\dots,a_m)\in M\right\}.
$$

Several properties of $M$-combination, natural $m$-ary analogues of those for binary additions listed in Section~\ref{properties}, follow easily from these equivalent definitions.  The $m$-ary operation $\oplus_M$ is monotonic and $GL(n)$ covariant.  If $M$ and $K_i$, $i=1,\dots,m$, are compact, then $\oplus_M (K_1,K_2,\dots, K_m)$ is also compact.  In the various settings below, when $\oplus_M:{\mathcal{C}}^m\to {\mathcal{K}}^n$, where ${\mathcal{C}}={\mathcal{K}}^n$, ${\mathcal{K}}^n_o$, or ${\mathcal{K}}^n_s$, $\oplus_M$ is continuous in the Hausdorff metric and since it is also $GL(n)$ covariant, it is homogeneous of degree 1, rotation covariant, and, by a straightforward extension of Lemma~\ref{CGimpliesP}, projection covariant.

\begin{thm}\label{thmM1}
Let $m\ge 2$ and let $M$ be a subset of $\R^m$.

$\mathrm{(i)}$ If $m\le n$, the operation $\oplus_M$ maps $\left({\mathcal{K}}^n\right)^m$ to ${\mathcal{K}}^n$ if and only if $M\in {\mathcal{K}}^m$ and $M$ is contained in one of the $2^m$ closed orthants of $\R^m$. (The assumption $m\le n$ is needed only to conclude that $M\in {\mathcal{K}}^m$.)

$\mathrm{(ii)}$ If $M\in {\mathcal{K}}^m$, then $\oplus_M$ maps $\left({\mathcal{K}}^n_o\right)^m$ to ${\mathcal{K}}^n_o$ if and only if $M$ is contained in one of the $2^m$ closed orthants of $\R^m$.
\end{thm}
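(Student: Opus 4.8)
The plan is to prove each equivalence by establishing sufficiency (the ``if'' direction) through a direct convexity argument and necessity (the ``only if'' direction) through explicit counterexamples. For sufficiency in both parts, assume $M\in\mathcal{K}^m$ lies in a single closed orthant. Since replacing $M$ by its reflection into $[0,\infty)^m$ and simultaneously replacing each $K_i$ by $\pm K_i$ (which remains in $\mathcal{K}^n$, and in $\mathcal{K}^n_o$ when $o\in K_i$) leaves the output unchanged, I may assume $M\subset[0,\infty)^m$. Compactness of $\oplus_M(K_1,\dots,K_m)$ is immediate, as it is the image of the compact set $M\times K_1\times\cdots\times K_m$ under a continuous map. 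The substantive point is convexity: given $p=\sum_ia_ix^{(i)}$ and $q=\sum_ib_iy^{(i)}$ in the output, with $a,b\in M$ and $x^{(i)},y^{(i)}\in K_i$, and $\lambda\in[0,1]$, I set $c=\lambda a+(1-\lambda)b\in M$ and show $\lambda p+(1-\lambda)q=\sum_ic_iz^{(i)}$ with $z^{(i)}\in K_i$: when $c_i>0$ one takes $z^{(i)}=(\lambda a_i/c_i)x^{(i)}+((1-\lambda)b_i/c_i)y^{(i)}$, a genuine convex combination precisely because $a_i,b_i\ge0$, while when $c_i=0$ the term vanishes and any $z^{(i)}\in K_i$ serves. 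For part (ii) one further notes that $o\in K_i$ for all $i$ and $M\neq\emptyset$ force $o$ into the output.

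For necessity in (i), applying $\oplus_M$ to the singletons $\{e_1\},\dots,\{e_m\}$---linearly independent because $m\le n$---yields precisely a copy of $M$ inside $\lin\{e_1,\dots,e_m\}$, so membership in $\mathcal{K}^n$ forces $M\in\mathcal{K}^m$; this is the only place the hypothesis $m\le n$ enters. It remains to show, in both parts, that $M$ must lie in one closed orthant. Suppose not; then some coordinate---say the first---takes both strictly positive and strictly negative values on $M$, and since $M$ is compact and convex its projection to that coordinate is an interval $[\alpha,\beta]$ with $\alpha<0<\beta$.

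To reach a contradiction I would test the operation on $K_1=\conv\{o,e_1,e_2\}$ together with $K_i=\{o\}$ for $2\le i\le m$; these are valid inputs for both (i) and (ii), since each contains $o$. The factors $K_2,\dots,K_m$ collapse, and the first coordinate of $M$ sweeps all of $[\alpha,\beta]$, so $\oplus_M(K_1,\{o\},\dots,\{o\})=\bigcup_{t\in[\alpha,\beta]}tK_1=\beta K_1\cup|\alpha|(-K_1)$, where the last equality uses that $o\in K_1$ makes the scaled triangles nested on $[0,\beta]$ and on $[\alpha,0]$. This is the union of $\conv\{o,\beta e_1,\beta e_2\}$ in the first quadrant of the $(e_1,e_2)$-plane with $\conv\{o,-|\alpha|e_1,-|\alpha|e_2\}$ in the third quadrant, and it is not convex: the midpoint $\frac{1}{2}(\beta e_1-|\alpha|e_2)$ of two of their vertices lies in the open fourth quadrant, hence in neither triangle. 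This contradicts the output lying in $\mathcal{K}^n$ (respectively $\mathcal{K}^n_o$), so $M$ must lie in a single orthant.

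The main obstacle is the convexity bookkeeping in the sufficiency direction, and in particular the degenerate case $c_i=0$: it is exactly the nonnegativity of the coefficients, guaranteed by the single-orthant hypothesis, that lets every $z^{(i)}$ be realized inside $K_i$. The necessity direction is the mirror image of this---the non-convex ``bowtie'' of triangles is precisely the failure that occurs when nonnegativity is lost---so once the right test sets are identified the verification is routine. I would also record that this test configuration is two-dimensional, so the orthant condition is genuinely needed only for $n\ge2$ (for $n=1$ every such union of intervals through $o$ is automatically an interval).
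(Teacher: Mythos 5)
Your proof is correct and follows essentially the same route as the paper's: the singleton test $\oplus_M(\{e_1\},\dots,\{e_m\})$ to force $M\in{\mathcal{K}}^m$, a two-triangle ``bowtie'' built from $\conv\{o,e_1,e_2\}$ (the paper uses $\conv\{o,e_1,\dots,e_n\}$) to rule out $M$ meeting two opposite open half-spaces of a coordinate, and the same convex-combination bookkeeping for sufficiency, which you merely normalize first to $M\subset[0,\infty)^m$ by reflection where the paper carries the signs $\ee_i$ through directly. Your closing remark that the orthant condition in part (ii) is only forced when $n\ge 2$ is a fair observation; the paper's proof of (ii) tacitly relies on the same two-dimensional test configuration.
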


\begin{proof}
(i) Let
$$M'=\{x=(x_1,x_2,\dots,x_n)\in\R^n: (x_1,x_2,\dots,x_m)\in M, x_{m+1}=\cdots=x_n=0\}.$$
(Here we assume that $m\le n$.) If $\oplus_M$ maps $\left({\mathcal{K}}^n\right)^m$ to ${\mathcal{K}}^n$, then
$\oplus_M(\{e_1\},\{e_2\},\dots,\{e_m\})=M'\in {\mathcal{K}}^n$.  Since $M'$ is a copy of $M$ embedded into $\R^n$, we have $M\in {\mathcal{K}}^m$.

Suppose that $M$ is not contained in one of the $2^m$ closed orthants of $\R^m$.  Then there are $c,d>0$ and $i\in\{1,\dots,m\}$ such that $M|\,l_{e_i}=[-ce_i,de_i]$.  Let $K_i=\conv\{o,e_1,e_2,\dots,e_n\}$ and $K_j=\{o\}$ for $1\le j\neq i\le m$.  Then
\begin{eqnarray*}
\oplus_M (K_1,K_2,\dots, K_m)&=&\{a_ix: (a_1,\dots,a_m)\in M, x\in K_i\}\\
&=&\{a_ix: x\in K_i, -c\le a_i\le d\}=-cK_i\cup dK_i
\end{eqnarray*}
is not convex.

Conversely, suppose that $M\in {\mathcal{K}}^m$ is contained in one of the $2^m$ closed orthants of $\R^m$. Let $K_i\in{\mathcal{K}}^n$, $i=1,\dots,m$, and let $w,z\in \oplus_M (K_1,K_2,\dots, K_m)$.  Then there are $x^{(i)},y^{(i)}\in K_i$, $i=1,\dots,m$, and $(a_1,\dots,a_m), (b_1,\dots,b_m)\in M$ such that
$$
w=\sum_{i=1}^ma_ix^{(i)}\quad{\text{and}}\quad z=\sum_{i=1}^mb_iy^{(i)}.
$$
Let $0<t<1$.  We have to show that $(1-t)w+tz\in \oplus_M (K_1,K_2,\dots, K_m)$.

Our assumption on $M$ implies that $a_i$ and $b_i$ have the same sign for all $i=1,\dots,m$.  If at least one of $a_i$ and $b_i$ are nonzero for all $i=1,\dots,m$, then $(1-t)a_i +tb_i\neq 0$ for all $i=1,\dots,m$.  In this case
\begin{eqnarray*}
(1-t)w+tz&=&(1-t)\sum_{i=1}^ma_ix^{(i)}+t\sum_{i=1}^mb_iy^{(i)}=\sum_{i=1}^m \left((1-t)a_ix^{(i)}+tb_iy^{(i)}\right)\\
&=&\sum_{i=1}^m\left(((1-t)a_i +tb_i)\frac{(1-t)a_ix^{(i)}+tb_iy^{(i)}}{(1-t)a_i +tb_i}\right)\in \oplus_M (K_1,K_2,\dots, K_m),
\end{eqnarray*}
since
\begin{equation}\label{ki}
\frac{(1-t)a_ix^{(i)}+tb_iy^{(i)}}{(1-t)a_i +tb_i}=
\frac{(1-t)|a_i|}{(1-t)|a_i| +t|b_i|}x^{(i)}
+\frac{t|b_i|}{(1-t)|a_i| +t|b_i|}y^{(i)}\in K_i,
\end{equation}
for $i=1,\dots,m$, and
$$\left((1-t)a_1 +tb_1,\dots,(1-t)a_m +tb_m\right) =(1-t)(a_1,\dots,a_m) + t(b_1,\dots,b_m)\in M.$$
If $a_i=b_i=0$ for some $i$, we have $(1-t)a_i+tb_i=0$, so the point in $K_i$ in (\ref{ki}) can be replaced by an arbitrary point in $K_i$.

(ii) Let $M\in {\mathcal{K}}^m$. Obviously, if $o\in K_i$ for $i=1,\dots,m$, then $o\in \oplus_M (K_1,K_2,\dots, K_m)$. The result follows directly from this and the proof of (i).
\end{proof}

Note that if the dimension of $M$ is less than $m$, it is possible that $M$ is contained in more than one of the $2^m$ closed orthants of $\R^m$.

The fact that $\oplus_M$ is $L_p$ addition on $\left({\mathcal{K}}^n_o\right)^2$ when $M$ is given by (\ref{Mmp}) shows that in part (ii) of the previous theorem, it is not necessary that $M$ is convex.

\begin{lem}\label{lem1uncond1}
If $M\subset \R^m$ and $K_i\in {\mathcal{K}}^n_s$, $i=1,\dots,m$, then
$$\oplus_M (K_1,K_2,\dots, K_m)=\oplus_{\widehat{M}} (K_1,K_2,\dots, K_m),$$
where $\widehat{M}$ is the 1-unconditional hull of $M$ defined by (\ref{1uncond}).
\end{lem}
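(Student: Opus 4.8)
The plan is to prove the set equality by establishing the two inclusions separately. The first, $\oplus_M (K_1,\dots, K_m)\subset \oplus_{\widehat{M}} (K_1,\dots, K_m)$, is immediate and requires no hypothesis on the $K_i$: taking all $\alpha_i=1$ in (\ref{1uncond}) shows $M\subset\widehat{M}$, so any point $\sum_{i=1}^m a_i x^{(i)}$ produced from a tuple $(a_1,\dots,a_m)\in M$ via (\ref{Mndef}) is produced a fortiori from the same tuple viewed as an element of $\widehat{M}$.

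The substantive inclusion is the reverse one, and the one place where the symmetry and convexity of the $K_i$ enter. The single fact I would isolate is the scaling property: if $C\in {\mathcal{K}}^n_s$ and $|\alpha|\le 1$, then $\alpha x\in C$ whenever $x\in C$. This holds because $x\in C$ and, by $o$-symmetry, $-x\in C$, so by convexity the point $\tfrac{1+\alpha}{2}\,x+\tfrac{1-\alpha}{2}\,(-x)=\alpha x$ lies in $C$ for every $\alpha\in[-1,1]$. Granting this, I would take $z\in \oplus_{\widehat{M}} (K_1,\dots, K_m)$, so that $z=\sum_{i=1}^m b_i x^{(i)}$ with $x^{(i)}\in K_i$ and $(b_1,\dots,b_m)\in\widehat{M}$. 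By the definition (\ref{1uncond}) of the $1$-unconditional hull there are $(a_1,\dots,a_m)\in M$ and scalars $\alpha_1,\dots,\alpha_m$ with $|\alpha_i|\le 1$ such that $b_i=\alpha_i a_i$ for each $i$. Setting $y^{(i)}:=\alpha_i x^{(i)}$, the scaling property gives $y^{(i)}\in K_i$, and
$$
z=\sum_{i=1}^m b_i x^{(i)}=\sum_{i=1}^m a_i\alpha_i x^{(i)}=\sum_{i=1}^m a_i y^{(i)}\in \oplus_M (K_1,\dots, K_m),
$$
since $(a_1,\dots,a_m)\in M$. This yields $\oplus_{\widehat{M}} (K_1,\dots, K_m)\subset \oplus_M (K_1,\dots, K_m)$ and, with the first inclusion, the claimed equality.

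There is no real obstacle here; the only step using the hypotheses is the scaling property, which needs both the convexity and the $o$-symmetry of each $K_i$. I would emphasize that the argument handles the coefficients uniformly, with no separate treatment of the case $a_i=0$: when $a_i=0$ the corresponding summand $a_iy^{(i)}$ vanishes regardless of $y^{(i)}$, and the absorption $\alpha_i x^{(i)}\in K_i$ is exactly what lets the factor $\alpha_i$ be moved from the coefficient back into the set. Note also that compactness and boundedness of the $K_i$ and of $M$ play no role in this set identity.
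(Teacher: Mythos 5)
Your proof is correct and follows essentially the same route as the paper: the inclusion $M\subset\widehat{M}$ gives one direction, and the reverse direction absorbs the factors $\alpha_i$ into the sets via $\alpha_i x^{(i)}\in K_i$, which is exactly the paper's argument (you merely spell out why $o$-symmetry plus convexity yields this scaling property, which the paper leaves implicit).
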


\begin{proof}
Since $M\subset \widehat{M}$, $\oplus_M (K_1,K_2,\dots, K_m)\subset \oplus_{\widehat{M}} (K_1,K_2,\dots, K_m)$. For the reverse inclusion, let $z\in \oplus_{\widehat{M}} (K_1,K_2,\dots, K_m)$.  Then there are $x^{(i)}\in K_i$ and $-1\le \alpha_i\le 1$, $i=1,\dots,m$, and $(a_1,\dots,a_m)\in M$ such that
$$z=\sum_{i=1}^m\alpha_ia_ix^{(i)}=\sum_{i=1}^ma_iy^{(i)},$$
where $y^{(i)}=\alpha_ix^{(i)}\in K_i$ because $K_i\in {\mathcal{K}}^n_s$.  Hence $z\in \oplus_M (K_1,K_2,\dots, K_m)$.
\end{proof}

\begin{thm}\label{thmM1symm}
Let $2\le m\le n$ and let $M$ be a compact subset of $\R^m$.  Then $\oplus_M$ maps $\left({\mathcal{K}}^n_s\right)^m$ to ${\mathcal{K}}^n_s$ if and only if the 1-unconditional hull $\widehat{M}$ of $M$ is convex.  (The assumption $m\le n$ is not needed for the ``only if" part.)
\end{thm}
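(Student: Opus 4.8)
The plan is to isolate convexity as the sole nontrivial issue, pass to the $1$-unconditional hull $\widehat M$, and then treat the two implications separately. First I would record two automatic facts. If all $K_i\in\mathcal{K}^n_s$ and $z=\sum_i a_i x^{(i)}\in\oplus_M(K_1,\dots,K_m)$, then $-z=\sum_i a_i(-x^{(i)})$ with $-x^{(i)}\in K_i$, so the output is automatically $o$-symmetric; and since $M$ and the $K_i$ are compact, so is $\oplus_M(K_1,\dots,K_m)$. Hence $\oplus_M$ maps $(\mathcal{K}^n_s)^m$ into $\mathcal{K}^n_s$ if and only if $\oplus_M(K_1,\dots,K_m)$ is convex for all $K_i\in\mathcal{K}^n_s$. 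By Lemma~\ref{lem1uncond1}, on symmetric arguments $\oplus_M=\oplus_{\widehat M}$, so the theorem reduces to showing that $\oplus_{\widehat M}(K_1,\dots,K_m)$ is convex for all $K_i\in\mathcal{K}^n_s$ exactly when $\widehat M$ is convex. Note $\widehat M$ is compact, being the image of the compact set $M\times[-1,1]^m$ under $(x,\alpha)\mapsto(\alpha_1x_1,\dots,\alpha_mx_m)$, so once convex it lies in $\mathcal{K}^m$.

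For the ``if'' direction I would assume $\widehat M$ convex and exploit that, on symmetric arguments, signs can be absorbed. Set $M_+=\widehat M\cap[0,\infty)^m$, a compact convex set lying in a single orthant. I claim $\oplus_{\widehat M}(K_1,\dots,K_m)=\oplus_{M_+}(K_1,\dots,K_m)$ for $K_i\in\mathcal{K}^n_s$: the inclusion $\supseteq$ is clear since $M_+\subset\widehat M$, while for $\subseteq$, given $z=\sum_i a_i x^{(i)}$ with $(a_i)\in\widehat M$ and $x^{(i)}\in K_i$, I write $a_i x^{(i)}=|a_i|\,(\mathrm{sgn}(a_i)x^{(i)})$, using $\mathrm{sgn}(a_i)x^{(i)}\in K_i$ and $(|a_1|,\dots,|a_m|)\in\widehat M\cap[0,\infty)^m=M_+$, the latter because $\widehat M$ is $1$-unconditional. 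Since $M_+\in\mathcal{K}^m$ lies in one orthant, Theorem~\ref{thmM1}(ii) shows $\oplus_{M_+}$ maps $(\mathcal{K}^n_o)^m$, and in particular $(\mathcal{K}^n_s)^m$, into convex sets, so $\oplus_{\widehat M}(K_1,\dots,K_m)$ is convex.

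Conversely, assume $\oplus_{\widehat M}$ maps $(\mathcal{K}^n_s)^m$ into $\mathcal{K}^n_s$. Testing with segments $K_i=[-u_i,u_i]$, $u_i\in\R^n$, and absorbing the segment parameters by $1$-unconditionality, I get $\oplus_{\widehat M}(K_1,\dots,K_m)=T(\widehat M)$, where $T\colon\R^m\to\R^n$ is the linear map with $Te_i=u_i$. When $m\le n$ I take $u_i=e_i$, so $T$ is injective and the output is a copy of $\widehat M$; being convex by hypothesis, $\widehat M$ is convex. When $m>n$ this embedding is unavailable and the real work begins; since $n\ge2$, every linear map $\R^m\to\R^2$ arises in this way, so all $2$-dimensional linear images of $\widehat M$ are convex. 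The crux is then the statement that a compact $1$-unconditional set $C\subset\R^m$ all of whose images under linear maps $\R^m\to\R^2$ are convex must itself be convex. I would argue the contrapositive, first reducing convexity of $C$ to convexity of the lower set $A=C\cap[0,\infty)^m$ (such a $C$ is convex if and only if $A$ is, since $\tfrac12|p_i+q_i|\le\tfrac12(|p_i|+|q_i|)$ together with the lower-set property of $A$ propagates midpoint convexity from $A$ back to $C$), and then, given $p,q\in A$ with $(p+q)/2\notin A$, producing a rank-$2$ linear map whose image of $C$ omits the image of the midpoint, equivalently an $(m-2)$-dimensional affine subspace through $(p+q)/2$ disjoint from $C$. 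Constructing this subspace is the main obstacle: the sphere $S^{m-1}$, every $2$-dimensional projection of which is a disk, shows the statement is false without $1$-unconditionality, so the construction must genuinely exploit the lower-set geometry of $A$ to locate a separating $2$-dimensional quotient.
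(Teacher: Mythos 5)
Your proof is correct and follows essentially the same route as the paper's: for the ``if'' direction you reduce to $\widehat M\cap[0,\infty)^m$ and invoke Theorem~\ref{thmM1} (the paper gets there via Lemma~\ref{lem1uncond1} applied to $M'=\widehat{M}\cap[0,\infty)^m$ rather than by absorbing signs directly, which amounts to the same thing), and for the ``only if'' direction you test with the segments $[-e_1,e_1],\dots,[-e_m,e_m]$ to recover an embedded copy of $\widehat M$, exactly as in the paper. The unfinished excursion into the case $m>n$ is not required by the stated hypothesis $2\le m\le n$, and you should not assume the paper has a hidden argument there: its own proof of the ``only if'' direction likewise embeds $\widehat M$ via $e_1,\dots,e_m$ and so also uses $m\le n$, the parenthetical remark notwithstanding.
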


\begin{proof}
Let $K_i\in{\mathcal{K}}^n_s$, $i=1,\dots,m$, and suppose that $\widehat{M}$ is convex.  Since $M$ is compact, it is easy to see that $\widehat{M}$ is also compact.  Let $M'=\widehat{M}\cap [0,\infty)^m$.  Then $\widehat{M'}=\widehat{M}$ because $\widehat{M}$ is 1-unconditional. Therefore, by Lemma~\ref{lem1uncond1}, $\oplus_M (K_1,K_2,\dots, K_m)=\oplus_{M'} (K_1,K_2,\dots, K_m)$.  By Theorem~\ref{thmM1}(i), it follows that $\oplus_M (K_1,K_2,\dots, K_m)\in {\mathcal{K}}^n$.  Also, $\oplus_M (K_1,K_2,\dots, K_m)$ is $o$-symmetric due to the $o$-symmetry of $K_i$, $i=1,\dots,m$, so $\oplus_M$ maps $\left({\mathcal{K}}^n_s\right)^m$ to ${\mathcal{K}}^n_s$.

Conversely, suppose that $\oplus_M$ maps $\left({\mathcal{K}}^n_s\right)^m$ to ${\mathcal{K}}^n_s$. Then
\begin{eqnarray*}
\lefteqn{\oplus_M ([-e_1,e_1],[-e_2,e_2],\dots, [-e_m,e_m])=}\\
&=&
\left\{\sum_{i=1}^ma_i\alpha_ie_i: (a_1,\dots,a_m)\in M, |\alpha_i|
\le 1, i=1,\dots,m\right\}\\
&=&\{(\alpha_1a_1,\dots,\alpha_ma_m,0,\dots,0): (a_1\dots,a_m)\in M, |\alpha_i|
\le 1, i=1,\dots,m\}
\end{eqnarray*}
belongs to ${\mathcal{K}}^n_s$ and is a copy of $\widehat{M}$ in $\R^n$.  Therefore $\widehat{M}$ is convex.
\end{proof}

As was mentioned in Section~\ref{Msub}, the following corollary was proved by Protasov \cite{Pro97} for $1$-unconditional $M$ and $m=2$.

\begin{cor}\label{corM1symm}
Let $m\ge 2$. If $M\in {\mathcal{K}}^m$ and $M$ is either contained in one of the $2^m$ closed orthants of $\R^m$ or $1$-unconditional, then $\oplus_M$ maps $\left({\mathcal{K}}^n_s\right)^m$ to ${\mathcal{K}}^n_s$.
\end{cor}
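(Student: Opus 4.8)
The plan is to reduce both hypotheses on $M$ to a single application of Theorem~\ref{thmM1}(ii), using the containment $\mathcal{K}^n_s\subset\mathcal{K}^n_o$ together with an elementary symmetry observation. First I would record that observation. If $K_1,\dots,K_m\in\mathcal{K}^n_s$ and $z=\sum_{i=1}^m a_ix^{(i)}$ with $x^{(i)}\in K_i$ and $(a_1,\dots,a_m)\in M$, then $-z=\sum_{i=1}^m a_i(-x^{(i)})$, and each $-x^{(i)}\in K_i$ since $K_i$ is $o$-symmetric; hence $-z\in\oplus_M(K_1,\dots,K_m)$. Thus $\oplus_M(K_1,\dots,K_m)$ is automatically $o$-symmetric, so to prove the corollary it suffices to show $\oplus_M(K_1,\dots,K_m)\in\mathcal{K}^n_o$. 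I would also note that every $o$-symmetric compact convex set contains the origin, because $o=\tfrac12x+\tfrac12(-x)$, so indeed $\mathcal{K}^n_s\subset\mathcal{K}^n_o$.

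In the first case, where $M\in\mathcal{K}^m$ lies in one of the $2^m$ closed orthants of $\R^m$, this is immediate: Theorem~\ref{thmM1}(ii) gives $\oplus_M\colon(\mathcal{K}^n_o)^m\to\mathcal{K}^n_o$, and restricting to $\mathcal{K}^n_s\subset\mathcal{K}^n_o$ yields $\oplus_M(K_1,\dots,K_m)\in\mathcal{K}^n_o$, which the symmetry observation upgrades to membership in $\mathcal{K}^n_s$. I would stress here that the ``if'' direction of Theorem~\ref{thmM1}(ii) carries no restriction $m\le n$, which is precisely what lets the corollary be asserted for all $m\ge 2$.

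In the second case, where $M$ is $1$-unconditional, the idea is to peel off the part of $M$ in the nonnegative orthant and invoke the first case. I would set $M'=M\cap[0,\infty)^m$, which is compact (an intersection of a compact set with a closed set) and convex, and which lies in a single closed orthant. The crucial identity is $\widehat{M'}=\widehat{M}=M$. The equality $\widehat M=M$ holds because $M$ is $1$-unconditional and convex: for any $x=(x_1,\dots,x_m)\in M$ the $2^m$ sign-reflections $(\pm x_1,\dots,\pm x_m)$ lie in $M$, so by convexity the full coordinate box $\prod_{i=1}^m[-|x_i|,|x_i|]$ through $x$ lies in $M$, and every point $(\alpha_1x_1,\dots,\alpha_mx_m)$ with $|\alpha_i|\le1$ lies in that box; hence $\widehat M\subseteq M$, while $M\subseteq\widehat M$ always. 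Consequently $M'=\widehat M\cap[0,\infty)^m$, and $\widehat{M'}=\widehat M$ because $\widehat M$ is $1$-unconditional, exactly as in the proof of Theorem~\ref{thmM1symm}. Applying Lemma~\ref{lem1uncond1} twice on $o$-symmetric arguments then gives $\oplus_M(K_1,\dots,K_m)=\oplus_{\widehat M}(K_1,\dots,K_m)=\oplus_{\widehat{M'}}(K_1,\dots,K_m)=\oplus_{M'}(K_1,\dots,K_m)$, so the first case applied to $M'$ completes the proof.

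I expect the main obstacle to be in this second case, namely circumventing the restriction $m\le n$ that is built into Theorem~\ref{thmM1symm}. Rather than appealing to that theorem, whose ``if'' direction requires $m\le n$, I reduce the $1$-unconditional situation to the orthant situation through $M'=M\cap[0,\infty)^m$ and Lemma~\ref{lem1uncond1}, which keeps the argument valid for every $m\ge 2$. The only point demanding genuine (though routine) verification is the identity $\widehat M=M$ for $1$-unconditional convex $M$; everything else is a packaging of Theorem~\ref{thmM1}(ii), Lemma~\ref{lem1uncond1}, and the symmetry observation above.
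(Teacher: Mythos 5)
Your proof is correct, and mathematically it travels the same road as the paper: the paper's own proof of this corollary is a one-liner that observes both hypotheses force $\widehat{M}$ to be convex and then cites Theorem~\ref{thmM1symm}, whose sufficiency direction is proved there by exactly your second-case argument (pass to $M'=\widehat{M}\cap[0,\infty)^m$, apply Lemma~\ref{lem1uncond1}, invoke Theorem~\ref{thmM1}(i), and note that $o$-symmetry of the $K_i$ forces $o$-symmetry of the output). What you do differently is refuse to cite Theorem~\ref{thmM1symm} and instead inline that argument, precisely because that theorem is stated under the hypothesis $2\le m\le n$ while the corollary claims all $m\ge 2$; your instinct is sound, since the restriction $m\le n$ enters only in the converse direction of Theorem~\ref{thmM1symm} (the construction with the segments $[-e_1,e_1],\dots,[-e_m,e_m]$), and your rewrite makes the independence from $m\le n$ explicit rather than leaving it to the reader to check the proof of the cited theorem. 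Two small remarks: your verification that $\widehat{M}=M$ for $1$-unconditional convex $M$ is correct but not strictly needed --- the paper works with $\widehat{M}$ throughout and only needs its convexity, which for a convex $1$-unconditional $M$ is immediate from $\widehat{M}=M$ anyway; and in your first case you use Theorem~\ref{thmM1}(ii) where the paper's parenthetical points to Theorem~\ref{thmM1}(i), an inessential difference since both sufficiency directions hold without $m\le n$.
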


\begin{proof}
Either of the two hypotheses on $M$ guarantees that $\widehat{M}$ is convex, so the result follows directly from the previous theorem.  (In the case of the first hypothesis, it is also an easy consequence of Theorem~\ref{thmM1}(i).)
\end{proof}

Obviously there are nonconvex sets $M$ such that $\widehat{M}$ is convex and hence $\oplus_M$ maps $\left({\mathcal{K}}^n_s\right)^m$ to ${\mathcal{K}}^n_s$. Indeed, we have already observed in Section~\ref{Msub} that when $M$ is the nonconvex set defined by (\ref{Mmp}), the operation $\oplus_M$ is $L_p$ addition as defined in \cite{LYZ}, and as an operation on $\left({\mathcal{K}}^n_s\right)^2$, this is equivalent to $\oplus_{\widehat{M}}$, where $\widehat{M}$ is the unit ball in $l^n_{p'}$.  However, there are also compact convex sets $M$, even $o$-symmetric ones, such that $\widehat{M}$ is not convex.  A specific example is given by $M=\conv\{(2,1),(-2,-1),(-1,2),(1,-2)\}$; the 1-unconditional hull $\widehat{M}$ of this $o$-symmetric square contains the points $(\pm 2,\pm 1)$ and $(\pm 1,\pm 2)$ but not $(\pm 3/2,\pm 3/2)$, so $\widehat{M}$ is not convex.  Consequently, by Theorem~\ref{thmM1symm}, it is not true in general that $\oplus_M$ maps $\left({\mathcal{K}}^n_s\right)^m$ to ${\mathcal{K}}^n_s$ when $M\in {\mathcal{K}}^n_s$.

\begin{thm}\label{lemM2}
$\mathrm{(i)}$ Let $m\ge 2$ and let $M\in {\mathcal{K}}^m$ be contained in one of the $2^m$ closed orthants of $\R^m$.  Let $\ee_i=\pm 1$, $i=1,\dots,m$, denote the sign of the $i$th coordinate of a point in the interior of this orthant and let
$$M^+=\{(\ee_1a_1,\dots,\ee_ma_m): (a_1,\dots,a_m)\in M\}$$
be the reflection of $M$ contained in $[0,\infty)^m$.  If $K_i\in{\mathcal{K}}^n$, $i=1,\dots,m$, then
\begin{equation}\label{OMsupp}
h_{\oplus_M (K_1,K_2,\dots, K_m)}(x)=h_{M^+}\left(h_{\ee_1K_1}(x),h_{\ee_2K_2}(x),\dots,h_{\ee_mK_m}(x)\right),
\end{equation}
for all $x\in\R^n$.

$\mathrm{(ii)}$ Let $m\ge 2$ and let $M\in {\mathcal{K}}^m$ be $1$-unconditional. If $K_i\in{\mathcal{K}}^n_s$, $i=1,\dots,m$, then
$$
h_{\oplus_M (K_1,K_2,\dots, K_m)}(x)=h_{M}\left(h_{K_1}(x),h_{K_2}(x),\dots,h_{K_m}(x)\right),
$$
for all $x\in\R^n$.
\end{thm}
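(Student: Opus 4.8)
The plan is to prove both identities by computing the support function directly from the definition \eqref{Mndef} of $\oplus_M$ and then unwinding the resulting supremum; the only real work is the bookkeeping of signs, which is exactly what the single-orthant hypothesis in (i), and the $1$-unconditionality together with $o$-symmetry in (ii), are designed to control. Throughout I use that $h_X(x)=\sup\{x\cdot y:y\in X\}$, that for a nonnegative scalar $b$ one has $\sup_y b(x\cdot y)=b\,h_X(x)$, and that for $X$ $o$-symmetric one has $\inf_y(x\cdot y)=-h_X(x)$.

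For part (i), I would start from
$$
h_{\oplus_M(K_1,\dots,K_m)}(x)=\sup\left\{\sum_{i=1}^m a_i\,(x\cdot x^{(i)}):x^{(i)}\in K_i,\ (a_1,\dots,a_m)\in M\right\}.
$$
Since $M$ lies in the orthant with sign pattern $(\ee_1,\dots,\ee_m)$, every $a_i$ satisfies $\ee_ia_i\ge 0$, so I substitute $b_i=\ee_ia_i\ge 0$ (so that $(b_1,\dots,b_m)$ ranges exactly over $M^+$) and $y^{(i)}=\ee_ix^{(i)}$ (which ranges over $\ee_iK_i$). Then each $a_i(x\cdot x^{(i)})=b_i\,(x\cdot y^{(i)})$, and since the $y^{(i)}$ vary independently, the supremum splits as a sum of individual suprema; here the nonnegativity $b_i\ge 0$ is essential, as it lets me pull $b_i$ out and identify $\sup_{y^{(i)}\in \ee_iK_i}b_i(x\cdot y^{(i)})=b_i\,h_{\ee_iK_i}(x)$. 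The outer supremum over $(b_1,\dots,b_m)\in M^+$ of $\sum_i b_i\,h_{\ee_iK_i}(x)$ is then precisely the support function of $M^+\subset\R^m$ evaluated at $(h_{\ee_1K_1}(x),\dots,h_{\ee_mK_m}(x))$, which is \eqref{OMsupp}. No positivity of the arguments $h_{\ee_iK_i}(x)$ is needed, since $h_{M^+}$ is defined on all of $\R^m$.

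For part (ii), I would reduce to (i). Writing $M'=M\cap[0,\infty)^m$, the $1$-unconditionality of $M$ gives $\widehat{M'}=M$, so Lemma~\ref{lem1uncond1} applied to $M'$ yields $\oplus_M(K_1,\dots,K_m)=\oplus_{M'}(K_1,\dots,K_m)$ for $o$-symmetric $K_i$. Now $M'\in{\mathcal{K}}^m$ sits in the nonnegative orthant (with all $\ee_i=+1$ and $(M')^+=M'$), so part (i) gives $h_{\oplus_{M'}(K_1,\dots,K_m)}(x)=h_{M'}(h_{K_1}(x),\dots,h_{K_m}(x))$. Since each $K_i$ is $o$-symmetric, $h_{K_i}(x)=\sup_{y\in K_i}|x\cdot y|\ge 0$, so the argument lies in $[0,\infty)^m$; and for any $c\in[0,\infty)^m$ the $1$-unconditionality of $M$ forces $h_M(c)=h_{M'}(c)$ (one inclusion is $M'\subset M$, the other uses that $(|b_1|,\dots,|b_m|)\in M'$ realizes $\sum_i c_i|b_i|\ge\sum_i c_ib_i$). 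Combining these replaces $h_{M'}$ by $h_M$, which is the claim. Alternatively, one can run the computation of (i) directly: for $o$-symmetric $K_i$ the inner supremum is $\sup_{x^{(i)}}a_i(x\cdot x^{(i)})=|a_i|\,h_{K_i}(x)$ regardless of the sign of $a_i$, and then $\sup_{(a)\in M}\sum_i|a_i|\,h_{K_i}(x)=h_M(h_{K_1}(x),\dots,h_{K_m}(x))$ by the same $1$-unconditionality argument.

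The step requiring the most care---really the heart of the proof---is the interchange that turns a supremum over the product $K_1\times\cdots\times K_m$ into a sum of one-variable suprema with the scalars $b_i$ (resp.\ $|a_i|$) factored out. Multiplying by a nonnegative scalar commutes with $\sup$, but multiplying by a negative scalar turns $\sup$ into $\inf$; this is precisely why the hypotheses enter. In (i) the single-orthant condition supplies $b_i\ge 0$ after reflection, while in (ii) it is the $o$-symmetry of the $K_i$ that lets a negative coefficient be absorbed into $|a_i|$ via $\inf_{y\in K_i}(x\cdot y)=-h_{K_i}(x)$, and the $1$-unconditionality of $M$ that converts $\sum_i|a_i|h_{K_i}(x)$ back into a genuine support value of $M$. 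Everything else is routine manipulation of support functions.
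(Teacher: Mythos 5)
Your proposal is correct and follows essentially the same route as the paper: in both arguments the single-orthant hypothesis (after reflection) or the $1$-unconditionality plus $o$-symmetry is used precisely to make the coefficients nonnegative so that they pull out of the supremum over $K_1\times\cdots\times K_m$, turning the defining supremum for $h_{\oplus_M(K_1,\dots,K_m)}$ into $h_{M^+}$ (resp.\ $h_M$) evaluated at the vector of support values; the paper merely phrases this as two inequalities (an upper estimate followed by an explicit choice of maximizers) rather than as a single chain of interchanged suprema. Your primary route for (ii) --- replacing $M$ by $M'=M\cap[0,\infty)^m$ via Lemma~\ref{lem1uncond1} and the identity $h_{M'}=h_M$ on $[0,\infty)^m$, then invoking (i) --- is a clean and valid variant of the paper's direct repetition of the computation with $\ee_i=1$, and your stated alternative coincides with what the paper actually does.
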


\begin{proof}
(i) By Theorem~\ref{thmM1}(i), $\oplus_M (K_1,K_2,\dots, K_m)\in {\mathcal{K}}^n$.
If $z\in \oplus_M (K_1,K_2,\dots, K_m)$, then by (\ref{Mndef}), $z=a_1x^{(1)}+\cdots+a_mx^{(m)}$ with $x^{(i)}\in K_i$, $i=1,\dots,m$, and $(a_1,\dots,a_m)\in M$. If $u\in S^{n-1}$, then using (\ref{suppf}), we have
\begin{eqnarray*}
z\cdot u&=&\sum_{i=1}^ma_i(x^{(i)}\cdot u)=\sum_{i=1}^m(\ee_ia_i)(\ee_i x^{(i)}\cdot u)\le \sum_{i=1}^m(\ee_ia_i)h_{\ee_iK_i}(u)\\
&=&(\ee_1a_1,\dots,\ee_ma_m)\cdot \left(h_{\ee_1K_1}(u),h_{\ee_2K_2}(u),\dots,h_{\ee_mK_m}(u)\right)\\
&\le& h_{M^+}\left(h_{\ee_1K_1}(u),h_{\ee_2K_2}(u),\dots,h_{\ee_mK_m}(u)\right).
\end{eqnarray*}
By (\ref{suppf}) again, this shows that $$
h_{\oplus_M (K_1,K_2,\dots, K_m)}(u)\le h_{M^+}\left(h_{\ee_1K_1}(u),h_{\ee_2K_2}(u),\dots,h_{\ee_mK_m}(u)\right).
$$

Now choose $(a_1(u),\dots,a_m(u))\in M$ such that $h_{M^+}\left(h_{\ee_1K_1}(u),h_{\ee_2K_2}(u),\dots,h_{\ee_mK_m}(u)\right)$ is equal to $(\ee_1a_1(u),\dots,\ee_ma_m(u))\cdot \left(h_{\ee_1K_1}(u),h_{\ee_2K_2}(u),\dots,h_{\ee_mK_m}(u)\right)$.
Choose $x^{(i)}\in K_i$ such that $\ee_i x^{(i)}\cdot u=h_{\ee_iK_i}(u)$,
for $i=1,\dots,m$.  Then $h_{M^+}\left(h_{\ee_1K_1}(u),h_{\ee_2K_2}(u),\dots,h_{\ee_mK_m}(u)\right)$ is equal to
$$
(\ee_1a_1(u),\dots,\ee_ma_m(u))\cdot \left(\ee_1 x^{(1)}\cdot  u,\dots,\ee_m x^{(m)}\cdot  u\right)=\left(\sum_{i=1}^ma_i(u)x^{(i)}\right)\cdot u\le  h_{\oplus_M (K_1,K_2,\dots, K_m)}(u).
$$
This proves the result for $x\in S^{n-1}$, and the general case follows by the homogeneity of support functions.

(ii) By Corollary~\ref{corM1symm}, $\oplus_M (K_1,K_2,\dots, K_m)\in {\mathcal{K}}^n_s$. If $z\in \oplus_M (K_1,K_2,\dots, K_m)$, then by (\ref{Mndef}), $z=a_1x^{(1)}+\cdots+a_mx^{(m)}$ with $x^{(i)}\in K_i$, $i=1,\dots,m$, and $(a_1,\dots,a_m)\in M$. If $u\in S^{n-1}$, then using (\ref{suppf}) and $(|a_1|,\dots,|a_m|)\in M$, we have
\begin{eqnarray*}
z\cdot u&=&\sum_{i=1}^ma_i(x^{(i)}\cdot u)\le \sum_{i=1}^m|a_i|h_{K_i}(u)=(|a_1|,\dots,|a_m|)\cdot \left(h_{K_1}(u),h_{K_2}(u),\dots,h_{K_m}(u)\right)\\
&\le& h_{M}\left(h_{K_1}(u),h_{K_2}(u),\dots,h_{K_m}(u)\right).
\end{eqnarray*}
By (\ref{suppf}) again, this shows that $$
h_{\oplus_M (K_1,K_2,\dots, K_m)}(u)\le h_{M}\left(h_{K_1}(u),h_{K_2}(u),\dots,h_{K_m}(u)\right).
$$
The rest of the proof is as in part (i) with $M^+=M$ and $\ee_i=1$ for $i=1,\dots,m$.
\end{proof}

\begin{cor}\label{corthmM1}
Let $m\ge 2$ and let $M$ be a compact convex set in $\R^m$.  Then
\begin{equation}\label{FF}
F(x)=h_{M}\left(h_{K_1}(x),h_{K_2}(x),\dots,h_{K_m}(x)\right),
\end{equation}
for $x\in \R^n$, is a support function whenever $K_i\in {\mathcal{K}}^n$, $i=1,\dots,m$, if and only if $M\subset [0,\infty)^m$.
\end{cor}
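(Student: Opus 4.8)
The plan is to rely on the basic fact recorded in Section~\ref{prelim} that a function on $\R^n$ is a support function if and only if it is sublinear, i.e., homogeneous of degree $1$ and subadditive. Since $h_M$ and each $h_{K_i}$ are homogeneous of degree $1$, the composite $F$ is automatically homogeneous of degree $1$; hence in both directions the only issue is subadditivity. This observation organizes the whole argument.

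For the sufficiency, suppose $M\subset[0,\infty)^m$. Then $M$ lies in one of the $2^m$ closed orthants of $\R^m$, so by Theorem~\ref{thmM1}(i) the set $\oplus_M(K_1,\dots,K_m)$ belongs to $\mathcal{K}^n$ whenever $K_i\in\mathcal{K}^n$. Applying Theorem~\ref{lemM2}(i) with $M^+=M$ and $\varepsilon_i=1$ for all $i$ identifies the support function of this set with $F$, so $F$ is a support function. (Alternatively, one can argue directly: writing $F(x)=\sup_{a\in M}\sum_{i=1}^m a_ih_{K_i}(x)$ and noting that for $a\in M\subset[0,\infty)^m$ each summand is the support function of $\sum_i a_iK_i$, one sees that $F$ is a supremum---finite because $M$ is compact---of support functions, hence sublinear.)

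For the necessity I would argue by contraposition: assuming $M\not\subset[0,\infty)^m$, I would produce sets $K_i$ for which $F$ fails to be subadditive. The key simplification is to collapse all but one coordinate. Choose an index $j$ with $\alpha^-:=\min\{a_j:a\in M\}<0$, which exists because some point of $M$ has a negative coordinate, and set $\alpha^+:=\max\{a_j:a\in M\}$, so that the projection of the compact convex set $M$ onto the $j$th axis is $[\alpha^-,\alpha^+]$. Take $K_j=\conv\{e_1+e_2,e_1-e_2\}$ and $K_i=\{o\}$ for $i\ne j$. Then $h_{K_i}\equiv 0$ for $i\ne j$, while $h_{K_j}(x)=x_1+|x_2|$, and the formula for $F$ collapses to
\[
F(x)=h_M\bigl(0,\dots,0,h_{K_j}(x),0,\dots,0\bigr)=\sup_{a\in M}a_j\,h_{K_j}(x)=\max\bigl(\alpha^+h_{K_j}(x),\,\alpha^-h_{K_j}(x)\bigr).
\]

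It then remains to exhibit directions on which subadditivity breaks, and here lies the crux of the argument. I would take $x=-e_1+e_2$ and $y=-e_1-e_2$, for which $h_{K_j}(x)=h_{K_j}(y)=0$ but $h_{K_j}(x+y)=h_{K_j}(-2e_1)=-2$. Consequently $F(x)=F(y)=0$, whereas $F(x+y)=\max(-2\alpha^+,-2\alpha^-)=-2\alpha^->0$, so $F(x+y)>F(x)+F(y)$ and $F$ is not subadditive. The point requiring care is that this construction must defeat $F$ no matter what $\alpha^+$ is; the reason it does is that the corner of $g(t)=\max(\alpha^+t,\alpha^-t)$ at $t=0$ forces $g(t)>0$ for $t<0$ as soon as $\alpha^-<0$, so any segment $K_j$ whose support function vanishes on two directions $x,y$ yet is negative on $x+y$ immediately violates subadditivity. (The construction uses $n\ge2$, which is in force throughout.) This completes the contrapositive and hence the proof.
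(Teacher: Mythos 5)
Your proof is correct and follows essentially the same route as the paper: sufficiency via Theorem~\ref{thmM1}(i) and Theorem~\ref{lemM2}(i), and necessity by collapsing all but one coordinate and testing subadditivity against a line segment whose support function vanishes at two vectors but is negative at their sum. The only differences are cosmetic (contrapositive phrasing and the segment $\conv\{e_1+e_2,e_1-e_2\}$ in place of the paper's $\conv\{-e_1,-e_2\}$), and your computation of $h_M(te_j)=\max(\alpha^+t,\alpha^-t)$ correctly yields the required violation.
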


\begin{proof}
Let $M\subset [0,\infty)^m$ be a compact convex set and let $K_i\in {\mathcal{K}}^n$, $i=1,\dots,m$.  By Theorem~\ref{thmM1}(i), $\oplus_M(K_1,\dots,K_m)\in {\mathcal{K}}^n$. Then the fact that $F(x)$ is a support function follows from Theorem~\ref{lemM2}(i), since in (\ref{OMsupp}) we have $M^+=M$ and $\ee_i=1$ for $i=1,\dots,m$.

For the converse, suppose that $F(x)$ is a support function whenever $K_i\in {\mathcal{K}}^n$, $i=1,\dots,m$. Fix $j\in \{1,\dots,m\}$ and let $K_j=\conv\{-e_1,-e_2\}$ and $K_i=\{o\}$ for $1\le i\neq j\le m$.  Then $h_{K_j}(e_1)=h_{K_j}(e_2)=0$, $h_{K_j}(e_1+e_2)=-1$, and of course $h_{K_i}\equiv 0$ for $1\le i\neq j\le m$.  Therefore
$$F(e_k)=h_{M}\left(h_{K_1}(e_k),\dots,h_{K_m}(e_k)\right)=h_M(0,\dots,0)=0,$$
for $k=1,2$, and
$$F(e_1+e_2)=h_{M}\left(h_{K_1}(e_1+e_2),\dots,h_{K_m}(e_1+e_2)\right)=h_{M}(-e_j).$$
The subadditivity of $F(x)$, with $x=e_1$, $e_2$, and $e_1+e_2$, now gives $h_M(-e_j)\le 0+0=0$.  Since this holds for each $j=1,\dots,m$, we have $M\subset [0,\infty)^m$.
\end{proof}

\begin{ex}\label{LPex}
{\em Let $1<p\le \infty$ and let
$$
M=\left\{(x_1,x_2)\in [0,1]^2:x_1^{p'}+x_2^{p'}\le 1\right\},
$$
where $p>1$ and $1/p+1/p'=1$.  Then $M$ is the part of the unit ball in $l^2_{p'}$ contained in the closed positive quadrant.  The support function of $M$ is given for $x\in \R^2$ by
$$
h_M(x_1,x_2)=\begin{cases}
\left(\max\{x_1,0\}^p+\max\{x_2,0\}^p\right)^{1/p},& {\text{ if $1<p<\infty$,}}\\
\max\left\{\max\{x_1,0\},\max\{x_2,0\}\right\},& {\text{ if $p=\infty$}}.
\end{cases}
$$
By Theorem~\ref{lemM2} and Corollary~\ref{corthmM1}, for this choice of $M$, we have, for all $K, L\in {\mathcal{K}}^n$ and $x\in \R^n$,
$$
h_{K\oplus_M L}(x)=h_M\left(h_K(x),h_L(x)\right)=\begin{cases}
\left(\max\{h_K(x),0\}^p+\max\{h_L(x),0\}^p\right)^{1/p},& {\text{ if $1<p<\infty$,}}\\
\max\left\{\max\{h_K(x),0\},\max\{h_L(x),0\}\right\},& {\text{ if $p=\infty$}}.
\end{cases}
$$
Thus we have retrieved the extension of $L_p$ addition to sets in ${\mathcal{K}}^n$ via (\ref{extendedLp}) proposed earlier.  In general, this extension is different from that in \cite{LYZ}, but by Theorem~\ref{thmLpS} and its proof, the two extensions coincide in the special case when $L=-K$.  In particular, the $o$-symmetral $\triangle_pK$ can be defined via either extension of $L_p$ addition in an unambiguous way.}
\end{ex}

The situation is different for sets in the class ${\mathcal{K}}^n_o$.  Indeed,
suppose that $F$ as defined in (\ref{FF}) is a support function whenever $K_i\in {\mathcal{K}}^n_o$, $i=1,\dots,m$.  Suppose that $h_M(a)\le 0$ for some $a=(a_1,\dots,a_m)\in (0,\infty)^m$, and let $K_i=a_iB^n$, $i=1,\dots,m$.  Then
$h_{K_i}(u)=a_i$ for all $u\in S^{n-1}$ and $i=1,\dots,m$.  Consequently, $F(u)=h_M(a)\le 0$ for all $u\in S^{n-1}$, which implies that $F\equiv 0$ and hence $M=\{o\}$.  It follows that if $M\neq \{o\}$, then $M\cap [0,\infty)^m$ is a nontrivial compact convex set.  However, it is not necessary that $M\subset [0,\infty)^m$; for example, if $m=2$ and $M=B^2$, then
$$F(x)=\left(h_{K_1}(x)^2+h_{K_2}(x)^2\right)^{1/2},$$
for all $x\in \R^n$, so $F$ is the support function of the $L_2$ sum $K_1+_2K_2$ for all $K_1,K_2\in {\mathcal{K}}^n_o$.

Next, we seek a version of Corollary~\ref{corthmM1} for sets in the class ${\mathcal{K}}^n_s$.

\begin{lem}\label{thm3}
Let $m\ge 2$ and let $M\in {\mathcal{K}}^m_s$.  Then
$$
F(x)=h_{M}\left(h_{K_1}(x),h_{K_2}(x),\dots,h_{K_m}(x)\right),
$$
for $x\in \R^n$, is a support function whenever $K_i\in {\mathcal{K}}^n_s$, $i=1,\dots,m$, if and only if $h_M(s_1,\dots,s_m)$ is increasing in each variable on $[0,\infty)^m$.
\end{lem}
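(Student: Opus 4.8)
The plan is to reduce the question of whether $F$ is a support function to a single property, subadditivity, since homogeneity is automatic: because $h_M$ and each $h_{K_i}$ are homogeneous of degree $1$, we get $F(rx)=h_M(rh_{K_1}(x),\dots,rh_{K_m}(x))=rF(x)$ for $r\ge 0$. Moreover, each $K_i\in{\mathcal{K}}^n_s$ is $o$-symmetric, so $h_{K_i}\ge 0$, which means the argument $(h_{K_1}(x),\dots,h_{K_m}(x))$ always lies in $[0,\infty)^m$; only the values of $h_M$ on that orthant are ever used, which is why monotonicity is imposed only there. As $M$ is compact, $h_M$ is finite, so $F$ is finite, and a finite sublinear function is the support function of a unique member of ${\mathcal{K}}^n$. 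Thus $F$ is a support function if and only if it is subadditive.

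For the ``if'' direction I would assume $h_M$ is increasing in each variable on $[0,\infty)^m$ and verify subadditivity by a chain of three inequalities. Fix $x,y\in\R^n$ and write $p_i=h_{K_i}(x)$, $q_i=h_{K_i}(y)$. Subadditivity of each $h_{K_i}$ gives $h_{K_i}(x+y)\le p_i+q_i$ coordinatewise, with all quantities nonnegative; monotonicity of $h_M$ then yields $F(x+y)\le h_M(p_1+q_1,\dots,p_m+q_m)$. Finally the subadditivity of $h_M$ (automatic, as a support function), applied to the two points $(p_1,\dots,p_m)$ and $(q_1,\dots,q_m)$ of $[0,\infty)^m$, bounds the right-hand side by $F(x)+F(y)$. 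Hence $F$ is subadditive, so it is a support function.

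For the ``only if'' direction I would argue by contraposition. If $h_M$ is not increasing in each variable on $[0,\infty)^m$, then by definition the failure occurs in a single slot $j$: there are $\xi,\eta\in[0,\infty)^m$ with $\xi_i=\eta_i$ for $i\ne j$, $0\le\xi_j<\eta_j$, and $h_M(\xi)>h_M(\eta)$. Working in the plane $\lin\{e_1,e_2\}$ (the construction needs two independent directions, so $n\ge 2$, which we assume), I would build $K_i\in{\mathcal{K}}^n_s$ realizing $h_{K_i}(e_1)=h_{K_i}(e_2)=\eta_i/2$ and $h_{K_i}(e_1+e_2)=\xi_i$. For $i\ne j$ one has $\eta_i=\xi_i$, so the box $K_i=[-\tfrac{\xi_i}{2}e_1,\tfrac{\xi_i}{2}e_1]+[-\tfrac{\xi_i}{2}e_2,\tfrac{\xi_i}{2}e_2]$ works, since it is additive: $h_{K_i}(e_1)+h_{K_i}(e_2)=\xi_i=h_{K_i}(e_1+e_2)$. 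For $i=j$ I need a ``drop'' at $e_1+e_2$; writing $r=\eta_j/2$ and $c=\xi_j$, so $0\le c<2r$, the slab-intersection $K_j=\{w:|w\cdot e_1|\le r,\ |w\cdot e_2|\le r,\ |w\cdot(e_1+e_2)|\le c\}$ is $o$-symmetric, compact, and convex, and the point $re_1+(c-r)e_2\in K_j$ shows $h_{K_j}(e_1)=h_{K_j}(e_2)=r$ while $h_{K_j}(e_1+e_2)=c$. With these choices, homogeneity gives $F(e_1)+F(e_2)=h_M(\eta/2)+h_M(\eta/2)=h_M(\eta)$, whereas $F(e_1+e_2)=h_M(\xi)>h_M(\eta)$; thus $F$ violates subadditivity and is not a support function.

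The main obstacle is this realization step in the ``only if'' direction: one must simultaneously prescribe, coordinate by coordinate, the values of $m$ support functions at the three coplanar directions $e_1,e_2,e_1+e_2$ so that all but the $j$th behave additively while the $j$th drops, and one must check that the resulting sets genuinely lie in ${\mathcal{K}}^n_s$. The box and slab-intersection bodies above make this explicit and reduce the verification to the elementary constraint $0\le\xi_j<\eta_j$; the remaining computations are the routine algebra of support functions.
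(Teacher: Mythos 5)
Your proof is correct and follows essentially the same route as the paper: the ``if'' direction is the identical monotonicity-plus-subadditivity chain, and for the ``only if'' direction the paper likewise tests subadditivity of $F$ at two vectors in a coordinate plane against segments for the coordinates $i\ne j$ and one specially constructed $o$-symmetric body for the critical coordinate (it reduces to the case $t_2<2t_1$ and iterates, rather than using your slab intersection). The only point to tidy is that your $K_j$ must be taken inside $\lin\{e_1,e_2\}$ (or intersected with it), since otherwise the three slab constraints leave it unbounded when $n\ge 3$.
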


\begin{proof}
Suppose that $h_M(s_1,\dots,s_m)$ is increasing in each variable for $s_i\ge 0$, $i=1,\dots,m$, and let $K_i\in {\mathcal{K}}^n_s$, $i=1,\dots,m$. Clearly $F$ is homogeneous of degree 1.  Also, for $x,y\in \R^n$, the subadditivity of support functions implies that
\begin{eqnarray*}
F(x+y)&=&h_{M}\left(h_{K_1}(x+y),\dots,h_{K_m}(x+y)\right)\\
&\le &h_{M}\left(h_{K_1}(x)+h_{K_1}(y),\dots,h_{K_m}(x)+h_{K_m}(y)\right)\\
&=& h_{M}\left(\left(h_{K_1}(x),\dots,h_{K_m}(x)\right)+
\left(h_{K_1}(y),\dots,h_{K_m}(y)\right)\right)\\
&\le & h_{M}\left(h_{K_1}(x),\dots,h_{K_m}(x)\right)+
h_{M}\left(h_{K_1}(y),\dots,h_{K_m}(y)\right)=F(x)+F(y).
\end{eqnarray*}
This proves that $F$ is a support function.

Suppose that $F$ is a support function. To show that $h_M(s_1,\dots,s_{m})$ is increasing in each variable on $[0,\infty)^m$, we may, without loss of generality, prove that if $0<t_1<t_2$ and $s_i>0$, $i=1,\dots,m-1$, we have
$$
h_M(s_1,s_2\dots,s_{m-1},t_1)\le h_M(s_1,s_2,\dots,s_{m-1},t_2).
$$
In fact, it suffices to assume also that $t_2<2t_1$, since the general case then follows by iteration.  Let $K_i=[-s_ie_1,s_ie_1]$ for $i=1,\dots,m-1$, let $x=(1,1,0\dots,0)\in \R^n$, and let $y=(1,-1,0,\dots,0)\in \R^n$.  Note that $h_{K_i}(x)=h_{K_i}(y)=s_i$ and $h_{K_i}(x+y)=2s_i$, $i=1,\dots,m-1$.
Choose $K_m\in {\mathcal{K}}^n_s$ such that
$h_{K_m}(x)=h_{K_m}(y)=t_2$ and
$h_{K_m}(x+y)=2t_1$. (This is possible since $0<t_1<t_2<2t_1$.)
Then, since $F$ is subadditive, we have
\begin{eqnarray*}
2h_M\left(s_1,s_2,\dots,s_{m-1},t_1\right)
&=&h_M\left(2s_1,2s_2,\dots,2s_{m-1},2t_1\right)\\
&=&h_{M}\left(h_{K_1}(x+y),\dots,h_{K_m}(x+y)\right)\\
&=&F(x+y)\le F(x)+F(y)\\
&=&h_{M}\left(h_{K_1}(x),\dots,h_{K_m}(x)\right)+
h_{M}\left(h_{K_1}(y),\dots,h_{K_m}(y)\right)\\
&=&2h_M\left(s_1,s_2,\dots,s_{m-1},t_2\right),
\end{eqnarray*}
as required.
\end{proof}

Note that it is not necessary that $h_M$ is strictly increasing in each variable.  This is shown by taking $M=\conv\{\pm e_1, \pm e_2,\dots,\pm e_m\}$ (i.e., the unit ball in $l^m_1$), in which case $h_M(s_1,\dots,s_m)=\max\{s_1,\dots,s_m\}$, for $s_1,\dots,s_m\ge 0$.

If $M\in \K^m$ is $1$-unconditional, then $M$ is $o$-symmetric and it follows from Theorem~\ref{lemM2}(ii) and Lemma~\ref{thm3} that $h_M$ is increasing in each variable on $[0,\infty)^m$.  When $m=2$, the next lemma provides a sort of converse statement.

\begin{lem}\label{madd2}
Suppose that $M\in \K_o^2$ is such that $h_M(s,t)$ is increasing in each variable for $s,t\ge 0$.  Then there is an $M'\in \K^2$ that is $1$-unconditional and such that $h_{M'}(s,t)=h_M(s,t)$ for $s,t\ge 0$.
\end{lem}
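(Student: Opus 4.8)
The plan is to define the candidate set $M'$ through its support function. Since a $1$-unconditional compact convex set in $\R^2$ is precisely one whose support function is invariant under sign changes of the coordinates, I would set
\[
g(x_1,x_2)=h_M(|x_1|,|x_2|),\qquad (x_1,x_2)\in\R^2,
\]
and aim to show that $g$ is the support function of the desired $M'$. Because $M$ is compact, $h_M$ is finite, so $g$ is real-valued; by the characterization of support functions recalled in Section~\ref{prelim}, it then suffices to prove that $g$ is sublinear, i.e.\ homogeneous of degree $1$ and subadditive. Homogeneity is immediate: for $r\ge 0$ one has $g(rx_1,rx_2)=h_M(r|x_1|,r|x_2|)=rh_M(|x_1|,|x_2|)=rg(x_1,x_2)$, using the homogeneity of $h_M$.

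The crux is subadditivity, and this is exactly where the monotonicity hypothesis enters. Given $x=(x_1,x_2)$ and $y=(y_1,y_2)$ in $\R^2$, the triangle inequality in each coordinate gives $|x_i+y_i|\le |x_i|+|y_i|$ for $i=1,2$, where the point $(|x_1|+|y_1|,|x_2|+|y_2|)$ lies in $[0,\infty)^2$. Since $h_M(s,t)$ is increasing in each variable on $[0,\infty)^2$ by hypothesis, I would first bound
\[
g(x+y)=h_M(|x_1+y_1|,|x_2+y_2|)\le h_M(|x_1|+|y_1|,|x_2|+|y_2|).
\]
Then, applying the subadditivity of the support function $h_M$ to the vectors $(|x_1|,|x_2|)$ and $(|y_1|,|y_2|)$, the right-hand side is at most $h_M(|x_1|,|x_2|)+h_M(|y_1|,|y_2|)=g(x)+g(y)$. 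Hence $g$ is subadditive, so $g=h_{M'}$ for a unique $M'\in\K^2$.

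It remains to check that $M'$ has the two required features. For $s,t\ge 0$ we have $h_{M'}(s,t)=g(s,t)=h_M(s,t)$, which is the agreement on the positive quadrant. To see that $M'$ is $1$-unconditional, let $\sigma_i$ denote reflection in the $i$th coordinate hyperplane; each $\sigma_i$ is orthogonal and symmetric, so $\sigma_i^t=\sigma_i$, and by (\ref{suppchange}), $h_{\sigma_i M'}(x)=h_{M'}(\sigma_i x)=g(\sigma_i x)=g(x)=h_{M'}(x)$, the third equality holding because $g$ depends only on $|x_1|$ and $|x_2|$. Thus $\sigma_i M'=M'$ for $i=1,2$, so $M'$ is symmetric with respect to each coordinate axis, as required. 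No step presents a genuine obstacle; the only point requiring care is the ordering of the two inequalities in the subadditivity argument, where monotonicity must be invoked \emph{before} the subadditivity of $h_M$.
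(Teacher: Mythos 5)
Your proof is correct, but it takes a genuinely different route from the paper's. You work entirely on the level of support functions: you define $g(x_1,x_2)=h_M(|x_1|,|x_2|)$ and verify sublinearity, with the key point (which you correctly isolate) being that monotonicity of $h_M$ on $[0,\infty)^2$ must be applied to pass from $(|x_1+y_1|,|x_2+y_2|)$ to $(|x_1|+|y_1|,|x_2|+|y_2|)$ \emph{before} the subadditivity of $h_M$ is invoked; the characterization of support functions as real-valued sublinear functions then produces $M'$, and the invariance of $g$ under coordinate sign changes gives $1$-unconditionality via (\ref{suppchange}). The paper instead constructs $M'$ geometrically: it first uses monotonicity to show that among the points of $M$ with greatest $x_2$-coordinate (respectively, greatest $x_1$-coordinate) there is one lying in $[0,\infty)^2$, then defines $M'\cap[0,\infty)^2$ as the convex hull of $M\cap[0,\infty)^2$ together with two auxiliary horizontal and vertical segments reaching the coordinate axes, and finally extends by unconditional reflection. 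Your argument is shorter and arguably less delicate, since the only verification is a two-step inequality; the paper's construction has the advantage of describing $M'$ explicitly as a set built from $M$, which makes the relationship between $M$ and $M'$ geometrically transparent. Both establish exactly the statement required, and I see no gap in your version.
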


\begin{proof}
Suppose that $M\in \K_o^2$ is such that $h_M(s,t)$ is increasing in each variable for $s,t\ge 0$.  We claim that among the points in $M$ with the greatest $x_2$-coordinate, there is one contained in $[0,\infty)^2$.  If this is not the case, let $(-a,b)$, $a,b>0$, be the one with the greatest $x_1$-coordinate. Let
$s,t>0$ be such that the tangent line to $M$ orthogonal to $(s,t)$ meets $\partial M$ at $(-c,d)$, where $0<c\le a$ and $0\le d\le b$.  Then
$$h_M(s,t)=(-c,d)\cdot (s,t)=-cs+dt<bt=h_M(0,t),$$
which contradicts the fact that $h_M(s,t)$ is increasing in $s$ for $s\ge 0$.  This proves the claim.  Similarly, one can show that among the points in $M$ with the greatest $x_1$-coordinate, there is one contained in $[0,\infty)^2$.

Let $z$ be the point in $M\cap [0,\infty)^2$ with the greatest $x_2$-coordinate and such that the (possibly degenerate) horizontal line segment $T$ with left endpoint on the $x_2$-axis and right endpoint equal to $z$ has relative interior disjoint from $M$. Similarly, let $z'$ be the point in $M\cap [0,\infty)^2$ with the greatest $x_1$-coordinate and such that the (possibly degenerate) vertical line segment $T'$ with lower endpoint on the $x_1$-axis and upper endpoint equal to $z'$ has relative interior disjoint from $M$.  Let $M'$ be the compact convex set such that $M'\cap [0,\infty)^2=\conv\{M\cap [0,\infty)^2, T,T'\}$ and $M'$ is $1$-unconditional.  By this construction, $h_{M'}(s,t)=h_M(s,t)$ for all $s,t\ge 0$.
\end{proof}

Of special interest in Section~\ref{Symmetrization} is the behavior of the $M$-sum of a compact convex set $K$ in $\R^n$ and its reflection $-K$, particularly when $M\in {\mathcal{K}}^2$ is symmetric in the line $x_1=x_2$.  By Theorem~\ref{thmM1}(i), $K\oplus_M (-K)$ is convex and hence easily seen to be in ${\mathcal{K}}_s^n$, when $M\subset [0,\infty)^2$, and then Theorem~\ref{lemM2}(i) implies that $h_M(h_K(x),h_{-K}(x))$, $x\in \R^n$, is the support function of $K\oplus_M(-K)$.  It is natural to ask if these observations remain true when $M\in {\mathcal{K}}_s^2$.  The following example shows that this is not the case, even if $M$ is 1-unconditional and symmetric in the line $x_1=x_2$ and $o\in K$.

Let $M=[-1,1]^2$ and let $K=[0,1]^2$.  Then $(0,2)=1(0,1)+(-1)(0,-1)$ and $(-1,1)=(-1)(1,0)+(-1)(0,-1)$ both belong to $K\oplus_M(-K)$.  We claim that $(-1/2,3/2)\not\in K\oplus_M(-K)$.  Indeed, otherwise there would be $0\le x_1,x_2,y_1,y_2\le 1$ and $-1\le a,b\le 1$ such that $-1/2=ax_1-by_1$ and $3/2=ax_2-by_2$.  For the first equation to hold, we need $a<0$ or $b>0$, or both, but then the second equation cannot hold.  This proves the claim and shows that $K\oplus_M(-K)$ is not convex.

\section{Operations between $o$-symmetric compact convex or star sets}\label{results}

Here we focus on projection covariant operations $*:\left({\mathcal{K}}^n_s\right)^2\rightarrow {\mathcal{K}}^n$ or section covariant operations $*:\left({\mathcal{S}}^n_s\right)^2\rightarrow {\mathcal{S}}^n$.  We remark at the outset that in this case, the distributivity property $(rK)*(sK)=(r+s)K$ is too strongly tied to Minkowski or radial addition for a nontrivial classification theorem.  Indeed, we have the following easy result.

\begin{thm}\label{thm00}
Let $*:\left({\mathcal{K}}^n_s\right)^2\rightarrow {\mathcal{K}}^n$ (or $*:\left({\mathcal{S}}^n_s\right)^2\rightarrow {\mathcal{S}}^n$) be projection covariant (or section covariant, respectively) and have the distributivity property.  Then $K*L=K+L$ for all $K,L\in {\mathcal{K}}_s^n$ (or $K*L=K\widetilde{+}L$ for all $K,L\in {\mathcal{S}}_s^n$, respectively).
\end{thm}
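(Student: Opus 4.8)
The plan is to exploit projection (resp.\ section) covariance by restricting attention to the lines through the origin, where an $o$-symmetric set collapses to a symmetric segment, and then to invoke distributivity to pin down the operation on such segments. I first treat the compact convex case. Fix $u\in S^{n-1}$ and let $l_u$ be the line through the origin containing $u$; since $n\ge 2$, this is a proper subspace and projection covariance (Property~8) applies with $S=l_u$. For any $K\in{\mathcal{K}}^n_s$ the projection $K|l_u$ is an $o$-symmetric segment on $l_u$, and by \eqref{hproj} its support function in direction $u$ equals $h_{K|l_u}(u)=h_K(u|l_u)=h_K(u)$; hence $K|l_u=h_K(u)[-u,u]$. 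Writing $B_u=[-u,u]\in{\mathcal{K}}^n_s$, projection covariance gives
\[
(K*L)|l_u=(K|l_u)*(L|l_u)=\bigl(h_K(u)B_u\bigr)*\bigl(h_L(u)B_u\bigr).
\]
Now distributivity (Property~4), applied with the single set $B_u$ and scalars $r=h_K(u)$, $s=h_L(u)$, yields $\bigl(h_K(u)B_u\bigr)*\bigl(h_L(u)B_u\bigr)=(h_K(u)+h_L(u))B_u$. Thus $(K*L)|l_u$ is the symmetric segment $(h_K(u)+h_L(u))[-u,u]$, and reading off its extent in direction $u$ gives $h_{K*L}(u)=h_K(u)+h_L(u)=h_{K+L}(u)$. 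Since $u\in S^{n-1}$ was arbitrary and a compact convex set is determined by its support function, $K*L=K+L$.

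The star-set case is entirely parallel, with section covariance (Property~9) replacing projection covariance and radial functions replacing support functions. For $K\in{\mathcal{S}}^n_s$ and $u\in S^{n-1}$, the $o$-symmetry of $K$ gives $K\cap l_u=[-\rho_K(u)u,\rho_K(u)u]=\rho_K(u)B_u$, where now $B_u=[-u,u]\in{\mathcal{S}}^n_s$. Section covariance with $S=l_u$ and then distributivity give
\[
(K*L)\cap l_u=\bigl(\rho_K(u)B_u\bigr)*\bigl(\rho_L(u)B_u\bigr)=(\rho_K(u)+\rho_L(u))B_u,
\]
so that $\rho_{K*L}(u)=\rho_K(u)+\rho_L(u)=\rho_{K\widetilde{+}L}(u)$ for every $u\in S^{n-1}$. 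Because a star set is determined by its radial function, $K*L=K\widetilde{+}L$.

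There is no serious obstacle here; consistent with the paper calling this an easy result, the argument is a short reduction to one dimension. The only points requiring a little care are the identification of the projection (resp.\ section) of an $o$-symmetric set along $l_u$ as a dilate of the \emph{fixed} segment $B_u$, which is precisely what allows distributivity to be applied with a single base set, together with the concluding appeal to the uniqueness of a compact convex set from its support function (resp.\ of a star set from its radial function). It is worth noting that only the case $k=1$ of projection and of section covariance is used.
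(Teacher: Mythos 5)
Your proof is correct and follows essentially the same route as the paper: project (resp.\ intersect) onto the line $l_u$, identify the images of $K$ and $L$ as nonnegative dilates of the fixed segment $[-u,u]$, apply distributivity, and conclude by comparing support (resp.\ radial) functions. The only difference is cosmetic — you make the scalars $h_K(u)$, $h_L(u)$ explicit where the paper writes "suitable $r,s\ge 0$".
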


\begin{proof}
Suppose that $K,L\in {\mathcal{K}}_s^n$ and $u\in S^{n-1}$, and recall that $l_u$ denotes the line through the origin parallel to $u$. Then
\begin{eqnarray*}
(K*L)|\,l_u&=&(K|\,l_u)*(L|\,l_u)=[-ru,ru]*[-su,su]\\
&=&(r[-u,u])*(s[-u,u])=
(r+s)[-u,u]=[-(r+s)u,(r+s)u]\\
&=&[-ru,ru]+[-su,su]=(K|\,l_u)+(L|\,l_u)=(K+L)|\,l_u,
\end{eqnarray*}
for suitable $r,s\ge 0$ (depending on $u$) and all $u\in S^{n-1}$.  But this implies that $h_{K*L}(u)=h_{K+L}(u)$ for all $u\in S^{n-1}$ and hence $K*L=K+L$.  The other case follows in a similar fashion.
\end{proof}

The next two results can be stated in different versions, according to the corresponding versions of the results in Section~\ref{background}.  In both results, we interpret $L_p$ addition for $-\infty\le p<1$, $p\neq 0$, via (\ref{radialp}) and the remarks following it, since when $n=1$, the classes of $o$-symmetric compact convex sets and  $o$-symmetric star sets coincide.

\begin{lem}\label{lem0}
Let $*:\left({\mathcal{K}}^1_s\right)^2\rightarrow {\mathcal{K}}_s^1$ be continuous, homogeneous of degree 1, and associative.  Then either $K*L=\{o\}$, or $K*L=K$, or $K*L=L$, for all $K,L\in {\mathcal{K}}^1_s$, or else  $*=+_p$, i.e., the operation is $L_p$ addition, for some $-\infty\le p\neq 0\le \infty$.
\end{lem}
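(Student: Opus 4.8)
The plan is to reduce the statement to Proposition~\ref{prp2} by encoding the operation as a function of two nonnegative real variables. Every $K\in{\mathcal{K}}^1_s$ is an origin-symmetric interval $[-a,a]$ for a unique $a=h_K(1)\ge 0$, so the assignment $K=[-a,a]\mapsto a$ identifies ${\mathcal{K}}^1_s$ with $[0,\infty)$. Because $*$ maps into ${\mathcal{K}}^1_s$, writing $[-s,s]*[-t,t]=[-f(s,t),f(s,t)]$ defines a function $f:[0,\infty)^2\to[0,\infty)$ that carries all the information of $*$.

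Next I would translate the three hypotheses into conditions on $f$. Since $\delta([-a,a],[-b,b])=|a-b|$ (and the radial metric agrees with the Hausdorff metric here, as $h_{[-a,a]}(1)=\rho_{[-a,a]}(1)=a$), continuity of $*$ is exactly continuity of $f$ on $[0,\infty)^2$. Homogeneity of degree $1$ reads $r[-s,s]*r[-t,t]=[-rf(s,t),rf(s,t)]$, i.e. $f(rs,rt)=rf(s,t)$, which is condition (i) of Proposition~\ref{prp1}. Associativity reads $f(s,f(t,u))=f(f(s,t),u)$, which is condition (v). Thus $f$ is a continuous function satisfying (i) and (v), and Proposition~\ref{prp2} applies. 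Note that monotonicity is \emph{not} assumed, which is precisely why one invokes Pearson's Proposition~\ref{prp2} rather than Bohnenblust's Proposition~\ref{prp1}.

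Finally I would read off the conclusion case by case. The alternatives $f\equiv0$, $f(s,t)=s$, and $f(s,t)=t$ furnish respectively $K*L=\{o\}$, $K*L=K$, and $K*L=L$. The alternative $f(s,t)=(s^p+t^p)^{1/p}$ with $0<p\le\infty$ gives $h_{K*L}(1)=(h_K(1)^p+h_L(1)^p)^{1/p}$, i.e. $L_p$ addition $+_p$ for $0<p\le\infty$. The penultimate alternative (the same formula for $s,t>0$ and $0$ otherwise, with $p<0$) and the final alternative $f=\min$ (the case $p=-\infty$) translate, via the radial-function formula (\ref{radialp}) and the remarks following it, into $p$th radial addition for $p<0$ and $p=-\infty$; by the convention recorded just before the lemma these are the operations $+_p$ for $-\infty\le p<0$, since in dimension one the support and radial functions coincide. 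Collecting the cases gives exactly the stated dichotomy, with $-\infty\le p\neq0\le\infty$.

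The argument is essentially a dictionary between $*$ and $f$ followed by a citation of Proposition~\ref{prp2}, so there is no substantial obstacle. The only points demanding care are the verification that the two candidate metrics (Hausdorff and radial) coincide on ${\mathcal{K}}^1_s$, so that ``continuous'' is unambiguous, and the bookkeeping that matches every sign regime of $p$ produced by Proposition~\ref{prp2} with the correct interpretation of $+_p$ (support-function based for $p>0$, radial based for $p<0$)---which is exactly what lets the single symbol $+_p$ cover the whole range $-\infty\le p\neq0\le\infty$.
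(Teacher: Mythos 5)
Your proposal is correct and is essentially the paper's own proof: the paper likewise writes $[-s,s]*[-t,t]=[-f(s,t),f(s,t)]$, observes that $f:[0,\infty)^2\to[0,\infty)$ satisfies the hypotheses of Proposition~\ref{prp2}, and reads off the cases. Your version simply spells out the translation of the hypotheses and the identification of the negative-$p$ cases with $\widetilde{+}_p$, which the paper leaves implicit via the remark preceding the lemma.
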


\begin{proof}
Suppose that for each $s,t\ge 0$,
$$[-s,s]*[-t,t]=[-f(s,t), f(s,t)].$$
Then the function $f:[0,\infty)^2\rightarrow [0,\infty)$ satisfies the hypotheses of Proposition~\ref{prp2}, and the result follows immediately.
\end{proof}

\begin{cor}\label{thmcor9}
Let $*:\left({\mathcal{K}}^1_s\right)^2\rightarrow {\mathcal{K}}_s^1$ be a continuous, quasi-homogeneous, and associative operation that has the identity property. Then $*=+_p$ for some $0< p\le\infty$.
\end{cor}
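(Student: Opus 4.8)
The plan is to reduce the corollary to Lemma~\ref{lem0} by first upgrading quasi-homogeneity to genuine homogeneity of degree~$1$, and then to use the identity property a second time to eliminate the unwanted alternatives.

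First I would invoke Lemma~\ref{lemthmcor9}. Since here $n=1$ and $rB^1=[-r,r]\in{\mathcal{K}}^1_s$ for all $r\ge 0$, and since the identity property supplies $K*\{o\}=K$ for all $K\in{\mathcal{K}}^1_s$, that lemma applies directly and shows that the quasi-homogeneous operation $*$ is in fact homogeneous of degree~$1$. Consequently $*$ is continuous, homogeneous of degree~$1$, and associative, so Lemma~\ref{lem0} applies: either $K*L=\{o\}$, or $K*L=K$, or $K*L=L$ for all $K,L\in{\mathcal{K}}^1_s$, or else $*=+_p$ for some $-\infty\le p\neq 0\le\infty$.

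Next I would discard the three trivial alternatives using the two-sided identity property. If $K*L=\{o\}$ identically, then taking $L=\{o\}$ forces $K=K*\{o\}=\{o\}$, which fails for $K\neq\{o\}$. If $K*L=K$ identically, then $\{o\}*K=\{o\}\neq K$ in general, contradicting $\{o\}*K=K$; symmetrically, $K*L=L$ contradicts $K*\{o\}=K$. Hence only the $L_p$ case survives, and it remains to determine which exponents $p$ are compatible with the identity property.

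The remaining point---and the only place requiring care---is to rule out $p<0$. Writing $[-s,s]*[-t,t]=[-f(s,t),f(s,t)]$, the identity property reads $f(s,0)=f(0,s)=s$ for all $s\ge 0$. For $0<p<\infty$ one has $f(s,0)=(s^p+0^p)^{1/p}=s$, and for $p=\infty$, $f(s,0)=\max\{s,0\}=s$, so the identity holds. For $p<0$, however, the operation $+_p$ is interpreted via the radial convention following~(\ref{radialp}), under which $f(s,0)=0\neq s$ whenever $s>0$; thus the identity property fails. (Since the support and radial functions of $[-s,s]$ agree at its two endpoints, the convex and star interpretations coincide here, so this convention is the right one to apply.) This leaves exactly $0<p\le\infty$, as claimed. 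I expect the case analysis itself to be routine; the only subtlety is the correct application of the negative-$p$ convention, where the identity property is precisely what excludes those exponents.
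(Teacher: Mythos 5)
Your proposal is correct and follows the same route as the paper, which simply cites Lemma~\ref{lemthmcor9} (to upgrade quasi-homogeneity to homogeneity of degree 1) and Lemma~\ref{lem0}, and then notes that the identity property fails when $-\infty\le p<0$. You have merely spelled out the case analysis that the paper leaves implicit, including the correct use of the radial convention to exclude negative $p$.
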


\begin{proof}
This follows directly from Lemmas~\ref{lemthmcor9} and~\ref{lem0}, bearing in mind that the identity property fails in general when $-\infty\le p<0$.
\end{proof}

To see that none of the assumptions of Lemma~\ref{lem0} nor the first three assumptions of Corollary~\ref{thmcor9} can be omitted, define $*$ by taking the function $f$ in the proof of Lemma~\ref{lem0} to be $f_1$, $f_2$, or $f_5$ as defined after Proposition~\ref{prp1}.
The operation $*$ defined by
$$[-s,s]*[-t,t]=[-st,st],$$
for $s,t\ge 0$ shows that the identity property also cannot be omitted in Corollary~\ref{thmcor9}.

\begin{lem}\label{lem01}
If $n\ge 2$, then $*:\left({\mathcal{K}}^n_s\right)^2\rightarrow {\mathcal{K}}^n$ is projection covariant if and only if there is a homogeneous of degree 1 function $f:[0,\infty)^2\to[0,\infty)$ such that
\begin{equation}\label{441}
h_{K*L}(x)=f\left(h_K(x),h_L(x)\right),
\end{equation}
for all $K,L\in {\mathcal{K}}^n_s$ and all $x\in \R^n$.
\end{lem}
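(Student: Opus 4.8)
The plan is to prove both directions by reducing everything to one-dimensional projections and exploiting the $o$-symmetry of the sets, which makes projections onto lines into centred segments whose half-lengths are support-function values.

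For the ``if'' direction, assume such an $f$ exists. Given $K,L\in{\mathcal{K}}^n_s$ and a subspace $S$, note that $K|S,L|S\in{\mathcal{K}}^n_s$, so the assumed formula applies to them as well. Using \eqref{hproj} (i.e. $h_{K|S}(x)=h_K(x|S)$) one computes, for every $x\in\R^n$,
$$
h_{(K*L)|S}(x)=h_{K*L}(x|S)=f\bigl(h_K(x|S),h_L(x|S)\bigr)=f\bigl(h_{K|S}(x),h_{L|S}(x)\bigr)=h_{(K|S)*(L|S)}(x).
$$
Since a compact convex set is determined by its support function, $(K*L)|S=(K|S)*(L|S)$, so $*$ is projection covariant.

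For the ``only if'' direction, first record that for $u\in S^{n-1}$ and $K\in{\mathcal{K}}^n_s$ one has $K|l_u=[-h_K(u)u,h_K(u)u]$, by $o$-symmetry and \eqref{hproj}. Projecting $K*L$ onto $l_u$ and reading off the support in direction $u$ (again via \eqref{hproj}) then gives, with $s=h_K(u)$ and $t=h_L(u)$,
$$
h_{K*L}(u)=h_{(K|l_u)*(L|l_u)}(u)=h_{[-su,su]*[-tu,tu]}(u)=:f_u(s,t),
$$
so $h_{K*L}(u)$ depends on $K,L$ only through the pair $(h_K(u),h_L(u))$; thus $f_u$ is well defined. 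Projecting $[-su,su]*[-tu,tu]$ onto $l_u$ and invoking projection covariance with these already-in-$l_u$ inputs shows that this set equals its own projection onto $l_u$, hence lies in $l_u$, so $f_u(s,t)$ records exactly the relevant endpoint.

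The crucial step is to promote the a priori direction-dependent, possibly inhomogeneous functions $f_u$ to a single homogeneous $f$, and this is where $n\ge 2$ enters. Choose $v\in S^{n-1}$ with $c:=u\cdot v\in(0,1)$, which is possible precisely because $n\ge 2$. Projecting the segment $[-su,su]*[-tu,tu]\subseteq l_u$ onto $l_v$, applying projection covariance to the projected inputs, and tracking the $v$-support yields the transport identity
$$
c\,f_u(s,t)=f_v(cs,ct).
$$
Applying this identity together with its analogue (the roles of $u,v$ interchanged) in succession gives $f_u(c^2s,c^2t)=c^2 f_u(s,t)$; letting $c$ range over $(0,1)$ and inverting establishes that each $f_u$ is homogeneous of degree $1$. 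Feeding this back into the transport identity gives $f_v=f_u$, and a short connectedness argument over $S^{n-1}$ (or the same computation for inner products in $(-1,0)$) extends the equality to all pairs, including $v=-u$. Writing $f:=f_u$, the identity $f_u=f_{-u}$ forces $h_{K*L}$ to be even, so $K*L$ is $o$-symmetric, whence $o\in K*L$ and $f\ge 0$; thus $f:[0,\infty)^2\to[0,\infty)$. Finally $h_{K*L}(x)=f(h_K(x),h_L(x))$ extends from $S^{n-1}$ to all $x\in\R^n$ by homogeneity of both sides. The main obstacle is discovering this transport identity: it is the single device that simultaneously delivers homogeneity and direction-independence, and it is the only place the hypothesis $n\ge 2$ is genuinely used.
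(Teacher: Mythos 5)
Your proof is correct and follows essentially the same route as the paper's: reduce to projections onto lines, define $f_u$ via the action on $o$-symmetric segments, derive the transport identity $c\,f_u(s,t)=f_v(cs,ct)$, compose it with its reverse to get homogeneity and then direction-independence, and finish by homogeneity of support functions. The only cosmetic difference is that the paper tracks both endpoints of the segment $[-g_u(s,t)u,f_u(s,t)u]$ and gets $f=g\ge 0$ by comparing coordinates, whereas you obtain nonnegativity from the evenness of $h_{K*L}$ via $f_u=f_{-u}$; both are valid.
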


\begin{proof}
Suppose that $*:\left({\mathcal{K}}^n_s\right)^2\rightarrow {\mathcal{K}}^n$ is projection covariant and let $u\in S^{n-1}$. Then for any two $o$-symmetric compact convex sets $K$ and $L$ in $\R^n$, we have
\begin{equation}\label{eqb}
(K*L)|\,l_u=(K|\,l_u)*(L|\,l_u).
\end{equation}
One consequence of this is that if $I$ and $J$ are $o$-symmetric closed intervals in $l_u$, we must have $I*J\subset l_u$. Hence there are functions $f_u, g_u:[0,\infty)^2\to\R$ such that $-g_u\le f_u$ and
such that
\begin{equation}\label{eqn1}
[-su,su]*[-tu,tu]=[-g_u(s,t)u,f_u(s,t)u],
\end{equation}
for all $s,t\ge 0$.

Let $0\le \alpha\le 1$ and choose $v\in S^{n-1}$ such that $u\cdot v=\alpha$.  Using (\ref{eqb}) with $K=[-su,su]$, $L=[-tu,tu]$, and $l_u$ replaced by $l_v$, and (\ref{eqn1}), we obtain
\begin{eqnarray*}
\alpha[-g_{u}(s,t)v,f_{u}(s,t)v]&=&[-g_{u}(s,t)u,f_{u}(s,t)u]|\,l_v\\
&=&([-su,su]*[-tu,tu])|\,l_v=([-su,su]|\,l_v)*([-tu,tu]|\,l_v)\\
&=&[-\alpha sv,\alpha sv]*[-\alpha tv,\alpha tv]=
[-g_{v}(\alpha s,\alpha t)v,f_{v}(\alpha s,\alpha t)v],
\end{eqnarray*}
for all $s,t\ge 0$.  Therefore
\begin{equation}\label{fandg}
f_v(\alpha s,\alpha t)=\alpha f_u(s,t)\quad{\text{and}}\quad g_v(\alpha s,\alpha t)=\alpha g_u(s,t),
\end{equation}
for all $s,t\ge 0$.  Interchanging $u$ and $v$ in the first equation in (\ref{fandg}), we also have
\begin{equation}\label{alphav}
f_u(\alpha s,\alpha t)=\alpha f_v(s,t)
\end{equation}
and hence
$$f_u(\alpha^2 s,\alpha^2 t)=\alpha f_v(\alpha s,\alpha t)=\alpha^2f_u(s,t),$$
for all $s,t\ge 0$.  Setting $r=\alpha^2$, we get
\begin{equation}\label{rrr}
f_u(rs,rt)=rf_u(s,t),
\end{equation}
for $0\le r\le 1$ and $s,t\ge 0$.  Replacing $s$ and $t$ by $s/r$ and $t/r$, respectively, yields
\begin{equation}\label{1r}
f_u(s/r,t/r)=(1/r)f_u(s,t),
\end{equation}
for $0<r\le 1$ and $s,t\ge 0$.  From (\ref{rrr}) and (\ref{1r}), it follows that $f_u$ is homogeneous of degree 1.

Now fix $u\in S^{n-1}$. Let $v\in S^{n-1}$ be such that $u\cdot v>0$ and choose $0<\alpha <1$ such that $\alpha=u\cdot v$.  Then from (\ref{alphav}) and the homogeneity of $f_u$, we obtain
$$\alpha f_v(s,t)=f_u(\alpha s,\alpha t)=\alpha f_u(s,t),$$
for all $s,t\ge 0$.  This shows that $f_v=f_u$ for all such $v$ and consequently $f_u=f$, say, is independent of $u$.

Applying the same arguments to the second equation in (\ref{fandg}), we see that $g_u=g$, say, is also homogeneous of degree 1 and independent of $u$. Now from (\ref{eqb}) and (\ref{eqn1}) with $f_u=f$ and $g_u=g$, we obtain
\begin{eqnarray*}
[-h_{K*L}(-u)u,h_{K*L}(u)u]&=&(K*L)|\,l_u=(K|\,l_u)*(L|\,l_u)\\
&=&[-h_{K}(u)u,h_{K}(u)u]*[-h_{L}(u)u,h_{L}(u)u]\\
&=&[-g\left(h_K(u),h_L(u)\right)u,f\left(h_K(u),h_L(u)\right)u].
\end{eqnarray*}
Comparing the second coordinates in the previous equation, we conclude that
\begin{equation}\label{441a}
h_{K*L}(u)=f\left(h_K(u),h_L(u)\right),
\end{equation}
for all $u\in S^{n-1}$. (Note that in view of the equality of the first coordinates and the $o$-symmetry of $K$ and $L$, we must in fact have $g(s,t)=f(s,t)\ge 0$ for all $s,t\ge 0$.)

Let $r\ge 0$.  Then (\ref{441a}) and the homogeneity of support functions imply that
$$h_{K*L}(ru)=rh_{K*L}(u)=
rf\left(h_{K}(u),h_{L}(u)\right)
=f\left(rh_{K}(u),rh_{L}(u)\right)
=f\left(h_{K}(ru),h_{L}(ru)\right),$$
and (\ref{441}) follows.

For the converse, let $S\in {\mathcal{G}}(n,k)$, $1\le k\le n-1$, be a subspace and let $x\in \R^n$.  Using (\ref{hproj}) and (\ref{441}), we obtain
$$h_{(K*L)|S}(x)=h_{K*L}(x|S)=f\left(h_K(x|S),h_L(x|S)\right)=
f\left(h_{K|S}(x),h_{L|S}(x)\right)=h_{(K|S)*(L|S)}(x),$$
establishing the projection covariance of $*$.
\end{proof}

In view of (\ref{441}), we obtain the following corollary.

\begin{cor}\label{corsymm}
If $n\ge 2$ and $*:\left({\mathcal{K}}^n_s\right)^2\rightarrow {\mathcal{K}}^n$ is projection covariant, then in fact $*:\left({\mathcal{K}}^n_s\right)^2\rightarrow {\mathcal{K}}_s^n$.
\end{cor}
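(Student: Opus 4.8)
The plan is to read off the conclusion directly from the representation just established in Lemma~\ref{lem01}. Since $n\ge 2$ and $*$ is projection covariant, that lemma supplies a homogeneous of degree $1$ function $f:[0,\infty)^2\to[0,\infty)$ with
$$h_{K*L}(x)=f\left(h_K(x),h_L(x)\right)$$
for all $K,L\in {\mathcal{K}}^n_s$ and all $x\in\R^n$. First I would recall the standard criterion for central symmetry in terms of support functions: for a set $C\in{\mathcal{K}}^n$ one has $h_{-C}(x)=h_C(-x)$ for every $x\in\R^n$, and since a compact convex set is uniquely determined by its support function, $C$ is $o$-symmetric precisely when $h_C$ is an even function of $x$.

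Next I would exploit the hypothesis that the arguments are themselves $o$-symmetric. Because $K,L\in {\mathcal{K}}^n_s$, their support functions are even, so $h_K(-x)=h_K(x)$ and $h_L(-x)=h_L(x)$ for all $x\in\R^n$. Substituting $-x$ for $x$ in the displayed formula and using this evenness gives
$$h_{K*L}(-x)=f\left(h_K(-x),h_L(-x)\right)=f\left(h_K(x),h_L(x)\right)=h_{K*L}(x),$$
so $h_{K*L}$ is even. By the symmetry criterion, $K*L$ is $o$-symmetric, and since $K*L\in{\mathcal{K}}^n$ by the assumption on the target of $*$, in fact $K*L\in{\mathcal{K}}^n_s$. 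As this holds for all $K,L\in{\mathcal{K}}^n_s$, the operation maps $\left({\mathcal{K}}^n_s\right)^2$ into ${\mathcal{K}}^n_s$, as required.

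There is no serious obstacle here: the whole content has been front-loaded into Lemma~\ref{lem01}, and the corollary is a one-line consequence of the pointwise formula together with the evenness of the support functions of the inputs. The only point requiring (minor) care is to invoke the representation in the form valid for \emph{all} $x\in\R^n$, rather than merely on $S^{n-1}$, so that the substitution $x\mapsto-x$ is legitimate; this is precisely the global version of (\ref{441}) obtained at the end of the proof of Lemma~\ref{lem01} via the homogeneity of support functions.
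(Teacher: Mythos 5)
Your argument is correct and is exactly the paper's (the paper simply says ``In view of (\ref{441})'' and leaves the evenness argument implicit): since $h_K$ and $h_L$ are even for $o$-symmetric $K,L$, the representation $h_{K*L}(x)=f(h_K(x),h_L(x))$ from Lemma~\ref{lem01} shows $h_{K*L}$ is even, so $K*L$ is $o$-symmetric. Nothing further is needed.
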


Another easy consequence of Lemma~\ref{lem01} is that if $n\ge 2$, then any projection covariant operation $*:\left({\mathcal{K}}^n_s\right)^2\rightarrow {\mathcal{K}}^n$ must be both homogeneous of degree 1 and rotation covariant.  However, an even stronger conclusion will be drawn in Corollary~\ref{contaff}.

\begin{thm}\label{thm12}
Let $n\ge 2$. An operation $*:\left({\mathcal{K}}^n_s\right)^2\rightarrow {\mathcal{K}}^n$ is projection covariant if and only if it can be defined by
\begin{equation}\label{nn}
h_{K*L}(x)=h_{M}\left(h_K(x),h_L(x)\right),
\end{equation}
for all $K,L\in {\mathcal{K}}^n_s$ and $x\in\R^n$, or equivalently by
\begin{equation}\label{Meq}
K*L=K\oplus_M L,
\end{equation}
where $M$ is a $1$-unconditional compact convex set in $\R^2$.  Moreover, $M$ is uniquely determined by $*$.
\end{thm}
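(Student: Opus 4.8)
The plan is to deduce the theorem from Lemma~\ref{lem01}, Corollary~\ref{corsymm}, and Theorem~\ref{lemM2}(ii) by producing the set $M$ explicitly as the value of $*$ on a pair of coordinate segments, rather than by verifying monotonicity and subadditivity of $f$ by hand.

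I would begin with the forward (``only if'') implication. Assuming $*$ is projection covariant, Lemma~\ref{lem01} supplies a function $f:[0,\infty)^2\to[0,\infty)$, homogeneous of degree $1$, with
$$h_{K*L}(x)=f\bigl(h_K(x),h_L(x)\bigr)$$
for all $K,L\in{\mathcal K}^n_s$ and $x\in\R^n$; note that $h_K,h_L\ge 0$ because $o$-symmetric convex sets contain $o$, so $f$ is only ever evaluated on $[0,\infty)^2$. I then set $M:=[-e_1,e_1]*[-e_2,e_2]$, which lies in ${\mathcal K}^n_s$ by Corollary~\ref{corsymm}. Since $h_{[-e_1,e_1]}(x)=|x_1|$ and $h_{[-e_2,e_2]}(x)=|x_2|$, the displayed formula gives $h_M(x)=f(|x_1|,|x_2|)$ for all $x\in\R^n$. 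In particular $h_M(\pm e_j)=f(0,0)=0$ for $j\ge 3$ (using $f(0,0)=0$, which follows from homogeneity), so $M\subset e_j^{\perp}$ for every $j\ge 3$ and hence $M\subset\lin\{e_1,e_2\}$. Identifying this plane with $\R^2$, we see $M$ is a compact convex set in $\R^2$ whose support function $(x_1,x_2)\mapsto f(|x_1|,|x_2|)$ is invariant under sign changes of each coordinate; that is, $M$ is $1$-unconditional, and $h_M(s,t)=f(s,t)$ for $s,t\ge 0$.

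With $M$ in hand, (\ref{nn}) is immediate: for arbitrary $K,L\in{\mathcal K}^n_s$ and $x\in\R^n$ we have $h_K(x),h_L(x)\ge 0$, so $h_{K*L}(x)=f(h_K(x),h_L(x))=h_M(h_K(x),h_L(x))$; and since $M$ is $1$-unconditional and compact convex, Theorem~\ref{lemM2}(ii) converts this into (\ref{Meq}), namely $K*L=K\oplus_M L$. For the reverse (``if'') implication I would start from a $1$-unconditional $M\in{\mathcal K}^2$, observe that $\oplus_M$ maps $({\mathcal K}^n_s)^2$ into ${\mathcal K}^n_s$ by Corollary~\ref{corM1symm} and that $h_{K\oplus_M L}(x)=h_M(h_K(x),h_L(x))$ by Theorem~\ref{lemM2}(ii); taking $f=h_M$ restricted to $[0,\infty)^2$, which is homogeneous of degree $1$ and nonnegative, the converse half of Lemma~\ref{lem01} yields projection covariance. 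Finally, uniqueness follows by evaluating any admissible $M$ on the same test segments: if $h_{K*L}(x)=h_{M}(h_K(x),h_L(x))$ with $K=[-e_1,e_1]$ and $L=[-e_2,e_2]$, then $h_M(|x_1|,|x_2|)=h_{K*L}(x)$ forces $h_M(s,t)=f(s,t)$ on $[0,\infty)^2$, and a $1$-unconditional set is determined by the restriction of its support function to $[0,\infty)^2$, so $M$ is unique.

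The genuinely substantive input is Lemma~\ref{lem01}, which already carries out the passage from the geometric hypothesis of projection covariance to the single-variable functional form; granted that lemma, together with Theorem~\ref{lemM2}(ii) and Corollary~\ref{corsymm}, this theorem is mostly a matter of assembling the pieces. The one step needing care, and the point I expect to be the main (if modest) obstacle, is confirming that the candidate $M=[-e_1,e_1]*[-e_2,e_2]$ is genuinely planar and $1$-unconditional rather than merely $o$-symmetric in $\R^n$; this is exactly where the explicit computation $h_M(x)=f(|x_1|,|x_2|)$ and the resulting collapse $M\subset\lin\{e_1,e_2\}$ do the essential work.
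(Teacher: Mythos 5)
Your proof is correct, and its overall architecture coincides with the paper's: both pass through Lemma~\ref{lem01} to get the functional form $h_{K*L}(x)=f(h_K(x),h_L(x))$, both define $M=[-e_1,e_1]*[-e_2,e_2]$ inside $\lin\{e_1,e_2\}$, and both finish via Theorem~\ref{lemM2}(ii). The one genuine divergence is how $1$-unconditionality of $M$ is obtained. The paper records the identity $h_M(x_1,x_2)=f(|x_1|,|x_2|)$ only on the positive quadrant, concludes from Corollary~\ref{corsymm} merely that $M$ is $o$-symmetric, and then needs Lemma~\ref{thm3} (monotonicity of $h_M$ on $[0,\infty)^2$) together with Lemma~\ref{madd2} to \emph{replace} $M$ by a $1$-unconditional $M'$ with the same support function on $[0,\infty)^2$. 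You instead observe that the identity $h_M(x)=f(|x_1|,|x_2|)$ from Lemma~\ref{lem01} holds for \emph{all} $x$, so $h_M$ is already invariant under sign changes of each coordinate and the original $M$ is itself $1$-unconditional; this is valid and bypasses Lemmas~\ref{thm3} and~\ref{madd2} entirely. Your derivation of $M\subset\lin\{e_1,e_2\}$ from $h_M(\pm e_j)=f(0,0)=0$ for $j\ge 3$ is a harmless variant of the paper's direct appeal to projection covariance onto $\lin\{e_1,e_2\}$, and your uniqueness argument (test segments rather than the paper's dilated balls $x_1B^n$, $x_2B^n$) is equally sound, since a $1$-unconditional set is determined by its support function on $[0,\infty)^2$. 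What the paper's longer route buys is that Lemmas~\ref{thm3} and~\ref{madd2} are stated and proved anyway for independent use; what your route buys is a shorter, self-contained proof of this particular theorem.
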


\begin{proof}
By Lemma~\ref{lem01}, an operation defined by (\ref{nn}) is projection covariant.

Let $*:\left({\mathcal{K}}^n_s\right)^2\rightarrow {\mathcal{K}}^n$ be projection covariant. By Lemma~\ref{lem01}, (\ref{441}) holds for some homogeneous of degree 1 function $f:[0,\infty)^2\to[0,\infty)$.

Let $K_0=[-e_1,e_1]$, $L_0=[-e_2,e_2]$, and $S=\lin\{e_1,e_2\}$.  From the projection covariance of $*$, we have
$$(K_0*L_0)|S=(K_0|S)*(L_0|S)=K_0*L_0,$$
so $K_0*L_0\subset S$.  Identifying $S$ with $\R^2$ in the natural way, we let $M=K_0*L_0$.  Then for $x=(x_1,\dots,x_n)\in \R^n$ with $x_1,x_2\ge 0$, (\ref{441}) with $K=K_0$ and $L=L_0$ yields
\begin{equation}\label{Mf}
h_M(x_1,x_2)=h_{K_0*L_0}(x)=f(h_{K_0}(x),h_{L_0}(x))=f(|x_1|,|x_2|)=f(x_1,x_2).
\end{equation}
Since $h_K(x),h_L(x)\ge 0$ for all $K,L\in {\mathcal{K}}^n_s$ and all $x\in\R^n$, (\ref{nn}) follows directly from (\ref{441}) and (\ref{Mf}).  Moreover, from its definition and Corollary~\ref{corsymm}, $M$ is an $o$-symmetric compact convex set.

By (\ref{nn}) and Lemma~\ref{thm3} with $m=2$, $h_M(s,t)$ is increasing in each variable for $s,t\ge 0$.  By Lemma~\ref{madd2}, there exists a 1-unconditional set $M'\in \mathcal{K}^2$ such that $h_{M'}=h_M$ on $[0,\infty)^2$. Hence we can assume that $M$ has this property. Now (\ref{Meq}) follows from (\ref{nn}) via Theorem~\ref{lemM2}(ii) with $m=2$. The equivalence of (\ref{nn}) and (\ref{Meq}) is also a consequence of Theorem~\ref{lemM2}(ii) with $m=2$.

Let $x_1,x_2\ge 0$. Then (\ref{nn}) with $K=x_1B^n$ and $L=x_2B^n$ yields
$h_{K*L}(u)=h_M(x_1,x_2)$, for $u\in S^{n-1}$. This shows that $M\cap [0,\infty)^2$, and since $M$ is 1-unconditional, $M$ itself, is uniquely determined by the operation $*$.
\end{proof}

\begin{cor}\label{contaff}
Let $n\ge 2$. An operation $*:\left({\mathcal{K}}^n_s\right)^2\rightarrow {\mathcal{K}}^n$  is projection covariant if and only if it is continuous and $GL(n)$ covariant (and hence homogeneous of degree 1).
\end{cor}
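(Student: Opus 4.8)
The statement is an equivalence between projection covariance on one side and the conjunction of continuity and $GL(n)$ covariance on the other, and the plan is to prove the two implications separately, both of them essentially as corollaries of results already in hand. The reverse implication is immediate from Lemma~\ref{CGimpliesP}, while the forward implication will ride entirely on the structural description furnished by Theorem~\ref{thm12}. Neither direction should present a genuine obstacle, since the real work has already been concentrated in Theorem~\ref{thm12} and in the elementary properties of $M$-addition; the only care required is in checking the closure hypotheses and in transporting properties of $\oplus_M$ back to $*$.

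First I would dispose of the direction asserting that continuity plus $GL(n)$ covariance implies projection covariance. To apply Lemma~\ref{CGimpliesP} with ${\mathcal{C}}={\mathcal{K}}^n_s$, the only thing to verify is that ${\mathcal{K}}^n_s$ satisfies the lemma's hypotheses. It is closed under the action of $GL(n)$, because if $K$ is $o$-symmetric then $\phi(-K)=-\phi K=-K$ shows $\phi K$ is again $o$-symmetric; and it is closed under Hausdorff limits, since a Hausdorff limit of $o$-symmetric compact convex sets is again $o$-symmetric and compact convex. With these two observations Lemma~\ref{CGimpliesP} applies verbatim and gives projection covariance.

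For the forward direction, suppose $*$ is projection covariant. Theorem~\ref{thm12} then produces a $1$-unconditional compact convex set $M$ in $\R^2$ with $K*L=K\oplus_M L$ for all $K,L\in{\mathcal{K}}^n_s$. Since $M$ is compact, I invoke the basic facts about $M$-addition recorded in Section~\ref{Msub}: for compact $M$ the operation $\oplus_M$ is continuous in the Hausdorff metric and $GL(n)$ covariant. Corollary~\ref{corM1symm} ensures that $\oplus_M$ maps $({\mathcal{K}}^n_s)^2$ into ${\mathcal{K}}^n_s\subset{\mathcal{K}}^n$, so $\oplus_M$ is an operation of exactly the required type, and transporting these two properties back to $*$ through the identity $K*L=K\oplus_M L$ shows that $*$ is continuous and $GL(n)$ covariant.

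Finally, the parenthetical claim of homogeneity of degree $1$ follows from $GL(n)$ covariance applied to the dilatation $\phi=r\,\mathrm{id}$ for $r>0$, which gives $r(K*L)=(rK)*(rL)$; the remaining case $r=0$ is then obtained by continuity, letting $r\to 0^+$. I do not expect any step to be the true bottleneck: the substance is already encapsulated in Theorem~\ref{thm12}, and the corollary amounts to assembling that theorem with the continuity and $GL(n)$ covariance of $\oplus_M$ and with the verification that ${\mathcal{K}}^n_s$ is stable under linear maps and Hausdorff limits.
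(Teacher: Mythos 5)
Your proof is correct and follows essentially the same route as the paper: Lemma~\ref{CGimpliesP} for the reverse implication, and Theorem~\ref{thm12} together with the continuity and $GL(n)$ covariance of $\oplus_M$ for a compact $M$ for the forward implication. (Only a cosmetic slip: the chain $\phi(-K)=-\phi K=-K$ should read $\phi K=\phi(-K)=-\phi K$, using $K=-K$ and linearity, to conclude $\phi K$ is $o$-symmetric.)
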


\begin{proof}
If $*$ is continuous and $GL(n)$ covariant, then it is projection covariant by Lemma~\ref{CGimpliesP} and homogeneous of degree 1.  Since $\oplus_M:\left({\mathcal{K}}^n_s\right)^2\rightarrow {\mathcal{K}}_s^n$ is continuous and $GL(n)$ covariant, the converse follows from Theorem~\ref{thm12}.
\end{proof}

\begin{cor}\label{BlaPol}
Neither polar $L_p$ addition, for $n\ge 2$ and $-\infty\le p\le -1$, nor Blaschke addition, for $n\ge 3$, can be extended to a projection covariant operation $*:\left({\mathcal{K}}^n_s\right)^2\rightarrow {\mathcal{K}}^n$.
\end{cor}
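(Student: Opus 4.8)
The plan is to recognize that this corollary is essentially immediate once projection covariance has been identified with continuity. The strategy is a short proof by contradiction that feeds the already-established non-existence results for \emph{continuous} extensions through the equivalence in Corollary~\ref{contaff}.

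First I would assume, to the contrary, that there is a projection covariant operation $*:\left({\mathcal{K}}^n_s\right)^2\rightarrow {\mathcal{K}}^n$ extending polar $L_p$ addition (in the case $n\ge 2$, $-\infty\le p\le -1$) or extending Blaschke addition (in the case $n\ge 3$). By Corollary~\ref{contaff}, every projection covariant operation on $\left({\mathcal{K}}^n_s\right)^2$ is continuous in the Hausdorff metric and $GL(n)$ covariant; in particular, $*$ is continuous. Thus $*$ would be a \emph{continuous} operation $\left({\mathcal{K}}^n_s\right)^2\rightarrow {\mathcal{K}}^n$ extending the operation in question. But this directly contradicts Theorem~\ref{thmF} in the polar $L_p$ case and Theorem~\ref{thmB} in the Blaschke case, each of which rules out exactly such a continuous extension. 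Hence no projection covariant extension can exist.

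There is no real obstacle in this argument: the substantive content resides in Theorems~\ref{thmF} and~\ref{thmB}, whose proofs produce explicit sequences of $o$-symmetric convex bodies forcing the two operations to behave discontinuously in the limit, and in Corollary~\ref{contaff}, which upgrades projection covariance to continuity. The only point needing a line of care is to note that a projection covariant extension would restrict, on the relevant subclasses (the $o$-symmetric convex bodies for Blaschke addition, and the $o$-symmetric members of ${\mathcal{K}}^n_{oo}$ for polar $L_p$ addition), precisely to the operation being extended, so that the continuity guaranteed by Corollary~\ref{contaff} genuinely collides with the discontinuity exploited in Theorems~\ref{thmF} and~\ref{thmB}.
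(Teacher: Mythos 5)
Your proof is correct and is exactly the argument the paper intends: the paper's proof of this corollary is a one-line appeal to Theorems~\ref{thmF} and~\ref{thmB} together with Corollary~\ref{contaff}, which is precisely the chain of implications you spell out. No issues.
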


\begin{proof}
This is an immediate consequence of Theorems~\ref{thmF} and~\ref{thmB} and Corollary~\ref{contaff}.
\end{proof}

The special case of the next theorem when $*=\oplus_M$ for some $1$-unconditional compact convex set $M$ in $\R^2$ was proved earlier by Protasov \cite{Pro99}, by a fairly intricate direct argument.  Rather than appealing to \cite{Pro99}, however, we prefer to utilize the more general results of Section~\ref{background}.

\begin{thm}\label{thm1}
If $n\ge 2$, then $*:\left({\mathcal{K}}^n_s\right)^2\rightarrow {\mathcal{K}}^n$ is projection covariant and associative if and only if $*=\oplus_M$, where either $M=\{o\}$, or $M=[-e_1,e_1]$, or $M=[-e_2,e_2]$, or $M$ is the unit ball in $l^2_p$ for some $1\le p\le \infty$; in other words, if and only if either $K*L=\{o\}$, or $K*L=K$, or $K*L=L$, for all $K,L\in {\mathcal{K}}^n_s$, or else $*=+_p$ for some $1\le p\le\infty$.
\end{thm}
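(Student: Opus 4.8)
The plan is to convert associativity of $*$ into the associativity equation (v) of Proposition~\ref{prp1} for an associated one-dimensional ``addition'' on $[0,\infty)^2$, and then to extract the admissible operations from Pearson's classification in Proposition~\ref{prp2}.

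The ``if'' direction is short. Each candidate $M$ is a $1$-unconditional compact convex set in $\R^2$, so $\oplus_M$ is projection covariant by Theorem~\ref{thm12}, and by (\ref{nn}) it is associative precisely when the function $f=h_M|_{[0,\infty)^2}$ satisfies the associativity equation. This is clear in each case: $f(s,t)=0$, $f(s,t)=s$, and $f(s,t)=t$ are trivially associative, while $f(s,t)=(s^p+t^p)^{1/p}$ is associative because addition of $p$th powers, and for $p=\infty$ the maximum, is associative.

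For the ``only if'' direction, assume $*$ is projection covariant and associative. By Theorem~\ref{thm12} we may write $*=\oplus_M$ for a unique $1$-unconditional compact convex $M\subset\R^2$, and by Lemma~\ref{lem01} there is a homogeneous-of-degree-$1$ function $f:[0,\infty)^2\to[0,\infty)$ with $h_{K*L}(x)=f(h_K(x),h_L(x))$ and $f=h_M$ on $[0,\infty)^2$. Since $M$ is a nonempty compact convex set, $f$ is continuous, and being the restriction of the sublinear function $h_M$ it is subadditive on $[0,\infty)^2$. To produce the associativity equation for $f$, I would feed the dilated balls $K=sB^n$, $L=tB^n$, $N=uB^n$ (with $s,t,u\ge0$) into associativity of $*$. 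By (\ref{nn}) each $M$-sum of balls is again a ball, so $K*L=f(s,t)B^n$ and hence $(K*L)*N=f(f(s,t),u)B^n$, while $K*(L*N)=f(s,f(t,u))B^n$; equality of these sets yields $f(f(s,t),u)=f(s,f(t,u))$ for all $s,t,u\ge0$. Thus $f$ is continuous and satisfies conditions (i) and (v) of Proposition~\ref{prp1}, so Proposition~\ref{prp2} applies.

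It then remains to prune Pearson's list, and this is where the only real care is needed. Proposition~\ref{prp2} leaves $f\equiv0$, $f(s,t)=s$, $f(s,t)=t$, the positive family $f(s,t)=(s^p+t^p)^{1/p}$ with $0<p\le\infty$, and the two families with $p<0$ (the latter including $\min\{s,t\}$). Subadditivity of $f$ eliminates the spurious solutions: testing $f(s_1+s_2,t_1+t_2)\le f(s_1,t_1)+f(s_2,t_2)$ at $(s_1,t_1)=(1,0)$ and $(s_2,t_2)=(0,1)$ gives $f(1,1)\le f(1,0)+f(0,1)$, which forces $2^{1/p}\le 2$ in the positive case, hence $p\ge1$, and fails for every $p<0$ as well as for $\min\{s,t\}$, where the right-hand side is $0$ while $f(1,1)>0$. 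This leaves exactly $f\in\{0,s,t\}$ or $f(s,t)=(s^p+t^p)^{1/p}$ with $1\le p\le\infty$. Finally I would read off $M$ from $f$ using the uniqueness in Theorem~\ref{thm12}: $f\equiv0$ gives $M=\{o\}$; $f(s,t)=s$ gives $M=[-e_1,e_1]$ and $K*L=K$; $f(s,t)=t$ gives $M=[-e_2,e_2]$ and $K*L=L$; and $f(s,t)=(s^p+t^p)^{1/p}$ gives $h_{K*L}=(h_K^p+h_L^p)^{1/p}$, i.e.\ $*=+_p$, whose $1$-unconditional $M$ is the unit ball of $\ell^2_{p'}$. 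As $p$ runs over $[1,\infty]$ these are precisely the unit balls of $\ell^2_q$, $1\le q\le\infty$, which reconciles the two descriptions in the statement. The main obstacle is thus the subadditivity pruning that pins the exponent down to $1\le p\le\infty$; the reduction to the associativity equation and the invocation of Proposition~\ref{prp2} are otherwise routine given the structural results of Section~\ref{results}.
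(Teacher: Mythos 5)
Your proposal is correct and follows essentially the same route as the paper: reduce to $*=\oplus_M$ via Theorem~\ref{thm12}, derive the associativity equation for $h_M$ by testing on dilated balls, invoke Proposition~\ref{prp2}, and discard the exponents $p<1$ because $h_M$ must be subadditive (your explicit test at $(1,0)$ and $(0,1)$ is just the paper's terse ``excluded because $h_M$ is a support function'' spelled out). The only detail the paper records that you leave implicit is that Corollary~\ref{corsymm} guarantees $K*L\in{\mathcal{K}}^n_s$, so that iterating $*$ in the associativity hypothesis makes sense.
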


\begin{proof}
Note that by Corollary~\ref{corsymm}, we have $*:\left({\mathcal{K}}^n_s\right)^2\rightarrow {\mathcal{K}}_s^n$, so the associativity property makes sense. By Theorem~\ref{thm12}, (\ref{nn}) holds for some $1$-unconditional compact convex set $M$ in $\R^2$.  The support function $h_M(s,t)$ is continuous and homogeneous of degree 1. Since $*$ is associative, (\ref{nn}) implies that
$$
h_M\left(h_K(x),h_M\left(h_L(x),h_N(x)\right)\right)=h_M\left(h_M\left
(h_K(x),h_L(x)\right),h_N(x)\right),
$$
for all $K,L,N\in \K^n_s$ and $x\in \R^n$.  Setting $K=rB^n$, $L=sB^n$, and $N=tB^n$, for $r,s,t\ge 0$, we obtain
$$
h_M(r,h_M(s,t))=h_M(h_M(r,s),t),
$$
i.e., $h_M$ is associative. It follows that $h_M$ satisfies the hypotheses of Proposition \ref{prp2} and so must be of one of the forms listed there.  The case $p<1$ is excluded because $h_M$ is a support function.  The remaining possibilities are that $M=\{o\}$, or $M=[-e_1,e_1]$, or $M=[-e_2,e_2]$, or $M$ is the unit ball in $l^2_p$ for some $p\ge 1$.
\end{proof}

The three elementary exceptional cases in Theorem~\ref{thm1} can be eliminated by imposing the identity property in addition to those assumed there, i.e., projection covariance and associativity.  The following corollary assumes continuity and $GL(n)$ covariance, but by Corollary~\ref{contaff}, could equivalently be stated with the assumption of projection covariance instead.

\begin{cor}\label{cor10}
If $n\ge 2$, the operation $*:\left({\mathcal{K}}^n_s\right)^2\rightarrow {\mathcal{K}}^n$ is continuous, $GL(n)$ covariant, associative, and has the identity property if and only if $*=+_p$ for some $1\le p\le\infty$.
\end{cor}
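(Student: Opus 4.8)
The plan is to obtain the corollary as a short consequence of Theorem~\ref{thm1}, using the identity property only to discard the three trivial operations.

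For the ``if'' direction, I would simply check that $+_p$ has all four properties on $\left({\mathcal{K}}^n_s\right)^2$. Since every $K\in{\mathcal{K}}^n_s$ satisfies $h_K\ge 0$ (because $h_K(x)=\sup_{y\in K}|x\cdot y|$ by $o$-symmetry), the two extensions of $L_p$ addition agree here and $K+_pL$ is again $o$-symmetric, so $+_p$ maps $\left({\mathcal{K}}^n_s\right)^2$ into ${\mathcal{K}}^n_s\subset{\mathcal{K}}^n_o$. Continuity, $GL(n)$ covariance, and associativity then follow from $h_{K+_pL}^p=h_K^p+h_L^p$ (with $h_{K+_\infty L}=\max\{h_K,h_L\}$), the transformation rule (\ref{suppchange}), and the associativity of $(s,t)\mapsto(s^p+t^p)^{1/p}$ on $[0,\infty)^2$; the identity property holds because $h_{\{o\}}\equiv 0$. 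These are exactly the facts recorded for $L_p$ addition in Section~\ref{subLp}, restricted to the subclass ${\mathcal{K}}^n_s$, so this direction needs no new argument.

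For the ``only if'' direction, assume $*$ is continuous, $GL(n)$ covariant, associative, and has the identity property. By Corollary~\ref{contaff}, continuity and $GL(n)$ covariance already make $*$ projection covariant, so Theorem~\ref{thm1} applies: $*=\oplus_M$ with $M=\{o\}$, or $M=[-e_1,e_1]$, or $M=[-e_2,e_2]$, or $M$ the unit ball in $l^2_p$ for some $1\le p\le\infty$, the last case being $+_p$. It remains to rule out the first three using the definition (\ref{Mdef}). If $M=\{o\}$, then $K\oplus_M L=\{o\}$ always, so $K*\{o\}=\{o\}\ne K$ for $K\ne\{o\}$ (such $K$ exist as $n\ge2$). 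If $M=[-e_1,e_1]$, then $K\oplus_M L=\bigcup_{|a|\le1}aK=K$, since $aK\subseteq K$ for $|a|\le1$ when $K$ is $o$-symmetric and convex; hence $\{o\}*K=\{o\}\ne K$, contradicting the left identity. Symmetrically, $M=[-e_2,e_2]$ gives $K*L=L$ and $K*\{o\}=\{o\}\ne K$, contradicting the right identity. Thus only the $l^2_p$ case survives and $*=+_p$.

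The underlying work is done by Theorem~\ref{thm1} and Corollary~\ref{contaff}, so I anticipate no real obstacle. The only point requiring attention is that \emph{both} halves of the identity property are needed: $[-e_1,e_1]$ satisfies $K*\{o\}=K$ but not $\{o\}*K=K$, while $[-e_2,e_2]$ satisfies the reverse, so neither can be excluded by a one-sided identity alone.
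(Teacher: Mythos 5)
Your proposal is correct and follows essentially the same route as the paper: deduce projection covariance from continuity and $GL(n)$ covariance (the paper cites Lemma~\ref{CGimpliesP} directly rather than Corollary~\ref{contaff}, but this is the same fact), apply Theorem~\ref{thm1}, and use the identity property to eliminate the three trivial cases. Your explicit verification that both halves of the identity property are needed to exclude $M=[-e_1,e_1]$ and $M=[-e_2,e_2]$ is a correct elaboration of what the paper leaves implicit.
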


\begin{proof}
By Lemma~\ref{CGimpliesP}, if $*$ is continuous and $GL(n)$ covariant, then it is also projection covariant and Theorem~\ref{thm1} applies.  The identity property assumption eliminates the other possibilities for $*$ in the statement of Theorem~\ref{thm1}.  The converse is clear.
\end{proof}

Various examples of operations $*:\left({\mathcal{K}}^n_s\right)^2\rightarrow {\mathcal{K}}^n$ can be obtained by defining
\begin{equation}\label{suppdef}
h_{K*L}(x)=f\left(h_K(x),h_L(x)\right),
\end{equation}
for all $K, L\in {\mathcal{K}}_s^n$ and $x\in \R^n$, where $f:[0,\infty)^2\rightarrow \R$.  However, the options for the function $f$ are already severely limited by the fact that the right-hand side of (\ref{suppdef}) must be a support function.  It is not enough that $f$ is nonnegative and homogeneous of degree 1; for example, we cannot take $f=f_5$ as defined after Proposition~\ref{prp1}.

\begin{ex}\label{ex2}
{\em The force of Theorem~\ref{thm1} is apparent when considering the possibility of taking
$$
f(s,t)=\sinh^{-1}\left(\sinh s+\sinh t\right),
$$
for $s,t\ge 0$.  Then (\ref{suppdef}) takes the form
$$h_{K*L}(x)=\sinh^{-1}\left(\sinh h_K(x)+\sinh h_L(x)\right),$$
for all $K, L\in {\mathcal{K}}_s^n$ and $x\in \R^n$.
The subadditivity of $h_{K*L}$ then follows from the subadditivity of $h_K$ and $h_L$, together with the fact that the $\sinh$ function is increasing and satisfies Mulholland's inequality (\ref{mul}).  However, $h_{K*L}$ is not homogeneous of degree 1.  This can be proved directly, but since the resulting operation would be associative and projection covariant, it is already a consequence of Theorem~\ref{thm1}.}
\end{ex}

In fact, from Lemma~\ref{lem01} and Theorem~\ref{thm12} we know that if (\ref{suppdef}) holds, where $f$ is homogeneous of degree 1, then $*=\oplus_M$ for some $1$-unconditional compact convex set $M$. The next example is of this type.

\begin{ex}\label{ex1}
{\em To show that associativity cannot be dropped in Theorem~\ref{thm1} and Corollary~\ref{cor10}, we can take in (\ref{suppdef}) the function
$$f(s,t)=f_6(s,t)=\frac12(s+t)+
\frac12\left(s^2+t^2\right)^{1/2}.$$
The function $f_6$ is homogeneous of degree 1 and the resulting operation
$*:\left({\mathcal{K}}^n_s\right)^2\rightarrow {\mathcal{K}}^n$, defined by
$$h_{K*L}(x)=\frac12\left(h_K(x)+h_L(x)\right)+\frac12\left(h_K(x)^2+h_L(x)^2\right)^{1/2}
,$$
for all $K, L\in {\mathcal{K}}_s^n$ and $x\in \R^n$,
satisfies all the other hypotheses of those results.  This corresponds to $M$-addition with $M=(1/2)[-1,1]^2+(1/2)B^2$. More generally, let $\lambda_i>0$, $i=1,\dots,m$, satisfy $\sum_{i=1}^m\lambda_i=1$.
Let $p_i\ge 1$, let $B^2_{p_i}$ be the unit ball in $l^2_{p_i}$, $i=1,\dots,m$, and define
$$M=\lambda_1B^2_{p_1}+\cdots +\lambda_mB^2_{p_m}.$$
Then the operation $\oplus_M$ is continuous, $GL(n)$ covariant, and has the identity property, but is not associative.}
\end{ex}

\begin{ex}\label{ex4}
{\em Define
$$K*L=\begin{cases}
K\cap L,& {\text{ if $K\neq \{o\}$ and $L\neq \{o\}$,}}\\
K\cup L,& {\text{ if $K=\{o\}$ or $L=\{o\}$}},
\end{cases}
$$
for all $K,L\in {\mathcal{K}}^n_s$.  This operation is $GL(n)$ covariant, associative, and has the identity property, but it is neither continuous nor projection covariant (take $K=[-e_1,e_1]$ and $L=[-e_2,e_2]$ and consider projections onto the $x_1$-axis).  This shows that projection covariance cannot be omitted in Theorem~\ref{thm1} and continuity cannot be dropped in Corollary~\ref{cor10}.}
\end{ex}

\begin{ex}\label{ex5}
{\em Let $n\ge 2$ and define
$$K*L=\left(\frac{{{\mathcal{H}}^n}(K)^{1/n}+{{\mathcal{H}}^n}(L)^{1/n}}{\kappa_n^{1/n}}\right)B^n,
$$
for all $K,L\in {\mathcal{K}}^n_s$.  This operation is continuous, homogeneous of degree 1, rotation covariant, and associative, but is not projection covariant and does not have the identity property.  This also shows that projection covariance cannot be omitted in Theorem~\ref{thm1}.}
\end{ex}

\begin{ex}\label{ex5555}
{\em Let $n\ge 2$ and let $F:{\mathcal{K}}^n_s\to {\mathcal{K}}^n_s$ be such that $F(K)$ is the set obtained by rotating $K$ by an angle equal to its volume ${{\mathcal{H}}^n}(K)$ around the origin in the $\{x_1,x_2\}$-plane.  Note that since ${{\mathcal{H}}^n}(F(K))={{\mathcal{H}}^n}(K)$, the map $F$ is injective and so $F^{-1}$ is defined.  Of course, $F^{-1}$ rotates by an angle $-{{\mathcal{H}}^n}(K)$ instead. Now define
\begin{equation}\label{eq5555}
K*L=F^{-1}\left(F(K)+F(L)\right),
\end{equation}
for all $K,L\in {\mathcal{K}}^n_s$.  It is easy to check that $*$ is continuous, associative, homogeneous of degree 1, rotation covariant, and moreover has the identity property.  However, $*$ is not projection covariant or $GL(n)$ covariant.  This is rather clear from Theorem~\ref{thm1} and Corollary~\ref{cor10}, but an explicit example can be constructed as follows.

Let $K=[-1/2,1/2]\times [-\pi^{1/(n-1)}/2, \pi^{1/(n-1)}/2]^{n-1}$ and
$L=[-(1/4)e_1,(1/4)e_1]$ be an $o$-symmetric coordinate box and line segment in the $x_1$-axis in $\R^n$, respectively.  Since ${{\mathcal{H}}^n}(K|\,l_{e_1})={{\mathcal{H}}^n}(L|\,l_{e_1})=0$, we have $F(K|\,l_{e_1})=K|\,l_{e_1}$ and $F(L|\,l_{e_1})=L|\,l_{e_1}$ and hence
$$(K|\,l_{e_1})*(L|\,l_{e_1})=(K+L)|\,l_{e_1}=[-3/4, 3/4].$$
Also, ${{\mathcal{H}}^n}(K)=\pi$ and ${{\mathcal{H}}^n}(L)=0$, so $F(K)=K$ and $F(L)=L$.  Therefore
$$F(K)+F(L)=K+L =[-3/4,3/4] \times \left[-\pi^{1/(n-1)}/2, \pi^{1/(n-1)}/2\right]^{n-1}$$
and hence ${{\mathcal{H}}^n}\left(F(K)+F(L)\right)=3\pi/2$.  Therefore
$$K*L=\left[-\frac{\pi^{1/(n-1)}}{2}, \frac{\pi^{1/(n-1)}}{2}\right]\times [-3/4,3/4] \times \left[-\frac{\pi^{1/(n-1)}}{2}, \frac{\pi^{1/(n-1)}}{2}\right]^{n-2}$$
so
$$(K*L)|\,l_{e_1}=\left[-\pi^{1/(n-1)}/2, \pi^{1/(n-1)}/2\right]
\neq (K|\,l_{e_1})*(L|\,l_{e_1}).$$
This example also shows that projection covariance cannot be omitted in Theorem~\ref{thm1} and moreover that $GL(n)$ covariance is essential for Corollary~\ref{cor10}.}
\end{ex}

Note that the trivial operations $K*L=\{o\}$, or $K*L=K$, or $K*L=L$, for all $K,L\in {\mathcal{K}}^n_s$, show that the identity property alone cannot be omitted in Corollary~\ref{cor10}.

Next, we consider operations on pairs of star sets.  The case $n=1$ is already dealt with in Lemma~\ref{lem0}, since ${\mathcal{S}}^1_s={\mathcal{K}}^1_s$, and $L_p$ addition and $p$th radial addition coincide.

\begin{lem}\label{lem01arbsections}
Suppose that $n\ge 2$ and that $*:\left({\mathcal{S}}^n_s\right)^2\rightarrow {\mathcal{S}}^n$ is rotation and section covariant.  Then $*:\left({\mathcal{S}}^n_s\right)^2\rightarrow {\mathcal{S}}_s^n$.
\end{lem}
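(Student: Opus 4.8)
The plan is to prove that $\rho_{K*L}(u)=\rho_{K*L}(-u)$ for every $u\in S^{n-1}$, which is exactly the assertion that $K*L$ is $o$-symmetric. Fix $u\in S^{n-1}$. Since $n\ge 2$, the line $l_u$ is a $1$-dimensional subspace of $\R^n$, so section covariance (with $k=1$, which is permissible since $1\le 1\le n-1$) gives
$$(K*L)\cap l_u=(K\cap l_u)*(L\cap l_u).$$
Because $K,L\in {\mathcal{S}}^n_s$, the sets $K\cap l_u$ and $L\cap l_u$ are $o$-symmetric (possibly degenerate) closed segments contained in $l_u$. Thus the problem is reduced to the single segment $(K*L)\cap l_u$, and it suffices to show that this segment is $o$-symmetric.

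To exploit this, I would introduce a rotation that reverses $l_u$ while fixing the two sections. Since $n\ge 2$, choose a $2$-dimensional subspace $S$ with $l_u\subset S$, and let $\psi$ be the transformation of $\R^n$ acting as rotation by the angle $\pi$ on $S$ and as the identity on $S^{\perp}$. Then $\psi\in SO(n)$, since its determinant equals $(-1)^2=1$, and $\psi$ restricts to $-\,\mathrm{id}$ on $S$, hence also on $l_u\subset S$. Because $K\cap l_u$ and $L\cap l_u$ are $o$-symmetric subsets of $l_u$ on which $\psi$ acts as reflection in the origin, we have $\psi(K\cap l_u)=-(K\cap l_u)=K\cap l_u$ and likewise $\psi(L\cap l_u)=L\cap l_u$. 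Rotation covariance then yields
$$\psi\big((K\cap l_u)*(L\cap l_u)\big)=\big(\psi(K\cap l_u)\big)*\big(\psi(L\cap l_u)\big)=(K\cap l_u)*(L\cap l_u),$$
so the segment $(K*L)\cap l_u$ is invariant under $\psi$.

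Finally, since $(K*L)\cap l_u\subset l_u\subset S$ and $\psi$ acts on $S$ as reflection in the origin, invariance of this segment under $\psi$ means that it is invariant under $x\mapsto -x$; being a segment containing $o$, it is therefore $o$-symmetric. This gives $\rho_{K*L}(u)=\rho_{K*L}(-u)$, and as $u\in S^{n-1}$ was arbitrary, $K*L\in {\mathcal{S}}^n_s$, as required. The only genuinely delicate point is the construction of the auxiliary map $\psi$: one needs a single transformation that simultaneously reverses $l_u$ and preserves the $o$-symmetric sections $K\cap l_u$ and $L\cap l_u$. The observation that makes this feasible with an element of $SO(n)$, rather than merely $O(n)$, is that reflection in the origin of a $2$-plane is a genuine rotation (by $\pi$); this is precisely why the hypotheses of rotation covariance and $n\ge 2$ are used, and why the one-dimensional case is treated separately in Lemma~\ref{lem0}.
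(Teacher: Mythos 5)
Your proof is correct and follows essentially the same route as the paper's: both arguments hinge on a rotation that acts as $-\mathrm{id}$ on $l_u$ (hence fixes the $o$-symmetric sections $K\cap l_u$ and $L\cap l_u$) and combine rotation covariance with section covariance to conclude that $(K*L)\cap l_u$ is $o$-symmetric for every $u$. The only cosmetic differences are that you apply section covariance before rotation covariance (the paper does the reverse) and that you construct the rotation explicitly as a rotation by $\pi$ in a $2$-plane containing $l_u$, where the paper merely asserts its existence.
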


\begin{proof}
Let $u\in S^{n-1}$, and let $\phi_u$ be a rotation such that $\phi_u(u)=-u$.  Note that $\phi_u l_u=l_u$.  If $K, L\in {\mathcal{S}}^n_s$, then
\begin{eqnarray*}
\phi_u\left((K*L)\cap l_u\right)& =& \left(\phi_u(K*L)\right)\cap l_u = (\phi_u K * \phi_u L)\cap l_u\\
&=&((\phi_u K)\cap l_u) * ((\phi_u L)\cap l_u) = (K\cap l_u) * (L\cap l_u) = (K*L)\cap l_u.
\end{eqnarray*}
Thus $(K*L)\cap l_u\in {\mathcal{S}}_s^n$ for all $u\in S^{n-1}$, so $K*L\in {\mathcal{S}}_s^n$.
\end{proof}

\begin{thm}\label{thm1a}
If $n\ge 2$, then $*:\left({\mathcal{S}}^n_s\right)^2\rightarrow {\mathcal{S}}^n$ is continuous, homogeneous of degree $1$, rotation and section covariant, and associative if and only if either $K*L=\{o\}$, or $K*L=K$, or $K*L=L$, for all $K,L\in {\mathcal{S}}^n_s$, or else $*=\widetilde{+}_p$ for some $-\infty\le p\le\infty$ with $p\neq 0$.
\end{thm}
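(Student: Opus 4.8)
The plan is to follow the template of Theorem~\ref{thm1}, with radial functions in place of support functions, sections in place of projections, and the section-covariant analogue of Lemma~\ref{lem01} replacing that lemma. The converse direction is immediate: the three trivial operations plainly have all the listed properties, and by the discussion in Section~\ref{rpadd}, $\widetilde{+}_p$ is continuous in the radial metric, homogeneous of degree $1$, rotation (indeed $GL(n)$) covariant, section covariant, and associative for every $-\infty\le p\le\infty$ with $p\neq 0$, where $\widetilde{+}_{\infty}$ and $\widetilde{+}_{-\infty}$ are read as union and intersection. So the substance is the forward implication.

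First I would invoke Lemma~\ref{lem01arbsections} to conclude that $*$ actually maps $\left({\mathcal{S}}^n_s\right)^2$ into ${\mathcal{S}}^n_s$, so that the associativity hypothesis is meaningful. The key step is then to produce a continuous, homogeneous-of-degree-$1$ function $f:[0,\infty)^2\to[0,\infty)$ with
\[
\rho_{K*L}(u)=f\left(\rho_K(u),\rho_L(u)\right)
\]
for all $K,L\in{\mathcal{S}}^n_s$ and $u\in S^{n-1}$; this is the radial counterpart of Lemma~\ref{lem01}, but easier, since homogeneity and rotation covariance are now assumed rather than forced. To build $f$, fix $u\in S^{n-1}$ and apply section covariance with $k=1$ to the segments $I=[-su,su]$ and $J=[-tu,tu]$ lying in $l_u$: taking $S=l_u$ gives $(I*J)\cap l_u=(I\cap l_u)*(J\cap l_u)=I*J$, so $I*J\subset l_u$, and since $I*J\in{\mathcal{S}}^n_s$ it is an $o$-symmetric segment $[-f_u(s,t)u,f_u(s,t)u]$. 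Rotation covariance, together with transitivity of the rotation group on $S^{n-1}$ for $n\ge 2$, forces $f_u=f$ to be independent of $u$; homogeneity of $*$ gives $f(rs,rt)=rf(s,t)$; and section covariance with $k=1$ applied to general $K,L$ yields $(K*L)\cap l_u=(K\cap l_u)*(L\cap l_u)$, which is exactly the displayed formula. Continuity of $f$ follows by feeding balls into the formula: since $(sB^n)*(tB^n)=f(s,t)B^n$, radial continuity of $*$ transfers directly to continuity of $f$.

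With the representation in hand, associativity of $*$ becomes associativity of $f$: evaluating $\rho_{K*(L*N)}=\rho_{(K*L)*N}$ at $K=rB^n$, $L=sB^n$, $N=tB^n$ gives $f(r,f(s,t))=f(f(r,s),t)$ for all $r,s,t\ge 0$. Thus $f$ satisfies conditions (i) and (v) of Proposition~\ref{prp1} and is continuous, so Proposition~\ref{prp2} applies and pins $f$ down to one of its listed forms. Translating back via the representation, $f\equiv 0$, $f(s,t)=s$, and $f(s,t)=t$ give the three trivial operations, while $f(s,t)=(s^p+t^p)^{1/p}$ for $0<p\le\infty$ gives $*=\widetilde{+}_p$ (with $p=\infty$ yielding union); the two remaining cases of Proposition~\ref{prp2}, namely $f(s,t)=(s^p+t^p)^{1/p}$ for $s,t>0$ and $0$ otherwise with $p<0$, and $f=\min$, match precisely the definition of $\widetilde{+}_p$ for $p<0$ via (\ref{radialp}) and the following remarks, and $\widetilde{+}_{-\infty}$, respectively. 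A point worth stressing is that, unlike the convex case of Theorem~\ref{thm1} where $p<1$ was excluded because $h_M$ must be sublinear, here no value of $p$ is lost: radial functions carry no subadditivity constraint, so every branch of Proposition~\ref{prp2} survives, which is exactly why $\widetilde{+}_p$ appears for the full range $-\infty\le p\le\infty$, $p\neq 0$. I expect the main obstacle to be bookkeeping rather than conceptual, namely verifying that the negative-$p$ and $p=\pm\infty$ branches correspond exactly to the conventions defining $\widetilde{+}_p$ (including the $0$-values of radial functions where a star set fails to meet a given line). The conceptual heart, the passage from associativity to the $p$-power form, is delivered wholesale by Proposition~\ref{prp2}.
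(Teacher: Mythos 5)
Your proposal is correct and follows essentially the same route as the paper's proof: reduce to ${\mathcal{S}}^n_s$ via Lemma~\ref{lem01arbsections}, use section covariance on lines and rotation covariance to obtain the representation $\rho_{K*L}(u)=f(\rho_K(u),\rho_L(u))$ with $f$ homogeneous, continuous, and associative (all verified by feeding in balls), and then apply Proposition~\ref{prp2}, with every branch surviving since radial functions carry no subadditivity constraint. No substantive differences from the published argument.
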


\begin{proof}
If $*:\left({\mathcal{S}}^n_s\right)^2\rightarrow {\mathcal{S}}^n$ is rotation and section covariant, then by Lemma~\ref{lem01arbsections}, we have $*:\left({\mathcal{S}}^n_s\right)^2\rightarrow {\mathcal{S}}_s^n$, so associativity makes sense.  Let $u\in S^{n-1}$. Since $*$ is section covariant, for any two $o$-symmetric star sets $K$ and $L$ in $\R^n$, we have
\begin{equation}\label{seqb}
(K*L)\cap l_u=(K\cap l_u)*(L\cap l_u).
\end{equation}
One consequence of this is that if $I$ and $J$ are $o$-symmetric closed intervals in $l_u$, we must have $I*J\subset l_u$. Hence, for each $u\in S^{n-1}$, there is a function $f_u:[0,\infty)^2\to[0,\infty)$
such that
\begin{equation}\label{seqn1}
[-su,su]*[-tu,tu]=[-f_u(s,t)u,f_u(s,t)u],
\end{equation}
for all $s,t\ge 0$. Let $\phi$ be a rotation. Then
$\phi u\in S^{n-1}$ and, for $s,t\ge 0$, we use (\ref{seqn1}) and the rotation covariance of $*$ to obtain
\begin{eqnarray*}
[-f_{\phi u}(s,t)\phi u,f_{\phi u}(s,t)\phi u]&=&[-s\phi u,s\phi u]*[-t\phi u,t\phi u]\\
&=&\phi[-s u,s u]*\phi[-t u,tu]=\phi([-s u,s u]*[-t u,tu])\\
&=&\phi[-f_u(s,t) u,f_u(s,t) u]=[-f_u(s,t) \phi u,f_u(s,t)\phi u].
\end{eqnarray*}
We conclude that $f_{\phi u}(s,t)=f_u(s,t)$ for $s,t\ge 0$.
Since $\phi$ was an arbitrary rotation, we conclude that $f_u=f$, say, is independent of $u$.  Now from (\ref{seqb}) and (\ref{seqn1}) with $f_u=f$, we obtain
\begin{eqnarray*}
[-\rho_{K*L}(u)u,\rho_{K*L}(u)u]&=&(K*L)\cap l_u=(K\cap l_u)*(L\cap l_u)\\
&=&[-\rho_{K}(u)u,\rho_{K}(u)u]*[-\rho_{L}(u)u,\rho_{L}(u)u]\\
&=&[-f\left(\rho_K(u),\rho_L(u)\right)u,f\left(\rho_K(u),\rho_L(u)\right)u].
\end{eqnarray*}
Hence
\begin{equation}\label{442}
\rho_{K*L}(u)=f\left(\rho_K(u),\rho_L(u)\right),
\end{equation}
for all $K,L\in {\mathcal{S}}^n_s$, all $u\in S^{n-1}$, and some function $f:[0,\infty)^2\rightarrow [0,\infty)$.  Let $r,s,t\ge 0$ and $u\in S^{n-1}$ and let $K=sB^n$ and $L=tB^n$.  Then (\ref{442}) and the homogeneity of $*$ imply that
\begin{eqnarray*}
f(rs,rt)&=&f\left(r\rho_K(u),r\rho_L(u)\right)=f\left(\rho_{rK}(u),\rho_{rL}(u)\right)\\
&=&\rho_{rK*rL}(u)=\rho_{r(K*L)}(u)=r\rho_{K*L}(u)=rf\left(\rho_K(u),\rho_L(u)\right)=rf(s,t),
\end{eqnarray*}
so $f$ is homogeneous of degree 1.  The associativity of $f$ is proved by the same argument used for $h_M$ in the proof of Theorem~\ref{thm1}.  Let $s_m,t_m\ge 0$, $m\in \N$, be such that $(s_m,t_m)\rightarrow (s,t)$ as $k\rightarrow\infty$.  With $K_m=s_mB^n$, $L_m=t_mB^n$, $K=sB^n$, and $L=tB^n$, we obtain
$$f(s_m,t_m)=f\left(\rho_{K_m}(u),\rho_{L_m}(u)\right)=\rho_{K_m*L_m}(u)\rightarrow
\rho_{K*L}(u)=f\left(\rho_{K}(u),\rho_{L}(u)\right)=f(s,t),$$
as $m\rightarrow\infty$, for all $u\in S^{n-1}$, by the continuity of $*$.  Hence $f$ is continuous.  It follows that $f$ satisfies the hypotheses of Proposition \ref{prp2} and so must be of one of the forms listed there.  Then (\ref{442}) implies that $*$ is one of the operations listed in the statement of the theorem.

The converse is clear.
\end{proof}

A characterization of $p$th radial addition is obtained by adding the identity property to the hypotheses of Theorem~\ref{thm1a}.

Note that as in Corollary~\ref{thmcor9}, the assumption of homogeneity of degree 1 in Theorem~\ref{thm1a} can be weakened to quasi-homogeneity if the identity property is added to the hypotheses.

Various examples of operations $*:\left({\mathcal{S}}^n_s\right)^2\rightarrow {\mathcal{S}}^n$ can be obtained by defining $K*L$ by
$$\rho_{K*L}(u)=f\left(\rho_K(u),\rho_L(u)\right),$$
for $u\in S^{n-1}$, where $f:[0,\infty)^2\rightarrow [0,\infty)$.  Here either the function $f$ is homogeneous of degree 1, or one extends the definition to $\R^n\setminus\{o\}$ by setting
$$\rho_{K*L}(ru)=\frac{1}{r}\rho_{K*L}(u),$$
for $u\in S^{n-1}$ and $r>0$. Examples showing that homogeneity, continuity, and associativity cannot be dropped separately in Theorem~\ref{thm1a} are obtained by taking $f=f_1$, $f=f_2$, or $f=f_5$, respectively, as given after Proposition~\ref{prp1}.  (Note that taking $f=f_2$ yields an operation that is not the same as that defined in Example~\ref{ex4}; to see the difference, take $K=[-e_1,e_1]$ and $L=[-e_2,e_2]$.)   All these operations also have the identity property. Example~\ref{ex5} shows that section covariance cannot be omitted, but this operation does not have the identity property; an obvious modification of Example~\ref{ex5555}, where (\ref{eq5555}) is replaced by $K*L=F^{-1}(F(K)\widetilde{+}F(L))$ for all $K,L\in {\mathcal{S}}^n_s$, serves the same purpose and also has the identity property.  Finally, the following example proves the necessity of the hypothesis of rotation covariance.

\begin{ex}\label{ex3}
{\em To show that rotation covariance cannot be omitted from the hypotheses of Theorem~\ref{thm1a}, let $n\ge 2$ and let $p:S^{n-1}\rightarrow [1,\infty)$ be any function that is continuous but not rotation invariant. (For example, when $n=2$, we can take $p(\theta)=\sin\theta+2$ for $0\le \theta<2\pi$.)  Then define $p(ru)=p(u)$ for all $r>0$. Define
$$\rho_{K*L}(x)=\left(\rho_K(x)^{p(x)}+\rho_L(x)^{p(x)}\right)^{1/p(x)},$$
for all $K, L\in {\mathcal{S}}_s^n$ and $x\in\R^n\setminus\{o\}$.  Then $*$ is continuous, homogeneous of degree 1, associative, section covariant, and has the identity property, but it is not rotation covariant.}
\end{ex}

\section{Classification of $o$-symmetrizations}\label{Symmetrization}

\begin{lem}\label{lem01s}
Let $H$ be the closed half-plane $H=\{(s,t)\in\R^2: -s\le t\}$. If $n\ge 2$, the $o$-symmetrization $\di:{\mathcal{K}}^n\rightarrow{\mathcal{K}}^n_s$ is projection covariant if and only if there is a homogeneous of degree 1 function $f:H\to[0,\infty)$, symmetric in its variables, such that
\begin{equation}\label{441s}
h_{\di K}(x)=f\left(h_{K}(x),h_{-K}(x)\right),
\end{equation}
for all $K\in {\mathcal{K}}^n$ and all $x\in \R^n$.
\end{lem}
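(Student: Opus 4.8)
The plan is to follow the proof of Lemma~\ref{lem01} closely; the two new features are that the sets $K$ need no longer be $o$-symmetric, so their projections onto lines are arbitrary (possibly asymmetric) segments---which is exactly why the natural domain of $f$ is the half-plane $H$---and that the symmetry of $f$ in its two variables will be extracted from the $o$-symmetry of the output $\di K$.

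For the forward implication, fix $u\in S^{n-1}$ and let $\phi$ be orthogonal projection onto $l_u$. Since $\di$ is projection covariant and $\di K\in{\mathcal{K}}^n_s$, the segment $(\di K)|l_u=\di(K|l_u)$ is $o$-symmetric, while $K|l_u=[-h_K(-u)u,\,h_K(u)u]$. As $\di$ of a segment in $l_u$ depends only on that segment, I define $f_u\colon H\to[0,\infty)$ by $\di([-tu,su])=[-f_u(s,t)u,\,f_u(s,t)u]$, where $(s,t)$ ranges over $H$ (so that $-t\le s$ and the segment is a genuine, possibly degenerate, interval). This is well defined and nonnegative, since every nonempty $o$-symmetric convex set contains $o$, whence $h_{\di K}\ge 0$; and it yields $h_{\di K}(u)=f_u(h_K(u),h_{-K}(u))$ after recalling $h_{-K}(u)=h_K(-u)$.

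The next step, showing that $f_u$ is homogeneous of degree $1$ and independent of $u$, repeats the projection argument of Lemma~\ref{lem01}, now on the cone $H$ in place of $[0,\infty)^2$. Choosing $v\in S^{n-1}$ with $u\cdot v=\alpha\in(0,1]$, projecting both $[-tu,su]$ and the $o$-symmetric segment $\di([-tu,su])$ onto $l_v$, and comparing half-lengths via projection covariance gives $f_v(\alpha s,\alpha t)=\alpha f_u(s,t)$. Interchanging $u$ and $v$ and composing yields $f_u(\alpha^2 s,\alpha^2 t)=\alpha^2 f_u(s,t)$; replacing $(s,t)$ by $(s/r,t/r)$ then shows $f_u$ is homogeneous of degree $1$ on $H$ (note $H$ is a cone, hence closed under the positive scalings used). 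Feeding homogeneity back into $f_v(\alpha s,\alpha t)=\alpha f_u(s,t)$ gives $f_v=f_u$ whenever $u\cdot v>0$, and since $S^{n-1}$ is connected for $n\ge2$, this common value $f$ is independent of $u$. Homogeneity of support functions upgrades $h_{\di K}(u)=f(h_K(u),h_{-K}(u))$ from $u\in S^{n-1}$ to all $x\in\R^n$, giving~(\ref{441s}). Finally, the symmetry of $f$ follows from the $o$-symmetry of $\di K$: since $h_{\di K}(x)=h_{\di K}(-x)$, applying (\ref{441s}) at $x$ and at $-x$ and using $h_{-K}(-x)=h_K(x)$ gives $f(s,t)=f(t,s)$ for $(s,t)=(h_K(x),h_{-K}(x))$; taking $K=[-tu,su]$ and $x=u$ shows that these pairs exhaust $H$.

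The converse is a direct computation. Given such an $f$, let $S\in{\mathcal{G}}(n,k)$ and $x\in\R^n$; then by (\ref{hproj}), the identity $(-K)|S=-(K|S)$, and (\ref{441s}) applied to $K$ and to $K|S$,
\[
h_{(\di K)|S}(x)=h_{\di K}(x|S)=f\left(h_{K|S}(x),h_{-(K|S)}(x)\right)=h_{\di(K|S)}(x),
\]
so $(\di K)|S=\di(K|S)$. I expect the only point needing real care to be the one flagged above: carrying the homogeneity-and-independence argument out on the asymmetric segments and the cone $H$ rather than on $[0,\infty)^2$, and checking that the segments $[-tu,su]$, $(s,t)\in H$, genuinely realize every pair of support-function values required both for the scaling identity and for the symmetry step.
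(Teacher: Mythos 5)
Your proposal is correct and follows essentially the same route as the paper's proof: reduce to segments via projections onto lines, extract $f_u$ on the cone $H$, run the two-direction projection argument to get homogeneity and independence of $u$, and deduce the symmetry of $f$ from the $o$-symmetry of $\di K$ (the paper merely writes the variables in the opposite order first and then invokes symmetry to reach~(\ref{441s})). The converse computation is also the one the paper intends.
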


\begin{proof}
Let $\di:{\mathcal{K}}^n\rightarrow{\mathcal{K}}^n_s$ be projection covariant and let $u\in S^{n-1}$. For $K\in {\mathcal{K}}^n$, we have
\begin{equation}\label{eqbs}
(\di K)|\,l_u=\di(K|\,l_u).
\end{equation}
One consequence of this is that if $I$ is a closed interval in $l_u$, we must have $\di I\subset l_u$.  Thus there is a function $f_u:H\to[0,\infty)$
such that
\begin{equation}\label{eqn1s}
\di [-su,tu]=[-f_u(s,t)u,f_u(s,t)u],
\end{equation}
whenever $(s,t)\in H$.

Let $0\le \alpha\le 1$ and choose $v\in S^{n-1}$ such that $u\cdot v=\alpha$.  Using (\ref{eqbs}) with $K=[-su,tu]$ and $l_u$ replaced by $l_v$, and (\ref{eqn1s}), we obtain
\begin{eqnarray*}
\alpha[-f_{u}(s,t)v,f_{u}(s,t)v]&=&[-f_{u}(s,t)u,f_{u}(s,t)u]|\,l_v
=(\di[-su,tu])|\,l_v\\
&=&\di([-su,tu]|\,l_v)=\di[-\alpha sv,\alpha tv]=
[-f_{v}(\alpha s,\alpha t)v,f_{v}(\alpha s,\alpha t)v],
\end{eqnarray*}
for all $(s,t)\in H$.  Therefore $f_v(\alpha s,\alpha t)=\alpha f_u(s,t)$, for all $(s,t)\in H$.  Now exactly as in the proof of Lemma~\ref{lem01}, we conclude that $f_u$ is homogeneous of degree 1 and further that $f_u=f$, say, is independent of $u$.

Now from (\ref{eqbs}) and (\ref{eqn1s}) with $f_u=f$, we obtain
\begin{eqnarray*}
[-h_{\di K}(u)u,h_{\di K}(u)u]&=&(\di K)|\,l_u=\di (K|\,l_u)=\di[-h_{K}(-u)u,h_{K}(u)u]\\
&=&[-f\left(h_{-K}(u),h_K(u)\right)u,f\left(h_{-K}(u),h_K(u)\right)u],
\end{eqnarray*}
for all $u\in S^{n-1}$. This yields
\begin{equation}\label{hdknew}
h_{\di K}(u)=f\left(h_{-K}(u),h_{K}(u)\right),
\end{equation}
for all $u\in S^{n-1}$.

Let $(s,t)\in H$ and choose $K\in {\mathcal{K}}^n$ and $u\in S^{n-1}$ such that $h_{-K}(u)=s$ and $h_K(u)=t$.  Then by (\ref{hdknew}), we have
\begin{eqnarray*}
f(s,t)&=&f\left(h_{-K}(u),h_{K}(u)\right)=h_{\di K}(u)=h_{\di K}(-u)\\
&=&f\left(h_{-K}(-u),h_{K}(-u)\right)=f\left(h_{K}(u),h_{-K}(u)\right)=f(t,s),
\end{eqnarray*}
so $f$ is symmetric in its variables and (\ref{441s}) holds for $x\in S^{n-1}$.
Then the homogeneity of $f$ can be used, just as it was in the proof of Lemma~\ref{lem01}, to obtain (\ref{441s}).

An argument analogous to that in the last paragraph of the proof of Lemma~\ref{lem01} proves the converse.
\end{proof}

\begin{thm}\label{thm2s}
If $n\ge 2$, the $o$-symmetrization $\di:{\mathcal{K}}^n\rightarrow{\mathcal{K}}^n_s$ is projection covariant if and only if there is a compact convex set $M$ in $\R^2$, symmetric in the line $x_1=x_2$, such that
\begin{equation}\label{ns}
h_{\di K}(x)=h_{M}\left(h_K(x),h_{-K}(x)\right),
\end{equation}
for all $K\in {\mathcal{K}}^n$ and all $x\in\R^n$.
\end{thm}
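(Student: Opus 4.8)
The plan is to deduce both implications from Lemma~\ref{lem01s}, which already reduces projection covariance to the existence of a homogeneous-of-degree-$1$ function $f\colon H\to[0,\infty)$, symmetric in its variables, with $h_{\di K}(x)=f\left(h_K(x),h_{-K}(x)\right)$ for all $K\in{\mathcal{K}}^n$ and $x\in\R^n$. The only content of Theorem~\ref{thm2s} beyond Lemma~\ref{lem01s} is to recognize such an $f$ as the restriction to $H$ of the support function of a compact convex set $M$ symmetric in the line $x_1=x_2$. A key simplification is that $h_K(x)+h_{-K}(x)$ is the width of $K$ in the direction $x$, hence nonnegative, so the pair $\left(h_K(x),h_{-K}(x)\right)$ always lies in $H$; it therefore suffices to match $h_M$ with $f$ on $H$ alone, leaving $h_M$ unconstrained on the complementary open half-plane.

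For the ``if'' direction, suppose (\ref{ns}) holds for some compact convex $M$ symmetric in $x_1=x_2$, and set $f=h_M|_H$. Then $f$ is homogeneous of degree $1$, and the symmetry of $M$ gives $h_M(s,t)=h_M(t,s)$, so $f$ is symmetric in its variables. Moreover $f\ge 0$ on $H$: every $(s,t)\in H$ is realized as $\left(h_K(e_1),h_{-K}(e_1)\right)$ for the segment $K=\conv\{-te_1,se_1\}$, and $\di K\in{\mathcal{K}}^n_s$ forces $h_{\di K}\ge 0$. Since $\left(h_K(x),h_{-K}(x)\right)\in H$, (\ref{ns}) shows that this $f$ satisfies (\ref{441s}), so Lemma~\ref{lem01s} yields projection covariance of $\di$.

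For the ``only if'' direction, apply Lemma~\ref{lem01s} to obtain $f$. The crucial step is to realize $f$ on all of $H$ as a single genuine support function. Consider the segment $K_0=\conv\{-e_2,e_1\}\subset\lin\{e_1,e_2\}$. A direct computation gives $h_{K_0}(x)=\max\{x_1,-x_2\}$ and $h_{-K_0}(x)=\max\{x_2,-x_1\}$, and whenever $x_1+x_2\ge 0$ these equal $x_1$ and $x_2$ respectively; that is, $\left(h_{K_0}(x),h_{-K_0}(x)\right)=(x_1,x_2)$ for all $x$ with $(x_1,x_2)\in H$. Since $\di K_0\in{\mathcal{K}}^n_s$ and, for $n\ge 3$, projection covariance onto $\lin\{e_1,e_2\}$ gives $\di K_0\subseteq\lin\{e_1,e_2\}$ (the containment being automatic when $n=2$), we may regard $\di K_0$ as a compact convex subset of $\R^2$. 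Consequently $h_{\di K_0}(x)=f(x_1,x_2)$ for all $(x_1,x_2)\in H$, so $f=h_{\di K_0}|_H$ is the restriction of a support function of the compact convex set $\di K_0$.

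Finally I would symmetrize. Let $\sigma$ denote the reflection in the line $x_1=x_2$ and put $M=\tfrac12\,\di K_0+\tfrac12\,\sigma(\di K_0)$, a compact convex set in $\R^2$ with $\sigma M=M$. Then $h_M(x)=\tfrac12\bigl(h_{\di K_0}(x)+h_{\di K_0}(\sigma x)\bigr)$, and since $H$ is $\sigma$-invariant and $f$ is symmetric, for $x\in H$ this equals $\tfrac12\left(f(x)+f(\sigma x)\right)=f(x)$; thus $h_M=f$ on $H$. Because $\left(h_K(x),h_{-K}(x)\right)\in H$ always, we conclude $h_{\di K}(x)=f\left(h_K(x),h_{-K}(x)\right)=h_M\left(h_K(x),h_{-K}(x)\right)$ for every $K$ and $x$, which is (\ref{ns}). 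The main obstacle to anticipate is precisely the \emph{compactness} of $M$: the naive candidate $\{y:\langle y,x\rangle\le f(x)\text{ for all }x\in H\}$ is unbounded in the direction $(-1,-1)$, and for a general sublinear $f$ its support function may even have infinite one-sided slope along $\partial H$. Passing through the explicit set $K_0$, whose image $\left(h_{K_0}(x),h_{-K_0}(x)\right)$ sweeps out all of $H$, sidesteps this difficulty entirely, since $\di K_0$ is compact by hypothesis.
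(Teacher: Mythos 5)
Your proposal is correct and follows essentially the same route as the paper: both reduce to Lemma~\ref{lem01s} and then realize $f$ on $H$ as the support function of the compact convex set $\di K_0$ for the segment $K_0=[-e_2,e_1]$, whose pair $\left(h_{K_0}(x),h_{-K_0}(x)\right)$ sweeps out $H$. The only (immaterial) difference is that you symmetrize via $M=\tfrac12\,\di K_0+\tfrac12\,\sigma(\di K_0)$, whereas the paper takes $M=\di K_0$ directly and deduces its symmetry in the line $x_1=x_2$ from the symmetry of $f$.
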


\begin{proof}
Let $\di:{\mathcal{K}}^n\rightarrow{\mathcal{K}}^n_s$ be projection covariant.
Lemma~\ref{lem01s} implies that there is a homogeneous of degree 1 function $f:H\to[0,\infty)$, symmetric in its variables, such that (\ref{441s}) holds.

Let $K_0=[-e_2,e_1]$. Then for $x=(x_1,\dots,x_n)\in\R^n$ with $(x_1,x_2)\in H$, we have $-x_1\le x_2$ and hence
$$h_{K_0}(x)=\max\{-x\cdot e_2,x\cdot e_1\}=\max\{-x_2,x_1\}=x_1$$ and similarly $h_{-K_0}(x)=x_2$.   Let $S=\lin\{e_1,e_2\}$ and note that the projection covariance of $\di$ implies that
$$(\di K_0)|S=\di(K_0|S)=\di K_0,$$
so $\di K_0\subset S$. Identifying $S$ with $\R^2$ in the natural way, we let $M=\di K_0$. Then (\ref{441s}) with $K=K_0$ yields
\begin{equation}\label{dif}
h_M(x_1,x_2)=h_{\di K_0}(x)=f\left(h_{K_0}(x),h_{-K_0}(x)\right)=f(x_1,x_2),
\end{equation}
whenever $(x_1,x_2)\in H$.  Since $(h_{K}(x),h_{-K}(x))\in H$ for all $K\in {\mathcal{K}}^n$ and all $x\in\R^n$, (\ref{ns}) follows directly from (\ref{441s}) and (\ref{dif}).  Finally, $f$ is symmetric in its variables by Lemma~\ref{lem01s}, so the symmetry of $M$ in the line $x_1=x_2$ is a consequence of (\ref{dif}).

The converse is clear.
\end{proof}

\begin{cor}\label{contaffs}
Let $n\ge 2$. An $o$-symmetrization $\di:{\mathcal{K}}^n\rightarrow{\mathcal{K}}^n_s$ is projection covariant if and only if it is continuous and $GL(n)$ covariant (and hence homogeneous of degree 1).
\end{cor}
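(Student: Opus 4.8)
The plan is to follow the template of Corollary~\ref{contaff} essentially verbatim, since the present statement is its exact $o$-symmetrization analogue and Theorem~\ref{thm2s} already carries the structural weight. The assertion is a biconditional, so I would prove the two implications separately.

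First I would treat the easy direction. Assuming $\di$ is continuous and $GL(n)$ covariant, Lemma~\ref{diCGimpliesP}---the $o$-symmetrization counterpart of Lemma~\ref{CGimpliesP}---yields projection covariance at once. For the parenthetical ``hence homogeneous of degree $1$,'' I would appeal to Lemma~\ref{lem01s}: projection covariance produces a homogeneous of degree $1$ function $f$ with $h_{\di K}(x)=f\left(h_K(x),h_{-K}(x)\right)$, and then for $r\ge 0$ the homogeneity of support functions gives $h_{\di(rK)}(x)=f\left(rh_K(x),rh_{-K}(x)\right)=r\,h_{\di K}(x)$, so $\di(rK)=r\,\di K$.

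For the converse I would start from projection covariance and apply Theorem~\ref{thm2s} to obtain a compact convex set $M$ in $\R^2$, symmetric in the line $x_1=x_2$, with $h_{\di K}(x)=h_M\left(h_K(x),h_{-K}(x)\right)$ as in (\ref{ns}). It then remains only to verify that an $o$-symmetrization of this form is automatically continuous and $GL(n)$ covariant. Continuity would follow because $K_m\to K$ in the Hausdorff metric forces $h_{K_m}\to h_K$ and $h_{-K_m}\to h_{-K}$ uniformly on $S^{n-1}$ by (\ref{HD}); the continuity of the support function $h_M$ then propagates to $h_{\di K_m}\to h_{\di K}$, that is, $\di K_m\to\di K$. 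For $GL(n)$ covariance I would combine the transformation rule (\ref{suppchange}) with the identity $-\phi K=\phi(-K)$ to get
\[
h_{\di(\phi K)}(x)=h_M\!\left(h_K(\phi^tx),h_{-K}(\phi^tx)\right)=h_{\di K}(\phi^tx)=h_{\phi(\di K)}(x),
\]
for every $\phi\in GL(n)$, whence $\di(\phi K)=\phi(\di K)$.

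The hard part is really just recognizing that all of the difficulty has been front-loaded into Theorem~\ref{thm2s}; once the representation (\ref{ns}) through a single fixed $M$ is available, both remaining verifications are formal. The only place demanding a little care is tracking the reflected support function $h_{-K}(x)=h_K(-x)$ correctly through (\ref{suppchange}), so that the $GL(n)$-covariance computation goes through cleanly; there is no substantive obstacle beyond this bookkeeping.
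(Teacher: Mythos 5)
Your proposal is correct and follows exactly the paper's own route: Lemma~\ref{diCGimpliesP} for the forward direction, and the representation (\ref{ns}) from Theorem~\ref{thm2s} together with (\ref{HD}) and (\ref{suppchange}) for the converse, which the paper simply records as ``easy consequences of (\ref{ns})'' and you have written out in full. The details you supply (uniform convergence of support functions for continuity, and the identity $h_{-\phi K}(x)=h_{-K}(\phi^t x)$ for $GL(n)$ covariance) are the intended ones and are accurate.
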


\begin{proof}
If $\di$ is continuous and $GL(n)$ covariant, then it is projection covariant by Lemma~\ref{diCGimpliesP}.  If $\di$ is projection covariant, then both the continuity and the $GL(n)$ covariance (using the formula \cite[(0.27), p.~18]{Gar06} for the change in a support function under a linear transformation) are easy consequences of (\ref{ns}).
\end{proof}

Theorem~\ref{thm2s} raises the question as to which compact convex sets $M$ in $\R^2$, symmetric in the line $x_1=x_2$, are such that the right-hand side of (\ref{ns}) is a support function for every $K\in {\mathcal{K}}^n$.  A partial answer is provided by Corollary~\ref{corthmM1}, which implies that this is true if in addition $M\subset [0,\infty)^2$.  It is natural to ask when it is true if in addition $M$ is 1-unconditional.

We first observe that it is true when $M$ is the unit ball in $l^2_1$, that is, $M=\conv\{\pm e_1,\pm e_2\}$. In this case the right-hand side of (\ref{ns}) becomes
$$h_{M}\left(h_K(x),h_{-K}(x)\right)=\max\{|h_K(x)|,|h_{-K}(x)|\},$$
for all $x\in \R^n$ and it is a routine exercise to show that this is a support function.

However, the right-hand side of (\ref{ns}) is not a support function when $M$ is the unit ball in $l^2_p$ and $1<p\le \infty$.  To see this, suppose the contrary, and let $K=\conv\{-e_1,-e_2,-e_1-e_2\}$, so that $h_{K}(e_1)=h_{K}(e_2)=0$, $h_{K}(e_1+e_2)=-1$, $h_{-K}(e_1)=h_{-K}(e_2)=1$, and $h_{-K}(e_1+e_2)=2$.  The subadditivity of $h_M(h_{K}(x),h_{-K}(x))$, with $x=e_1$, $e_2$, and $e_1+e_2$, yields
$$h_M(-1,2)\le 2h_M(0,1).$$
Now let $M$ be the unit $l_p^2$ ball, so that $h_M(s,t)=(|s|^{p'}+|t|^{p'})^{1/p'}$ and $1\le p'<\infty$.  Then the previous inequality implies that $1+2^{p'}\le 2^{p'}$, which is false.

The following corollary characterizes the central symmetral operator (or, equivalently, the difference body operator, where the difference body $DK$ is defined by $DK=K+(-K)=2\Delta K$).

\begin{cor}\label{corthm2s}
If $n\ge 2$, the $o$-symmetrization $\di:{\mathcal{K}}^n\rightarrow{\mathcal{K}}^n_s$ is projection covariant and translation invariant, i.e., $\di(K+z)=\di K$ for all $z\in\R^n$, if and only if there is a $\lambda\ge 0$ such that $\di K=\lambda \Delta K$.
\end{cor}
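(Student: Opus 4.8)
The plan is to deduce this from the analytic representation of projection covariant $o$-symmetrizations in Theorem~\ref{thm2s} and to isolate the single constraint that translation invariance imposes on the set $M$ appearing there.

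First I would dispatch the converse. If $\di K=\lambda\Delta K$ for some $\lambda\ge 0$, then since orthogonal projection onto a subspace $S$ is linear and commutes with Minkowski addition and with reflection in the origin, $(\Delta K)|S=\tfrac12(K|S)+\tfrac12(-(K|S))=\Delta(K|S)$, so $\lambda\Delta$ is projection covariant. Translation invariance is immediate, as $\Delta(K+z)=\tfrac12(K+z)+\tfrac12(-K-z)=\Delta K$, the translates cancelling.

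For the forward direction, assume $\di$ is projection covariant and translation invariant. By Theorem~\ref{thm2s} there is a compact convex set $M\subset\R^2$, symmetric in the line $x_1=x_2$, with $h_{\di K}(x)=h_M(h_K(x),h_{-K}(x))$ for all $K\in{\mathcal K}^n$ and $x\in\R^n$. Using $h_{K+z}(x)=h_K(x)+z\cdot x$ and $h_{-(K+z)}(x)=h_{-K}(x)-z\cdot x$, the identity $\di(K+z)=\di K$ becomes
\[
h_M\bigl(h_K(x)+z\cdot x,\,h_{-K}(x)-z\cdot x\bigr)=h_M\bigl(h_K(x),h_{-K}(x)\bigr)
\]
for all $K\in{\mathcal K}^n$ and all $x,z\in\R^n$. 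Now I would fix a unit vector $u$ and observe that for any $(s,t)\in H$ the segment $K=[-tu,su]$ satisfies $h_K(u)=\max\{s,-t\}=s$ and $h_{-K}(u)=\max\{t,-s\}=t$, the last equalities holding precisely because $(s,t)\in H$, i.e. $s+t\ge 0$; meanwhile $z\cdot u$ sweeps out all of $\R$ as $z$ varies. Hence the displayed relation yields $h_M(s+\tau,t-\tau)=h_M(s,t)$ for every $(s,t)\in H$ and every $\tau\in\R$.

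Since $(s+\tau,t-\tau)$ has the same coordinate sum as $(s,t)$ and lies again in $H$, this shows that $h_M$ restricted to $H$ depends only on $s+t$; writing $h_M(s,t)=\phi(s+t)$ on $H$, the homogeneity of degree $1$ of $h_M$ forces $\phi(ru)=r\phi(u)$ for $r\ge 0$, so $\phi(u)=cu$ for $u\ge 0$, where $c=h_M(1,0)$. Finally, because $h_K(x)+h_{-K}(x)=h_{DK}(x)=2h_{\Delta K}(x)\ge 0$, the pair $(h_K(x),h_{-K}(x))$ always lies in $H$, and therefore
\[
h_{\di K}(x)=\phi\bigl(h_K(x)+h_{-K}(x)\bigr)=c\bigl(h_K(x)+h_{-K}(x)\bigr)=2c\,h_{\Delta K}(x).
\]
As $\di K$ is a nonempty $o$-symmetric set, its support function is nonnegative, so taking $K=B^n$ (for which $\Delta B^n=B^n$) gives $2c\ge 0$; setting $\lambda=2c$ yields $\di K=\lambda\Delta K$. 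The crux of the argument, and the step I would be most careful about, is the parametrization: verifying that the pairs $(h_K(u),h_{-K}(u))$ fill exactly the half-plane $H$ and that translation sweeps out the transverse direction $(1,-1)$, so that translation invariance collapses $h_M|_H$ to a function of $s+t$ alone. The remaining manipulations are routine support-function bookkeeping.
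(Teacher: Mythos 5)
Your proof is correct and follows essentially the same route as the paper's: both invoke Theorem~\ref{thm2s} and use $h_{K+z}(x)=h_K(x)+z\cdot x$ to see that translation invariance forces $h_M$ on the half-plane $H$ to depend only on $s+t$, hence to equal a nonnegative multiple of $h_K(x)+h_{-K}(x)=2h_{\Delta K}(x)$. The only (cosmetic) difference is that the paper argues pointwise, choosing $z$ for each $x$ so that the two arguments of $h_M$ coincide and then applying homogeneity to get $\lambda=h_M(1,1)$, whereas you first solve the resulting functional equation for $h_M$ on all of $H$.
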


\begin{proof}
Let $K\in {\mathcal{K}}^n$. By (\ref{ns}), we have
$$
h_{\di (K+z)}(x)=h_{M}\left(h_{K+z}(x),h_{-K-z}(x)\right)=
h_{M}\left(h_{K}(x)+x\cdot z,h_{-K}(x)-x\cdot z\right),$$
for all $x,z\in \R^n$.  Choosing $z$ so that $x\cdot z=\left(h_{-K}(x)-h_K(x)\right)/2$, we obtain
$$
h_{\di K}(x)=h_{\di (K+z)}(x)=
h_{M}\left(\frac{h_{K}(x)+h_{-K}(x)}{2},\frac{h_{K}(x)+h_{-K}(x)}{2}\right)
=\lambda h_{\Delta K}(x),$$
where $\lambda=h_{M}(1,1)$, for all $x\in \R^n$.
\end{proof}

Neither projection covariance nor translation invariance can be omitted in the previous result.  Indeed, the $o$-symmetrization defined as in Example~\ref{LPex} by $\di K=\Delta_p K$ for $K\in {\mathcal{K}}^n$ is projection covariant but not translation invariant when $p>1$, while the $o$-symmetrization defined by $\di K=B^n$ for each $K\in {\mathcal{K}}^n$ is translation invariant but not projection covariant.

The next result is obtained in the same fashion as Theorem~\ref{thm1a}, with the symmetry of $f$ proved as in Lemma~\ref{lem01s}.

\begin{thm}\label{thm3s}
If $n\ge 2$, the $o$-symmetrization $\di:{\mathcal{S}}^n\rightarrow{\mathcal{S}}^n_s$ is homogeneous of degree 1 and rotation and section covariant if and only if there is a function $f:[0,\infty)^2\rightarrow [0,\infty)$, symmetric in its variables, such that
$$
\rho_{\di K}(x)=f\left(\rho_K(x),\rho_{-K}(x)\right),
$$
for all $K\in {\mathcal{S}}^n$ and all $x\in\R^n$.
\end{thm}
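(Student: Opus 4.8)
The plan is to follow the template of Theorem~\ref{thm1a}, restricting section covariance to lines through the origin so as to read off a one-variable function on each such line, and then to extract the symmetry of this function exactly as in the proof of Lemma~\ref{lem01s}, using the $o$-symmetry of the $o$-symmetral $\di K$ in place of the $o$-symmetry of $K*L$. No use of continuity or associativity is needed, since the conclusion is a representation rather than a specific form for $f$.

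First I would prove necessity. Fix $u\in S^{n-1}$ and apply section covariance (Property~6 for $o$-symmetrizations) with the $1$-dimensional subspace $S=l_u$, giving $(\di K)\cap l_u=\di(K\cap l_u)$ for every $K\in{\mathcal{S}}^n$. If $I\subset l_u$ is a (possibly degenerate) closed segment containing $o$, then $I\cap l_u=I$ forces $\di I=(\di I)\cap l_u\subset l_u$; since $\di I\in{\mathcal{S}}^n_s$, it is an $o$-symmetric segment. Because radial functions are nonnegative, $K\cap l_u=[-\rho_K(-u)u,\rho_K(u)u]$, so for all $s,t\ge 0$ there is $f_u(s,t)\ge 0$ with $\di[-su,tu]=[-f_u(s,t)u,f_u(s,t)u]$. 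This defines $f_u\colon[0,\infty)^2\to[0,\infty)$; note that the domain here is the full quadrant, in contrast with the half-plane $H$ of Lemma~\ref{lem01s}, precisely because radial functions cannot be negative.

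Next I would remove the dependence on $u$ via rotation covariance, exactly as in Theorem~\ref{thm1a}: for any rotation $\phi$ one has $\phi l_u=l_{\phi u}$, and comparing $\di(\phi[-su,tu])=\phi(\di[-su,tu])$ with the defining relation for $f_{\phi u}$ yields $f_{\phi u}=f_u$; since the rotation group acts transitively on $S^{n-1}$, $f_u=f$ is independent of $u$. Reading off the $u$-component of $(\di K)\cap l_u=\di(K\cap l_u)$ and using $\rho_{-K}(u)=\rho_K(-u)$ then gives
\[
\rho_{\di K}(u)=f\left(\rho_{-K}(u),\rho_K(u)\right),
\]
for all $u\in S^{n-1}$ and $K\in{\mathcal{S}}^n$. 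The crux is the symmetry of $f$, which I would obtain as in Lemma~\ref{lem01s}: since $\di K$ is $o$-symmetric, $\rho_{\di K}(u)=\rho_{\di K}(-u)$, and evaluating the displayed formula at $-u$ (using $\rho_{-K}(-u)=\rho_K(u)$) gives $f(\rho_{-K}(u),\rho_K(u))=f(\rho_K(u),\rho_{-K}(u))$; realizing arbitrary $(s,t)\in[0,\infty)^2$ by the segment $K=[-su,tu]$ then shows $f(s,t)=f(t,s)$. Homogeneity of degree~$1$ of $f$ follows from the homogeneity of $\di$ together with $\rho_{rK}=r\rho_K$, exactly as for the function $f$ in Theorem~\ref{thm1a}, and this homogeneity lets me pass from $u\in S^{n-1}$ to arbitrary $x\in\R^n\setminus\{o\}$ via the degree $-1$ homogeneity of radial functions.

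For the converse I would verify the three properties directly from the representing formula. Homogeneity of $\di$ is immediate from $\rho_{rK}(x)=r\rho_K(x)$ and the homogeneity of $f$; rotation covariance follows from $\rho_{\phi K}(x)=\rho_K(\phi^{-1}x)$ (see \cite[(0.34), p.~20]{Gar06}); and section covariance follows from $\rho_{K\cap S}(x)=\rho_K(x)$ for every $x\in S$, so that both sides of $(\di K)\cap S=\di(K\cap S)$ have the same radial function on $S$. I do not expect any serious obstacle here. The only points requiring care are keeping the order of the two arguments of $f$ consistent throughout, and observing that, unlike the convex-body setting of Lemma~\ref{lem01s}, the nonnegativity of radial functions places the natural domain of $f$ on all of $[0,\infty)^2$, which is exactly what makes the symmetry of $f$ emerge cleanly from the $o$-symmetry of $\di K$.
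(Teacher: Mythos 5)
Your proposal is correct and follows essentially the same route as the paper, which gives no separate argument for this result but simply states that it is obtained in the same fashion as Theorem~\ref{thm1a}, with the symmetry of $f$ proved as in Lemma~\ref{lem01s} -- precisely the combination you carry out, including the key points that the nonnegativity of radial functions puts the domain of $f$ on $[0,\infty)^2$ and that the $o$-symmetry of $\di K$ yields $f(s,t)=f(t,s)$ via the segments $[-su,tu]$. The only spot to tighten is the converse check of section covariance: agreement of radial functions on $S$ alone does not identify the two sets, but since the representation forces $f$ to be homogeneous of degree $1$ (hence $f(0,0)=0$), one gets $\rho_{\di(K\cap S)}(x)=f(0,0)=0$ for $x\notin S$, so $\di(K\cap S)\subset S$ and the sets coincide.
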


\section{Operations between arbitrary compact convex or star sets}\label{arbresults}

We begin with the following result, that can be deduced immediately from Corollary~\ref{corsymm}.

\begin{cor}\label{corrlem01arb}
Suppose that $n\ge 2$ and that $*:\left({\mathcal{K}}^n\right)^2\rightarrow {\mathcal{K}}^n$ (or $*:\left({\mathcal{K}}^n_o\right)^2\rightarrow {\mathcal{K}}^n$) is such that its restriction to $\left({\mathcal{K}}^n_s\right)^2$ is projection covariant.  Then $*:\left({\mathcal{K}}^n_s\right)^2\rightarrow {\mathcal{K}}_s^n$.
\end{cor}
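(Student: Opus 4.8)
The plan is to recognize that the hypothesis already hands us exactly the object to which Corollary~\ref{corsymm} applies, so that the proof is essentially a matter of bookkeeping. Write $g$ for the restriction of $*$ to $\left({\mathcal{K}}^n_s\right)^2$. Since the codomain of $*$ is ${\mathcal{K}}^n$, this restriction is a genuine operation $g:\left({\mathcal{K}}^n_s\right)^2\rightarrow {\mathcal{K}}^n$, and by hypothesis it is projection covariant.

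The one point deserving a moment's care is that projection covariance of $g$ is a statement \emph{internal} to the class ${\mathcal{K}}^n_s$. The defining identity $(K*L)|S=(K|S)*(L|S)$ for $S\in{\mathcal{G}}(n,k)$ involves, on the right-hand side, the sets $K|S$ and $L|S$. Since the orthogonal projection of an $o$-symmetric set is again $o$-symmetric, these projections lie in ${\mathcal{K}}^n_s$, so both sides of the identity are determined by $g$ alone. Thus the assumed projection covariance of the restriction of $*$ is precisely the projection covariance of $g$ viewed as an operation on $\left({\mathcal{K}}^n_s\right)^2$.

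With this observed, I would simply invoke Corollary~\ref{corsymm}: since $n\ge 2$ and $g:\left({\mathcal{K}}^n_s\right)^2\rightarrow {\mathcal{K}}^n$ is projection covariant, it follows that $g$ in fact maps into ${\mathcal{K}}^n_s$. Unraveling the notation, $K*L\in{\mathcal{K}}^n_s$ for all $K,L\in{\mathcal{K}}^n_s$, which is exactly the assertion $*:\left({\mathcal{K}}^n_s\right)^2\rightarrow {\mathcal{K}}^n_s$. The argument is identical whether the original domain of $*$ is $\left({\mathcal{K}}^n\right)^2$ or $\left({\mathcal{K}}^n_o\right)^2$, because only the behavior of $*$ on $o$-symmetric inputs is ever used.

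There is no substantive obstacle here; the entire content has been absorbed into Corollary~\ref{corsymm} (which in turn rests on the support-function representation of Lemma~\ref{lem01}). The deduction is immediate once one notes that projections preserve $o$-symmetry, which is what guarantees that the restricted operation is well defined and projection covariant in its own right, allowing Corollary~\ref{corsymm} to be applied verbatim.
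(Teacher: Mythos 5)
Your proposal is correct and follows exactly the paper's route: the paper states that this corollary ``can be deduced immediately from Corollary~\ref{corsymm},'' which is precisely the deduction you carry out, with the (sound) added observation that projections preserve $o$-symmetry so the restricted operation is itself projection covariant. No issues.
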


Many consequences of the previous simple result and those from Section~\ref{results} could be stated.  For example, Theorem~\ref{thm1} and Corollary~\ref{corrlem01arb} yield the following corollary.

\begin{cor}\label{newcorthm1}
Let $n\ge 2$, and let $*:\left({\mathcal{K}}^n\right)^2\rightarrow {\mathcal{K}}^n$ (or $*:\left({\mathcal{K}}^n_o\right)^2\rightarrow {\mathcal{K}}^n$) be such that its restriction to the $o$-symmetric sets is projection covariant and associative.  Then this restriction must be either $K*L=\{o\}$, or $K*L=K$, or $K*L=L$, for all $K,L\in {\mathcal{K}}^n_s$, or else $*=+_p$ for some $1\le p\le\infty$.
\end{cor}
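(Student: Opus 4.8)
The plan is to reduce the statement to the already-established classification in Theorem~\ref{thm1} by passing to the restriction of $*$ to the $o$-symmetric sets. First I would observe that, since an $o$-symmetric compact convex set automatically contains the origin, we have ${\mathcal{K}}^n_s\subset {\mathcal{K}}^n_o\subset {\mathcal{K}}^n$, so in either version of the hypothesis the restriction of $*$ to $\left({\mathcal{K}}^n_s\right)^2$ is a well-defined operation $\left({\mathcal{K}}^n_s\right)^2\to {\mathcal{K}}^n$, and the two cases are handled identically. I denote this restriction again by $*$.

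The only point requiring care is that associativity of the restriction be a meaningful hypothesis, i.e., that $K*(L*N)$ and $(K*L)*N$ even be defined; this requires $L*N$ and $K*L$ to be $o$-symmetric, so that $*$ can legitimately be applied a second time. This is exactly where Corollary~\ref{corrlem01arb} enters: because the restriction is projection covariant by assumption, that corollary guarantees $*:\left({\mathcal{K}}^n_s\right)^2\to {\mathcal{K}}^n_s$, so the associativity property is well-posed. I regard this as the one genuinely substantive observation in the argument; everything else is formal.

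With this in hand, the restriction $*:\left({\mathcal{K}}^n_s\right)^2\to {\mathcal{K}}^n$ is both projection covariant and associative, and therefore satisfies precisely the hypotheses of Theorem~\ref{thm1}. Applying that theorem yields that $*=\oplus_M$, where $M=\{o\}$, or $M=[-e_1,e_1]$, or $M=[-e_2,e_2]$, or $M$ is the unit ball in $l^2_p$ for some $1\le p\le\infty$; equivalently, the restriction is one of the trivial operations $K*L=\{o\}$, $K*L=K$, $K*L=L$ for all $K,L\in {\mathcal{K}}^n_s$, or else $*=+_p$ for some $1\le p\le\infty$. Since the whole argument concerns only the restriction to the $o$-symmetric sets, no further obstacle arises; the corollary is a direct synthesis of Corollary~\ref{corrlem01arb} and Theorem~\ref{thm1}, with the former ensuring the hypotheses of the latter are applicable.
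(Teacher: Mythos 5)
Your proposal is correct and follows exactly the paper's route: the paper derives this corollary by combining Corollary~\ref{corrlem01arb} (which ensures the restriction maps $\left({\mathcal{K}}^n_s\right)^2$ into ${\mathcal{K}}^n_s$, so that associativity is well-posed) with Theorem~\ref{thm1}. Your identification of that closure observation as the only substantive step matches the paper's reasoning.
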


\begin{cor}\label{cor10arb}
Let $n\ge 2$, and let $*:\left({\mathcal{K}}^n\right)^2\rightarrow {\mathcal{K}}^n$ (or $*:\left({\mathcal{K}}^n_o\right)^2\rightarrow {\mathcal{K}}^n$) be such that its restriction to the $o$-symmetric sets is continuous, $GL(n)$ covariant, associative, and has the identity property.  Then this restriction must be $L_p$ addition for some $1\le p\le\infty$.
\end{cor}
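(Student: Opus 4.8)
The plan is to reduce this to Corollary~\ref{cor10} (equivalently, to Corollary~\ref{newcorthm1} followed by an elimination of the three trivial cases), observing that both the hypotheses and the conclusion are statements purely about the restriction of $*$ to the $o$-symmetric sets.

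First I would record that the restriction $*|_{({\mathcal{K}}^n_s)^2}$ is a well-defined operation from $({\mathcal{K}}^n_s)^2$ into ${\mathcal{K}}^n$, since the original operation takes values in ${\mathcal{K}}^n$. The class ${\mathcal{K}}^n_s$ is closed under the action of $GL(n)$ (a nonsingular linear map preserves central symmetry about the origin) and under Hausdorff limits (a limit of $o$-symmetric sets is again $o$-symmetric), so Lemma~\ref{CGimpliesP} applies and shows that this continuous, $GL(n)$-covariant restriction is in fact projection covariant. By Corollary~\ref{corsymm} (equivalently, Corollary~\ref{corrlem01arb}), projection covariance forces $*:({\mathcal{K}}^n_s)^2\to{\mathcal{K}}^n_s$; this is the one point deserving attention, since it is exactly what guarantees that the associativity hypothesis $K*(L*M)=(K*L)*M$ is meaningful for $o$-symmetric $K,L,M$, the inner sums $L*M$ and $K*L$ needing to lie in the domain ${\mathcal{K}}^n_s$ before $*$ can be applied again.

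With this in hand, the restriction satisfies precisely the hypotheses of Corollary~\ref{cor10}: it is continuous, $GL(n)$ covariant, associative, and has the identity property. I would therefore invoke Corollary~\ref{cor10} directly to conclude that the restriction equals $+_p$, i.e., $L_p$ addition, for some $1\le p\le\infty$. Alternatively, combining the projection covariance established above with associativity and Corollary~\ref{newcorthm1} leaves only the four possibilities $K*L=\{o\}$, $K*L=K$, $K*L=L$, and $*=+_p$; the identity property $K*\{o\}=K=\{o\}*K$ then rules out the first three, since each would force $K*\{o\}$ or $\{o\}*K$ to equal $\{o\}$ rather than $K$ whenever $K\neq\{o\}$, and such sets $K$ exist in ${\mathcal{K}}^n_s$ because $n\ge 2$. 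This leaves $L_p$ addition as the only survivor. There is no genuine obstacle here; the proof is a bookkeeping reduction to the classification already established for operations between $o$-symmetric sets, and the converse (that $L_p$ addition restricted to the $o$-symmetric sets enjoys all four properties) is immediate.
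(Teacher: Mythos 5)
Your proposal is correct and follows essentially the same route the paper intends: the paper derives this corollary from Corollary~\ref{corrlem01arb} (equivalently Corollary~\ref{corsymm}) together with the classification in Theorem~\ref{thm1}/Corollary~\ref{cor10}, using Lemma~\ref{CGimpliesP} to pass from continuity and $GL(n)$ covariance to projection covariance and the identity property to eliminate the three trivial cases. Your explicit remark that the restriction must map into ${\mathcal{K}}^n_s$ before associativity even makes sense is exactly the point the paper's Corollary~\ref{corrlem01arb} is there to supply.
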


\begin{ex}\label{ex1dim}
{\em The previous corollary does not hold when $n=1$.  Indeed, let $1\le p\neq q\le \infty$ and define $*:\left({\mathcal{K}}^1\right)^2\rightarrow {\mathcal{K}}^1$ by
$$[-s_1,t_1]*[-s_2,t_2]=\left[-\left(|s_1|^p+|s_2|^p\right)^{1/p},
\left(|t_1|^q+|t_2|^q\right)^{1/q}\right],$$
for all $s_1,t_1,s_2,t_2$ with $-s_1\le t_1$ and $-s_2\le t_2$.  Then $*$ satisfies the hypotheses of Corollary~\ref{cor10arb} but its restriction to the $o$-symmetric intervals is not $L_p$ addition for any $1\le p\le\infty$.}
\end{ex}

\begin{ex}\label{NotLp}
{\em Note that even when $n\ge 2$, an operation $*:\left({\mathcal{K}}^n\right)^2\rightarrow {\mathcal{K}}^n$ that is continuous, $GL(n)$ covariant, and associative need not itself be $L_p$ addition.  For example, let $\di:{\mathcal{K}}^n\rightarrow{\mathcal{K}}^n_s$ be an $o$-symmetrization that is continuous and $GL(n)$ covariant, and that has the identity property  $\di K=K$ if $K\in {\mathcal{K}}^n_s$.  For some $1\le p\le \infty$ and all $K,L\in {\mathcal{K}}^n$, define
$$K*L=\di K+_p \di L.$$
Then $*$ is clearly continuous and $GL(n)$ covariant, and it is easy to check that $*$ is also associative.  A more specific example is obtained by taking $\di K=\Delta_qK$ for $K\in {\mathcal{K}}^n$, $q\ge 1$ (defined for $q>1$ as in Example~\ref{LPex}).  The simplest operation in this class is given by $p=q=1$, i.e.,
$$K*L=\Delta K+\Delta L.$$
Of course, this is ordinary Minkowski addition when restricted to the $o$-symmetric sets.}
\end{ex}

Nevertheless, results such as Corollaries~\ref{newcorthm1} and~\ref{cor10arb} show that respectable projection covariant operations between arbitrary compact compact sets must actually be $L_p$ addition (or $p$th radial addition, respectively) when restricted to $o$-symmetric sets.  The assumption of projection covariance is crucial.  Indeed, Corollary~\ref{BlaPol} shows that neither polar $L_p$ addition, for $n\ge 2$ and $-\infty\le p\le -1$, nor Blaschke addition, for $n\ge 3$, can be extended to projection covariant operations, even between $o$-symmetric compact convex sets.

\begin{lem}\label{lem017}
Let $H$ be the closed half-plane $H=\{(s,t)\in\R^2: -s\le t\}$.
If $n\ge 2$, then $*:\left({\mathcal{K}}^n\right)^2\rightarrow {\mathcal{K}}^n$ (or $*:\left({\mathcal{K}}^n_o\right)^2\rightarrow {\mathcal{K}}^n$) is projection covariant if and only if there is a homogeneous of degree 1 function $f:H^2\to\R$ (or $f:[0,\infty)^4\to\R$, respectively) such that
\begin{equation}\label{n7}
h_{K*L}(x)=f\left(h_K(-x),h_K(x), h_L(-x), h_L(x)\right),
\end{equation}
for all $K,L\in {\mathcal{K}}^n$ (or $K,L\in{\mathcal{K}}^n_o$, respectively) and all $x\in \R^n$.
\end{lem}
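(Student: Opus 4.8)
The plan is to follow the template of Lemmas~\ref{lem01} and~\ref{lem01s} almost verbatim, the only genuinely new feature being that, without central symmetry, an interval in a line $l_u$ is determined by \emph{two} endpoints rather than one. Accordingly I parameterize a compact interval in $l_u$ as $[-su,tu]$ with $(s,t)\in H$; its support function satisfies $h_{[-su,tu]}(u)=t$ and $h_{[-su,tu]}(-u)=s$, and the constraint $-s\le t$ is exactly the inequality $h(u)+h(-u)\ge 0$, which is automatic for support functions. For $K\in{\mathcal{K}}^n$ the projection onto $l_u$ is $K|l_u=[-h_K(-u)u,h_K(u)u]$ by (\ref{hproj}), with $(h_K(-u),h_K(u))\in H$; for $K\in{\mathcal{K}}^n_o$ one has in addition $h_K(\pm u)\ge 0$, which is precisely why the domain shrinks from $H^2$ to $[0,\infty)^4$ in that case.

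For the forward direction I assume $*$ is projection covariant. Applying projection covariance to intervals $I,J\subset l_u$ gives $(I*J)|l_u=I*J$, so $I*J$ is a (possibly degenerate) compact interval in $l_u$; hence there are functions $f_u,g_u:H^2\to\R$ with $-g_u\le f_u$ and
\[
[-s_1u,t_1u]*[-s_2u,t_2u]=[-g_u(s_1,t_1,s_2,t_2)u,\,f_u(s_1,t_1,s_2,t_2)u].
\]
Choosing $v\in S^{n-1}$ with $u\cdot v=\alpha\in[0,1]$ and projecting this identity onto $l_v$ — noting that $[-su,tu]|l_v=[-\alpha s v,\alpha t v]$ for $\alpha\ge 0$ — yields, exactly as in the derivation of (\ref{fandg})--(\ref{rrr}) in Lemma~\ref{lem01} but now with all four variables scaled simultaneously, that $f_u$ and $g_u$ are homogeneous of degree $1$ and independent of $u$; write $f_u=f$. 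Feeding the projections $K|l_u=[-h_K(-u)u,h_K(u)u]$ and $L|l_u=[-h_L(-u)u,h_L(u)u]$ into $(K*L)|l_u=(K|l_u)*(L|l_u)$ and comparing the endpoints in the direction $u$ gives $h_{K*L}(u)=f(h_K(-u),h_K(u),h_L(-u),h_L(u))$ for every $u\in S^{n-1}$; homogeneity of $f$ and of support functions then promotes this to all $x\in\R^n$, which is (\ref{n7}). The ${\mathcal{K}}^n_o$ case is identical, every interval now containing $o$ and $f$ being defined on $[0,\infty)^4$.

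The converse is the same short computation as in Lemma~\ref{lem01}: for a subspace $S$ and $x\in\R^n$, using (\ref{hproj}), the linearity relation $(-x)|S=-(x|S)$, and (\ref{n7}) applied both to $K,L$ at the point $x|S$ and to $K|S,L|S$ at $x$,
\[
h_{(K*L)|S}(x)=f\bigl(h_K(-(x|S)),h_K(x|S),h_L(-(x|S)),h_L(x|S)\bigr)=h_{(K|S)*(L|S)}(x),
\]
so $(K*L)|S=(K|S)*(L|S)$ and $*$ is projection covariant.

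The main point to watch is the bookkeeping forced by the loss of symmetry: each product of intervals now carries two endpoint functions $f_u,g_u$, and one must confirm that the two-line homogeneity-and-independence argument of Lemma~\ref{lem01} still goes through when $(s,t)$ ranges over the half-plane $H$ rather than a quadrant. Fortunately the statement only asks for the positive-endpoint function $f$; the companion $g$ need not be analyzed separately, since evaluating (\ref{n7}) at $-u$ shows $g(s_1,t_1,s_2,t_2)=f(t_1,s_1,t_2,s_2)$, and the validity of $*$ as a map into ${\mathcal{K}}^n$ is part of the hypothesis, so no convexity of the target has to be re-established.
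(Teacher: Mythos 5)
Your proposal is correct and follows essentially the same route as the paper's proof: parameterize intervals in $l_u$ by two endpoint functions $f_u,g_u$ on $H^2$ (or $[0,\infty)^4$), use projection onto a second line $l_v$ to deduce homogeneity and independence of $u$ exactly as in Lemma~\ref{lem01}, compare endpoints to get (\ref{n7}) on $S^{n-1}$ and extend by homogeneity, with the same converse computation. The paper likewise records the relation $g(s_1,t_1,s_2,t_2)=f(t_1,s_1,t_2,s_2)$ only as a parenthetical remark, so your treatment of $g$ matches as well.
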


\begin{proof}
Suppose that $*:\left({\mathcal{K}}^n\right)^2\rightarrow {\mathcal{K}}^n$  is projection covariant.  (The proof when $*:\left({\mathcal{K}}^n_o\right)^2\rightarrow {\mathcal{K}}^n$ is a straightforward modification of what follows.) Let $u\in S^{n-1}$.  Since $*$ is covariant under projection onto $l_u$, for any two compact convex sets $K$ and $L$ in $\R^n$, we have
\begin{equation}\label{eqb7}
(K*L)|\,l_u=(K|\,l_u)*(L|\,l_u).
\end{equation}
One consequence of this is that if $I$ and $J$ are closed intervals in $l_u$, we must have $I*J\subset l_u$. Hence there are functions $f_u, g_u:H^2\to\R$
such that $-g_u\le f_u$ and
\begin{equation}\label{eqn17}
[-s_1u,t_1u]*[-s_2u,t_2u]=[-g_u(s_1,t_1,s_2,t_2)u,f_u(s_1,t_1,s_2,t_2)u],
\end{equation}
for all $s_1,t_1,s_2,t_2$ with $-s_1\le t_1$ and $-s_2\le t_2$, i.e., for $(s_1,t_1,s_2,t_2)\in H^2$.

Let $0\le \alpha\le 1$ and choose $v\in S^{n-1}$ such that $u\cdot v=\alpha$.  Using (\ref{eqb7}) with $K=[-s_1u,t_1u]$, $L=[-s_2u,t_2u]$, and $l_u$ replaced by $l_v$, and (\ref{eqn17}), we obtain
\begin{eqnarray*}
\alpha[-g_{u}(s_1,t_1,s_2,t_2)v,f_{u}(s_1,t_1,s_2,t_2)v]&=&
[-g_{u}(s_1,t_1,s_2,t_2)u,f_{u}(s_1,t_1,s_2,t_2)u]|\,l_v\\
&=&([-s_1u,t_1u]*[-s_2u,t_2u])|\,l_v\\
&=&([-s_1u,t_1u]|\,l_v)*([-s_2u,t_2u]|\,l_v)\\
&=&[-\alpha s_1v,\alpha t_1v]*[-\alpha s_2v,\alpha t_2v]\\
&=&
[-g_{v}(\alpha s_1,\alpha t_1, \alpha s_2,\alpha t_2)v,
f_{v}(\alpha s_1,\alpha t_1, \alpha s_2,\alpha t_2)v],
\end{eqnarray*}
for all $(s_1,t_1,s_2,t_2)\in H^2$.  Therefore
$$f_v(\alpha s_1,\alpha t_1, \alpha s_2,\alpha t_2)=\alpha f_u(s_1,t_1,s_2,t_2)\quad{\text{and}}\quad g_v(\alpha s_1,\alpha t_1, \alpha s_2,\alpha t_2)=\alpha g_u(s_1,t_1,s_2,t_2),$$
for all $(s_1,t_1,s_2,t_2)\in H^2$. Now exactly as in the proof of Lemma~\ref{lem01}, we conclude that both $f_u$ and $g_u$ are homogeneous of degree 1 and further that $f_u=f$ and $g_u=g$, say, are independent of $u$.  Thus we have
\begin{equation}\label{comm7}
[-s_1u,t_1u]*[-s_2u,t_2u]=[-g(s_1,t_1,s_2,t_2)u,f(s_1,t_1,s_2,t_2)u],
\end{equation}
for all $u\in S^{n-1}$ and $(s_1,t_1,s_2,t_2)\in H^2$. Then (\ref{eqb7}) and (\ref{comm7}) yield
\begin{eqnarray*}
\lefteqn{[-h_{K*L}(-u)u,h_{K*L}(u)u]}\\
&=&(K*L)|\,l_u=(K|\,l_u)*(L|\,l_u)=[-h_{K}(-u)u,h_{K}(u)u]*[-h_{L}(-u)u,h_{L}(u)u]\\
&=&[-g\left(h_K(-u),h_K(u), h_L(-u), h_L(u)\right)u,f\left(h_K(-u),h_K(u), h_L(-u),h_L(u)\right)u].
\end{eqnarray*}
Comparing the second coordinates in the previous equation, we deduce (\ref{n7}) for $x\in S^{n-1}$.  (Note that in view of the equality of the first coordinates, we must in fact have $g(s_1,t_1,s_2,t_2)=f(t_1,s_1,t_2,s_2)$ for all $(s_1,t_1,s_2,t_2)\in H^2$.)  As in the proof of Theorem~\ref{thm12}, using the homogeneity of $f$ and of support functions, we easily obtain (\ref{n7}) for all $x\in \R^n$.

An argument analogous to that in the last paragraph of the proof of Lemma~\ref{lem01} proves the converse.
\end{proof}

Our next goal is to show that the function $f$ in the previous lemma can be taken to be the support function of a closed convex set.  Unfortunately, the method used in the transition from Lemma~\ref{lem01} to Theorem~\ref{thm12} works only when $n\ge 4$ (see Remark~\ref{remark}), and otherwise another route has to be followed.

\begin{thm}\label{thm275}
If $n\ge 2$, then $*:\left({\mathcal{K}}^n\right)^2\rightarrow {\mathcal{K}}^n$ (or $*:\left({\mathcal{K}}^n_o\right)^2\rightarrow {\mathcal{K}}^n$) is projection covariant if and only if there is a nonempty closed convex set $M$ in $\R^4$ such that
\begin{equation}\label{n77}
h_{K*L}(x)=h_M\left(h_K(-x),h_K(x), h_L(-x), h_L(x)\right),
\end{equation}
for all $K,L\in {\mathcal{K}}^n$ (or $K,L\in{\mathcal{K}}^n_o$, respectively) and $x\in\R^n$.
\end{thm}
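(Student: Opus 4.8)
The plan is to upgrade the homogeneous degree-one function $f$ furnished by Lemma~\ref{lem017} into the restriction of a support function. Throughout, write $\Psi_{K,L}(x)=\left(h_K(-x),h_K(x),h_L(-x),h_L(x)\right)$, so that (\ref{n7}) reads $h_{K*L}(x)=f\left(\Psi_{K,L}(x)\right)$, and note that $\Psi_{K,L}(x)$ always lies in the closed convex polyhedral cone $H^2=H\times H$ (since $h_K(x)+h_K(-x)\ge 0$ by subadditivity), while every point of $H^2$ is realized: given $(s_1,t_1,s_2,t_2)\in H^2$, the collinear intervals $K=[-s_1e_1,t_1e_1]$, $L=[-s_2e_1,t_2e_1]$ and $x=e_1$ give $\Psi_{K,L}(e_1)=(s_1,t_1,s_2,t_2)$. (In the ${\mathcal{K}}^n_o$ case the cone is $[0,\infty)^4$, treated identically.) The converse direction is immediate: given closed convex $M$ satisfying (\ref{n77}), the function $f=h_M|_{H^2}$ is homogeneous of degree $1$ and finite on $H^2$ (finiteness because $K*L\in{\mathcal{K}}^n$), so Lemma~\ref{lem017} delivers projection covariance.

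For the forward direction I would first prove that $f$ is subadditive on $H^2$. Given $\xi=(s_1,t_1,s_2,t_2)$ and $\eta=(s_1',t_1',s_2',t_2')$ in $H^2$, take the axis-aligned boxes $K=[-s_1,t_1]\times[-s_1',t_1']$ and $L=[-s_2,t_2]\times[-s_2',t_2']$ in $\lin\{e_1,e_2\}$. Since the support function of a box is additive across the coordinate directions, $\Psi_{K,L}(e_1)=\xi$, $\Psi_{K,L}(e_2)=\eta$, and $\Psi_{K,L}(e_1+e_2)=\xi+\eta$; subadditivity of the support function $h_{K*L}$ then gives
\[
f(\xi+\eta)=h_{K*L}(e_1+e_2)\le h_{K*L}(e_1)+h_{K*L}(e_2)=f(\xi)+f(\eta).
\]
This argument needs only $n\ge 2$; it is the ``other route'' alluded to before the theorem, required because the direct analogue of the proof of Theorem~\ref{thm12}, realizing $M$ as a single set $K_0*L_0$, breaks down: there is no set whose two support values $h_{K_0}(x)$ and $h_{K_0}(-x)$ are independent linear functions of $x$, and controlling all four slots of $\Psi$ independently forces $K_0$ and $L_0$ into complementary coordinate $2$-planes, whence $n\ge 4$.

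Being homogeneous of degree $1$ and subadditive, $f$ is sublinear, hence convex and finite, on the cone $H^2$. I would then set
\[
M=\left\{v\in\R^4:\ v\cdot\xi\le f(\xi)\ \text{for all }\xi\in H^2\right\},
\]
a closed convex set that is nonempty because a finite sublinear function on a cone with nonempty interior has a subgradient at each interior point, each such subgradient lying in $M$. By construction $h_M\le f$ on $H^2$, and choosing the subgradient at an interior point $\xi$ gives $h_M(\xi)=f(\xi)$ there, so $h_M=f$ on $\inte H^2$. Allowing $M$ to be unbounded (so that $h_M$ may equal $+\infty$ off $H^2$) is exactly why the theorem asserts a closed, rather than compact, convex set.

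The main obstacle is to promote the equality $h_M=f$ from $\inte H^2$ to all of $H^2$, since $\Psi_{K,L}(x)$ lands on $\partial H^2$ precisely when $K$ or $L$ has zero width in the direction $x$, and such degenerate sets must be covered. Equivalently, one must show that $f$ is lower semicontinuous up to $\partial H^2$; this does \emph{not} follow from convexity alone, because a convex function that is finite on a polyhedron is automatically upper semicontinuous there yet may jump upward on the relative boundary. Here I would exploit projection covariance: a boundary point $\xi_0\in\partial H^2$ is realized, after projection onto a suitable line $l_u$, by the explicit one-dimensional operation on intervals obtained in the proof of Lemma~\ref{lem017} (valid for all of $H^2$, boundary included), and I would combine this one-dimensional description with the upper semicontinuity of the convex function $f$ on the polyhedral cone $H^2$ to show that $f$ is also lower semicontinuous at $\xi_0$, whence $h_M(\xi_0)=f(\xi_0)$. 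Substituting $h_M=f$ into (\ref{n7}) then yields (\ref{n77}) for all $K,L$ and all $x$, completing the proof; the ${\mathcal{K}}^n_o$ case is identical with $H^2$ replaced by $[0,\infty)^4$.
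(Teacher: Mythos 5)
Your overall architecture matches the paper's: start from Lemma~\ref{lem017}, prove subadditivity of $f$ on $H^2$ by a two-dimensional construction, and then realize the resulting sublinear function as the support function of a closed convex set in $\R^4$. Your subadditivity step is correct and is only a cosmetic variant of the paper's (the paper uses the line segments $K_0=[-a_1e_1-b_1e_2,a_2e_1+b_2e_2]$, $L_0=[-a_3e_1-b_3e_2,a_4e_1+b_4e_2]$ where you use coordinate boxes; both realize $\xi$, $\eta$, $\xi+\eta$ as the values of $\bigl(h_K(-x),h_K(x),h_L(-x),h_L(x)\bigr)$ at $x=e_1,e_2,e_1+e_2$). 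Your remark explaining why the ``$M=K_0*L_0$'' shortcut of Theorem~\ref{thm12} needs $n\ge4$ is also in agreement with the paper's Remark~\ref{remark}.

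The genuine gap is exactly the step you flag as ``the main obstacle'' and then leave as an intention: proving that $f$ is lower semicontinuous (in fact continuous) on all of $H^2$, boundary included. Saying you would ``combine the one-dimensional description with the upper semicontinuity of $f$'' is not an argument --- upper semicontinuity plus the mere fact that $f$ is defined on $\partial H^2$ by an interval operation gives nothing in the downward direction, and without lower semicontinuity the extension $\overline f$ is not closed, Rockafellar's Theorem~13.2 does not apply, and your $M$ only satisfies $h_M=f$ on $\inte H^2$, which misses precisely the degenerate $K$, $L$ the theorem must cover. The paper closes this with a genuinely quantitative argument: first a uniform bound, namely that for each $r>0$ there is $R=R(r)$ with $K*L\subset RB^n$ whenever $K,L\subset rB^n$ (obtained because the convex function $f$ attains its maximum on $H^2\cap[-r,r]^4$ at a vertex); then, for $a\in H^2$ and $a'$ near $a$, an explicit pair of line segments $K_1,L_1\subset rB^n$ and two nearby unit vectors $x,x'$ with
\[
\bigl(h_{K_1}(-x),h_{K_1}(x),h_{L_1}(-x),h_{L_1}(x)\bigr)=a,\qquad
\bigl(h_{K_1}(-x'),h_{K_1}(x'),h_{L_1}(-x'),h_{L_1}(x')\bigr)=a',
\]
so that subadditivity of the single support function $h_{K_1*L_1}$ gives $|f(a)-f(a')|\le h_{K_1*L_1}\bigl(\pm(x-x')\bigr)\le R|x-x'|<\ee$. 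This ``tilting'' construction, which realizes both $a$ and $a'$ from one pair of nondegenerate segments evaluated at two directions, is the key idea your proposal is missing; until you supply it (or some substitute proof of lower semicontinuity at $\partial H^2$), the forward implication is not established.
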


\begin{proof}
Suppose that $*:\left({\mathcal{K}}^n\right)^2\rightarrow {\mathcal{K}}^n$ is projection covariant. By Lemma~\ref{lem017}, there is a homogeneous of degree 1 function $f:H^2\to\R$ such that (\ref{n7}) holds, where $H$ is as in the statement of Lemma~\ref{lem017}.

We aim to prove that $f$ is subadditive on $H^2$.  To this end, let $a,b\in H^2$, so that $-a_1\le a_2$, $-a_3\le a_4$, $-b_1\le b_2$, and $-b_3\le b_4$.  Let $K_0=[-a_1e_1-b_1e_2,a_2e_1+b_2e_2]$ and $L_0=[-a_3e_1-b_3e_2,a_4e_1+b_4e_2]$ be line segments in $\R^n$. Then
\begin{equation}\label{zz1}
h_{K_0}(-e_1)=a_1,\quad h_{K_0}(e_1)=a_2,\quad h_{K_0}(-e_2)=b_1,\quad{\text{and}}\quad
h_{K_0}(e_2)=b_2.
\end{equation}
(For example,
$$h_{K_0}(-e_1)=\max\{(-a_1e_1-b_1e_2)\cdot (-e_1),(a_2e_1+b_2e_2)\cdot (-e_1)\}=
\max\{a_1,-a_2\}=a_1.)$$
Similarly, we obtain
\begin{equation}\label{zz2}
h_{K_0}(-e_1-e_2)=a_1+b_1,\quad h_{K_0}(e_1+e_2)=a_2+b_2,
\end{equation}
\begin{equation}\label{zz3}
h_{L_0}(-e_1)=a_3,\quad h_{L_0}(e_1)=a_4, \quad h_{L_0}(-e_2)=b_3,\quad h_{L_0}(e_2)=b_4,
\end{equation}
\begin{equation}\label{zz4}
h_{L_0}(-e_1-e_2)=a_3+b_3,\quad{\text{and}}\quad h_{L_0}(e_1+e_2)=a_4+b_4.
\end{equation}
Therefore by (\ref{n7}) and the subadditivity of $h_{{K_0}*{L_0}}$, we have
\begin{eqnarray*}
f(a+b)\!\!\!
&=&\!\!\!f\left(h_{K_0}(-e_1-e_2),h_{K_0}(e_1+e_2),h_{L_0}(-e_1-e_2),
h_{L_0}(e_1+e_2)\right)\\
&=&\!\!\!h_{{K_0}*{L_0}}(e_1+e_2)\le h_{{K_0}*{L_0}}(e_1)+h_{{K_0}*{L_0}}(e_2)\\
&=&\!\!\!f\left(h_{K_0}(-e_1),h_{K_0}(e_1),h_{L_0}(-e_1),
h_{L_0}(e_1)\right)+f\left(h_{K_0}(-e_2),h_{K_0}(e_2),h_{L_0}(-e_2),
h_{L_0}(e_2)\right)\\
&=&\!\!\!f(a)+f(b).
\end{eqnarray*}
This proves that $f$ is subadditive on $H^2$.

Our next goal is to show that $f$ is continuous on $H^2$. We first claim that for each $r>0$, there is an $R=R(r)>0$ such that $K*L\subset RB^n$ whenever $K,L\subset rB^n$.  Indeed, we know that the function $f$ is sublinear and hence convex on $H^2$.  Since $f$ is also finite on $H^2\cap [-r,r]^4$, it has a maximum, $R$, say, on this set, attained at one of the vertices.  Because $K,L\subset rB^n$, we have
$$(h_K(-u),h_K(u),h_L(-u),h_L(u))\in H^2\cap [-r,r]^4,$$
for all $u\in S^{n-1}$. Then (\ref{n7}) implies that $h_{K*L}(u)\le R$
for all $u\in S^{n-1}$, proving the claim.

Now suppose that $a\in H^2$ and let $\ee>0$.  If $a\neq o$, let $r=2\max_{i=1,\dots,4}\{|a_i|\}$ and if $a=o$, let $r=1$.  In each case, let $R$ be the corresponding radius defined in the previous paragraph. Choose $0<\theta<\pi/2$ small enough to ensure that with $x=e_1$ and $x'=\cos\theta e_1+\sin\theta e_2$, we have $R|x-x'|<\ee$.  Let
$${K_1}=\left[-a_1e_1+\left(\frac{a_1\cos\theta-a_1'}{\sin\theta}\right)e_2,
a_2e_1+\left(\frac{-a_2\cos\theta+a_2'}{\sin\theta}\right)e_2\right]=[p,q],$$
say, and
$${L_1}=\left[-a_3e_1+\left(\frac{a_3\cos\theta-a_3'}{\sin\theta}\right)e_2,
a_4e_1+\left(\frac{-a_4\cos\theta+a_4'}{\sin\theta}\right)e_2\right]=[v,w],$$
say, be line segments in $\R^n$.  It is easy to check that
\begin{equation}\label{hhh}
(h_{K_1}(-x),h_{K_1}(x),h_{L_1}(-x),h_{L_1}(x))=a\quad{\text{and}}\quad (h_{K_1}(-x'),h_{K_1}(x'),h_{L_1}(-x'),h_{L_1}(x'))=a'.
\end{equation}
The latter equation shows that $a'\in H^2$.
We claim that ${K_1},{L_1}\subset rB^n$ if $a'$ is sufficiently close to $a$.  To see this, note that ${K_1},{L_1}\subset cB^n$, where
\begin{equation}\label{maxc}
c=\max\{|p|,|q|,|v|,|w|\}=\max_{i=1,\dots,4}\left\{\left(a_i^2+a_i'^2
-2a_ia_i'\cos\theta\right)^{1/2}/\sin\theta\right\}.
\end{equation}
If $a=o$, then $c=\max_{i=1,\dots,4}\{|a_i'|/\sin\theta\}<1=r$ for $a'$ sufficiently close to $a$.  If $a\neq o$, note that as $a'\rightarrow a$, the right-hand expression in (\ref{maxc}) approaches $$\max_{i=1,\dots,4}\{|a_i|/\cos(\theta/2)\}
\le\sqrt{2}\max_{i=1,\dots,4}\{|a_i|\}<r,$$
proving the claim. Then ${K_1}*{L_1}\subset RB^n$ and from this, (\ref{hhh}), (\ref{n7}), and the subadditivity of $h_{{K_1}*{L_1}}$, we get
$$f(a)-f(a')=h_{{K_1}*{L_1}}(x)-h_{{K_1}*{L_1}}(x')
\le h_{{K_1}*{L_1}}(x-x')\le R|x-x'|<\ee,$$
for all $a'$ sufficiently close to $a$.  The same bound applies to $f(a')-f(a)$, establishing the continuity of $f$.

Because $f$ is sublinear on $H^2$, it is also convex on $H^2$. Extend $f$ to a function $\overline f : \R^4\to \R\cup\{\infty\}$ by defining $\overline f(x)=\infty$, for $x\notin H^2$. Then $\overline f$ is convex and proper (i.e., not identically $\infty$).  Since $f$ is continuous on $H^2$, $\overline f$ is lower semi-continuous and hence closed (i.e., the epigraph $\{(x,t)\in \R^{5}:t\ge \overline f(x), x\in \R^4, t\in \R\}$ of $\overline f$ is closed; see \cite[p.~52]{Roc70}).  By \cite[Theorem~13.2]{Roc70}, there is a nonempty closed convex set $M\subset\R^4$ with support function $h_M=\overline f$ and therefore $h_M=f$ on $H^2$.  This yields (\ref{n77}) for this case.

The case when $*:\left({\mathcal{K}}^n_o\right)^2\rightarrow {\mathcal{K}}^n$ is projection covariant is handled in a similar fashion. By Lemma~\ref{lem017}, there is a homogeneous of degree 1 function $f:[0,\infty)^4\to\R$ such that (\ref{n7}) holds, and it suffices to prove that $f$ is subadditive and continuous on $[0,\infty)^4$.  For the former, let $a,b\in [0,\infty)^4$ and define $K_0'=[-a_1,a_2]\times [-b_1,b_2]\times\{o\}$ and $L_0'=[-a_3,a_4]\times [-b_3,b_4]\times\{o\}$, rectangles in the $\{x_1,x_2\}$-plane in $\R^n$.  Note that $K_0',L_0'\in {\mathcal{K}}^n_o$.  One readily verifies that (\ref{zz1})--(\ref{zz4}) hold with $K_0$ and $L_0$ replaced by $K_0'$ and $L_0'$, respectively, allowing the proof of the subadditivity of $f$ to go through as above.  For the continuity of $f$, let $a\in [0,\infty)^4$ and follow the proof above, replacing $K_1$ and $L_1$ by $K_1'=\conv\{K_1,o\}$ and $L_1'=\conv\{L_1,o\}$, respectively. Note that $K_1',L_1'\in {\mathcal{K}}^n_o$. Then (\ref{hhh}) holds with $K_1$ and $L_1$ replaced by $K_1'$ and $L_1'$, respectively, and it is clear that $K_1',L_1'\subset rB^n$ with the same values of $r$ used above.  This allows the proof of the continuity of $f$ and the conclusion to go through as before.

The converse is clear.
\end{proof}

Suppose that $f$ is a continuous and convex (and hence sublinear) function defined on a closed convex cone $C$ in $\R^n$ with apex at the origin.  If it were possible to extend $f$ to a continuous and convex function on $\R^n$, then we could conclude from the proof of Theorem~\ref{thm275} that the set $M$ could be taken to be a compact convex set for all $n\ge 2$.  But \cite[Theorem~2.2]{IgaP98} states that if $\inte C\neq \emptyset$, then unless $C=\R^n$, there is always such an $f$ for which this extension is not possible.
When $n\ge 4$, we can avoid this difficulty, as the following remark demonstrates.

\begin{rem}\label{remark}
{\em When $n\ge 4$, the set $M$ in Theorem~\ref{thm275} can be taken to be a compact convex set, and moreover the proof can be shortened considerably.  To be specific, when $*:\left({\mathcal{K}}^n\right)^2\rightarrow {\mathcal{K}}^n$ is projection covariant and $n\ge 4$, we may set $K_0=[-e_1,e_2]$ and $L_0=[-e_3,e_4]$ and define $M=K_0*L_0$. Identifying $S=\lin\{e_1,e_2,e_3,e_4\}$ with $\R^4$ in the natural way, it follows that by its very definition, $M$ is a compact convex set in $\R^4$. Using the same argument as in the proof of Theorem~\ref{thm12}, we apply (\ref{n7}) with $K=K_0$ and $L=L_0$ to obtain
\begin{equation}\label{MMf}
h_M(x_1,x_2,x_3,x_4)=h_{K_0*L_0}(x)=
f\left(h_{K_0}(-x),h_{K_0}(x), h_{L_0}(-x), h_{L_0}(x)\right)=f(x_1,x_2,x_3,x_4),
\end{equation}
whenever $(x_1,x_2,x_3,x_4)\in H^2$.  Since $(h_K(-x),h_K(x), h_L(-x), h_L(x))\in H^2$ for all $K,L\in {\mathcal{K}}^n$ and $x\in\R^n$, (\ref{n77}) follows directly from (\ref{n7}) and (\ref{MMf}). When $*:\left({\mathcal{K}}^n_o\right)^2\rightarrow {\mathcal{K}}^n$ is projection covariant, we can take $K_0=\conv\{o,-e_1,e_2\}$ and $L_0=\conv\{o,-e_3,e_4\}$ instead.

Observe that in order to obtain (\ref{MMf}), we require projection covariance for subspaces of dimension 4, whereas for Theorem~\ref{thm275}, only projections onto lines are used.}
\end{rem}

\begin{cor}\label{contaff7}
Let $n\ge 2$. An operation $*:\left({\mathcal{K}}^n\right)^2\rightarrow {\mathcal{K}}^n$ (or $*:\left({\mathcal{K}}^n_o\right)^2\rightarrow {\mathcal{K}}^n$) is projection covariant if and only if it is continuous and $GL(n)$ covariant (and hence homogeneous of degree 1).
\end{cor}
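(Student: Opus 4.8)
The plan is to follow the pattern of Corollaries~\ref{contaff} and~\ref{contaffs}, now leaning on the representation formula of Theorem~\ref{thm275} rather than Theorem~\ref{thm12}.

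First I would dispatch the easy implication. If $*$ is continuous and $GL(n)$ covariant, then it is projection covariant directly by Lemma~\ref{CGimpliesP}. Homogeneity of degree~$1$ is automatic as well: applying $GL(n)$ covariance with $\phi=rI$ (scalar multiplication by $r>0$) gives $(rK)*(rL)=r(K*L)$.

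For the converse, suppose $*$ is projection covariant. Theorem~\ref{thm275} then provides a nonempty closed convex set $M$ in $\R^4$ with
$$h_{K*L}(x)=h_M\left(h_K(-x),h_K(x),h_L(-x),h_L(x)\right)$$
for all admissible $K,L$ and all $x\in\R^n$, and I would read off both desired properties from this single identity. For $GL(n)$ covariance, fix $\phi\in GL(n)$ and invoke the change-of-support-function formula (\ref{suppchange}): since $h_K(\phi^tx)=h_{\phi K}(x)$ and $-\phi^tx=\phi^t(-x)$, substituting $\phi^tx$ for $x$ in (\ref{n77}) and comparing with (\ref{n77}) applied to the pair $(\phi K,\phi L)$ yields $h_{\phi(K*L)}(x)=h_{\phi K*\phi L}(x)$ for every $x$, hence $\phi(K*L)=\phi K*\phi L$. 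For continuity, I would recall from the proof of Theorem~\ref{thm275} that $h_M$ agrees with the finite, continuous function $f$ on the set $H^2$, and that the argument vector $\left(h_K(-x),h_K(x),h_L(-x),h_L(x)\right)$ always lies in $H^2$. If $K_m\to K$ and $L_m\to L$ in the Hausdorff metric, the associated support functions converge uniformly on $S^{n-1}$ by (\ref{HD}), so the argument vectors converge uniformly while staying in a fixed bounded subset of $H^2$; uniform continuity of $f$ there then forces $h_{K_m*L_m}\to h_{K*L}$ uniformly, which is precisely Hausdorff convergence $K_m*L_m\to K*L$.

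The only subtlety---and hence the main (mild) obstacle---is the possible unboundedness of $M$, which allows $h_M$ to take the value $+\infty$ outside $H^2$. This causes no trouble because the admissible argument vectors never leave $H^2$, where $h_M=f$ is finite and continuous; restricting to a bounded subset of $H^2$ (available since convergent sequences in ${\mathcal{K}}^n$ are uniformly bounded) upgrades the pointwise convergence to the uniform convergence needed for the Hausdorff metric. The case $*:\left({\mathcal{K}}^n_o\right)^2\to{\mathcal{K}}^n$ runs identically, with $H^2$ replaced throughout by $[0,\infty)^4$.
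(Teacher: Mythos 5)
Your proof is correct and follows essentially the same route as the paper: the easy direction via Lemma~\ref{CGimpliesP}, and the converse by reading both continuity and $GL(n)$ covariance off the representation formula (\ref{n77}) of Theorem~\ref{thm275}, using the continuity of $h_M$ on $H^2$ (or $[0,\infty)^4$) together with (\ref{HD}) and (\ref{suppchange}). The paper states these steps tersely; your write-up simply supplies the same details, including the correct observation that the argument vectors never leave $H^2$, so the possible unboundedness of $M$ is harmless.
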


\begin{proof}
If $*$ is continuous and $GL(n)$ covariant, then it is projection covariant by Lemma~\ref{CGimpliesP}.  Suppose that $*$ is projection covariant.  The proof of Theorem~\ref{thm275} shows that $h_M$ is continuous on $H^2$ or $[0,\infty)^4$, as appropriate. The continuity of $*$ follows from this, (\ref{n77}), and (\ref{HD}), while the $GL(n)$ covariance of $*$ is an easy consequence of (\ref{n77}) and (\ref{suppchange}).
\end{proof}

As was the case with Theorem~\ref{thm12}, in Theorem~\ref{thm275} there will generally be more than one set $M$ giving rise to the same operation $*$ via (\ref{n77}).

Theorem~\ref{thm275} raises the question as to which closed convex sets $M$ in $\R^4$ are such that the right-hand side of (\ref{n77}) is a support function for all $K, L\in {\mathcal{K}}^n$.  A partial answer is provided by Corollary~\ref{corthmM1}, which implies that this is true if  $M$ is a compact convex subset of $[0,\infty)^4$.  In this case, we have
$$K*L=\oplus_M(K,-K,L,-L),$$
by Lemma~\ref{lemM2}(i) and (\ref{n77}).

\begin{thm}\label{thm276}
Suppose that $n\ge 2$.  The operation $*:\left({\mathcal{K}}^n\right)^2\rightarrow {\mathcal{K}}^n$ is projection covariant and has the identity property if and only if it is Minkowski addition.
\end{thm}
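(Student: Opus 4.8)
The plan is to prove both implications, leaning on the representation theorem already established. The easy direction is immediate: Minkowski addition is continuous and $GL(n)$ covariant, hence projection covariant by Lemma~\ref{CGimpliesP}, and it plainly has the identity property. For the converse, suppose $*$ is projection covariant and has the identity property. By Theorem~\ref{thm275} there is a nonempty closed convex set $M\subset\R^4$ with
$$h_{K*L}(x)=h_M\bigl(h_K(-x),h_K(x),h_L(-x),h_L(x)\bigr),$$
and, writing $f:=h_M$ on the cone $H^2$ (with $H=\{(s,t):-s\le t\}$), Lemma~\ref{lem017} and the proof of Theorem~\ref{thm275} give that $f$ is sublinear on $H^2$. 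Inserting $L=\{o\}$ (so $h_L\equiv 0$) into the identity property and letting $(h_K(-x),h_K(x))$ range over all of $H$ yields $f(s,t,0,0)=t$ on $H$, and $K=\{o\}$ yields $f(0,0,s,t)=t$ on $H$.

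First I would read off the upper inclusion and the behaviour on the boundary of the cone. Subadditivity of $f$ gives $f(s_1,t_1,s_2,t_2)\le f(s_1,t_1,0,0)+f(0,0,s_2,t_2)=t_1+t_2$, that is, $h_{K*L}\le h_{K+L}$, so $K*L\subseteq K+L$. For a lower bound I would take the segments $K=[-s_1e_1,t_1e_1]$ and $L=[-s_2e_2,t_2e_2]$, realize the tuple $(s_1,t_1,s_2,t_2)$ at the direction $e_1+e_2$, and apply subadditivity of the genuine support function $h_{K*L}$ in the form $h_{K*L}(e_1)\le h_{K*L}(e_1+e_2)+h_{K*L}(-e_2)$; the slice values $h_{K*L}(e_1)=t_1$ and $h_{K*L}(-e_2)=s_2$ then give $f(s_1,t_1,s_2,t_2)\ge t_1-s_2$, and symmetrically $f\ge t_2-s_1$. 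On the faces $s_2=-t_2$ and $s_1=-t_1$ these meet the upper bound, so in particular $K*\{l\}=K+l$ and $\{k\}*L=k+L$: adding a point is already honest translation.

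The remaining task, and the crux, is to prove $K+L\subseteq K*L$, equivalently $f(s_1,t_1,s_2,t_2)=t_1+t_2$ throughout $H^2$. It is here that I expect the main obstacle. The data collected so far are genuinely insufficient: the function $t_1+t_2-\min\{s_1+t_1,\,s_2+t_2\}$ is sublinear on $H^2$, equals $t_1+t_2$ on both coordinate slices, and obeys the upper bound, yet is not identically $t_1+t_2$. What rules it out is that the right-hand side must be a support function in $x$ for \emph{every} pair $K,L$ at once (equivalently $*$ is continuous and $GL(n)$ covariant, Corollary~\ref{contaff7}); for instance with $K=[-e_1,e_1]$ and $L=[-e_2,e_2]$ this candidate fails subadditivity in $x$. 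Thus the proof must exploit the cross-direction subadditivity of $h_{K*L}$ on genuinely two-dimensional sets, and by projection covariance onto $2$-planes one may reduce to $n=2$.

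Concretely, I would reformulate the goal as $(0,1,0,1)\in M$. The slice identities force, for every $m\in M$, the relations $m_2=m_1+1\le 1$ and $m_4=m_3+1\le 1$ (the functional $s\mapsto sm_1+t(m_2-1)$ must be nonpositive on $\{s+t\ge0\}$), together with $\sup_{m\in M}m_1=\sup_{m\in M}m_3=0$; hence $M\subseteq\{(a,a+1,b,b+1):a\le0,\ b\le0\}$, and equality $f\equiv t_1+t_2$ is exactly the statement that the two suprema are attained at a common point, i.e. $\sup_M(m_1+m_3)=0$. The step that concentrates all the difficulty is to exclude the ``skew'' possibility $\sup_M(m_1+m_3)<0$ (realized by segments such as $\operatorname{conv}\{(0,1,-1,0),(-1,0,0,1)\}$); I would do so by feeding suitable two-dimensional polygons into the subadditivity of $h_{K*L}$ and letting the auxiliary directions degenerate, upgrading the partial bounds $t_1-s_2$ and $t_2-s_1$ toward $t_1+t_2$. (For $n\ge 4$ the set $M$ may instead be taken compact by Remark~\ref{remark}, which streamlines the bookkeeping, but the same two-dimensional input is still required to force the joint attainment.) Once $(0,1,0,1)\in M$ is established, $h_M\ge t_1+t_2$ on $H^2$ combines with the upper bound to give $K*L=K+L$, and the theorem follows; Corollary~\ref{corthmM1} confirms that the resulting $M$ indeed produces a valid operation.
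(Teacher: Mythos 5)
Your setup is sound and, up to the last step, tracks the paper's own argument: you invoke Theorem~\ref{thm275}, use the identity property to pin down $f$ on the two coordinate slices, deduce $M\subseteq\{(a,1+a,b,1+b):a,b\le 0\}$ with $\sup_M m_1=\sup_M m_3=0$ (the paper obtains the same containment, (\ref{verynew}), via the segments $[e_1,ae_1]$ and a growth argument in $a$), and correctly reduce the whole theorem to showing $\alpha:=\sup_M(m_1+m_3)=0$, equivalently $(0,1,0,1)\in M$. Your derivation of the inclusion $K*L\subseteq K+L$ directly from the subadditivity of $f$ on $H^2$ together with the slice identities is in fact cleaner than the paper's route via (\ref{56}), and your observation that segments alone cannot force $f\equiv t_1+t_2$ (your sublinear counterexample $t_1+t_2-\min\{s_1+t_1,s_2+t_2\}$) is a correct and useful diagnosis.

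But that is exactly where the proposal stops: the decisive step, excluding $\alpha<0$, is only announced (``feeding suitable two-dimensional polygons \dots and letting the auxiliary directions degenerate''), not carried out, and it is the step that carries the content of the theorem. The paper completes it as follows. If $\alpha<0$, then for any $K\in{\mathcal{K}}^n_o$ (so that $h_K\ge 0$) the representation (\ref{n77}) together with (\ref{verynew}) collapses to
$$h_{K*K}(x)=\alpha\left(h_K(-x)+h_K(x)\right)+2h_K(x),$$
which is (\ref{55}). Taking the triangle $K=\conv\{o,-e_1+e_2,-e_1-e_2\}$ and the directions $x=e_1+e_2$, $y=e_1-e_2$, one has $h_K(x)=h_K(y)=h_K(x+y)=0$ while $h_K(-x)=h_K(-y)=h_K(-x-y)=2$, whence $h_{K*K}(x+y)=2\alpha$ but $h_{K*K}(x)+h_{K*K}(y)=4\alpha<2\alpha$, violating subadditivity of $h_{K*K}$. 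The idea your sketch is missing is this choice of witness: a two-dimensional set $K$ containing $o$ whose support function vanishes on a whole two-dimensional cone of directions on which $h_{-K}$ is strictly positive, paired \emph{with itself}, so that on that cone the skew term $\alpha\left(h_K(-x)+h_K(x)\right)$ is the entire support function and its failure of subadditivity (for $\alpha<0$) is exposed. Until such a witness is exhibited, the proof is incomplete, although everything you did prove is correct and the remaining gap is precisely localized.
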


\begin{proof}
By Theorem~\ref{thm275}, there is a closed convex set $M$ in $\R^4$ such that (\ref{n77}) holds.  We first claim that
\begin{equation}\label{verynew}
M\subset\{(x_1,1+x_1,x_3,1+x_3)\in \R^4: x_1,x_3\le 0\},
\end{equation}
a quadrant of a 2-dimensional plane in $\R^4$ containing $(0,1,0,1)$.  To see this, let $a\ge 1$, let $K=[e_1,ae_1]$, and let $L=\{o\}$.  If $x=-e_1$, we have $h_K(x)=-1$, $h_K(-x)=a$, and of course $h_L(\pm x)=0$.  Therefore, by (\ref{n77}),
$$-1=h_K(x)=h_{K*\{o\}}(x)=h_M(a,-1,0,0).$$
Consequently, we have $h_M(a,-1,0,0)=-1<0$ for all $a\ge 1$.  In particular,
\begin{equation}\label{1new}
h_M(1,-1,0,0)=-1.
\end{equation}
Taking $a=1$, i.e., $K=\{e_1\}$, but now $x=e_1$, we obtain
\begin{equation}\label{2new}
h_M(-1,1,0,0)=1.
\end{equation}
Equations (\ref{1new}) and (\ref{2new}) imply that
$$M\subset\{x\in \R^4: x_2=1+x_1\}.$$
Now suppose that $h_M(e_1)\ge c>0$.  Then there is a $z\in M$ such that $z\cdot e_1\ge c$ and
$$h_M(a,-1,0,0)=h_M(ae_1-e_2)\ge (ae_1-e_2)\cdot z\ge ac-z\cdot e_2,$$
which is positive for sufficiently large $a$.  This contradiction shows that $h_M(e_1)\le 0$. In exactly the same way, using $\{o\}*L=L$ for all $L\in {\mathcal K}^n$, we can show that
$$M\subset\{x\in \R^4: x_4=1+x_3\}$$
and $h_M(e_3)\le 0$.  It follows that (\ref{verynew}) holds and the claim is proved.

Next, we claim that $(0,1,0,1)\in M$.  Indeed, if this is not the case, then (\ref{verynew}) and the fact that $M$ is closed yield
$$\alpha=\sup\{x_1+x_3: (x_1,1+x_1,x_3,1+x_3)\in M\}<0.$$
Suppose that $K\in {\mathcal{K}}^n_o$.  Then $h_K(-x), h_K(x)\ge 0$ and we have, by (\ref{n77}),
\begin{eqnarray}
h_{K*K}(x)&=&h_M\left(h_K(-x),h_K(x), h_K(-x), h_K(x)\right)\nonumber\\
&=&\sup\{z_1h_K(-x)+(1+z_1)h_K(x)+z_3h_K(-x)+(1+z_3)h_K(x):\nonumber\\
& & (z_1,1+z_1,z_3,1+z_3)\in M\}\nonumber\\
&=&\sup\{(z_1+z_3)\left(h_K(-x)+h_K(x)\right)+2h_K(x): (z_1,1+z_1,z_3,1+z_3)\in M\}\nonumber\\
&=& \alpha h_K(-x)+(2+\alpha)h_K(x).\label{55}
\end{eqnarray}
Let
$$K=\conv\{o,-e_1+e_2,-e_1-e_2\}$$
and note that $K\in {\mathcal{K}}^n_o$.  Let $x=e_1+e_2\in \R^n$ and $y=e_1-e_2\in \R^n$.  Then
$h_K(x)=h_K(y)=h_K(x+y)=0$ and $h_K(-x)=h_K(-y)=h_K(-x-y)=2$.  By (\ref{55}),
$h_{K*K}(x+y)=2\alpha$ and $h_{K*K}(x)=h_{K*K}(y)=2\alpha$.  But since $\alpha<0$, this implies
$$h_{K*K}(x+y)>h_{K*K}(x)+h_{K*K}(y),$$
contradicting the subadditivity of $h_{K*K}$. This proves the claim.

Now let $K,L\in {\mathcal{K}}^n$ and $x\in\R^n$.  Then
\begin{eqnarray}
h_{K*L}(x)&=&h_M\left(h_K(-x),h_K(x), h_L(-x), h_L(x)\right)\nonumber\\
&=&\sup\{z_1h_K(-x)+z_2h_K(x)+z_3h_L(-x)+z_4h_L(x): (z_1,z_2,z_3,z_4)\in M\}\nonumber\\
&\le &\sup\{z_1h_K(-x)+(1+z_1)h_K(x)+z_3h_L(-x)+(1+z_3)h_L(x): z_1,z_3\le 0\}\nonumber\\
&\le & h_K(x)+h_L(x),\label{56}
\end{eqnarray}
since $h_K(-x)+h_K(x)\ge 0$ and $h_L(-x)+h_L(x)\ge 0$.  On the other hand, $(0,1,0,1)\in M$, so
\begin{eqnarray*}
h_M\left(h_K(-x),h_K(x), h_L(-x), h_L(x)\right)&\ge & (0,1,0,1)\cdot \left(h_K(-x),h_K(x), h_L(-x), h_L(x)\right)\\
&=& h_K(x)+h_L(x),
\end{eqnarray*}
which implies by (\ref{n77}) and (\ref{56}) that $*$ is Minkowski addition.

The converse is clear.
\end{proof}

\begin{cor}\label{cor277}
Suppose that $n\ge 2$.  The operation $*:\left({\mathcal{K}}^n\right)^2\rightarrow {\mathcal{K}}^n$ is continuous, $GL(n)$ covariant, and has the identity property if and only if it is Minkowski addition.
\end{cor}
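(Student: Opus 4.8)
The plan is to deduce this directly from the projection-covariant version already established in Theorem~\ref{thm276}, using the bridge between the two sets of hypotheses supplied by Lemma~\ref{CGimpliesP}. The key observation is that the stated hypotheses (continuity in the Hausdorff metric together with $GL(n)$ covariance) are precisely what Lemma~\ref{CGimpliesP} converts into projection covariance, so no new geometric work is needed here—the substantive content lives in Theorem~\ref{thm276}.

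For the forward direction, I would proceed as follows. Suppose $*:\left({\mathcal{K}}^n\right)^2\rightarrow {\mathcal{K}}^n$ is continuous and $GL(n)$ covariant. The class ${\mathcal{K}}^n$ is closed under the action of $GL(n)$ and under Hausdorff limits, so Lemma~\ref{CGimpliesP} applies and yields that $*$ is projection covariant. Since $*$ also has the identity property by assumption, Theorem~\ref{thm276} immediately forces $*$ to be Minkowski addition. (Alternatively, one could invoke Corollary~\ref{contaff7} for the equivalence of projection covariance with the pair of hypotheses, but Lemma~\ref{CGimpliesP} alone supplies the one implication actually required.)

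For the converse, I would verify that Minkowski addition satisfies all three properties. Continuity in the Hausdorff metric and $GL(n)$ covariance are immediate from the support-function identity (\ref{supph}), namely $h_{K+L}(u)=h_K(u)+h_L(u)$, combined with (\ref{suppchange}) and the definition (\ref{HD}) of the Hausdorff metric; the identity property holds because $h_{\{o\}}\equiv 0$. These facts are recorded in the discussion of Minkowski addition in Section~\ref{examples}, so this direction is routine.

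I do not anticipate a genuine obstacle, since the corollary is a packaging of Theorem~\ref{thm276}; the only point meriting care is making explicit that continuity and $GL(n)$ covariance are strictly stronger than (indeed, equivalent to) the projection covariance hypothesis used in Theorem~\ref{thm276}, which is exactly the role of Lemma~\ref{CGimpliesP}.
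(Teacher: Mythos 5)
Your proposal is correct and matches the paper's intended argument: the corollary is stated without proof precisely because it follows immediately from Theorem~\ref{thm276} once Lemma~\ref{CGimpliesP} (or Corollary~\ref{contaff7}) converts continuity plus $GL(n)$ covariance into projection covariance. The converse verification for Minkowski addition is routine, as you note.
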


Theorem~\ref{thm276} and Corollary~\ref{cor277} do not hold for operations $*:\left({\mathcal{K}}^n_o\right)^2\rightarrow {\mathcal{K}}^n$ satisfying the same hypotheses, since $L_p$ addition, for example, is of this type.  (More generally, the operations defined in Example~\ref{ex1} also satisfy these hypotheses.)

The extension of $L_p$ addition given in Example~\ref{LPex} is continuous and $GL(n)$ covariant, and hence projection covariant by Lemma~\ref{CGimpliesP}, but does not have the identity property.  The operation $*$ defined in Example~\ref{ex4} is valid for $K,L\in {\mathcal{K}}^n$, and this
is $GL(n)$ covariant and has the identity property, but is neither continuous nor projection covariant.  Together with the following example, we see that none of the hypotheses of Theorem~\ref{thm276} or Corollary~\ref{cor277} can be omitted.

\begin{ex}\label{ex555}
{\em Define
$$K*L=\left(1+{{\mathcal{H}}^n}(L)\right)K+\left(1+{{\mathcal{H}}^n}(K)\right)L,
$$
for all $K,L\in {\mathcal{K}}^n$.  This operation is continuous and has the identity property, but is not $GL(n)$ covariant.}
\end{ex}

Let $H$ be the closed half-plane $H=\{(s,t)\in\R^2: -s\le t\}$ and let $n\ge 2$. If $*:\left({\mathcal{K}}^n\right)^2\rightarrow {\mathcal{K}}^n$ (or $*:\left({\mathcal{K}}^n_o\right)^2\rightarrow {\mathcal{K}}^n_o$) is projection covariant and associative, then one can show that the set $M$ in Theorem~\ref{thm275} satisfies
$$h_M\left(s_1,t_1,h_M\left(t_2,s_2,t_3,s_3\right),h_M\left(s_2,t_2,s_3,t_3\right)\right)=
h_M\left(h_M\left
(t_1,s_1,t_2,s_2\right),h_M\left
(s_1,t_1,s_2,t_2\right),s_3,t_3\right),
$$
for all $(s_i,t_i)\in H$ (or $(s_i,t_i)\in [0,\infty)^2$, respectively),  $i=1,2,3$.  Moreover, the previous displayed equation can be recast as a vector associativity equation (see \cite[(1), Section~8.2.3, p.~370]{Acz66} in the case when the dimension $m=2$). However, relatively little seems to be known about this vector associativity equation, and in particular, no result corresponding to Proposition~\ref{prp2} is available.

We now turn to operations on pairs of star sets. The following result can be deduced immediately from Lemma~\ref{lem01arbsections}.

\begin{cor}\label{corrlem01arbsections}
Suppose that $n\ge 2$ and that $*:\left({\mathcal{S}}^n\right)^2\rightarrow {\mathcal{S}}^n$ is such that its restriction to $\left({\mathcal{S}}^n_s\right)^2$ is rotation and section covariant.  Then $*:\left({\mathcal{S}}^n_s\right)^2\rightarrow {\mathcal{S}}_s^n$.
\end{cor}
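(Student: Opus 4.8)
The plan is to recognize that this corollary is a purely formal consequence of Lemma~\ref{lem01arbsections}, obtained by passing to the restriction of $*$ to pairs of $o$-symmetric star sets. First I would set $*_0=*|_{\left({\mathcal{S}}^n_s\right)^2}$, regarded as an operation $*_0:\left({\mathcal{S}}^n_s\right)^2\rightarrow {\mathcal{S}}^n$, and observe that by hypothesis $*_0$ is rotation and section covariant. The only point meriting explicit mention is that these two covariance identities are genuinely internal to the subclass ${\mathcal{S}}^n_s$: section covariance refers to the sets $K\cap S$ and $L\cap S$ for $S\in {\mathcal{G}}(n,k)$, and rotation covariance to $\phi K$ and $\phi L$ for a rotation $\phi$. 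As recorded in Section~\ref{prelim}, the class ${\mathcal{S}}^n_s$ is closed under intersections with subspaces, and it is clearly closed under rotations as well, since a rotation preserves compactness, star-shapedness at $o$, and (by linearity, using $\phi(-K)=-\phi K$) $o$-symmetry. Hence $K\cap S$, $L\cap S$, $\phi K$, and $\phi L$ all lie in ${\mathcal{S}}^n_s$ whenever $K,L\in {\mathcal{S}}^n_s$, so every set occurring on either side of the defining identities is an admissible argument of $*_0$, and the restriction $*_0$ therefore inherits both covariance properties from $*$.

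With this in hand, I would simply invoke Lemma~\ref{lem01arbsections} for the operation $*_0$. That lemma asserts that a rotation and section covariant operation from $\left({\mathcal{S}}^n_s\right)^2$ into ${\mathcal{S}}^n$ actually maps into ${\mathcal{S}}_s^n$; applied to $*_0$ this says precisely that $K*L\in {\mathcal{S}}_s^n$ for all $K,L\in {\mathcal{S}}^n_s$, which is the desired conclusion $*:\left({\mathcal{S}}^n_s\right)^2\rightarrow {\mathcal{S}}_s^n$.

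There is no substantive obstacle in this argument; all the analytic content lies in Lemma~\ref{lem01arbsections}, whose proof already uses only the behavior of the operation on $o$-symmetric sets (the rotation $\phi_u$ with $\phi_u(u)=-u$ and the sections $K\cap l_u$, $L\cap l_u$ are all $o$-symmetric). The single thing to verify with care is the bookkeeping just described, namely that shrinking the domain from $\left({\mathcal{S}}^n\right)^2$ to $\left({\mathcal{S}}^n_s\right)^2$ leaves the hypotheses of rotation and section covariance intact. Once that closure observation is made, the corollary follows verbatim from the lemma.
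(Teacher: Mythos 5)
Your proposal is correct and matches the paper exactly: the paper deduces this corollary immediately from Lemma~\ref{lem01arbsections} by restricting $*$ to $\left({\mathcal{S}}^n_s\right)^2$, which is precisely your argument. The closure observations you spell out (that ${\mathcal{S}}^n_s$ is preserved by rotations and by intersections with subspaces) are the correct, if routine, justification the paper leaves implicit.
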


Theorem~\ref{thm1a} and Corollary~\ref{corrlem01arbsections} provide the following corollary.

\begin{cor}\label{newcorthm1asections}
Let $n\ge 2$, and let $*:\left({\mathcal{S}}^n\right)^2\rightarrow {\mathcal{S}}^n$ be such that its restriction to the $o$-symmetric sets is continuous, homogeneous of degree $1$, rotation and section covariant, and associative.  Then this restriction must be either $K*L=\{o\}$, or $K*L=K$, or $K*L=L$, for all $K,L\in {\mathcal{S}}^n_s$, or else $*=\widetilde{+}_p$ for some $-\infty\le p\le\infty$ with $p\neq 0$.
\end{cor}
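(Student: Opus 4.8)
The plan is to obtain this corollary as a direct combination of Corollary~\ref{corrlem01arbsections} and Theorem~\ref{thm1a}, exactly as announced in the sentence preceding the statement. The only genuine point requiring attention is that the associativity hypothesis imposed on the restriction $*|_{\left({\mathcal{S}}^n_s\right)^2}$ presupposes that this restriction maps back into the $o$-symmetric star sets; otherwise the equation $K*(L*M)=(K*L)*M$ would not even type-check, since the inner sum need not be $o$-symmetric. Thus the first step is to secure this closure property.

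First I would observe that, by hypothesis, the restriction of $*$ to $\left({\mathcal{S}}^n_s\right)^2$ is both rotation covariant and section covariant. Since $*$ maps $\left({\mathcal{S}}^n\right)^2$ into ${\mathcal{S}}^n$, its restriction is a map $\left({\mathcal{S}}^n_s\right)^2\rightarrow {\mathcal{S}}^n$ satisfying the hypotheses of Corollary~\ref{corrlem01arbsections}. Applying that corollary, I would conclude that the restriction in fact maps $\left({\mathcal{S}}^n_s\right)^2$ into ${\mathcal{S}}_s^n$. This legitimizes the associativity assumption and, more importantly, places the restriction squarely in the setting of Theorem~\ref{thm1a}.

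With this in hand, the restriction $*|_{\left({\mathcal{S}}^n_s\right)^2}:\left({\mathcal{S}}^n_s\right)^2\rightarrow {\mathcal{S}}^n$ is, by hypothesis, continuous, homogeneous of degree $1$, rotation and section covariant, and associative. These are precisely the five hypotheses of Theorem~\ref{thm1a}, so that theorem applies verbatim and yields that the restriction is either $K*L=\{o\}$, or $K*L=K$, or $K*L=L$ for all $K,L\in {\mathcal{S}}^n_s$, or else $*=\widetilde{+}_p$ for some $-\infty\le p\le\infty$ with $p\neq 0$. This is exactly the asserted conclusion.

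I do not expect any real obstacle here: the argument is purely a matter of verifying that the hypotheses of the two cited results line up, with the single nontrivial bookkeeping step being the appeal to Corollary~\ref{corrlem01arbsections} to guarantee that the restriction is $o$-symmetric-valued (and hence that associativity is well posed and Theorem~\ref{thm1a} is applicable). All the analytic content was already carried out in the proof of Theorem~\ref{thm1a} via Proposition~\ref{prp2}, so nothing further is needed.
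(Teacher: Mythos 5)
Your proposal is correct and matches the paper's own (one-line) justification, which simply derives the corollary by combining Corollary~\ref{corrlem01arbsections} with Theorem~\ref{thm1a}; your extra remark that the closure property $*:\left({\mathcal{S}}^n_s\right)^2\rightarrow {\mathcal{S}}_s^n$ is needed to make associativity well posed is exactly the bookkeeping the paper also performs (inside the proof of Theorem~\ref{thm1a}).
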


\begin{thm}\label{lem01788}
If $n\ge 2$, then $*:\left({\mathcal{S}}^n\right)^2\rightarrow {\mathcal{S}}^n$ is rotation and section covariant if and only if there is a function $f:[0,\infty)^4\to\R$ such that
\begin{equation}\label{n788}
\rho_{K*L}(u)=f\left(\rho_K(-u),\rho_K(u), \rho_L(-u), \rho_L(u)\right),
\end{equation}
for all $K,L\in {\mathcal{S}}^n$ and all $u\in S^{n-1}$.  The operation $*$ is in addition homogeneous of degree 1 if and only if $f$ is too and (\ref{n788}) holds for $u\in \R^n\setminus \{o\}$.
\end{thm}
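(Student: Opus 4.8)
The plan is to mirror the proof of Lemma~\ref{lem017}, replacing projections by sections and $GL(n)$ covariance by rotation covariance, just as Theorem~\ref{thm1a} adapts Lemma~\ref{lem01}. The one new feature, which shapes the statement, is that sections and rotations carry no scaling, so homogeneity cannot be extracted for free and must be handled as a separate equivalence.

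For the forward implication, assume $*$ is rotation and section covariant. Fix $u\in S^{n-1}$ and apply section covariance with the line $S=l_u$ (permissible as $1\le k\le n-1$ when $n\ge 2$): $(K*L)\cap l_u=(K\cap l_u)*(L\cap l_u)$ for all $K,L\in{\mathcal{S}}^n$. Choosing $K=I$ and $L=J$ to be star sets lying in $l_u$, the right side equals $I*J$ while the left side lies in $l_u$, so $I*J\subset l_u$; as $I*J$ is a star set it is a segment $[-g_uu,f_uu]$ with $f_u,g_u\ge 0$. Since a star set in $l_u$ has the form $[-su,tu]$, $s,t\ge 0$, this defines $f_u,g_u:[0,\infty)^4\to[0,\infty)$ with
\[
[-s_1u,t_1u]*[-s_2u,t_2u]=[-g_u(s_1,t_1,s_2,t_2)u,\,f_u(s_1,t_1,s_2,t_2)u].
\]
Applying a rotation $\phi$, using linearity of $\phi$ and $\phi(I*J)=\phi I*\phi J$, gives $f_{\phi u}=f_u$ and $g_{\phi u}=g_u$; since rotations act transitively on $S^{n-1}$ for $n\ge 2$, $f_u=f$ and $g_u=g$ are independent of $u$. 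Writing $K\cap l_u=[-\rho_K(-u)u,\rho_K(u)u]$ and likewise for $L$, and comparing with $(K*L)\cap l_u=[-\rho_{K*L}(-u)u,\rho_{K*L}(u)u]$, yields (\ref{n788}) for $u\in S^{n-1}$ (and incidentally $g(s_1,t_1,s_2,t_2)=f(t_1,s_1,t_2,s_2)$). Taking $K=L=\{o\}$ forces $\{o\}*\{o\}\subset l_u$ for every $u$, whence $\{o\}*\{o\}=\{o\}$ and $f(0,0,0,0)=0$.

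For the converse, suppose (\ref{n788}) holds. Rotation covariance follows on substituting $\rho_{\phi K}(v)=\rho_K(\phi^{-1}v)$ into (\ref{n788}) and comparing radial functions. For section covariance, fix $S\in{\mathcal{G}}(n,k)$; for $v\in S\cap S^{n-1}$ one has $\rho_{K\cap S}(v)=\rho_K(v)$, so (\ref{n788}) gives $\rho_{(K*L)\cap S}(v)=\rho_{(K\cap S)*(L\cap S)}(v)$ on $S$. I would then note that both sets lie in $S$—the first trivially, the second because for $v\notin S$ all arguments of $f$ vanish and $f(0,0,0,0)=0$—so they coincide. This is the step needing care: unlike the support-function identity $h_{K|S}(x)=h_K(x|S)$ exploited in Lemma~\ref{lem01}, the radial identity $\rho_{K\cap S}(v)=\rho_K(v)$ holds only for $v\in S$, so one must separately control the directions off $S$.

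It remains to treat the homogeneity clause. From (\ref{n788}) and $\rho_{rK}(v)=r\rho_K(v)$ we get $\rho_{(rK)*(rL)}(u)=f\bigl(r\rho_K(-u),r\rho_K(u),r\rho_L(-u),r\rho_L(u)\bigr)$, while degree-$1$ homogeneity of $*$ asks this to equal $r\rho_{K*L}(u)$; as the arguments range over all of $[0,\infty)^4$ when $K,L$ run through segments $[-su,tu]$ and $u$ varies, $*$ is homogeneous of degree $1$ if and only if $f$ is. Once $f$ is homogeneous of degree $1$, both sides of (\ref{n788}) scale in the same way under $u\mapsto ru$, because radial functions are homogeneous of degree $-1$, so (\ref{n788}) automatically extends to all $u\in\R^n\setminus\{o\}$; this gives the stated equivalence. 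The main obstacle is conceptual rather than computational: recognizing that—in contrast to the convex case of Lemma~\ref{lem017}, where projections supply the scaling that makes $f$ automatically homogeneous—section and rotation covariance give no scaling, so homogeneity is genuinely extra information entering only through the final clause.
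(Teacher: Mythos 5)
Your argument follows the paper's proof essentially verbatim in the forward direction: intersect with lines $l_u$ to obtain interval-valued functions $f_u,g_u$ on $[0,\infty)^4$, use rotation covariance and the transitivity of the rotation group on $S^{n-1}$ to show they are independent of $u$, and compare endpoints in $(K*L)\cap l_u=(K\cap l_u)*(L\cap l_u)$ to get (\ref{n788}), including the observation $g(s_1,t_1,s_2,t_2)=f(t_1,s_1,t_2,s_2)$; the homogeneity clause is likewise handled exactly as the paper does, by the argument of Theorem~\ref{thm1a}.

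Where you diverge is the converse, which the paper dismisses with ``The converses are clear.'' Your extra care there is well placed, but the step you flag cannot in fact be closed from the stated hypotheses alone: in the converse direction you are given only the existence of some $f$ satisfying (\ref{n788}), and the value $f(0,0,0,0)$, which controls $\rho_{(K\cap S)*(L\cap S)}(v)$ for $v\notin S$, need not vanish. For instance $f\equiv 1$, i.e.\ $K*L=B^n$ for all $K,L$, satisfies (\ref{n788}) and is rotation covariant but not section covariant. You establish $f(0,0,0,0)=0$ only in the forward direction (from $\{o\}*\{o\}\subset l_u$ for all $u$) and then invoke it in the converse, which as written is circular. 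The repair is either to include $f(0,0,0,0)=0$ (equivalently $\{o\}*\{o\}=\{o\}$) in the characterizing condition, or to understand the converse as applying to the $f$ produced by the forward direction; with that proviso your proof is complete and is, if anything, more careful than the paper's on this point.
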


\begin{proof}
Suppose that $*:\left({\mathcal{S}}^n\right)^2\rightarrow {\mathcal{S}}^n$ is rotation and section covariant.  Let $u\in S^{n-1}$. For any two $o$-symmetric star sets $K$ and $L$ in $\R^n$, we have
\begin{equation}\label{seqb88}
(K*L)\cap l_u=(K\cap l_u)*(L\cap l_u).
\end{equation}
One consequence of this is that if $I$ and $J$ are $o$-symmetric closed intervals in $l_u$, we must have $I*J\subset l_u$. Hence there are functions $f_u, g_u:[0,\infty)^4\to\R$
such that
\begin{equation}\label{eqn1788}
[-s_1u,t_1u]*[-s_2u,t_2u]=[-g_u(s_1,t_1,s_2,t_2)u,f_u(s_1,t_1,s_2,t_2)u],
\end{equation}
for all $s_1,t_1,s_2,t_2\ge 0$.

Let $\phi$ be a rotation. Just as in the proof of Theorem~\ref{thm1a}, we use the rotation covariance of $*$ to obtain $f_{\phi u}(s_1,t_1,s_2,t_2)=f_u(s_1,t_1,s_2,t_2)$ for $s_1,t_1,s_2,t_2\ge 0$, and
conclude that $f_u=f$, say, is independent of $u$.  Similarly, $g_u=g$, say, is independent of $u$. Now from (\ref{seqb88}) and (\ref{eqn1788}) with $f_u=f$ and $g_u=g$, we obtain
\begin{eqnarray*}
\lefteqn{[-\rho_{K*L}(-u)u,\rho_{K*L}(u)u]}\\
&=&(K*L)\cap l_u=(K\cap l_u)*(L\cap l_u)=[-\rho_{K}(-u)u,\rho_{K}(u)u]*
[-\rho_{L}(-u)u,\rho_{L}(u)u]\\
&=&[-g\left(\rho_K(-u),\rho_K(u), \rho_L(-u), \rho_L(u)\right)u,f\left(\rho_K(-u),\rho_K(u), \rho_L(-u),\rho_L(u)\right)u].
\end{eqnarray*}
Comparing the second coordinates in the previous equation, we deduce (\ref{n788}).  (Note that in view of the equality of the first coordinates, we must in fact have $g(s_1,t_1,s_2,t_2)=f(t_1,s_1,t_2,s_2)$ for all $s_1,t_1,s_2,t_2\ge 0$.)

Suppose that $*$ is also homogeneous of degree 1. As in the proof of Theorem~\ref{thm1a}, this extra property and (\ref{n788}) yield the homogeneity of $f$ and then it is easy to show that (\ref{n788}) holds for all $u\in \R^n\setminus\{o\}$.

The converses are clear.
\end{proof}

It is natural to ask whether the appropriate analog of Theorem~\ref{thm276} holds for star sets, that is, must an operation $*:\left({\mathcal{S}}^n\right)^2\rightarrow {\mathcal{S}}^n$ that is homogeneous of degree 1, rotation and section covariant, and has the identity property be radial addition?  The answer is negative, as is shown by defining
$$\rho_{K*L}(x)=\rho_K(x)+\rho_L(x)+\sqrt{\rho_K(\pm x)\rho_L(\pm x)},$$
for all $x\in \R^n\setminus\{o\}$, for any particular choice of the plus and minus signs.  These examples also show that the analog of Corollary~\ref{cor277} fails to hold.

\section{Operations having polynomial volume}\label{Minkowski}

In this section we examine operations $*:\left({\mathcal{K}}^n_s\right)^2\rightarrow {\mathcal{K}}^n_s$ (or $*:\left({\mathcal{S}}^n_s\right)^2\rightarrow {\mathcal{S}}^n_s$) that have polynomial volume, that is,
\begin{equation}\label{Minknew}
{{\mathcal{H}}^n}(rK*sL)=\sum_{i,j=0}^{m(K,L)} a_{ij}(K,L)r^{i}s^j,
\end{equation}
for some real coefficients $a_{ij}(K,L)$, some $m(K,L)\in\N\cup\{0\}$, and all $K,L\in {\mathcal{K}}^n_s$ (or all $K,L\in {\mathcal{S}}^n_s$, respectively) and $r,s\ge 0$.

\begin{lem}\label{polhomoglem}
Let $*:\left({\mathcal{K}}^n_s\right)^2\rightarrow {\mathcal{K}}_s^n$ (or $*:\left({\mathcal{S}}^n_s\right)^2\rightarrow {\mathcal{S}}_s^n$) be homogeneous of degree $1$ and have polynomial volume.  Then
\begin{equation}\label{Mink}
{{\mathcal{H}}^n}(rK*sL)=\sum_{i=0}^n a_i(K,L)r^{n-i}s^i,
\end{equation}
for some real coefficients $a_i(K,L)$ and all $K,L\in {\mathcal{K}}^n_s$ (or all $K,L\in {\mathcal{S}}^n_s$, respectively) and $r,s\ge 0$.  Moreover, $a_0(K,L)=a_0(K)\ge 0$, $a_n(K,L)=a_n(L)\ge 0$, and
$a_i(rK,L)=r^{n-i}a_i(K,L)$ and $a_i(K,sL)=s^{i}a_i(K,L)$ for $i=0,\dots,n$ and all $r,s\ge 0$.
\end{lem}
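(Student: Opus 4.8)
The plan is to fix $K$ and $L$ and study the polynomial $P(r,s)={{\mathcal{H}}^n}(rK*sL)$ in the two variables $r,s$, which is genuinely a polynomial by the polynomial volume hypothesis, and to extract all the asserted structure from the extra homogeneity constraint. The first step is to observe that homogeneity of degree $1$ forces $P$ to be homogeneous of degree $n$. Indeed, for $t\ge 0$ we have $(tr)K=t(rK)$ and $(ts)L=t(sL)$, so $(tr)K*(ts)L=t\bigl((rK)*(sL)\bigr)$ by homogeneity of degree $1$ applied to the sets $rK$ and $sL$ with scalar $t$; since ${{\mathcal{H}}^n}$ scales by the factor $t^n$ under dilation by $t$, this gives $P(tr,ts)=t^nP(r,s)$ for all $t,r,s\ge 0$. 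The same computation is valid verbatim in the star-set case, since ${\mathcal{S}}^n_s$ is likewise closed under dilation and contains $\{o\}$, so both cases are handled at once.

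The second step converts this into the claimed monomial shape. Writing $P(r,s)=\sum_{i,j}a_{ij}r^is^j$ and substituting into $P(tr,ts)=t^nP(r,s)$ yields the identity $\sum_{i,j}a_{ij}t^{i+j}r^is^j=\sum_{i,j}a_{ij}t^nr^is^j$ in $t,r,s$. By uniqueness of the coefficients of a polynomial, $a_{ij}\bigl(t^{i+j}-t^n\bigr)=0$ for all $t$, whence $a_{ij}=0$ unless $i+j=n$. Relabelling the surviving terms as $r^{n-i}s^i$ with $0\le i\le n$ and setting $a_i(K,L)=a_{n-i,i}$ gives (\ref{Mink}).

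The third step identifies the two extreme coefficients. Putting $s=0$ in (\ref{Mink}) leaves only the $i=0$ term, so $a_0(K,L)r^n=P(r,0)={{\mathcal{H}}^n}\bigl(rK*\{o\}\bigr)$, using $0\cdot L=\{o\}$; the right-hand side depends on $K$ alone, so $a_0(K,L)=a_0(K)$, and it is nonnegative because ${{\mathcal{H}}^n}\ge 0$ and $r^n>0$ for $r>0$. Symmetrically, putting $r=0$ gives $a_n(K,L)s^n={{\mathcal{H}}^n}\bigl(\{o\}*sL\bigr)$, so $a_n(K,L)=a_n(L)\ge 0$.

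The final step is the scaling behaviour of the intermediate coefficients, obtained by rescaling one argument and comparing. Replacing $K$ by $rK$ and using $r'(rK)=(r'r)K$, I expand ${{\mathcal{H}}^n}\bigl(r'(rK)*s'L\bigr)=P(r'r,s')=\sum_{i=0}^n a_i(K,L)r^{n-i}(r')^{n-i}(s')^i$, match it with $\sum_{i=0}^n a_i(rK,L)(r')^{n-i}(s')^i$, and equate coefficients of $(r')^{n-i}(s')^i$ to get $a_i(rK,L)=r^{n-i}a_i(K,L)$; the relation $a_i(K,sL)=s^ia_i(K,L)$ follows in the same way by rescaling $L$. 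There is no serious obstacle here. The only points requiring a little care are the clean use of homogeneity to produce the degree-$n$ identity, and the appeal to uniqueness of polynomial coefficients (legitimate because both sides are polynomials agreeing on all of $[0,\infty)^2$, hence identically); I would state these explicitly rather than belabour the routine algebra.
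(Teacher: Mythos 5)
Your proposal is correct and follows essentially the same route as the paper: homogeneity of degree $1$ plus the scaling of ${\mathcal{H}}^n$ gives $P(tr,ts)=t^nP(r,s)$, comparing coefficients kills all monomials with $i+j\neq n$, setting $s=0$ or $r=0$ identifies the extreme coefficients, and rescaling one argument and matching coefficients yields the homogeneity relations $a_i(rK,L)=r^{n-i}a_i(K,L)$ and $a_i(K,sL)=s^ia_i(K,L)$. No gaps.
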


\begin{proof}
Let $K,L\in {\mathcal{K}}^n_s$ (or all $K,L\in {\mathcal{S}}^n_s$, respectively) and $r,s\ge 0$.  For any $t\ge 0$, the assumptions and (\ref{Minknew}) imply that
\begin{eqnarray*}
\sum_{i,j=0}^{m(K,L)} a_{ij}(K,L)r^{i}s^jt^{i+j}&=&
{{\mathcal{H}}^n}((tr)K*(ts)L)=
{{\mathcal{H}}^n}(t(rK*sL))\\
&=&t^n{{\mathcal{H}}^n}(rK*sL)=t^{n}\sum_{i,j=0}^{m(K,L)} a_{ij}(K,L)r^{i}s^j.
\end{eqnarray*}
Comparing coefficients of powers of $t$, we obtain $a_{ij}(K,L)=0$ whenever $i+j\neq n$.  This proves (\ref{Mink}).

By (\ref{Mink}), ${{\mathcal{H}}^n}(rK*\{o\})={{\mathcal{H}}^n}(rK*0L)=a_0(K,L)r^n$ is independent of $L$, so we may write $a_0(K,L)=a_0(K)$ and similarly $a_n(K,L)=a_n(L)$.  Moreover,
${{\mathcal{H}}^n}(rK*sL)={{\mathcal{H}}^n}(1(rK)*sL)$ implies by (\ref{Mink}) that
$$\sum_{i=0}^n a_i(K,L)r^{n-i}s^i=\sum_{i=0}^n a_i(rK,L)1^{n-i}s^i,$$
so $a_i(rK,L)=r^{n-i}a_i(K,L)$ and $a_i(K,sL)=s^{i}a_i(K,L)$ for $i=0,\dots,n$ and all $r,s\ge 0$.
\end{proof}

Ordinary Minkowski addition is not the only operation that satisfies (\ref{Mink}).  Indeed, if $F, G:{\mathcal{K}}^n_s\rightarrow {\mathcal{K}}^n_s$ are homogeneous of degree 1, and
\begin{equation}\label{con}
K*L=F(K)+G(L),
\end{equation}
then
$${{\mathcal{H}}^n}(rK*sL)={{\mathcal{H}}^n}\left(F(rK)+G(sL)\right)={{\mathcal{H}}^n}\left(rF(K)+sG(L)\right)=
\sum_{i=0}^n V_i\left(F(K),G(L)\right)r^{n-i}s^i,$$
by Minkowski's theorem for mixed volumes (see, for example, \cite[Theorem~A.3.1]{Gar06}).
Here $V_i\left(F(K),G(L)\right)$ denotes the mixed volume of $n-i$ copies of $F(K)$ and $i$ copies of $G(L)$.  It follows that in (\ref{Mink}) we may take $a_i(K,L)=V_i\left(F(K),G(L)\right)$.  Moreover, the operation $*$ defined by (\ref{con}) is homogeneous of degree 1, and it is continuous and rotation invariant if both $F$ and $G$ are, respectively.  If $F=G$ and $F(F(K)+F(L))=F(K)+F(L)$ for all $K,L\in {\mathcal{K}}^n_s$, then $*$ is also associative.  An operation with all these properties has already been given in Example~\ref{ex5}; here $F=G$ is given by $F(K)=\left({{\mathcal{H}}^n}(K)/\kappa_n\right)^{1/n}B^n$, and the operation is neither projection covariant nor closely related to $L_p$ addition.

The following easy result shows that when $n=1$, the polynomial volume property is extremely strong.

\begin{thm}\label{thm11}
Let $*:\left({\mathcal{K}}^1_s\right)^2\rightarrow {\mathcal{K}}_s^1$ be homogeneous of degree $1$ and have polynomial volume.  Then there are constants $a,b\ge 0$ such that
\begin{equation}\label{conMink}
K*L=aK+bL,
\end{equation}
for all $K,L\in {\mathcal{K}}_s^1$.  Hence $*$ is Minkowski addition if it also has the identity property.
\end{thm}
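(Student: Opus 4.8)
The plan is to exploit the fact that in dimension one every $o$-symmetric compact convex set is an interval $[-s,s]$ with $s\ge 0$, so the whole operation is encoded by a single function of two nonnegative variables. Since by hypothesis $*$ maps into $\mathcal{K}^1_s$, for each $s,t\ge 0$ there is a number $\phi(s,t)\ge 0$ with
$$[-s,s]*[-t,t]=[-\phi(s,t),\phi(s,t)],$$
and hence $\mathcal{H}^1([-s,s]*[-t,t])=2\phi(s,t)$. The goal is simply to show that $\phi$ is linear, $\phi(s,t)=as+bt$, since $aK+bL=[-(as+bt),as+bt]$ for $K=[-s,s]$, $L=[-t,t]$.

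First I would invoke Lemma~\ref{polhomoglem} with $n=1$. Because $*$ is homogeneous of degree $1$ and has polynomial volume, formula (\ref{Mink}) collapses to
$$\mathcal{H}^1(K*L)=a_0(K)+a_1(L),$$
where, by the assertions of that lemma, $a_0(K)\ge 0$ depends only on $K$, $a_1(L)\ge 0$ depends only on $L$, and the scaling relations give $a_0(rK)=r\,a_0(K)$ and $a_1(rL)=r\,a_1(L)$ for all $r\ge 0$.

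Next, writing $K=[-s,s]=s\,[-1,1]$ and $L=[-t,t]=t\,[-1,1]$ and using the degree-$1$ homogeneity of $a_0$ and $a_1$ just recorded, I obtain $a_0(K)=s\,a_0([-1,1])$ and $a_1(L)=t\,a_1([-1,1])$. Setting $a=a_0([-1,1])/2\ge 0$ and $b=a_1([-1,1])/2\ge 0$, the displayed volume formula yields $2\phi(s,t)=\mathcal{H}^1(K*L)=2as+2bt$, so $\phi(s,t)=as+bt$ and therefore
$$K*L=[-(as+bt),\,as+bt]=[-as,as]+[-bt,bt]=aK+bL,$$
which is exactly (\ref{conMink}). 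Finally, if $*$ also has the identity property, then $K*\{o\}=aK$ must equal $K$ for every $K\in\mathcal{K}^1_s$, forcing $a=1$, and similarly $\{o\}*L=bL=L$ forces $b=1$; hence $K*L=K+L$ is Minkowski addition.

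I do not expect any serious obstacle here: all the real content is carried by Lemma~\ref{polhomoglem}, whose degree reduction already forces the volume to be affine in the two radii, and affineness in the radii is precisely the signature of Minkowski addition in dimension one. The only points needing a little care are the bookkeeping that $a_0$ and $a_1$ genuinely reduce to functions of a single set with the stated homogeneity, and keeping track of the factor $2$ arising from $\mathcal{H}^1([-s,s])=2s$.
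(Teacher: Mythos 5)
Your proof is correct and follows essentially the same route as the paper: both reduce everything to Lemma~\ref{polhomoglem} with $n=1$, identify the length of the $o$-symmetric interval $K*L$ as an affine function of the two radii, and read off $a=a_0(B^1)/2$, $b=a_1(B^1)/2$ (you use the homogeneity of the coefficients $a_0,a_1$ where the paper equivalently plugs $K=L=B^1$ into the formula ${\mathcal{H}}^1(rK*sL)=a_0(K)r+a_1(L)s$). Your explicit treatment of the final identity-property step is also fine.
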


\begin{proof}
Since $n=1$, Lemma~\ref{polhomoglem} implies that ${{\mathcal{H}}^1}(rK*sL)=a_0(K)r+a_1(L)s$, for all $K,L\in {\mathcal{K}}_s^1$ and $r,s\ge 0$.  Therefore $rK*sL$ is an $o$-symmetric interval of length $a_0(K)r+a_1(L)s$.  When $K=L=B^1$, this gives
\begin{eqnarray*}
[-r,r]*[-s,s]&=&rB^1*sB^1=[-\left(a_0(B^1)r+a_1(B^1)s\right)/2,\left(
a_0(B^1)r+a_1(B^1)s\right)/2]\\
&=&[-(ar+bs),(ar+bs)]=a[-r,r]+b[-s,s],
\end{eqnarray*}
where $a=a_0(B^1)/2$ and $b=a_1(B^1)/2$, for all $r,s\ge 0$.
\end{proof}

Note that when $n=1$, the apparently stronger property (\ref{conMink}) is actually equivalent to (\ref{con}).

The first author and Mathieu Meyer convinced themselves during discussions in 1996 that the following theorem is true, but followed a rather different route and did not publish the result.

\begin{thm}\label{thm5}
Let $n\ge 1$ and let $-\infty\le p\neq 0\le \infty$ if $n=1$ and $1\le p\le \infty$ if $n\ge 2$. The operation $+_p:\left({\mathcal{K}}^n_s\right)^2\rightarrow {\mathcal{K}}_s^n$ does not have polynomial volume unless $p=1$. (Here we interpret the cases when $-\infty\le p<0$ and $p=\infty$ as in Proposition~\ref{prp2}.)
\end{thm}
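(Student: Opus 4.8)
The plan is to reduce the assertion to a one-variable statement and then, for each admissible $p\neq 1$, to exhibit a \emph{single} pair $K,L$ whose volume function grows linearly in the scaling parameter but has a vanishing linear term at the origin. Since $+_p$ is homogeneous of degree $1$, Lemma~\ref{polhomoglem} shows that if $+_p$ had polynomial volume, then for all $K,L$ the function $g(\lambda):={{\mathcal{H}}^n}(K+_p\lambda L)$ (obtained by setting $r=1$, $s=\lambda$) would be a polynomial in $\lambda\ge 0$. Thus it suffices to produce one pair $K,L\in{\mathcal{K}}^n_s$ for which $g$ is not a polynomial.

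For $n=1$ I would take $K=L=B^1$. Using the relevant support or radial function one finds $g(\lambda)={{\mathcal{H}}^1}(B^1+_p\lambda B^1)=2(1+\lambda^p)^{1/p}$, with the conventions of Proposition~\ref{prp2} when $p<0$ or $p=\pm\infty$ (so $g(\lambda)=2\max\{1,\lambda\}$ or $2\min\{1,\lambda\}$ in the two infinite cases). This is affine in $\lambda$ precisely when $p=1$; alternatively, Theorem~\ref{thm11} forces any such operation to satisfy $K*L=aK+bL$, which for $+_p$ is possible only when $p=1$.

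For $n\ge 2$ and $1<p<\infty$ I would take $K=B^n$ and $L=[-e_1,e_1]$, so that $h_{K+_p\lambda L}(u)=(1+\lambda^p|u_1|^p)^{1/p}$ for $u\in S^{n-1}$, and establish two estimates. First, as $\lambda\to 0$: monotonicity gives $B^n=K+_p\{o\}\subset K+_p\lambda L$, so $g(\lambda)\ge\kappa_n$, while the elementary inequality $(1+x)^{1/p}\le 1+x/p$ gives $h_{K+_p\lambda L}(u)\le 1+\lambda^p/p$, hence $K+_p\lambda L\subset(1+\lambda^p/p)B^n$ and $g(\lambda)\le\kappa_n(1+\lambda^p/p)^n$. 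Therefore $0\le g(\lambda)-\kappa_n=O(\lambda^p)=o(\lambda)$, since $p>1$. Second, as $\lambda\to\infty$: because $h_{K+_p\lambda L}(u)=1$ for $u\perp e_1$ and $h_{K+_p\lambda L}(\pm e_1)=(1+\lambda^p)^{1/p}\le 1+\lambda$, the body lies in the slab $[-(1+\lambda),1+\lambda]\times B^{n-1}$, so $g(\lambda)\le 2(1+\lambda)\kappa_{n-1}$. For a matching lower bound I would use the description $K+_p\lambda L=\oplus_M(B^n,\lambda L)$, where $M$ is the $1$-unconditional unit ball of $\ell^2_{p'}$ (Theorems~\ref{thm12} and~\ref{lemM2}); the admissible pair $(a,b)=(2^{-1/p'},2^{-1/p'})\in M$ shows that $K+_p\lambda L$ contains $2^{-1/p'}B^n+2^{-1/p'}\lambda[-e_1,e_1]$, and hence a solid cylinder of radius $2^{-1/p'}$ and length proportional to $\lambda$; thus $g(\lambda)\ge c\lambda$ with $c>0$. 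Consequently $g(\lambda)\asymp\lambda$.

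Combining the two estimates yields the contradiction. If $g$ were a polynomial $P$, the linear two-sided growth forces $\deg P=1$ with positive leading coefficient, so $P(\lambda)=\kappa_n+c_1\lambda$ with $c_1>0$ (here $P(0)=g(0)=\kappa_n$); but $g(\lambda)-\kappa_n=o(\lambda)$ forces $c_1=P'(0)=0$, a contradiction. Hence $g$ is not a polynomial and $+_p$ does not have polynomial volume. For $p=\infty$ the same pair works directly: $K+_\infty\lambda L=\conv(B^n\cup\lambda[-e_1,e_1])=B^n$ for $0\le\lambda\le 1$, so $g\equiv\kappa_n$ on $[0,1]$, while the double cone $\conv(\{\pm\lambda e_1\}\cup(B^n\cap e_1^{\perp}))\subset K+_\infty\lambda L$ gives $g(\lambda)\ge 2\kappa_{n-1}\lambda/n$; a polynomial that is constant on $[0,1]$ is constant, contradicting this growth. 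The only genuinely geometric point is the large-$\lambda$ lower bound---exhibiting an inner cylinder whose length is a positive multiple of $\lambda$---and for this the $M$-combination description makes the choice of $(a,b)$ explicit; the remaining ingredients are the convexity inequality $(1+x)^{1/p}\le 1+x/p$ and monotonicity, so I anticipate no serious obstacle.
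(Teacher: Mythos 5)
Your proof is correct, and it takes a genuinely different route from the one in the paper. The paper first sets $K=L$ to reduce to the one-variable identity $(1+s^p)^{n/p}=\sum_{i=0}^n c_is^i$, analyzes the binomial series to conclude that a polynomial-volume $+_p$ would force $p\in\N$ with $p\mid n$, and then needs a second, more delicate argument (volumes of $L_p$ sums of two affinely inequivalent $\ell^n_{p'}$-balls, followed by a unique-factorization argument in $\R[s]$) to eliminate the surviving integer values of $p$. You instead pick the single pair $K=B^n$, $L=[-e_1,e_1]$ and play the two ends of the $\lambda$-axis against each other: the bound $(1+x)^{1/p}\le 1+x/p$ gives $g(\lambda)-\kappa_n=O(\lambda^p)=o(\lambda)$ at the origin, while the inner body $2^{-1/p'}(B^n+\lambda[-e_1,e_1])$ (equivalently, $h_M(s,t)\ge 2^{-1/p'}(s+t)$ for $M$ the $\ell^2_{p'}$-ball) and the outer slab force $g(\lambda)\asymp\lambda$ at infinity; no polynomial can do both, and this kills every $p>1$ at once, integers included, with the cases $n=1$ and $p=\infty$ handled by the same template. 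Your argument is more elementary --- no power-series uniqueness, no factorization in $\R[s]$ --- and arguably more geometric. What the paper's route buys in exchange is the intermediate fact that $(1+s^p)^{n/p}$ is a polynomial iff $p\in\N$ and $p\mid n$, which is reused verbatim in Theorem~\ref{thm7} to show that $p$th \emph{radial} addition does have polynomial volume exactly in those cases; your asymptotic squeeze relies on convexity of the summands and would not transfer to that setting. All the individual steps you use (monotonicity of $+_p$, the containment $aK+bL\subseteq K\oplus_M L$ for $(a,b)\in M$, Lemma~\ref{polhomoglem}, Theorem~\ref{thm11} for $n=1$) are available in the paper, so there is no gap.
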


\begin{proof}
Suppose that $+_p$ has polynomial volume.  Since $+_p$ is homogeneous of degree 1, \eqref{Mink} holds, by Lemma~\ref{polhomoglem}.  Let $K\in {\mathcal{K}}_s^n$.  Suppose that $-\infty< p\neq 0< \infty$, and $1\le p<\infty$ if $n\ge 2$. Then
$$h_{rK+_p\,sK}(u)^p=h_{rK}(u)^p+h_{sK}(u)^p=(r^p+s^p)h_K(u)^p,$$
for all $u\in S^{n-1}$, so $rK+_p\,sK=(r^p+s^p)^{1/p}K$ for $r,s\ge 0$.  Therefore, assuming ${{\mathcal{H}}^n}(K)>0$, (\ref{Mink}) implies that
$$(r^p+s^p)^{n/p}=\sum_{i=0}^nc_ir^{n-i}s^i,$$
where $c_i$, $i=0,\dots,n$, are constants and where $r,s\ge 0$.  In particular,
\begin{equation}\label{spn}
(1+s^p)^{n/p}=\sum_{i=0}^nc_is^i,
\end{equation}
for all $s\ge 0$.  If $p=\infty$, we get instead that
$$\max\{1,s\}^n=\sum_{i=0}^nc_is^i,$$
for all $s\ge 0$, which is clearly impossible, and the case $p=-\infty$ can be dismissed in a similar fashion.

We claim that (\ref{spn}) implies that $p\in \N$ and $p$ divides $n$.  To see this, note first that if $p<0$, then the left-hand side of (\ref{spn}) converges to 1 as $s\to\infty$. This implies that $c_1=\cdots=c_n=0$. But then the left-hand side of (\ref{spn}) is constant, a contradiction. Assume, therefore, that $p>0$.  The binomial expansion of the left-hand side of (\ref{spn}) yields
\begin{equation}\label{spn2}
\sum_{j=0}^\infty\binom{n/p}{j}s^{pj}=\sum_{i=0}^n c_is^i.
\end{equation}
for $s\in [0,1)$.

If $p\in\N$ and $p$ does not divide $n$, then the left-hand side of (\ref{spn2}) does not terminate, since $\binom{n/p}{j}\neq 0$ for $j\in\N$. Note that $pj\in\N$ and therefore a contradiction follows from the uniqueness of power series representations.

It remains to consider the case when $p>0$ and $p\notin \N$.  Either $p>n$ or there is a $k\in\{0,\ldots,n-1\}$ such that $p\in (k,k+1)$.  Set $s=0$ in (\ref{spn2}) to get $c_0=1$ and hence
\begin{equation}\label{eqt1}
\sum_{j=1}^\infty\binom{n/p}{j}s^{pj}=\sum_{i=1}^n c_is^i.
\end{equation}
If $k=0$, then $p\in(0,1)$ and we deduce from \eqref{eqt1} that
$$
\frac{n}{p}+\binom{n/p}{2}s^p+\cdots=\sum_{i=1}^n c_is^{i-p}.
$$
Evaluating this equation at $s=0$ yields $n/p=0$, a contradiction. Now let $1\le k\le n-1$, so that $p>k\ge 1$. From \eqref{eqt1}, we get
$$
\frac{n}{p}s^{p-1}+\binom{n/p}{2}s^{2p-1}+\cdots=\sum_{i=1}^n c_is^{i-1}.
$$
Evaluating this equation at $s=0$ yields $c_1=0$. Suppose that $c_1=\cdots=c_i=0$ for $i<k\le n-1$. Then
$$
\frac{n}{p}s^{p-i-1}+\binom{n/p}{2}s^{2p-i-1}+\cdots= c_{i+1}+c_{i+2}s+\cdots.
$$
Evaluating this equation at $s=0$ yields $c_{i+1}=0$. Thus we conclude that $c_1=\cdots=c_k=0$ and therefore
$$
\frac{n}{p}+\binom{n/p}{2}s^p+\cdots=c_{k+1}s^{k+1-p}+\cdots.
$$
Evaluating this equation at $s=0$ yields $n/p=0$, a contradiction.

Finally, if $p>n$, we arrive as above at $c_1=\cdots=c_n=0$. Since the left-hand side of \eqref{spn} is not constant, we again obtain a contradiction.  This proves the claim.

More work is needed to exclude the possibility that $1\neq p\in \N$ and $p$ divides $n$.  To this end, let $p>1$ and let $p'$ denote the H\"{o}lder conjugate of $p$, so that $1/p+1/p'=1$. Let
$$D=\left\{x=(x_1,\dots,x_n)\in \R^n: \sum_{i=1}^n\left(\frac{|x_i|}{a_i}\right)^p\le 1\right\},$$
where $a_i>0$, $i=1,\dots,n$, so that $D$ is a linear image of the unit ball in $l_p^n$ under a diagonal matrix with entries $a_i$, $i=1,\dots,n$. We claim that
$$h_D(u)=\left(\sum_{i=1}^n(a_i|u_i|)^{p'}\right)^{1/p'},$$
for $u=(u_1,\dots,u_n)\in S^{n-1}$.  To see this, note that by symmetry we may restrict vectors to the positive orthant.  Then, using H\"{o}lder's inequality and the definition (\ref{suppf}) of the support function, we obtain for $u=(u_1,\dots,u_n)\in S^{n-1}$ and $x=(x_1,\dots,x_n)\in D$,
$$u\cdot x=\sum_{i=1}^nu_ix_i\le \left(\sum_{i=1}^n(a_iu_i)^{p'}\right)^{1/p'}
\left(\sum_{i=1}^n\left(\frac{x_i}{a_i}\right)^p\right)^{1/p}\le \left(\sum_{i=1}^n(a_iu_i)^{p'}\right)^{1/p'}.$$
Now taking
$$\left(\frac{x_i}{a_i}\right)^p=\left(\sum_{i=1}^n(a_iu_i)^{p'}\right)^{-1}(a_iu_i)^{p'},$$
for $i=1,\dots,n$, it is easy to check that $x\in D$ and equality holds in the previous inequalities.  This proves the claim.

Now let
$$K=\left\{x=(x_1,\dots,x_n)\in \R^n:\sum_{i=1}^n|x_i|^{p'}\le 1\right\}$$
and
$$L=\left\{x=(x_1,\dots,x_n)\in \R^n:|x_1/2|^{p'}+\sum_{i=2}^n|x_i|^{p'}\le 1\right\},$$
that is, $K$ is the unit ball in $l^n_{p'}$ and $L$ is a linear image of it under a diagonal matrix with entries $2,1,\dots,1$.  Then we have
\begin{eqnarray*}
h_{K+_psL}(u)&=&\left(\sum_{i=1}^n|u_i|^{p}+s^p\left(|2u_1|^{p}
+\sum_{i=2}^n|u_i|^{p}\right)\right)^{1/p}\\
&=&\left((1+2^ps^p)|u_1|^{p}+\sum_{i=2}^n(1+s^p)|u_i|^{p}\right)^{1/p},
\end{eqnarray*}
for $u=(u_1,\dots,u_n)\in S^{n-1}$, so $K+_psL$ is a linear image of the unit ball in $l^n_{p'}$ under a diagonal matrix with entries $(1+2^ps^p)^{1/p},(1+s^p)^{1/p},\dots,(1+s^p)^{1/p}$.  It follows that
$${{\mathcal{H}}^n}(K+_psL)=c(1+2^ps^p)^{1/p}(1+s^p)^{(n-1)/p},$$
where $c$ is a constant depending only on $n$ and $p$. Therefore (\ref{Mink}) implies that
$$(1+2^ps^p)(1+s^p)^{n-1}=\left(c_0+c_1s+\cdots+c_ns^n\right)^p=q(s)^p,$$
say, for some $c_i$, $i=0,\dots,n$.  Let $q(s)=r_1(s)^{m_1}\cdots r_k(s)^{m_k}$
be a factorization into powers of irreducible factors, any two of which are relatively prime. Then
$$
(1+2^ps^p)(1+s^p)^{n-1}=r_1(s)^{m_1p}\cdots r_k(s)^{m_kp}.
$$
The polynomials $1+2^ps^p$ and $1+s^p$ do not have a common divisor, since
any divisor must also divide $s^p$ and hence be of the form $s^i$, for some $i=1,\ldots,p$. But
$s^i$ is not a divisor of $1+s^p$. Since $\R[s]$ is a unique factorization domain, we deduce that $r_1(s)^{m_1p}$, say, is a divisor of $1+2^ps^p$. This implies that $m_1p\,\text{deg}(r_1)\le p$ and thus $m_1=\text{deg}(r_1)=1$. Writing $r_1(s)=a+bs$, where $a,b\neq 0$, we obtain
$$r_1(s)^p=\sum_{i=0}^p\binom{p}{i}a^ib^{p-i}s^{p-i}=\lambda(1+2^ps^p),$$
for some $\lambda\neq 0$. If $p>1$, this implies that $a=0$ or $b=0$, a contradiction.
This shows that $p=1$.
\end{proof}

\begin{cor}\label{cor6}
Let $*:\left({\mathcal{K}}^n_s\right)^2\rightarrow {\mathcal{K}}_s^n$ be an associative operation that has polynomial volume.  If $n=1$,  assume that $*$ is also homogeneous of degree 1. If $n\ge 2$, assume that $*$ is also projection covariant.  Then either $K*L=\{o\}$, or $K*L=K$, or $K*L=L$, for all $K,L\in {\mathcal{K}}^n_s$ or $*$ is Minkowski addition.
\end{cor}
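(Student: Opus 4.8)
The plan is to treat the cases $n\ge 2$ and $n=1$ separately, since different auxiliary hypotheses are imposed, but in both cases the strategy is identical: first use the appropriate classification result established earlier to reduce $*$ to a short list of candidates, and then invoke the polynomial volume hypothesis to discard every candidate except Minkowski addition and the three trivial operations. The two engines driving the argument are Theorem~\ref{thm1} (the classification of projection covariant associative operations) and Theorem~\ref{thm5} (the failure of polynomial volume for $+_p$ when $p>1$), so almost no new work is needed.

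For $n\ge 2$ I would begin by applying Theorem~\ref{thm1}. Since $*$ is projection covariant and associative, that theorem tells us that $*$ must be one of the trivial operations $K*L=\{o\}$, $K*L=K$, or $K*L=L$, or else $*=+_p$ for some $1\le p\le\infty$. Each of the three trivial operations plainly has polynomial volume: for instance $\mathcal{H}^n(rK*sL)=r^n\mathcal{H}^n(K)$ when $K*L=K$, and the volume vanishes identically when $K*L=\{o\}$. Hence these three cannot be eliminated and must appear in the conclusion. It therefore remains only to treat the family $+_p$. Here the decisive input is Theorem~\ref{thm5}, according to which $+_p$ has polynomial volume only when $p=1$. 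As $*$ is assumed to have polynomial volume, the case $*=+_p$ with $p>1$ is impossible, leaving only $+_1$, which is Minkowski addition. This disposes of the case $n\ge 2$.

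For $n=1$, projection covariance is vacuous, so the hypotheses instead furnish homogeneity of degree $1$, associativity, and polynomial volume. By Theorem~\ref{thm11}, homogeneity of degree $1$ together with polynomial volume already forces $K*L=aK+bL$ for some constants $a,b\ge 0$, and it then remains to exploit associativity. Representing each $o$-symmetric interval by its half-length, the operation $*$ sends half-lengths $k,l$ to $ak+bl$; writing out $K*(L*M)$ and $(K*L)*M$ in these terms gives half-lengths $ak+abl+b^2m$ and $a^2k+abl+bm$, respectively. Equating these for all $k,l,m\ge 0$ yields $a=a^2$ and $b=b^2$, so $a,b\in\{0,1\}$. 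The four resulting possibilities $(a,b)=(0,0),(1,0),(0,1),(1,1)$ are precisely $K*L=\{o\}$, $K*L=K$, $K*L=L$, and $K*L=K+L$, completing the proof.

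In truth this corollary is a short deduction rather than a result with a genuinely hard kernel, all the real work having already been done in the classification theorems and, above all, in the polynomial volume computation of Theorem~\ref{thm5}. The only points requiring a little care are the bookkeeping that confirms the three trivial operations \emph{do} satisfy polynomial volume (so that they survive into the conclusion) and, in the one-dimensional case, the elementary but essential use of associativity to cut the two-parameter family $aK+bL$ down to the four admissible operations. I anticipate no serious obstacle beyond ensuring that these two cases are stitched together cleanly.
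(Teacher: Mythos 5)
Your proposal is correct and follows exactly the paper's own route: for $n\ge 2$ it combines Theorem~\ref{thm1} with Theorem~\ref{thm5}, and for $n=1$ it combines Theorem~\ref{thm11} with associativity (your half-length computation forcing $a,b\in\{0,1\}$ is the detail the paper leaves implicit). No discrepancies to report.
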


\begin{proof}
When $n=1$, the result follows easily from Theorem~\ref{thm11} and the associativity assumption.  For $n\ge 2$, we appeal instead to
Theorems~\ref{thm1} and~\ref{thm5}.
\end{proof}

None of the assumptions in the previous corollary can be omitted.  Indeed,
the operation defined by $K*L=2K+L$, for all $K,L\in {\mathcal{K}}^n_s$, shows that the associativity assumption in the previous corollary cannot be omitted.  The operation in Example~\ref{ex5555} is easily seen to have polynomial volume (note that the map $F$ is homogeneous of degree 1 and $F^{-1}$ preserves volume), so projection covariance is necessary when $n\ge 2$.  Finally, $L_p$ addition is associative and projection covariant when $n\ge 2$, but does not have polynomial volume, by Theorem~\ref{thm5}.

The following corollary is a direct consequence of the previous one and provides a characterization of Minkowski addition.

\begin{cor}\label{cor12}
Let $n\ge 2$.  The operation $*:\left({\mathcal{K}}^n_s\right)^2\rightarrow {\mathcal{K}}_s^n$ is continuous, $GL(n)$ covariant, associative, and has the identity property and polynomial volume if and only if it is Minkowski addition.
\end{cor}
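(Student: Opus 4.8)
The plan is to derive this corollary directly from Corollary~\ref{cor6}, using the equivalence between projection covariance and the pair (continuity, $GL(n)$ covariance) to convert the analytic hypotheses into the form required there, and then to rule out the three trivial operations by invoking the identity property. Essentially all the real content already lives in Corollary~\ref{cor6}, so the task reduces to assembling the right pieces and performing one short elimination.

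First I would treat the forward implication. Assume $*:\left({\mathcal{K}}^n_s\right)^2\rightarrow {\mathcal{K}}_s^n$ is continuous, $GL(n)$ covariant, associative, and has the identity property and polynomial volume. Since $n\ge 2$ and $*$ is continuous and $GL(n)$ covariant, Lemma~\ref{CGimpliesP} (equivalently Corollary~\ref{contaff}) shows that $*$ is projection covariant. Thus $*$ satisfies exactly the hypotheses of Corollary~\ref{cor6} in the case $n\ge 2$: it is associative, projection covariant, and has polynomial volume. Applying that corollary, I conclude that one of four alternatives holds: either $K*L=\{o\}$ for all $K,L$, or $K*L=K$ for all $K,L$, or $K*L=L$ for all $K,L$, or $*$ is Minkowski addition.

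It then remains to eliminate the three trivial alternatives using the identity property $K*\{o\}=K=\{o\}*K$. If $K*L=\{o\}$ identically, then $K*\{o\}=\{o\}$, which differs from $K$ whenever $K\neq\{o\}$; if $K*L=K$ identically, then $\{o\}*K=\{o\}\neq K$ for $K\neq\{o\}$; and if $K*L=L$ identically, then $K*\{o\}=\{o\}\neq K$ for $K\neq\{o\}$. Since $n\ge 2$ guarantees the existence of sets $K\in{\mathcal{K}}^n_s$ with $K\neq\{o\}$, each trivial case contradicts the identity property, so $*$ must be Minkowski addition. For the converse I would simply verify that Minkowski addition enjoys all five listed properties: continuity in the Hausdorff metric, $GL(n)$ covariance, associativity, and the identity property follow immediately from $h_{K+L}=h_K+h_L$ and the elementary facts about support functions recorded in Section~\ref{prelim}, while polynomial volume is Minkowski's classical theorem on mixed volumes, cited as \cite[Theorem~5.1.6]{Sch93}. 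These were already collected in the discussion of Minkowski addition in Section~\ref{examples}, so the converse needs nothing new.

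I anticipate no serious obstacle here, precisely because the weight of the argument has been shifted into Corollary~\ref{cor6}—itself resting on Theorem~\ref{thm1} (the classification of associative, projection covariant operations between $o$-symmetric compact convex sets) and Theorem~\ref{thm5} (the failure of polynomial volume for $+_p$ when $p>1$). The only point deserving a word of care is confirming that the identity property excludes \emph{all three} degenerate operations rather than just one; this is settled at once by testing the property on a one-sided product involving $\{o\}$, as above.
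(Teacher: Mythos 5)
Your proof is correct and follows exactly the route the paper intends: the paper gives no separate argument, merely noting that Corollary~\ref{cor12} is a direct consequence of Corollary~\ref{cor6}, with projection covariance supplied by Lemma~\ref{CGimpliesP} and the three trivial operations excluded by the identity property, just as you do.
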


\begin{thm}\label{thm7}
Let $-\infty\le p\neq 0\le \infty$. The operation $\widetilde{+}_p:\left({\mathcal{S}}^n_s\right)^2\rightarrow {\mathcal{S}}_s^n$ has polynomial volume if and only if $p\in\N$ and $p$ divides $n$. (Here we interpret the cases when $-\infty\le p<0$ and $p=\infty$ as in Proposition~\ref{prp2}.)
\end{thm}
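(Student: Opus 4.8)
The plan is to reduce everything to the scalar identity for $(r^p+s^p)^{n/p}$ that already drives the proof of Theorem~\ref{thm5}, but now to exploit the \emph{integral} representation of volume available for star sets. First I would record the polar-coordinate formula
$${{\mathcal{H}}^n}(L)=\frac1n\int_{S^{n-1}}\rho_L(u)^n\,du,$$
valid for $L\in {\mathcal{S}}^n$. Using $\rho_{rK}=r\rho_K$ together with the definition (\ref{radialp}), for finite $p\neq 0$ and $u\in S^{n-1}$ one has $\rho_{rK\widetilde{+}_psL}(u)=(r^p\rho_K(u)^p+s^p\rho_L(u)^p)^{1/p}$, whence
$${{\mathcal{H}}^n}(rK\widetilde{+}_psL)=\frac1n\int_{S^{n-1}}\left(r^p\rho_K(u)^p+s^p\rho_L(u)^p\right)^{n/p}\,du.$$
Since $\widetilde{+}_p$ is homogeneous of degree $1$, Lemma~\ref{polhomoglem} guarantees that, as soon as $\widetilde{+}_p$ has polynomial volume, this expression is a homogeneous polynomial of degree $n$ in $r$ and $s$.

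For the sufficiency I would argue directly. If $p\in\N$ and $p$ divides $n$, write $n=pm$ with $m\in\N$; then $n/p=m$ and the binomial theorem gives the \emph{finite} expansion
$$\left(r^p\rho_K(u)^p+s^p\rho_L(u)^p\right)^{m}=\sum_{j=0}^{m}\binom{m}{j}r^{p(m-j)}s^{pj}\rho_K(u)^{p(m-j)}\rho_L(u)^{pj}.$$
Because $p\in\N$, every exponent $p(m-j)$ and $pj$ is a nonnegative integer, so the integrand is a bona fide polynomial in $r,s$ with bounded, integrable coefficient functions of $u$. Integrating term by term over $S^{n-1}$ then exhibits ${{\mathcal{H}}^n}(rK\widetilde{+}_psL)$ as a polynomial in $r$ and $s$, with coefficients $\tfrac1n\binom{m}{j}\int_{S^{n-1}}\rho_K(u)^{p(m-j)}\rho_L(u)^{pj}\,du$, so $\widetilde{+}_p$ has polynomial volume.

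For the necessity I would simply test on $K=L=B^n$, where $\rho_K\equiv\rho_L\equiv1$. The cases $p=\pm\infty$ are disposed of first: there $rB^n\widetilde{+}_psB^n$ equals $\max\{r,s\}B^n$ or $\min\{r,s\}B^n$, whose volumes $\max\{r,s\}^n\kappa_n$ and $\min\{r,s\}^n\kappa_n$ are not polynomials. For finite $p\neq0$ the integral collapses to $(r^p+s^p)^{n/p}\kappa_n$, which by Lemma~\ref{polhomoglem} must be a polynomial; setting $r=1$ reproduces exactly equation (\ref{spn}) (with $c_i=a_i/\kappa_n$), and the claim established inside the proof of Theorem~\ref{thm5}---treating $p<0$, non-integer $p>0$, and $p\in\N$ not dividing $n$ in turn---forces $p\in\N$ with $p\mid n$. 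The only point requiring care, and the conceptual heart of the matter, is to recognize that the \emph{same} expansion that \emph{fails} to be polynomial for convex bodies (where volume is a mixed volume and the $l^n_{p'}$-ball example of Theorem~\ref{thm5} rules out $p>1$) now \emph{succeeds}, because the star-body volume formula is an integral of $\rho^n$ and the binomial series above terminates precisely when $p\mid n$. Thus for star sets no further cases are excluded, and the characterization is exactly $p\in\N$, $p\mid n$.
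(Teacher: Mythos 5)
Your proposal is correct and follows essentially the same route as the paper: the necessity reduces to the scalar identity $(1+s^p)^{n/p}=\sum_i c_is^i$ already analyzed in the proof of Theorem~\ref{thm5} (the paper tests on a general $K$ of positive volume where you specialize to $B^n$, an immaterial difference), and the sufficiency is the same terminating binomial expansion of $\left((r\rho_K)^p+(s\rho_L)^p\right)^{n/p}$ under the polar-coordinate volume formula.
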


\begin{proof}
Suppose that $\widetilde{+}_p$ has polynomial volume. Since $\widetilde{+}_p$ is homogeneous of degree 1, \eqref{Mink} holds, by Lemma~\ref{polhomoglem}. Let $K\in {\mathcal{S}}_s^n$.  Suppose that $-\infty< p\neq 0< \infty$. Then
$$\rho_{rK\widetilde{+}_p\,sK}(u)^p=\rho_{rK}(u)^p+\rho_{sK}(u)^p=(r^p+s^p)\rho_K(u)^p,$$
for all $u\in S^{n-1}$, so $rK\widetilde{+}_p\,sK=(r^p+s^p)^{1/p}K$ for $r,s\ge 0$.  As in the proof of Theorem~\ref{thm5}, we take $r=1$, conclude that
$$(1+s^p)^{n/p}=\sum_{i=0}^nc_is^i,$$
for all $s\ge 0$, and deduce that $p\in \N$ and $p$ divides $n$.  The cases $p=\pm\infty$ can also be excluded as in the proof of Theorem~\ref{thm5}.

Now assume that $p\in \N$ and $p$ divides $n$. For arbitrary $K, L\in {\mathcal{S}}_s^n$, we have
$$\rho_{rK\widetilde{+}_p\,sL}(u)^p=(r\rho_K(u))^p+(s\rho_L(u))^p,$$
where $n=mp$, say, $m\in \N$.  So
\begin{eqnarray*}
{{\mathcal{H}}^n}(rK\widetilde{+}_p\,sL)&=&\frac{1}{n}\int_{S^{n-1}}\rho_{rK\widetilde{+}_psL}(u)^n\,du\\
&=&\frac{1}{n}\int_{S^{n-1}}\left((r\rho_K(u))^p+(s\rho_L(u))^p\right)^m\,du\\
&=&\frac{1}{n}\int_{S^{n-1}}\sum_{j=0}^m
\binom{m}{j}\left(r\rho_K(u)\right)^{(m-j)p}\left(s\rho_L(u)\right)^{jp}\,du\\
&=&\sum_{i=0}^n a_i(K,L)r^{n-i}s^i,
\end{eqnarray*}
where
$$
a_i(K,L)=\left\{\begin{array}{ll}
\binom{n/p}{i/p}\frac{1}{n}\int_{S^{n-1}}\rho_K(u)^{n-i}\rho_L(u)^i\,du, \hspace{.15in}{\text{ if $i=jp$, $j=0,\dots,n/p$}}\\
0,\hspace{2.25in} {\text{otherwise}}.
\end{array}\right.
$$
Therefore (\ref{Mink}) holds, as required.
\end{proof}

\begin{cor}\label{cor8}
Let $*:\left({\mathcal{S}}^n_s\right)^2\rightarrow {\mathcal{S}}_s^n$ be an associative operation that has polynomial volume.  If $n=1$ assume that $*$ is also homogeneous of degree $1$. If $n\ge 2$, assume that $*$ is also rotation and section covariant.  Then either $K*L=\{o\}$, or $K*L=K$, or $K*L=L$, or $K*L=K\widetilde{+}_p\,L$, where $p\in \N$ and $p$ divides $n$, for all $K,L\in {\mathcal{S}}^n_s$.
\end{cor}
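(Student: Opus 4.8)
The plan is to mirror the proof of Corollary~\ref{cor6}, replacing projection covariance by section covariance and $L_p$ addition by $p$th radial addition, and to reduce the nontrivial case to Theorem~\ref{thm1a} together with Theorem~\ref{thm7}; the case $n=1$ is disposed of separately.

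For $n=1$ we have ${\mathcal S}^1_s={\mathcal K}^1_s$, so the assumed homogeneity of degree $1$ together with polynomial volume lets us invoke Theorem~\ref{thm11}: there are $a,b\ge 0$ with $K*L=aK+bL$ for all $K,L\in{\mathcal K}^1_s$. Feeding this into the associativity identity $K*(L*M)=(K*L)*M$ and comparing the sets on each side forces $a=a^2$ and $b=b^2$, hence $a,b\in\{0,1\}$. The four resulting possibilities are exactly $K*L=\{o\}$, $K*L=K$, $K*L=L$, and $K*L=K+L=K\widetilde{+}_1 L$, and $p=1$ divides $n=1$.

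Now suppose $n\ge 2$. First I would extract a radial representation: since $*$ is rotation and section covariant, Lemma~\ref{lem01arbsections} gives $*:({\mathcal S}^n_s)^2\to{\mathcal S}^n_s$, and the argument of Theorem~\ref{thm1a} (which up to that point uses only rotation and section covariance) produces a function $f:[0,\infty)^2\to[0,\infty)$ with $\rho_{K*L}(u)=f(\rho_K(u),\rho_L(u))$ for all $K,L\in{\mathcal S}^n_s$ and $u\in S^{n-1}$. Associativity of $*$ then yields associativity of $f$ (take $K=sB^n$, $L=tB^n$, $N=vB^n$). For continuity, I would use polynomial volume on balls: since $\rho_{rB^n}(u)=r$,
\[
\kappa_n f(r,s)^n=\frac1n\int_{S^{n-1}}f(r,s)^n\,du={{\mathcal{H}}^n}(rB^n*sB^n)
\]
is a polynomial $P(r,s)$ in $r,s$, so $f=(P/\kappa_n)^{1/n}$ is the nonnegative $n$th root of a continuous function and is therefore continuous on $[0,\infty)^2$.

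The main obstacle is the remaining ingredient required to apply Theorem~\ref{thm1a}, namely homogeneity of degree $1$ of $f$ (equivalently, that $P=\kappa_n f^n$ is homogeneous of degree $n$); unlike projection covariance, section covariance does not supply this for free. I would first analyze the boundary functions $f(\cdot,0)$, $f(0,\cdot)$, and the value $f(0,0)$: when $f(0,0)=0$, associativity makes $f(\cdot,0)$ and $f(0,\cdot)$ idempotent, and the polynomial constraints $f(s,0)^n=P(s,0)$, $f(0,t)^n=P(0,t)$ force each to be either the identity or the zero map; the four combinations lead to the projection and zero cases or else to $o$ being a two-sided identity, in which case $P(s,0)=s^n$ and $P(0,t)=t^n$. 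In this identity case I would show $P$ is homogeneous of degree $n$ by studying the rescaled operations $f_\lambda(s,t)=\lambda^{-1}f(\lambda s,\lambda t)$, each of which is again continuous, associative, fixes $o$ as identity, and has polynomial $n$th power $P_\lambda=\lambda^{-n}P(\lambda\,\cdot,\lambda\,\cdot)$; letting $\lambda\to 0$ and $\lambda\to\infty$ and using associativity to exclude homogeneous parts of $P$ of degree different from $n$ pins down $P=P_n$, so $f$ is homogeneous of degree $1$. This is where essentially all the difficulty lies, being the analogue, in the present non-cancellative setting, of the reduction to Proposition~\ref{prp2}. Once $f$ is known to be continuous and homogeneous of degree $1$, Theorem~\ref{thm1a} applies and shows that $*$ is one of the three trivial operations or $*=\widetilde{+}_p$ for some $-\infty\le p\le\infty$ with $p\neq 0$. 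Finally, since $*$ has polynomial volume, Theorem~\ref{thm7} excludes every $p$ except those with $p\in\N$ and $p\mid n$, which completes the proof.
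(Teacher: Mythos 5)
Your $n=1$ argument and your overall strategy for $n\ge 2$ (obtain the radial representation, reduce to Theorem~\ref{thm1a}, then invoke Theorem~\ref{thm7} to restrict $p$) coincide with the paper's proof, which is just the citation of Theorem~\ref{thm11} for $n=1$ and of Theorems~\ref{thm1a} and~\ref{thm7} for $n\ge 2$. Your derivation of $\rho_{K*L}(u)=f(\rho_K(u),\rho_L(u))$ from rotation and section covariance, of the associativity of $f$, and of its continuity from polynomial volume evaluated on balls are all correct, and you have correctly identified the one hypothesis of Theorem~\ref{thm1a} that is neither assumed in the corollary nor supplied by section covariance: homogeneity of degree $1$.

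The gap is in your attempt to supply that homogeneity, and it cannot be closed under the stated hypotheses. In your case analysis, $f(\cdot,0)\equiv 0$ and $f(0,\cdot)\equiv 0$ do \emph{not} force $f\equiv 0$: the function $f(s,t)=st$ is continuous, associative, vanishes on the axes, and has $f^n$ polynomial; the induced operation $\rho_{K*L}=\rho_K\rho_L$ (on $S^{n-1}$, extended by homogeneity of degree $-1$) maps $({\mathcal{S}}^n_s)^2$ to ${\mathcal{S}}^n_s$, is associative, rotation and section covariant, and has polynomial volume, since ${\mathcal{H}}^n(rK*sL)=(rs)^n\tfrac{1}{n}\int_{S^{n-1}}\rho_K(u)^n\rho_L(u)^n\,du$ --- yet it is none of the four listed operations. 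Your rescaling argument in the two-sided-identity case fails for the same reason: $f(s,t)=s+t+st$ is associative (conjugate to multiplication via $x\mapsto 1+x$), continuous, has $o$ as identity and $f^n$ polynomial, and again yields an operation satisfying every stated hypothesis, including polynomial volume, without being $\widetilde{+}_p$ for any $p$; the rescaled functions $f_\lambda(s,t)=s+t+\lambda st$ remain associative with identity for every $\lambda>0$, so nothing is excluded in the limit. Thus homogeneity of degree $1$ must be added as an explicit hypothesis for $n\ge 2$ (after which the corollary does follow directly from Theorems~\ref{thm1a} and~\ref{thm7}, exactly as the paper asserts). To be fair, the paper's one-sentence proof applies Theorem~\ref{thm1a} without verifying its continuity and homogeneity hypotheses, so you located the weak point precisely; but the repair you sketch does not work, and the examples above show that no repair confined to the stated hypotheses can.
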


\begin{proof}
This follows directly from Theorem~\ref{thm11} (or Corollary~\ref{cor6}) when $n=1$ and from Theorems~\ref{thm1a} and~~\ref{thm7} when $n\ge 2$.
\end{proof}

\bigskip

\end{document}